\documentclass[11pt]{article}
\addtolength{\textwidth}{1in}
\addtolength{\oddsidemargin}{-0.5in}
\addtolength{\textheight}{1in}
\addtolength{\topmargin}{-0.55in}
\renewcommand{\baselinestretch}{1.2}  

\usepackage{mathtools}
\usepackage{empheq}
\usepackage{amsfonts}
\usepackage{amsgen,amsmath,amstext,amsbsy,amsopn,amssymb,subfigure,amsthm,stmaryrd}
\usepackage[dvips]{graphicx}
\usepackage{comment}
\usepackage{enumitem}
\usepackage{natbib}
\usepackage{booktabs}
\usepackage{blkarray}
\usepackage{algorithm}
\usepackage{algpseudocode}
\usepackage{url}
\usepackage{longtable}
\usepackage{mathtools}
\usepackage{multirow}
\DeclarePairedDelimiter\ceil{\lceil}{\rceil}
\DeclarePairedDelimiter\floor{\lfloor}{\rfloor}

\usepackage[usenames]{color}
\newcommand{\red}{\color{red}}
\newcommand{\blue}{\color{blue}}

\addtolength{\textwidth}{1in}
\addtolength{\oddsidemargin}{-0.5in}
\addtolength{\textheight}{1in}
\addtolength{\topmargin}{-0.55in}
\renewcommand{\baselinestretch}{1.2}  

\usepackage{mathtools}
\usepackage{empheq}
\usepackage{amsfonts}
\usepackage{amsgen,amsmath,amstext,amsbsy,amsopn,amssymb,subfigure,amsthm,stmaryrd}
\usepackage[dvips]{graphicx}
\usepackage{comment}
\usepackage{enumitem}
\usepackage{natbib}
\usepackage{booktabs}
\usepackage{blkarray}
\usepackage{algorithm}
\usepackage{algpseudocode}
\usepackage{url}
\usepackage{longtable}
\usepackage{mathtools}
\usepackage{multirow}

\usepackage[usenames]{color}

\allowdisplaybreaks

\textwidth 6.3in \textheight 8.8in \topmargin -0.5truein
\oddsidemargin .15truein
\parskip .1in

\newtheorem{Theorem}{Theorem}[section]

\newtheorem{Lemma}{Lemma}[section]
\newtheorem{Remark}{Remark}[section]
\newtheorem{Corollary}{Corollary}[section]

\newtheorem{Assumption}{Assumption}[section]

\allowdisplaybreaks

\textwidth 6.3in \textheight 8.8in \topmargin -0.5truein
\oddsidemargin .15truein
\parskip .1in

\DeclareMathAlphabet\mathbfcal{OMS}{cmsy}{b}{n}

\newcommand{\be}{\begin{equation}}
\newcommand{\ee}{\end{equation}}
\newcommand{\bea}{\begin{eqnarray}}
\newcommand{\eea}{\end{eqnarray}}
\newcommand{\beas}{\begin{eqnarray*}}
	\newcommand{\eeas}{\end{eqnarray*}}

\newcommand{\I}{{\mathbf{I}}}

\newcommand{\Cov}{{\rm Cov}}

\newcommand{\bbeta}{\boldsymbol{\beta}}
\newcommand{\bgamma}{\boldsymbol{\gamma}}
\newcommand{\bU}{\boldsymbol{U}}

\newcommand{\tr}{{\rm tr}}

\newcommand{\argmin}{\mathop{\rm arg\min}}

\makeatletter
\newcommand*{\rom}[1]{\expandafter\@slowromancap\romannumeral #1@}
\makeatother 

\allowdisplaybreaks

\begin{document}

	\title{Testing for high-dimensional network parameters in auto-regressive models}
	
	\author{Lili Zheng$^1$ and Garvesh Raskutti$^1$}
	
	\date{}
	
	\maketitle

	\footnotetext[1]{Department of Statistics, University of Wisconsin-Madison}
	
	\bigskip
	
	\begin{abstract}
	High-dimensional auto-regressive models provide a natural way to model influence between $M$ actors given multi-variate time series data for $T$ time intervals. While there has been considerable work on network estimation, there is limited work in the context of inference and hypothesis testing. In particular, prior work on hypothesis testing in time series has been restricted to linear Gaussian auto-regressive models. From a practical perspective, it is important to determine suitable statistical tests for connections between actors that go beyond the Gaussian assumption. In the context of \emph{high-dimensional} time series models, confidence intervals present additional estimators since most estimators such as the Lasso and Dantzig selectors are biased which has led to \emph{de-biased} estimators. In this paper we address these challenges and provide convergence in distribution results and confidence intervals for the multi-variate AR(p) model with sub-Gaussian noise, a generalization of Gaussian noise that broadens applicability and presents numerous technical challenges. The main technical challenge lies in the fact that unlike Gaussian random vectors, for sub-Gaussian vectors zero correlation does not imply independence. The proof relies on using an intricate truncation argument to develop novel concentration bounds for quadratic forms of dependent sub-Gaussian random variables. Our convergence in distribution results hold provided $T = \Omega((s \vee \rho)^2 \log^2 M)$, where $s$ and $\rho$ refer to sparsity parameters which matches existed results for hypothesis testing with i.i.d. samples. We validate our theoretical results with simulation results for both block-structured and chain-structured networks.
	\end{abstract}
	
\section{Introduction}

Vector autoregressive models arise in a number of applications including macroeconomics (see e.g.\cite{ang2003no},\cite{hansen2003structural},\cite{shan2005does}), computational neuroscience (see e.g.\cite{goebel2003investigating},\cite{seth2015granger},\cite{harrison2003multivariate}, \cite{bressler2007cortical}), and many others (see e.g.\cite{michailidis2013autoregressive},\cite{fujita2007modeling}). Recent years has seen substantial development in the theory and methodology of high-dimensional auto-regressive models with respect to parameter estimation (see e.g. \cite{song2011large},\cite{basu2015regularized},\cite{davis2016sparse},\cite{medeiros2016l1}, \cite{MarkRasWillett18}). In particular if there are $M$ dependent time series (e.g. voxels in the brain, actors in a social network, measurements at different spatial locations), \emph{time series network} models allow us to model temporal dependence between actors/nodes in a network.

More precisely, consider the following time series auto-regressive network model with lag $p$,
\begin{equation}\label{AR(p)}
X_{t+1}=\sum_{j=1}^p A^*(j)X_{t+1-j}+\epsilon_{t},
\end{equation}
where $\{X_t\}_{t=0}^T\in \mathbb{R}^M$ is the time series data we have access to, $\{A^*(j) \in \mathbb{R}^{M \times M}, j=1,\dots,p\}$ are the network parameters of interest and $\epsilon_{t}\in \mathbb{R}^M$ is zero-mean noise. We are considering the high-dimensional setting where the number of nodes $M$ in the network is much larger than the sample size $T$. Prior work in \cite{basu2015regularized} has addressed the question of how to estimate the network parameter $A^*$ with Gaussian noise $\epsilon_t$ under sparsity assumptions and various structural constraints. In this paper, we focus on \emph{inference and hypothesis testing} for the parameter $A^*$ given the data $(X_t)_{t=0}^{T}$. 

In high-dimensional statistics, there has recently been a growing body of work on confidence intervals and hypothesis testing under structural assumptions such as sparsity. Since the widely used Lasso estimator for sparse linear regression is asymptotically biased, one-step estimators based on bias-correction have been studied in works such as \cite{zhang2014confidence}, \cite{van2014asymptotically} and \cite{javanmard2014confidence} which are referred to as LDPE, de-sparsifying and de-biasing estimator respectively. Low-dimensional components of these estimators have asymptotic normality and thus can be used for constructing hypothesis testing and confidence intervals. 

In this paper, we adopt the framework of Ning and Liu (\cite{ning2017general}) 
who propose a high dimensional test statistic based on score function, called the decorrelated score function which we briefly describe here. Formally, consider a statistical model $\mathcal{P}=\{\mathbb{P}_{\bbeta}:\bbeta\in \Omega\}$ with high-dimensional parameter vector ${\bbeta}=(\theta,\bgamma^\top)^\top\in \mathbb{R}^d$. Suppose we are interested in the scalar parameter $\theta$ and $\bgamma\in \mathbb{R}^{d-1}$ is the nuisance parameter. Suppose data $\{\bU_i, i=1,\dots,n\}$ are i.i.d. data following distribution $\mathbb{P}_{\bbeta}$, then the negative log-likelihood function is defined as
\begin{equation*}
    \ell(\theta,\bgamma)=-\frac{1}{n}\sum_{i=1}^n \log f(\bU_i;\theta,\bgamma).
\end{equation*}
It is known that the score function $\sqrt{n}\nabla_{\theta}\ell(0,\bgamma^*)$ is asymptotically normal if the true parameter $\bbeta^*=(0,\bgamma^*)$. If $\bgamma^*$ is substituted by some estimator $\hat{\bgamma}$, the estimation induced error can be approximated as the following:
\begin{equation*}
    \sqrt{n}\nabla_{\theta}\ell(0,\hat{\bgamma})-\sqrt{n}\nabla_{\theta}\ell(0,\bgamma^*)\approx\sqrt{n}\nabla_{\theta\bgamma}^2\ell(0,\bgamma^*)(\hat{\bgamma}-\bgamma^*),
\end{equation*}
when $\hat{\bgamma}-\bgamma^*$ is small enough.
Although $\hat{\bgamma}-\bgamma^*$ converge to 0 with properly chosen $\hat{\bgamma}$, e.g. Lasso estimator, $\sqrt{n}\nabla_{\theta\bgamma}^2\ell(0,\bgamma^*)(\hat{\bgamma}-\bgamma^*)$ would not vanish if $\mathbb{E}_{\bbeta}\left(\nabla_{\theta\bgamma}^2\ell(0,\bgamma^*)\right)\neq 0$. This fact motivates the decorrelated score function:
\begin{equation*}
    S(\theta,\bgamma)=\nabla_{\theta}\ell(\theta,\bgamma)-\I_{\theta\bgamma}\I_{\bgamma\bgamma}^{-1}\nabla_{\bgamma}\ell(\theta,\bgamma),
\end{equation*}
with Fisher information matrix $\I=\mathbb{E}_{\bbeta}\left(\nabla^2\ell(\bbeta)\right)$. One can check that 
\begin{equation*}
    \mathbb{E}\left(\nabla_{\bgamma}S(\theta,\bgamma)\right)=0.
\end{equation*}
Both $\bgamma$ and $\I_{\theta\bgamma}\I_{\bgamma\bgamma}^{-1}$ are substituted by some estimator, and it is shown in \cite{ning2017general} that the decorrelated score function is asymptotically normal. 

In the linear regression case, the test statistic generated by the decorrelated score function in \cite{ning2017general} is equivalent to that constructed by de-biased estimator in \cite{van2014asymptotically}. However, \cite{ning2017general} allow a more general form, and thus is easier to adapt to the time series case. In fact Neykov et al.\cite{neykov2018unified} consider amongst other examples, high-dimensional time series with Gaussian error innovations. While Gaussian error innovations are widely used, many time series models include data that has bounded range or discrete data, for which the Gaussian distribution is not a natural fit. In this paper, we address the more general and technically challenging setting in which the noise $\epsilon_t$ is sub-Gaussian.

One of the important technical challenges in going from the Gaussian to the sub-Gaussian case is that dependent Gaussian vectors can be rotated to be independent, while such a result does not hold for sub-Gaussian vectors. Prior work in~\cite{wong2016lasso} addresses this challenges by imposing stationarity and $\beta$-mixing conditions. In order to avoid these conditions, we develop novel concentration bounds for sub-Gaussian random vectors.

In this paper, we investigate the hypothesis testing and confidence region with respect to a low-dimensional component of parameter matrices $\{A^*(j),j=1,\dots,p\}$ for sub-Gaussian data, using the testing framework in \cite{ning2017general}. Our major contributions are as follows:
\begin{itemize}
    \item Extending theoretical results in \cite{ning2017general} for high-dimensional hypothesis testing from Gaussian to sub-Gaussian temporal dependent data (VAR model), both under null and alternative hypothesis. We also show that our techniques lead to similar results to Neykov et al.\cite{neykov2018unified} in the Gaussian case but under less restrictive conditions;
    \item A novel concentration bound for quadratic forms of sub-Gaussian time series data. Note that unlike Gaussian vectors which can be rotated to be independent, sub-Gaussian vectors can not which present  additional technical challenges. Our analysis also leads to estimators for covariance and regression parameters for time series data under sub-Gaussian assumptions which are of independent interest. 
    \item We also construct semi-parametric efficient confidence region for multivariate parameters with fixed dimension;
    \item Finally we support our theoretical guarantees with a simulation study on bounded noise, which is sub-Gaussian but not Gaussian.
\end{itemize}


\subsection{Related Work}
In the literature on inference for high-dimensional VAR models, most work focuses on the estimation problem. Song and Bickel (\cite{song2011large}) investigate penalized least squares algorithms for different penalties, with some externally imposed assumptions on the temporal dependence. Theoretical guarantees on Dantzig type and Lasso type estimators are studied in \cite{han2015direct} and \cite{basu2015regularized}, but with Gaussian noise. Barigozzi and Brownlees (\cite{barigozzi2018nets}) consider the inference for stationary dependence structure built among variables, other than the parameters in the VAR model. In our work, we control the error bounds of Lasso and Dantzig type estimators for parameter matrices, with sub-Gaussian noise. Then we establish asymptotic distribution of test statistic based on this.

In the high-dimensional hypothesis testing literature, there is some work regarding to testing for high-dimensional mean vector (\cite{srivastava2009test}), covariance matrices (\cite{chen2010tests},\cite{zhang2013tests}) and independence among variables (\cite{schott2005testing}). While for testing on regression parameters, most work assumes i.i.d samples. \cite{lockhart2014significance}, \cite{taylor2014post} and \cite{lee2016exact} proposes methods to test whether a covariate should be selected conditioning on the selection of some other covariates. A penalized score test depending on the tuning parameter $\lambda$ is considered in \cite{voorman2014inference}. Our work follows the a line of work by \cite{zhang2014confidence}, \cite{van2014asymptotically}, \cite{javanmard2014confidence} and \cite{ning2017general}, the de-sparsifying or decorrelated literature. We construct a VAR version of decorrelated score test proposed by \cite{ning2017general}. Chen and Wu (\cite{chen2018testing}) tackles the hypothesis testing problem for time series data as well, but they are testing the trend in a time series, instead of the autoregressive parameter which encodes the influence structure among variables.

As mentioned earlier, our work is most closely related to the prior work of Neykov et al.\cite{neykov2018unified}, which provides a hypothesis testing framework with high-dimensional Gaussian time series as a special case. In our work, we consider the more general and technically challenging case of sub-Gaussian vector auto-regressive models. Throughout this paper, we provide a comparison to results derived in this work for the Gaussian case.

\subsection{Organization of the Paper}
Section \ref{setup} explains the problem set up and proposes our test statistic. Theoretical guarantee is shown in section \ref{theory}. Specifically, section~\ref{NullSec} and \ref{AltSec} present the weak convergence rate of test statistic under the null and alternative hypothesis $\mathcal{H}_0$ and $\mathcal{H}_A$. Section \ref{Estimator} propose some feasible estimators, which satisfy the assumptions required and can be plugged into the test statistic. Section \ref{EstVarSec} considers the case when the variance of noise are unknown, and we construct a confidence region for multivariate parameter vectors in Section \ref{CRSec}. We consider the special case of the AR(1) model with Gaussian noise, a detailed comparison with \cite{neykov2018unified} is provided in section \ref{GaussAR1}. Section \ref{SimSec} provides simulation results and section \ref{main_proof} includes the proofs for the two main theorems. Much of the proof is deferred to Appendices.
\subsection{Notation}
We define the following norms for vectors and matrices: For a vector $u=(u_1,\dots,u_d)^\top\in \mathbb{R}^d$, we define the $p$-norm where $p \geq 1$,$\|u\|_p=\left(\sum_{i=1}^d u_i^p\right)^{\frac{1}{p}}.$ For a matrix $U\in \mathbb{R}^{m\times n}$, the $\ell_p$ norm and Frobenius norm of $U$ is defined as $\|U\|_p=\sup_{v}\frac{\left\|Uv\right\|_p}{\|v\|_p}, \quad \|U\|_{F}=\left(\sum_{i=1}^m\sum_{j=1}^n U_{ij}^2\right)^{\frac{1}{2}}.$ We also use notation $\|U\|_{1,1}$ to denote the $\ell_1$ penalty on $U$, which is $\sum_{i=1}^m\sum_{j=1}^n |U_{i,j}|$.
Furthermore, if $U$ is symmetric the trace norm of $U$ is $\|U\|_{\tr}=\text{tr}(\sqrt{U^2}).$ 

Throughout the paper, we assume that the entries of noise vectors $\{\epsilon_{ti},1\leq i\leq M\}_{t=-\infty}^{\infty}$ are independent sub-Gaussian variables with constant scale factor. A univariate centered random variable $X$ has a sub-Gaussian distribution with scale factor $\tau$ if
$$
M_{X}(t) \triangleq \mathbb{E}\left[\exp(tX)\right]\leq \exp(\tau^2t^2/2).
$$

\section{Problem Setup}\label{setup}

We consider a general vector auto-regressive time series with lag $p$, where $p$ is known and finite and independent of $T$ or other dimensions:
\begin{equation}\label{ARp1}
    X_{t+1}=\sum_{j=1}^{p}A(j)X_{t-j+1}+\epsilon_t,
\end{equation}
where $X_t\in \mathbb{R}^M$, $\epsilon_t\in \mathbb{R}^M$ is zero-mean entry-wise independent sub-Gaussian noise with identity covariance matrix, and $A(j)\in \mathbb{R}^{M\times M}, j=1,\cdots,p$ are parameters of interest. Define the matrix $A^*=(A(1),\cdots,A(p))\in \mathbb{R}^{M\times pM}$ and $\mathcal{X}_t=(X_t^\top,\cdots,X_{t-p+1}^\top)^\top\in \mathbb{R}^{pM}$, then we can also write (\ref{ARp1}) as
\begin{equation}\label{ARp2}
    X_{t+1}=A^*\mathcal{X}_t+\epsilon_t.
\end{equation}
For notational convenience, we assume that time series data $X_t$ has time range $1-p\leq t\leq T$. 

Based on data $(X_t)_{t=1-p}^{T}$, we test the hypothesis of whether a subset of entries in $A^*$ are $0$. Let $A_i^*$ be the $i$th row vector of $A^*$. Without loss of generality, suppose the entries we test are in rows $1,\cdots, k$. Define $D_m\subset \{1,\cdots,pM\}$ as the columns we test in $m$th row with $d_m=|D_m|$, and $D=\{(i,j):1\leq i\leq k, j\in D_i\}$, with $d=|D|=\sum_{m=1}^k d_m$. We test the null hypothesis:
\begin{equation}\label{EqnNullp}
    \mathcal{H}_0:\widetilde{A}_D=0
\end{equation}
where $\widetilde{A}_D=((A_1^*)_{D_1}^\top,\cdots,(A_k^*)_{D_k}^\top)^\top\in \mathbb{R}^d$. We also assume that $d$ is finite and not increasing with $T$. In the work of of Neykov et al.\cite{neykov2018unified}, $d$ is assumed to be $1$.

\subsection{Stationary distribution}

Since we are developing a hypothesis testing framework based on the decorrelated score test, it is important to specify a stationary distribution for $X_t$ 
Using standard notation from auto-regressive time series models, define the polynomial $\mathcal{A}(z)=I_M-\sum_{j=1}^p A(j)z^j$, where $I_M$ is an $M\times M$ identity matrix, and $z$ is a complex number. To guarantee the existence of a stationary solution to (\ref{ARp2}), we assume 
\begin{equation*}
     \det(\mathcal{A}(z))\neq 0,\quad |z|\leq 1.
\end{equation*}
Then we can write
\begin{equation*}
    \left(\mathcal{A}(z)\right)^{-1}=\sum_{j=0}^\infty \Psi_j z^j,
\end{equation*}
where $\Psi_j, j\geq 0$ are all real valued matrices which are polynomial functions of $A(i), 1\leq i\leq p$. Note that in the special case where $p=1$, $\Psi_j = (A^*)^j$.

It can be shown that the unique stationary solution to (\ref{ARp1}) is
\begin{equation*}
    X_t=\sum_{j=0}^\infty \Psi_j\epsilon_{t-j-1},
\end{equation*}
and the covariance matrix $\Sigma$ of $X_t$ satisfies
\begin{equation}\label{Upsilon_def}
    \Sigma=\Cov(X_t)=\sum_{j=0}^\infty \Psi_j\Psi_j^\top.
\end{equation}

\subsection{Decorrelated Score Function}

Using the frameworks developed in \cite{ning2017general} for independent design, we consider the decorrelated score test. First we define the \emph{score function} $S(A^*)\in \mathbb{R}^{M\times M}$, with each entry defined as follows:
	$$
	[S(A^*)]_{jk}=-\frac{1}{T}\sum_{t=0}^{T-1} (X_{t+1,j}-a_j^{*\top}\mathcal{X}_t)\mathcal{X}_{tk}=-\frac{1}{T}\sum_{t=0}^{T-1} \epsilon_{t,j} \mathcal{X}_{tk}.
	$$	
As pointed out in \cite{ning2017general}, the standard score function is infeasible and we need to consider the \emph{decorrelated score function}
\begin{equation*}
    \begin{split}
        S=(S_1^\top,S_2^\top,\cdots,S_k^\top)^\top\in \mathbb{R}^d,
    \end{split}
\end{equation*} 
with each $S_m\in \mathbb{R}^{d_m}$ corresponding to the tested row $(m,D_m)$:
	$$ S_m=-\frac{1}{T}\sum_{t=0}^{T-1} \epsilon_{t,m} (\mathcal{X}_{t,D_m}-w_{m}^{*\top} \mathcal{X}_{t,D_m^c}),
	$$
where $\mathcal{X}_{t,D_m}\in \mathbb{R}^{d_m}$ is composed of the entries of $\mathcal{X}_t$ whose indices are within set $D_m$. $\mathcal{X}_{t,D_m^c}\in \mathbb{R}^{pM-d_m}$ is also defined similarly and $w_{m}^*\in\mathbb{R}^{(pM-d_m)\times d_m}$ is chosen to satisfy
\begin{equation}\label{w_m_def}
    \Cov(X_{t,D_m}-w_{m}^{*\top}\mathcal{X}_{t,D_m^c}, \mathcal{X}_{t,D_m^c})=0.
\end{equation}
Specifically, $w_m^*$ is defined as a function of $\Upsilon=\Cov(\mathcal{X}_t)\in\mathbb{R}^{pM\times pM}$:
\begin{equation}\label{w_m_def_S}
    w_m^*=(\Upsilon_{D_m^c,D_m^c})^{-1}\Upsilon_{D_m^c,D_m}.
\end{equation}

\subsection{Test Statistic}
Based on the decorrelated score function $S_m$, we first define the statistic $V_{T,m}\in \mathbb{R}^{d_m}$: 
$$
V_{T,m}\triangleq \sqrt{T}(\Upsilon^{(m)})^{-\frac{1}{2}}S_m,
$$
with $\Upsilon^{(m)}\in \mathbb{R}^{d_m\times d_m}$ being defined as:
\begin{equation}\label{Cov_m_def}
    \begin{split}
\Upsilon^{(m)}&\triangleq \Cov(\mathcal{X}_{t,D_m}-w_m^{*\top}\mathcal{X}_{t,D_m^c})\\
&=\Cov(\mathcal{X}_{t,D_m}|\mathcal{X}_{t,D_m^c})\\
&=\Upsilon_{D_m,D_m}-\Upsilon_{D_m,D_m^c}(\Upsilon_{D_m^c,D_m^c})^{-1}\Upsilon_{D_m^c,D_m}.
    \end{split}
\end{equation}
Let $V_T$ be the $d$-dimensional vector concatenated by $V_{T,m}$'s:
$$
V_T=(V_{T,1}^\top,\cdots,V_{T,k}^\top)^\top.
$$
One of the main results of the paper is to show that $V_T$ is asymptotically Gaussian. Define $U_T=\|V_T\|_2^2$, then $U_T$ is asymptotically $\chi_d^2$. Since we do not know $\epsilon_t$, $w_m^*$, and $\Upsilon^{(m)}$, we later define estimators for these quantities. Formally, we define our test statistic $\widehat{U}_T$ as
\begin{equation}\label{TestStatistic}
    \widehat{U}_T=T\sum_{m=1}^k\widehat{S}_m^\top \left(\widehat{\Upsilon^{(m)}}\right)^{-1}\widehat{S}_m,
\end{equation}
where $\widehat{\Upsilon^{(m)}}\in \mathbb{R}^{d_m\times d_m}$ is an estimator for $\Upsilon^{(m)}$ and $\widehat{S}_m\in \mathbb{R}^{d_m}$ is defined as
\begin{equation*}
    \widehat{S}_m=-\frac{1}{T}\sum_{t=0}^{T-1}\left(X_{t+1,m}-(\widehat{A}_m)_{D_m^c}^\top \mathcal{X}_{t,D_m^c}\right)(\mathcal{X}_{t,D_m}-\hat{w}_m^\top \mathcal{X}_{t,D_m^c}),
\end{equation*}
with $\widehat{A}_m\in \mathbb{R}^{pM}$ and $\hat{w}_m\in \mathbb{R}^{(pM-d_m)\times d_m}$ estimating $A_m^*$ and $w_m^*$. Here we are not worried about the invertible issue of $\widehat{\Upsilon^{(m)}}$, since $\Upsilon^{(m)}$ is a low dimensional covariance matrix. To guarantee a good estimation of the high-dimensional parameter $A_m^*$ and $w_m^*$, we impose sparsity conditions upon them. Specifically, for each $1\leq m\leq M$, $1\leq i\leq k$ define
\begin{equation}\label{sparsity_def}
    \begin{split}
        \rho_m\triangleq\|A_m^*\|_0,\quad s_i\triangleq\|w_i^*\|_0,
    \end{split}
\end{equation} 
and note that they both depend on $A^*$.

The sparsity of $w_m^*$ can be implied by the sparsity of $\Upsilon^{-1}$, which is a common condition in high-dimensional hypothesis testing literature (e.g. see \cite{van2014asymptotically}). Specifically, the following Lemma shows that when lag $p=1$ and $A^*$ is symmetric, the sparsity of $w_m^*$ is implied by the sparsity of $A^*$:
\begin{Lemma}\label{SparsityBound}
    If $p=1$, $A^*\in \mathbb{R}^{M\times M}$ is symmetric, then $s_m$ defined in (\ref{sparsity_def}) satisfies
    \begin{equation*}
        s_m\leq d_m^2\max_{1\leq i\leq M}\rho_i,\quad \text{for } 1\leq m\leq k.
    \end{equation*}
\end{Lemma}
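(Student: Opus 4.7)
The plan is to exploit the fact that when $p=1$ and $A^*$ is symmetric, the precision matrix $\Omega := \Sigma^{-1}$ has an explicit sparse closed form, and then translate this sparsity into a bound on $\|w_m^*\|_0$ via the standard block-inverse identity that expresses regression coefficients in terms of $\Omega$.

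First I would derive $\Omega$. With $p=1$ and $A^*$ symmetric, $\Psi_j = (A^*)^j$ is symmetric, so by (\ref{Upsilon_def}),
\begin{equation*}
\Sigma \;=\; \sum_{j=0}^\infty (A^*)^j (A^*)^{j\top} \;=\; \sum_{j=0}^\infty (A^{*2})^j \;=\; (I - A^{*2})^{-1},
\end{equation*}
where convergence follows from the stationarity condition $\det(\mathcal{A}(z)) \ne 0$ on $|z|\le 1$, which forces $\|A^*\|_2 < 1$. Hence $\Omega = I - A^{*2}$. Applying the block-inverse identity to the partition $(D_m, D_m^c)$ together with (\ref{w_m_def_S}), and noting that the identity part of $\Omega$ vanishes on the off-diagonal block,
\begin{equation*}
w_m^{*\top} \;=\; -(\Omega_{D_m,D_m})^{-1}\,\Omega_{D_m,D_m^c} \;=\; (\Omega_{D_m,D_m})^{-1}\,(A^{*2})_{D_m,D_m^c}.
\end{equation*}

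Next I would push the sparsity through. Since $(\Omega_{D_m,D_m})^{-1}$ is $d_m \times d_m$ and generically has no zero columns, each nonzero column of $(A^{*2})_{D_m, D_m^c}$ contributes up to $d_m$ nonzero entries to $w_m^{*\top}$, so
\begin{equation*}
s_m \;\le\; d_m \cdot \bigl|\bigl\{\,j \in D_m^c : (A^{*2})_{D_m, j} \ne 0\,\bigr\}\bigr|.
\end{equation*}
Finally, reading $(A^{*2})_{ij} = \sum_k A^*_{ik} A^*_{kj}$ as enumerating length-two paths in the support graph of $A^*$, and using the symmetry of $A^*$ to equate row- and column-sparsities with $\rho_i$, I would bound the two-hop neighborhood of $D_m$ inside $D_m^c$ to conclude.

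The main obstacle is the last counting step: a naive union bound over the starting index $i \in D_m$ and the intermediate vertex $k$ produces roughly $d_m\,(\max_i \rho_i)^2$ two-hop endpoints, which yields $s_m \le d_m^2 (\max_i \rho_i)^2$ and overshoots the claim. Recovering the sharper constant $d_m^2 \max_i \rho_i$ will require organizing the path enumeration so that one of the two hops is absorbed into the $d_m = |D_m|$ factor already present rather than into a second $\max_i \rho_i$; the symmetry of $A^*$ is crucial here, since without it one would have to introduce separate row- and column-sparsity parameters for the two hops. Everything else in the argument is routine bookkeeping built on the two identities displayed above.
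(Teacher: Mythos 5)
Your derivation follows the same route as the paper's proof: write $\Upsilon^{-1}=I-A^{*2}$ (with $\Upsilon=\Sigma$ since $p=1$), express $w_m^*$ through the off-diagonal block of the precision matrix so that $\mathrm{supp}(w_m^*)$ is controlled by $\mathrm{supp}\bigl((A^{*2})_{D_m^c,D_m}\bigr)$, and then count two-hop neighbors of $D_m$ in the support graph of $A^*$. Up to that point your bookkeeping is correct and matches the paper's: each hop contributes a factor $\max_i\rho_i$, and the column multiplicity and the first hop each contribute a factor $d_m$, giving
\begin{equation*}
s_m \;\le\; d_m \cdot d_m (\max_i\rho_i)\cdot(\max_i\rho_i) \;=\; d_m^2\bigl(\max_{1\le i\le M}\rho_i\bigr)^2.
\end{equation*}

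Your instinct that this ``overshoots the claim'' is correct, but the discrepancy is not a gap in your reasoning: it is a mismatch between the lemma's statement and the paper's own proof. If you read the paper's argument in Appendix~\ref{RestProof}, its final display is precisely $s_m\le d_m^2(\max_{1\le i\le M}\|a_i^*\|_0)^2=d_m^2(\max_i\rho_i)^2$, the same quantity you obtained; the stated bound $d_m^2\max_i\rho_i$ with a single power of $\max_i\rho_i$ is never derived. Moreover the sharper form cannot be recovered by rearranging the path enumeration: a symmetric ``star of stars'' with $d_m=1$, in which node $1$ has $\rho$ neighbors each of which has $\rho-1$ private neighbors, makes $\max_i\rho_i=\rho$ while $((A^*)^2)_{\cdot,1}$ has order $\rho^2$ nonzero entries, so $s_m\asymp\rho^2$ and the claimed $d_m^2\max_i\rho_i=\rho$ fails. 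You should therefore stop at $s_m\le d_m^2(\max_i\rho_i)^2$ rather than chase the tighter constant --- that is what the paper's proof actually establishes, and what is true.
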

\noindent The proof for Lemma \ref{SparsityBound} is included in Appendix \ref{RestProof}.

\section{Theoretical guarantee}\label{theory}
In this section, we present uniform convergence results for test statistic $\widehat{U}_T$ under $\mathcal{H}_0$ and $\mathcal{H}_A$, with $A^*$ and estimators satisfying conditions. We also provide feasible estimators, and prove that they satisfy corresponding conditions in Section~\ref{Estimator}. Unknown variance and confidence region construction is discussed in Section~\ref{EstVarSec} and \ref{CRSec}. In Section~\ref{GaussAR1} we provide consequences of our theory under AR(1) model with Gaussian noise and compare our results with Neykov et al.\cite{neykov2018unified}.

Recall that the null hypothesis is
\begin{equation}
\mathcal{H}_0: \widetilde{A}_D=0,
\end{equation}
with $\widetilde{A}_D\in \mathbb{R}^{d}$ being concatenated by $(A_1^*)_{D_1}, \dots, (A_k^*)_{D_k}$. While for the alternative hypothesis, like in \cite{ning2017general}, we consider
\begin{equation}
\label{EqnAlt}
\mathcal{H}_A: \widetilde{A}_D=T^{-\phi}\Delta,
\end{equation}
with some constant $\phi>0$ and constant vector $\Delta\in \mathbb{R}^{d}$. Write
\begin{equation*}
    \Delta=(\Delta_1^\top,\cdots.\Delta_k^\top)^\top,
\end{equation*}
where each $\Delta_m\in \mathbb{R}^{d_m}$. The reason why $T^{-\phi}\Delta$ instead of $\Delta$ is considered in (\ref{EqnAlt}) is that we expect the test to be more sensitive as sample size increases. We will see how the value of $\phi$ influences the convergence of $\widehat{U}_T$ in Theorem \ref{AltThm}.

 We still assume $\epsilon_{ti}$'s are i.i.d. sub-Gaussian random variables, and also consider a special case, where $\epsilon_t\sim \mathcal{N}(0,I)$. We compare our result in the Gaussian case to results in Neykov et al.\cite{neykov2018unified}.

First we define the sets $\Omega_0$ and $\Omega_1$ of feasible parameter matrices $A^*$ under $\mathcal{H}_0$ and $\mathcal{H}_A$ respectively. To control the stability of $\{X_t\}$ in model (\ref{ARp2}), we impose the condition:
\begin{equation}\label{StbARpsG}
\sum_{i=0}^{\infty}\left(\sum_{j=0}^{\infty}\left\|\Psi_{i+j}\right\|_2^2\right)^{\frac{1}{2}}\leq  \beta,
\end{equation}
for some constant $\beta>0$. In the case $p=1$, condition (\ref{StbARpsG}) reduces to
\begin{equation}\label{StbAR1W}
\sum_{i=0}^{\infty}\left(\sum_{j=0}^{\infty}\left\|(A^*)^{i+j}\right\|_2^2\right)^{\frac{1}{2}}\leq  \beta,
\end{equation}
which is implied by $\left\|A^*\right\|_2\leq 1-\epsilon$ for some $0<\epsilon<1$, a typical condition assumed (see e.g.~\cite{neykov2018unified}).
Then define sets $\Omega_0$ and $\Omega_1$ for any $\beta, \rho, s, M, T,\phi>0$, set $D$ of size $d$ and vector $\Delta=(\Delta_1^\top,\cdots, \Delta_k^\top)^\top\in \mathbb{R}^d$:
\begin{equation}\label{Omega0_def}
\begin{split}
\Omega_0=\{A^*\in \mathbb{R}^{M\times pM}: \widetilde{A}_D=0, \sum_{i=0}^{\infty}\left(\sum_{j=0}^{\infty}\left\|\Psi_{i+j}\right\|_2^2\right)^{\frac{1}{2}}\leq  \beta,\\ 
    \max_m\rho_m(A^*) \leq \rho, \max_m s_m(A^*)\leq s\},
    \end{split}
  \end{equation}
  \begin{equation}\label{Omega1_def}
  \begin{split}
      \Omega_1=\{A^*\in \mathbb{R}^{M\times pM}: \widetilde{A}_D=T^{-\phi}\Delta,
      \sum_{i=0}^{\infty}\left(\sum_{j=0}^{\infty}\left\|\Psi_{i+j}\right\|_2^2\right)^{\frac{1}{2}}\leq  \beta, \\
\max_m\rho_m(A^*) \leq \rho, \max_m s_m(A^*)\leq s\}.
\end{split}
\end{equation}
 Note here $\rho_m(A^*)$ and $s_m(A^*)$ are still functions of $A^*$, since $\Upsilon$ is determined by $A^*$. 
Clearly we need reliable estimators for $\widehat{A}_m$, $\hat{w}_m$ and $\widehat{\Sigma^{(m)}}$ with $1\leq m\leq k$, to guarantee the weak convergence of $\widehat{U}_T$. We present the following assumptions for these estimators, which we will verify in section \ref{Estimator}. Note that constants $C$ may depend on $p, d, \beta$ and $\tau$, but do not depend on either $M$ or $T$.

 \begin{Assumption}[Estimation Error for $A_m^*$]\label{Assump_A_bound}For each $A^*\in \Omega_0\cup \Omega_1$, 
\begin{equation}\label{Abnd}
	\begin{split}
     &\left\|\widehat{A}_m-A^*_m\right\|_1\leq C\rho_{m}\sqrt{\frac{\log M}{T}}, \quad \left\|\widehat{A}_m-A^*_m\right\|_2\leq C\sqrt{\frac{\rho_{m}\log M}{T}},\\
    &(\widehat{A}_m-A^*_m)^\top \left(\frac{1}{T}\sum_{t=0}^{T-1}\mathcal{X}_t\mathcal{X}_t^\top\right)(\widehat{A}_m-A^*_m)\leq C\frac{\rho_{m}\log M}{T},
	\end{split}
	\end{equation}
	hold for $1\leq m\leq k$, with probability at least $1-c_1\exp\{-c_2\log M\}$.
\end{Assumption}
These are standard error bounds for Lasso estimator and Dantzig Selector with independent design. In this paper we verify Assumption \ref{Assump_A_bound} in section \ref{Estimator} and the remaining two assumptions when we have dependent sub-Gaussian random variables, as we do for our vector auto-regressive model setting. 

\begin{Assumption}[Estimation Error for $w_m^*$]\label{Assump_w_bound}For each $A^*\in \Omega_0\cup \Omega_1$: 
\begin{equation}\label{wbnd}
\begin{split}
\left\|\hat{w}_m-w_m^*\right\|_1\leq Cs_m\sqrt{\frac{\log M}{T}},&\\
\text{tr}\left[(\hat{w}_m-w^*_{m})^\top \left(\frac{1}{T}\sum_{t=0}^{T-1}\mathcal{X}_{t,D_m^c}\mathcal{X}_{t,D_m^c}^\top\right)(\hat{w}_{m}-w^*_{m})\right]&\leq C\frac{s_{m}\log M}{T},
\end{split}
\end{equation}
hold for $1\leq m\leq k$, with probability at least $1-c_1\exp\{-c_2\log M\}$.
\end{Assumption}
Similar to Assumption \ref{Assump_A_bound}, we will show that both Lasso estimator and Dantzig selector under model \eqref{ARp2} satisfy Assumption \ref{Assump_w_bound}.

\begin{Assumption}[Estimation Error for $\Upsilon^{(m)}$]\label{Assump_Cov_m_bound}For each $A^*\in \Omega_0\cup \Omega_1$,
\begin{equation}\label{Cov_mBnd}
\begin{split}
\left\|\Upsilon^{(m)\frac{1}{2}}\left(\widehat{\Upsilon^{(m)}}\right)^{-1}\Upsilon^{(m)\frac{1}{2}}-I\right\|_\infty\leq C\frac{(s\vee\rho)\log M}{\sqrt{T}},
\end{split}
\end{equation}
hold for $1\leq m\leq k$, with probability at least $1-c_1\exp\{-c_2\log M\}$.
\end{Assumption}
Note that $\Upsilon^{(m)}\in \mathbb{R}^{d_m\times d_m}$ is a low-dimensional matrix, and thus it is computationally feasible to use the sample covariance matrix of $X_{t,D_m}-\hat{w}_m^\top X_{t,D_m^c}$ as an estimator for $\widehat{\Upsilon^{(m)}}$. We show in section \ref{Estimator} that, as long as $\hat{w}_m$ is a reliable estimator for $w_m^*$, $\widehat{\Upsilon^{(m)}}$ would satisfy a tighter bound than \eqref{Cov_mBnd}. This looser bound in Assumption~\ref{Assump_Cov_m_bound} actually allows more choices for estimators for $(\Upsilon^{(m)})^{-1}$, as shown in section \ref{CRSec}.

\subsection{Uniform convergence under null hypothesis}\label{NullSec}
Based on these assumptions, we have the following main theorem. 
\begin{Theorem}\label{NullThm}
	Consider the model (\ref{ARp2}) with i.i.d. sub-Gaussian noise $\epsilon_{ti}$ with sub-Gaussian parameter $\tau$.  If Assumptions \ref{Assump_A_bound}-\ref{Assump_Cov_m_bound} are satisfied, and $(\rho\vee s)\log M=o(\sqrt{T})$, then $\widehat{U}_T$ defined in (\ref{TestStatistic}) satisfies
	\begin{equation}\label{thm1_bound}
    \begin{split}
	&\sup_{x\in \mathbb{R}, A^*\in \Omega_0} \left|\mathbb{P}(\widehat{U}_T\leq x)-F_{d}(x)\right|\\
	\leq &\frac{C_1}{T^{\frac{1}{8}}}+C_2\left(\frac{(s\vee \rho)\log M}{\sqrt{T}}\right)^{\frac{1}{2}}+\frac{C_3}{M^{C_4}},
	\end{split}
\end{equation}
	when $T>C$ for some constant $C$. Here the constants $C_i$'s depend on $p, d,\beta,\tau$.
\end{Theorem}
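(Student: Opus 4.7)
The plan is to (a) reduce $\widehat U_T$ to the oracle quadratic form $U_T := T\sum_{m=1}^{k}S_m^\top(\Upsilon^{(m)})^{-1}S_m$ using the estimation-error bounds in Assumptions~\ref{Assump_A_bound}-\ref{Assump_Cov_m_bound}, (b) apply a martingale CLT with an explicit Berry-Esseen rate to conclude $U_T\Rightarrow\chi^2_d$, and (c) combine the two pieces through the anti-concentration of the $\chi^2_d$ distribution.

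Under $\mathcal{H}_0$ one has $(A_m^*)_{D_m}=0$, so $\epsilon_{t,m}=X_{t+1,m}-(A_m^*)_{D_m^c}^\top\mathcal{X}_{t,D_m^c}$ and a direct expansion gives
\begin{equation*}
\widehat S_m-S_m = -\frac{1}{T}\sum_{t=0}^{T-1}\epsilon_{t,m}(w_m^*-\hat w_m)^\top\mathcal{X}_{t,D_m^c} - \frac{1}{T}\sum_{t=0}^{T-1}(A_m^*-\hat A_m)_{D_m^c}^\top\mathcal{X}_{t,D_m^c}\bigl(\mathcal{X}_{t,D_m}-\hat w_m^\top\mathcal{X}_{t,D_m^c}\bigr).
\end{equation*}
Decomposing $\mathcal{X}_{t,D_m}-\hat w_m^\top\mathcal{X}_{t,D_m^c}=\widetilde{\mathcal{X}}_{t,m}+(w_m^*-\hat w_m)^\top\mathcal{X}_{t,D_m^c}$ isolates a single doubly-robust bilinear term in $\hat A_m-A_m^*$ and $\hat w_m-w_m^*$, plus two single-nuisance cross-terms. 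I would bound the single-nuisance terms by H\"older's inequality against the $\ell_\infty$ concentration of the martingale averages $\frac{1}{T}\sum_t\epsilon_{t,m}\mathcal{X}_{t,D_m^c}$ and $\frac{1}{T}\sum_t\mathcal{X}_{t,D_m^c}\widetilde{\mathcal{X}}_{t,m}^\top$ at rate $\sqrt{\log M/T}$; the latter has zero mean precisely because of the orthogonality $\Cov(\mathcal{X}_{t,D_m^c},\widetilde{\mathcal{X}}_{t,m})=0$ built into the definition~\eqref{w_m_def} of $w_m^*$. The doubly-robust term is handled by Cauchy-Schwarz through the empirical prediction-error forms in Assumptions~\ref{Assump_A_bound} and~\ref{Assump_w_bound}, giving $\sqrt T\,\|\widehat S_m-S_m\|_2=O_P((s\vee\rho)\log M/\sqrt T)$. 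Feeding this into the quadratic form together with Assumption~\ref{Assump_Cov_m_bound}, which in fixed dimension $d_m$ controls $\|(\widehat{\Upsilon^{(m)}})^{-1}-(\Upsilon^{(m)})^{-1}\|_2$, yields $|\widehat U_T-U_T|=O_P((s\vee\rho)\log M/\sqrt T)$ on a high-probability event.

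Since $\mathcal{X}_t$ is $\mathcal{F}_{t-1}$-measurable for $\mathcal{F}_{t-1}:=\sigma(\epsilon_s:s\le t-1)$ while $\epsilon_{t,m}$ is independent of $\mathcal{F}_{t-1}$ with zero mean and unit variance, and the components of $\epsilon_t$ are cross-independent, the $\mathbb{R}^d$-valued vector $\xi_t:=\bigl((\Upsilon^{(m)})^{-1/2}\epsilon_{t,m}\widetilde{\mathcal{X}}_{t,m}\bigr)_{m=1}^{k}$ is a martingale difference sequence whose conditional covariance is the block-diagonal matrix with $m$-th block $(\Upsilon^{(m)})^{-1/2}\widetilde{\mathcal{X}}_{t,m}\widetilde{\mathcal{X}}_{t,m}^\top(\Upsilon^{(m)})^{-1/2}$. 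Its time-average converges to $I_d$ as soon as $\frac{1}{T}\sum_t\widetilde{\mathcal{X}}_{t,m}\widetilde{\mathcal{X}}_{t,m}^\top\to\Upsilon^{(m)}$ in operator norm, which I would obtain from the paper's novel concentration bound for quadratic forms of dependent sub-Gaussian vectors. Combined with a sub-Gaussian tail bound on $\max_t\|\xi_t\|_2$ verifying a Lindeberg condition, a quantitative vector-valued martingale CLT then yields $\sup_{x}|\mathbb{P}(U_T\le x)-F_d(x)|\le CT^{-1/8}$, the exponent $1/8$ reflecting the usual martingale Berry-Esseen trade-off between truncation of $\xi_t$ and deviations of its conditional variance.

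Finally, by the anti-concentration of $\chi^2_d$ (which gives $\sup_x|F_d(x+\eta)-F_d(x)|\lesssim\sqrt\eta$ in the worst case over $d$), for every $\eta>0$,
\begin{equation*}
\mathbb{P}(\widehat U_T\le x) \le F_d(x)+C\sqrt\eta+\sup_{y}\bigl|\mathbb{P}(U_T\le y)-F_d(y)\bigr|+\mathbb{P}(|\widehat U_T-U_T|>\eta),
\end{equation*}
and symmetrically from below; choosing $\eta\asymp(s\vee\rho)\log M/\sqrt T$ to match the first-paragraph tail bound recovers exactly the three terms in~\eqref{thm1_bound}, with $M^{-C_4}$ absorbing the failure probabilities of Assumptions~\ref{Assump_A_bound}-\ref{Assump_Cov_m_bound} together with those of the auxiliary $\ell_\infty$ martingale concentration events. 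The main obstacle lies in the martingale CLT step: because $\widetilde{\mathcal{X}}_{t,m}$ depends on the entire past through non-Gaussian innovations, the rotation-to-independence shortcut used by \cite{neykov2018unified} in the Gaussian case is unavailable, and concentration of the conditional variance must instead be obtained from the intricate truncation-based Hanson-Wright-type argument flagged as a principal contribution of the paper.
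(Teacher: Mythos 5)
Your strategy is structurally identical to the paper's: you split $|\mathbb{P}(\widehat U_T\le x)-F_d(x)|$ into a quantitative martingale CLT term for $U_T=\|V_T\|_2^2$ (rate $T^{-1/8}$), an anti-concentration term for $\chi^2_d$, and a coupling probability $\mathbb{P}(|\widehat U_T-U_T|>\eta)$; your expansion of $\widehat S_m-S_m$ is algebraically the same as the paper's \eqref{NullSmErr}, and your handling of the cross-terms via the orthogonality from \eqref{w_m_def} and Cauchy--Schwarz on the empirical prediction errors is the same.

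The genuine gap is in the coupling step. Write $\epsilon_0:=(s\vee\rho)\log M/\sqrt T$. Expanding as in \eqref{U_T_dev}, on the high-probability event where Assumptions~\ref{Assump_A_bound}--\ref{Assump_Cov_m_bound} hold one gets
\begin{equation*}
|\widehat U_T-U_T|\;\lesssim\;\epsilon_0\sum_m\|V_{T,m}\|_2 \;+\; \epsilon_0\sum_m\|V_{T,m}\|_2^2\;+\;\text{lower order},
\end{equation*}
which is $O_P(\epsilon_0)$ only because $\|V_{T,m}\|_2=O_P(1)$. This is \emph{not} a high-probability bound: $\|V_{T,m}\|_2^2$ is approximately $\chi^2_{d_m}$, so $\mathbb{P}(\|V_{T,m}\|_2>C)$ is a \emph{constant} (plus the CLT error $T^{-1/8}$), not $M^{-c}$. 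Your claim that choosing $\eta\asymp\epsilon_0$ makes $\mathbb{P}(|\widehat U_T-U_T|>\eta)\le M^{-C_4}$ is therefore false, and this is precisely where the theorem's $\epsilon_0^{1/2}$ rate has to be earned. The paper resolves this by introducing a truncation level $y$: on $\{\max_m\|V_{T,m}\|_2\le y\}$ the coupling error is $\lesssim\epsilon_0 y^2$, while the complement has probability $\lesssim T^{-1/8}+y^{-2}$ by Lemma~\ref{CLT} together with the $\chi^2$ tail bound \eqref{V_T_TB}. Optimizing $y=\epsilon_0^{-1/4}$ yields $\varepsilon=\epsilon_0^{1/2}$ and tail probability $\epsilon_0^{1/2}$, which, combined with the \emph{linear} anti-concentration $F_d(x+\varepsilon)-F_d(x-\varepsilon)\le C\varepsilon$ (the paper uses the bounded density of $\chi^2_d$, not the worst-case $\sqrt{\cdot}$ bound), gives all three terms of \eqref{thm1_bound}. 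If you instead plug your $\sqrt\eta$ anti-concentration into the same $y$-trade-off, the balance occurs at $y=\epsilon_0^{-1/6}$ and produces the weaker rate $\epsilon_0^{1/3}$. The fix is to carry out the $y$-balancing explicitly as the paper does, rather than treating the $O_P(\epsilon_0)$ statement as if it implied an $M^{-c}$ tail.
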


Theorem \ref{NullThm} proves weak convergence of $\widehat{U}_T$ to $\chi_d^2$. The uniform convergence rate can be understood as follows: the first term is due to the rate obtained by martingale CLT, where we require $T^{-\frac{1}{8}}$ rather than $T^{-\frac{1}{2}}$ due to the dependence; the remaining two terms arise from estimation error, with the second one being the error bounds, and third being the probability that the error bounds do not hold. If we assume Gaussianity, we can improve the first term in the rate of convergence from $T^{-\frac{1}{8}}$ to $T^{-\frac{1}{4}+\alpha}$ for any $\alpha>0$. To the best of our knowledge, ours is the first work that formally attempts to characterize the rates of convergence. 

\begin{Remark}
Compared to the theoretical result for independent design in \cite{ning2017general}, the only additional condition we add is $\sum_{i=0}^{\infty}\left(\sum_{j=0}^{\infty}\left\|\Psi_{i+j}\right\|_2^2\right)^{\frac{1}{2}}\leq \beta$, which is used to control the strength of dependence uniformly. Also, we consider multivariate testing which is more general, and derive the explicit convergence rate.  
\end{Remark}

\begin{Remark}
The test statistic proposed in \cite{van2014asymptotically} and \cite{javanmard2014confidence} for the independent design share similar ideas with our test statistic. Instead of imposing a sparsity assumption upon $w_m^*$, \cite{van2014asymptotically} assumes $\Upsilon^{-1}$ to be row wise sparse. This is actually equivalent to the sparsity assumption on $w_m^*$ in the univariate case. \cite{javanmard2014confidence} does not require the sparsity condition on $\Upsilon^{-1}$, but it is hard to extend their theory to the time series setting, due to a difficulty in applying the martingale CLT.  
\end{Remark}

\begin{Remark}
The theoretical guarantee we obtained here, is more general and stronger than the result achieved in \cite{neykov2018unified}. A more detailed comparison is presented in section \ref{GaussAR1}.
\end{Remark}

\subsection{Uniform convergence under alternative hypothesis}\label{AltSec}
Recall the definition of $\Omega_A$ in (\ref{Omega1_def}). The following theorem establishes the asymptotic behavior of $\widehat{U}_T$ for $A^*\in \Omega_A$, with different values of $\phi$. First define
\begin{equation}\label{DeltaARpDef}
\widetilde{\Delta}=(\widetilde{\Delta}_1^\top,\cdots,\widetilde{\Delta}_k^\top)^\top,\quad \widetilde{\Delta}_m=(\Upsilon^{(m)})^{\frac{1}{2}}\Delta_m,
\end{equation}
where $\Upsilon^{(m)}$ is defined in (\ref{Cov_m_def}).
\begin{Theorem}\label{AltThm}
	Consider the model (\ref{ARp2}) with i.i.d. sub-Gaussian noise $\epsilon_{ti}$ and sub-Gaussian parameter $\tau$. If Assumptions \ref{Assump_A_bound}-\ref{Assump_Cov_m_bound} are satisfied, and $(\rho\vee s)\log M=o(\sqrt{T})$, then when $T>C$ for some constant $C$,
	\begin{itemize}
	    \item[(1)] $\phi=\frac{1}{2}$
	\begin{equation}\label{thm2_bound1}
    \begin{split}
	&\sup_{x\in \mathbb{R}, A^*\in \Omega_1} \left|\mathbb{P}(\widehat{U}_T\leq x)-F_{d,\|\widetilde{\Delta}\|_2^2}(x)\right|\\
	\leq &\frac{C_1}{T^{\frac{1}{8}}}+C_2\left(\frac{(s\vee \rho)\log M}{\sqrt{T}}\right)^{\frac{1}{2}}+\frac{C_3}{M^{C_4}}.
	\end{split}
\end{equation}
	\item[(2)] $0<\phi<\frac{1}{2}$
	\begin{equation}\label{thm2_bound2}
    \begin{split}
	&\sup_{A^*\in \Omega_1} |\mathbb{P}(\widehat{U}_T\leq x)|\\
	\leq &\frac{C_1}{T^{\frac{1}{8}}}+\frac{C_2}{M^{C_3}}+C_4\exp\{-C_5T^{\frac{1}{2}-\phi}+C_6\sqrt{x}\}.
	\end{split}
\end{equation}
	\item[(3)] $\phi>\frac{1}{2}$
	\begin{equation}\label{thm2_bound3}
    \begin{split}
	&\sup_{x\in \mathbb{R}, A^*\in \Omega_1} \left|\mathbb{P}(\widehat{U}_T\leq x)-F_{d}(x)\right|\\
	\leq &\frac{C_1}{T^{\frac{1}{8}}}+C_2\left(\frac{(s\vee \rho)\log M}{\sqrt{T}}\right)^{\frac{1}{2}}+\frac{C_3}{M^{C_4}}+C_3T^{\frac{1-2\phi}{3}}.
	\end{split}
\end{equation}
	\end{itemize}
    Here $C_i$'s are constants depending on $p, d,\beta,\Delta,\tau$. 
\end{Theorem}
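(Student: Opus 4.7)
The plan is to reduce the analysis under $\mathcal{H}_A$ to a perturbation of the null-hypothesis analysis of Theorem \ref{NullThm}, by isolating the deterministic drift produced by $\widetilde{A}_D = T^{-\phi}\Delta$. Substituting $X_{t+1,m} = T^{-\phi}\Delta_m^\top \mathcal{X}_{t,D_m} + (A_m^*)_{D_m^c}^\top \mathcal{X}_{t,D_m^c} + \epsilon_{t,m}$ into the definition of $\widehat{S}_m$ and grouping terms gives the decomposition
\[
\widehat{S}_m \;=\; S_m^{(0)} \;-\; T^{-\phi}\, b_{T,m} \;+\; r_{T,m},
\]
where $S_m^{(0)} = -\frac{1}{T}\sum_t \epsilon_{t,m}(\mathcal{X}_{t,D_m} - w_m^{*\top}\mathcal{X}_{t,D_m^c})$ is the idealized decorrelated score that drives the null proof, $b_{T,m} = \frac{1}{T}\sum_t \Delta_m^\top\mathcal{X}_{t,D_m}(\mathcal{X}_{t,D_m} - w_m^{*\top}\mathcal{X}_{t,D_m^c})$ is the systematic signal (whose mean equals $\Upsilon^{(m)}\Delta_m$ by the defining property \eqref{w_m_def} of $w_m^*$), and $r_{T,m}$ collects the plug-in errors involving $\widehat{A}_m - A_m^*$, $\hat{w}_m - w_m^*$ and $\widehat{\Upsilon^{(m)}}-\Upsilon^{(m)}$. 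The same Cauchy--Schwarz/Hölder bounds used in the null proof, combined with Assumptions \ref{Assump_A_bound}--\ref{Assump_Cov_m_bound}, yield $\sqrt{T}(\Upsilon^{(m)})^{-1/2}r_{T,m} = o_p(1)$ uniformly over $\Omega_1$, while the quadratic-form concentration bound for dependent sub-Gaussian vectors developed for Theorem \ref{NullThm} gives $b_{T,m} = \Upsilon^{(m)}\Delta_m + o_p(1)$. Whitening and stacking then yields
\[
\widehat{V}_T \;=\; V_T^{(0)} \;-\; T^{1/2-\phi}\widetilde{\Delta} \;+\; R_T, \qquad R_T = o_p(1),
\]
with $V_T^{(0)}$ asymptotically $\mathcal{N}(0, I_d)$ at the Berry--Esseen rate $T^{-1/8}$ already supplied by Theorem \ref{NullThm}.

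From here I split into the three regimes. For case (1), $\phi=1/2$, the shift is exactly $\widetilde{\Delta}$, so $\widehat{U}_T = \|V_T^{(0)} - \widetilde{\Delta}\|_2^2 + o_p(1)$ converges to $\|Z - \widetilde{\Delta}\|_2^2 \sim \chi^2_d(\|\widetilde{\Delta}\|_2^2)$; since the CDF of the noncentral $\chi^2_d$ is Lipschitz, the uniform rate is inherited verbatim from \eqref{thm1_bound}. For case (2), $0<\phi<1/2$, the shift diverges, so the reverse triangle inequality gives $\|\widehat{V}_T\|_2 \ge T^{1/2-\phi}\|\widetilde{\Delta}\|_2 - \|V_T^{(0)}\|_2 - \|R_T\|_2$, and therefore $\{\widehat{U}_T \le x\}$ forces $\|V_T^{(0)}\|_2 \ge T^{1/2-\phi}\|\widetilde{\Delta}\|_2 - \sqrt{x} - o(1)$. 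Combining the $T^{-1/8}$ CLT rate with the Gaussian tail $\mathbb{P}(\|Z\|_2 > t) \lesssim e^{-c t^2}$ produces the exponential bound $\exp\{-C_5 T^{1/2-\phi} + C_6\sqrt{x}\}$.

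For case (3), $\phi>1/2$, the shift vanishes and I would expand
\[
\widehat{U}_T - \|V_T^{(0)}\|_2^2 \;=\; -2\, T^{1/2-\phi}\, V_T^{(0)\top}\widetilde{\Delta} \;+\; T^{1-2\phi}\|\widetilde{\Delta}\|_2^2 \;+\; o_p(1).
\]
Applying Chebyshev to the linear functional $V_T^{(0)\top}\widetilde{\Delta}$, which has asymptotic variance $\|\widetilde{\Delta}\|_2^2$, yields $\mathbb{P}(|\widehat{U}_T - \|V_T^{(0)}\|_2^2| > \eta) \lesssim T^{1-2\phi}/\eta^2$, while the bounded density of $\chi^2_d$ gives the anti-concentration bound $\mathbb{P}(x-\eta \le \|V_T^{(0)}\|_2^2 \le x+\eta) \lesssim \eta$. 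Balancing these two via $\eta \asymp T^{(1-2\phi)/3}$ (the optimum of $a/\eta^2 + \eta$ with $a = T^{1-2\phi}$) produces precisely the extra term $C_3\, T^{(1-2\phi)/3}$, which is then added to the null rate from Theorem \ref{NullThm}.

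The main obstacle is making the quadratic-form concentration $b_{T,m} = \Upsilon^{(m)}\Delta_m + o_p(1)$ and the control of $r_{T,m}$ uniform over $\Omega_1$, since both $w_m^*$ and $\Upsilon^{(m)}$ are implicit functions of $A^*$; this relies crucially on the truncation-based sub-Gaussian concentration inequality developed for Theorem \ref{NullThm}, because one cannot rotate the dependent sub-Gaussian vectors to independence as in the Gaussian case. A secondary subtlety is to carry the cube-root optimization in case (3) through the estimation-error remainder $R_T$ cleanly, ensuring that $R_T$ contributes only to the inherited null rate and not to the additional $T^{(1-2\phi)/3}$ term.
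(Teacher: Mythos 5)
Your proposal is correct and follows essentially the same strategy as the paper: decompose $\widehat{S}_m$ into an idealized score, a deterministic drift, and estimation remainders, then handle the three regimes of $\phi$ by tracking the magnitude of the drift $T^{1/2-\phi}\widetilde{\Delta}$ relative to the fluctuation and estimation scales. The one place your mechanics genuinely differ is case~(3). The paper carries over the norm bound \eqref{U_T_dev} with $E_m = \sqrt{T}(\Upsilon^{(m)})^{-1/2}(\widehat S_m - S_m)$, showing $\|E_m\|_2 \lesssim \frac{(s\vee\rho)\log M}{\sqrt{T}} + T^{1/2-\phi}$, and bounds the cross term $2\|V_{T,m}\|_2\|E_m\|_2$ by first bounding $\|V_{T,m}\|_2 \leq y$ via the Berry--Esseen-plus-$\chi^2_d$ tail bound \eqref{V_T_TB}; then it optimizes $y = (\frac{(s\vee\rho)\log M}{\sqrt T})^{-1/4} \wedge T^{(2\phi-1)/6}$ so that both the error contribution $yT^{1/2-\phi}$ and the tail probability $y^{-2}$ land at $T^{(1-2\phi)/3}$. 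You instead isolate the exact signed cross term $-2T^{1/2-\phi}V_T^{(0)\top}\widetilde{\Delta}$ and apply Chebyshev to this scalar (whose variance is precisely $\|\widetilde\Delta\|_2^2$ because the $S_m$ are uncorrelated across $m$ and $\Var(V_{T,m}) = I_{d_m}$), balancing $T^{1-2\phi}/\eta^2 + \eta$ at $\eta \asymp T^{(1-2\phi)/3}$. The two routes give the same rate: your Chebyshev on the linear functional is marginally tighter in constant (it does not go through the worst-case alignment bound $\|V_{T,m}\|_2\|E_m\|_2$) and perhaps more transparent, while the paper's version reuses the machinery of Theorem~\ref{NullThm} with minimal new bookkeeping. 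In case~(2), your reverse-triangle-inequality plus $\chi^2_d$ tail is exactly the paper's argument (the paper also first obtains $\exp\{-(C_3 T^{1/2-\phi} - C_4\sqrt{x})^2\}$ and then records the weaker stated form), and case~(1) is identical, including the uniform Lipschitz control of the noncentral $\chi^2_d$ CDF over $\Omega_1$.
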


Theorem \ref{AltThm} shows the threshold value of $\phi$ for $\mathcal{H}_A$ to be detectable. When $\phi>\frac{1}{2}$, we cannot distinguish $\mathcal{H}_0$ and $\mathcal{H}_A$ since under both cases $\widehat{U}_T$ converges to $\chi_d^2$; When $\phi<\frac{1}{2}$, $\widehat{U}_T$ diverges to $+\infty$ in probability, thus it would be very easy to detect $\mathcal{H}_A$; When $\phi=\frac{1}{2}$, $\widehat{U}_T$ converges to a non-central $\chi_d^2$ with noncentrality parameter determined by constant vector $\Delta$ and $\Upsilon=\text{Cov}(\mathcal{X}_t)$, which implies the power of the test. Note here, \eqref{thm2_bound2} holds also for the trivial case $\phi<0$, since we do not use the fact $\phi>0$ in the proof. 

\begin{Remark}
Theorem \ref{AltThm} is also consistent with the threshold value of $\phi$ given by \cite{ning2017general} for linear regression with i.i.d samples. However, \cite{ning2017general} assumes additional conditions on the scaling of sample size, number of covariates and sparsity of $w_m^*$ for proving asymptotic power. Our conditions are exactly the same as the ones for $\mathcal{H}_0$, due to a more specific model and careful analysis. 
\end{Remark}

\subsection{Feasible Estimators}\label{Estimator}
Both the estimation of $w_m^*$ and $A^*$ can be viewed as high-dimensional sparse regression problems, thus we can use the Lasso or Dantzig selector.  Formally, define
\begin{equation}\label{A_Ldef}
    \widehat{A}^{(L)}=\argmin_{A\in \mathbb{R}^{M\times pM}}\frac{1}{T}\sum_{t=0}^{T-1}\|X_{t+1}-A\mathcal{X}_{t}\|_2^2+\lambda_A \|A\|_{1,1},
\end{equation}
as the Lasso estimator for $A^*$, and
\begin{equation}\label{A_Ddef}
    \widehat{A}^{(D)}=\argmin_{A\in \mathbb{R}^{M\times pM}}\|A\|_{1,1},\quad\text{s.t.}\quad \left\|\frac{1}{T}\sum_{t=0}^{T-1}(X_{t+1}-A\mathcal{X}_{t})\mathcal{X}_t^\top\right\|_\infty\leq \lambda_A,
\end{equation}
as the Dantzig selector estimator for $A^*$.
Similarly, for $1\leq m\leq k,$ define
\begin{equation}\label{w_Ldef}
\hat{w}_m^{(L)}=\argmin_{w\in \mathbb{R}^{(pM-d_m)\times d_m}}\frac{1}{T}\sum_{t=0}^{T-1}\|\mathcal{X}_{t,D_m}-w^\top \mathcal{X}_{t,D_m^c}\|_2^2+\lambda_w\|w\|_{1,1}, 
\end{equation}
and
\begin{equation}\label{w_Ddef}
\hat{w}_m^{(D)}=\argmin_{w\in \mathbb{R}^{(pM-d_m)\times d_m}}\|w\|_{1,1},\quad \text{s.t.}\quad \left\|\frac{1}{T}\sum_{t=0}^{T-1} (\mathcal{X}_{t,D_m}-w^\top \mathcal{X}_{t,D_m^c})\mathcal{X}_{t,D_m^c}^\top\right\|_{\infty}\leq \lambda_w.
\end{equation}
While for estimating $\Upsilon^{(m)}$, since this is a low dimensional covariance matrix for $\mathcal{X}_{t,D_m}-w_m^{*\top}\mathcal{X}_{t,D_m^c}$, we can directly use sample covariance of $\mathcal{X}_{t,D_m}-\hat{w}_m^\top \mathcal{X}_{t,D_m^c}$ as $\widehat{\Upsilon^{(m)}}$:
\begin{equation}\label{S_def}
    \widehat{\Upsilon^{(m)}}=\frac{1}{T}\sum_{t=0}^{T-1} (\mathcal{X}_{t,D_m}-\hat{w}_m^\top \mathcal{X}_{t,D_m^c})(\mathcal{X}_{t,D_m}-\hat{w}_m^\top \mathcal{X}_{t,D_m^c})^\top,
\end{equation}
for $1\leq m\leq k$. Here $\hat{w}_m$ in the definition of (\ref{S_def}) is either $\hat{w}_m^{(L)}$ or $\hat{w}_m^{(D)}$.

As shown in the following, estimators (\ref{A_Ldef}) to (\ref{S_def}) all satisfy Assumptions \ref{Assump_A_bound} to \ref{Assump_Cov_m_bound}, under the model setting stated in (\ref{ARp2}): 
\begin{Lemma}\label{Abound}
	If $\widehat{A}=\widehat{A}^{(L)}$, or $\widehat{A}=\widehat{A}^{(D)}$, which are defined as in (\ref{A_Ldef}) and (\ref{A_Ddef}) with $\lambda_A\asymp\sqrt{\frac{\log M}{T}}$, then $\widehat{A}$ satisfies Assumption \ref{Assump_A_bound} when $T>C\rho \log M$.
\end{Lemma}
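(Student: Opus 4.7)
The plan is to reduce Lemma \ref{Abound} to two standard ingredients for Lasso/Dantzig analysis and then apply the usual row-by-row error bound machinery. The two ingredients are: (i) an $\ell_\infty$ deviation bound on the noise-design cross product $\frac{1}{T}\sum_{t=0}^{T-1}\epsilon_t\mathcal{X}_t^\top$ which justifies the scaling $\lambda_A \asymp \sqrt{\log M/T}$; and (ii) a restricted eigenvalue (RE) condition on the sample Gram matrix $\widehat{\Gamma}\triangleq \frac{1}{T}\sum_{t=0}^{T-1}\mathcal{X}_t\mathcal{X}_t^\top$ restricted to sparse cones of dimension $\rho$. Since the problem decomposes across rows of $A^*$, once (i) and (ii) are established I can argue one row at a time and then take a union bound over the $k\le M$ tested rows.

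For (i), the key observation is that $\epsilon_{t,j}$ is independent of $\mathcal{X}_{t,k}$ by causality of the recursion (\ref{ARp2}), so $\{\epsilon_{t,j}\mathcal{X}_{t,k}\}_t$ forms a martingale difference sequence with respect to the natural filtration. The individual summands are only sub-exponential (a product of two sub-Gaussians), and the dependence across $t$ means a naive Bernstein inequality is unavailable; this is exactly where the truncation-based concentration inequalities for quadratic forms of dependent sub-Gaussian vectors that the paper develops enter. Invoking those as a black box gives $\bigl\|\frac{1}{T}\sum_{t=0}^{T-1}\epsilon_t\mathcal{X}_t^\top\bigr\|_\infty \lesssim \sqrt{\log M/T}$ with probability $1-c_1\exp(-c_2\log M)$ after a union bound over $pM^2$ entries. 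For (ii), I would first show $\|\widehat{\Gamma}-\Upsilon\|_\infty \lesssim \sqrt{\log M/T}$ using the same concentration tools, then combine with the fact that the population covariance $\Upsilon$ has smallest eigenvalue bounded away from $0$ under the stability condition (\ref{StbARpsG}). A standard argument (see, e.g., the deviation-based RE lemma in \cite{basu2015regularized}) then gives the RE condition $v^\top\widehat{\Gamma} v \ge \kappa\|v\|_2^2 - \tau(\log M/T)\|v\|_1^2$ on the whole space, which suffices provided $T\gtrsim \rho\log M$; this is precisely the sample-size assumption in the statement.

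With (i) and (ii) in hand, the Lasso and Dantzig analyses are routine. For the Lasso, the KKT conditions combined with the choice $\lambda_A\ge 2\bigl\|\frac{1}{T}\sum\epsilon_t\mathcal{X}_t^\top\bigr\|_\infty$ yield the usual basic inequality for the error $\widehat{\Delta}_m=\widehat{A}_m-A_m^*$, namely $\widehat{\Delta}_m^\top\widehat{\Gamma}\widehat{\Delta}_m + \lambda_A\|\widehat{\Delta}_m\|_1 \le 3\lambda_A\|(\widehat{\Delta}_m)_{S_m}\|_1$, where $S_m=\supp(A_m^*)$. This places $\widehat{\Delta}_m$ in the cone $\|\widehat{\Delta}_{m,S_m^c}\|_1 \le 3\|\widehat{\Delta}_{m,S_m}\|_1$, so that $\|\widehat{\Delta}_m\|_1 \le 4\sqrt{\rho_m}\|\widehat{\Delta}_m\|_2$. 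Plugging into the RE bound gives the prediction error $\widehat{\Delta}_m^\top\widehat{\Gamma}\widehat{\Delta}_m \lesssim \rho_m\log M/T$ and the $\ell_2$ bound $\|\widehat{\Delta}_m\|_2\lesssim \sqrt{\rho_m\log M/T}$, from which the $\ell_1$ bound follows. The Dantzig case is analogous: the feasibility of $A_m^*$ with probability at least $1-c_1\exp(-c_2\log M)$ follows from (i), and the cone containment follows from the minimality of $\|\widehat{A}^{(D)}\|_{1,1}$.

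The main obstacle is step (ii), the RE condition for $\widehat{\Gamma}$ built from dependent sub-Gaussian vectors. Unlike the Gaussian setting of \cite{basu2015regularized} where one can orthogonalize the joint distribution of $(\mathcal{X}_0,\dots,\mathcal{X}_{T-1})$ and reduce to $\chi^2$ concentration, here the lack of rotational invariance for sub-Gaussians forces a direct attack on the quadratic form $v^\top\widehat{\Gamma}v$. This is precisely where the novel truncation-based concentration bounds advertised in the abstract do the heavy lifting, and I would cite the corresponding auxiliary lemmas from the appendix rather than reproving them here.
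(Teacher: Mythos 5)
Your overall architecture matches the paper exactly: a deviation bound for $\frac{1}{T}\sum_t\epsilon_t\mathcal{X}_t^\top$ (the paper's Lemma~\ref{DevBnd}), a restricted eigenvalue condition for $H=\frac{1}{T}\sum_t\mathcal{X}_t\mathcal{X}_t^\top$ (the paper's Lemma~\ref{REC}), and then the standard row-by-row Lasso/Dantzig cone argument followed by a union bound. The Lasso/Dantzig half is correct and essentially identical to the paper's.

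The gap is in your route to the RE condition, step (ii). You propose to derive RE from the element-wise bound $\|H-\Upsilon\|_\infty\lesssim\sqrt{\log M/T}$ combined with $\Lambda_{\min}(\Upsilon)\geq c$, via $|v^\top(H-\Upsilon)v|\leq\|H-\Upsilon\|_\infty\|v\|_1^2$. That route only yields
$v^\top Hv\geq\kappa\|v\|_2^2-C\sqrt{\tfrac{\log M}{T}}\,\|v\|_1^2$,
i.e.\ the prefactor on $\|v\|_1^2$ is $\sqrt{\log M/T}$, not $\log M/T$ as you claim. On the cone $\|v\|_1\leq 4\sqrt{\rho}\|v\|_2$ this becomes $v^\top Hv\geq(\kappa-C\rho\sqrt{\log M/T})\|v\|_2^2$, which is only positive when $T\gtrsim\rho^2\log M$, strictly weaker than the Lemma's stated requirement $T>C\rho\log M$. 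The $\log M/T$ prefactor you wrote is indeed what Basu--Michailidis obtain, but not by passing through an $\ell_\infty$ deviation bound; their argument (and the paper's Lemma~\ref{REC}) instead establishes concentration of the quadratic form $v^\top(H-\Upsilon)v$ for a \emph{fixed} unit $v$ (here via Lemma~\ref{QuaBound} applied to $B=vv^\top$, which has $\|B\|_2=\|B\|_{\tr}=1$), and then extends to the sparse cone $\mathcal{C}(J,\kappa)$ through the covering-and-convex-hull lemmas (cone-to-sparse containment, $\sup$ over $\mathrm{conv}(\mathcal{K}(l))$ bounded by $\sup$ over $\mathcal{K}(2l)$, and a discretization union bound over $\mathcal{K}(2|J|)$). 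The union bound over that discretization is what produces the $\rho\log M\lesssim T$ requirement rather than $\rho^2\log M\lesssim T$. Your exposition conflates these two distinct routes, and the one you actually spell out would not establish the Lemma under the stated sample-size condition.

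A secondary observation: for part (i) you frame the deviation bound as a martingale-difference concentration. The paper instead proves Lemma~\ref{DevBnd} by truncating the MA($\infty$) representation of $\mathcal{X}_t$, writing $\frac{1}{T}\sum_t\epsilon_{t,j}\mathcal{X}_{t,k}$ as a finite quadratic form in the stacked noise vector plus a vanishing residue, and applying Hanson--Wright together with a Toeplitz operator-norm bound. Both viewpoints are sound; the paper's is the one that makes the later quadratic-form concentration (Lemma~\ref{QuaBound}) and hence Lemma~\ref{REC} go through uniformly in the sub-Gaussian case, which is precisely the technical novelty the Lemma's proof rests on.
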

\begin{Lemma}\label{wbound}
If $\hat{w}_m=\hat{w}_m^{(L)}$ or $\hat{w}_m=\hat{w}_m^{(D)}$, which are defined as in (\ref{w_Ldef}) and (\ref{w_Ddef}) with $\lambda_w\asymp \sqrt{\frac{\log M}{T}}$, then $\hat{w}_m$'s satisfy Assumption \ref{Assump_w_bound} when $T>Cs \log M$.
\end{Lemma}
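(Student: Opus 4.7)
The plan is to reduce Lemma \ref{wbound} to the classical Lasso/Dantzig oracle-inequality machinery applied to the auxiliary regression of $\mathcal{X}_{t,D_m}$ on $\mathcal{X}_{t,D_m^c}$ with true coefficient $w_m^*$ and residual $\eta_{t,m} := \mathcal{X}_{t,D_m} - w_m^{*\top}\mathcal{X}_{t,D_m^c}$. By the defining relation (\ref{w_m_def}), $\mathbb{E}[\eta_{t,m}\mathcal{X}_{t,D_m^c}^\top] = 0$, and since $\eta_{t,m}$ is a fixed linear combination of the sub-Gaussian innovations $\{\epsilon_s\}_{s\le t}$, it is itself sub-Gaussian with scale depending only on $\beta$ and $\tau$. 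Because (\ref{w_Ldef}) and (\ref{w_Ddef}) are separable across the $d_m$ columns of $w$, it is enough to treat each column as a standard sparse linear regression. Two ingredients then suffice: (i) a gradient bound
\[
\Bigl\|\tfrac{1}{T}\sum_{t=0}^{T-1}\eta_{t,m}\mathcal{X}_{t,D_m^c}^\top\Bigr\|_\infty \le C\sqrt{\tfrac{\log M}{T}},
\]
with probability at least $1 - c_1\exp(-c_2\log M)$, so that the choice $\lambda_w \asymp \sqrt{\log M/T}$ dominates this quantity and thereby certifies Dantzig feasibility of $w_m^*$ as well as the usual Lasso basic inequality; and (ii) a restricted eigenvalue (RE) condition on $\widehat\Gamma_m := \tfrac{1}{T}\sum_t\mathcal{X}_{t,D_m^c}\mathcal{X}_{t,D_m^c}^\top$ over the standard $s_m$-sparse cone. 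Granted (i) and (ii), the standard cone/basic-inequality argument (as in \cite{basu2015regularized}) produces both the $\ell_1$ rate $s_m\sqrt{\log M/T}$ and the in-sample quadratic bound $s_m\log M/T$ that appear in Assumption \ref{Assump_w_bound}.

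Ingredient (ii) is the easier piece. Since $\mathcal{X}_{t,D_m^c}$ is a subvector of $\mathcal{X}_t$, it is enough to establish the RE condition for $\widehat\Gamma := \tfrac{1}{T}\sum_t\mathcal{X}_t\mathcal{X}_t^\top$ relative to $\Upsilon = \mathrm{Cov}(\mathcal{X}_t)$. The stability assumption (\ref{StbARpsG}) together with the representation $\Upsilon = \sum_j\Psi_j\Psi_j^\top$ lower-bounds $\lambda_{\min}(\Upsilon)$ by a constant depending only on $\beta$. The paper's concentration bounds for quadratic forms of dependent sub-Gaussian variables (the same machinery invoked for $\widehat A$ in Lemma \ref{Abound}) supply the uniform deviation $|v^\top(\widehat\Gamma - \Upsilon)v| \le C\sqrt{\log M/T}\|v\|_1\|v\|_2$ for sparse $v$, which transfers the population RE to the empirical one as soon as $T \gtrsim s_m\log M$.

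Ingredient (i) is the main obstacle and is the step that genuinely differs from the Gaussian case. Although $\mathbb{E}[\eta_{t,m}(\mathcal{X}_{t,D_m^c})_j] = 0$ for every $j\in D_m^c$, the scalar process $\{\eta_{t,m}(\mathcal{X}_{t,D_m^c})_j\}_t$ is neither independent nor a martingale-difference sequence, and sub-Gaussian (rather than Gaussian) vectors cannot be decorrelated by a linear transformation --- precisely the obstruction flagged in the introduction. I would express both $\eta_{t,m}$ and $(\mathcal{X}_{t,D_m^c})_j$ as linear functionals of the full innovation sequence, so that $\tfrac{1}{T}\sum_t\eta_{t,m}(\mathcal{X}_{t,D_m^c})_j$ becomes a quadratic form in the $\epsilon_s$'s whose deterministic kernel has operator norm, Frobenius norm, and trace controlled by $\beta$ through (\ref{StbARpsG}). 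The novel truncation-based concentration bound for quadratic forms of dependent sub-Gaussian variables developed in the paper then yields an exponential tail of the correct order for each $j$, and a union bound over $|D_m^c|\le pM$ coordinates delivers (i). Combining (i) and (ii) with the standard cone argument completes the proof for both the Lasso and Dantzig estimators simultaneously; I expect step (i) to be the only serious work, with everything downstream being routine.
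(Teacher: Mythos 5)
Your proposal follows essentially the same route as the paper: the paper's proof also reduces to the column-wise auxiliary regression, certifies the tuning parameter via a deviation bound (Lemma \ref{true_w_dev}, proved by writing the cross-moment $\eta_{t,m}\mathcal{X}_{t,j}$ as a quadratic form $\mathcal{X}_t^\top \frac{1}{2}\bigl((W_m^*)_{i\cdot}^\top e_j^\top + e_j(W_m^*)_{i\cdot}\bigr)\mathcal{X}_t$ and invoking the truncation-based quadratic-form concentration of Lemma \ref{QuaBound}), and transfers the population RE from the full $\Upsilon$ to the submatrix by zero-padding the error vector, exactly as you outline. One small slip: you write $\Upsilon = \sum_j \Psi_j\Psi_j^\top$, but that series is $\Sigma = \mathrm{Cov}(X_t)$; the relevant matrix $\Upsilon = \mathrm{Cov}(\mathcal{X}_t)$ is its block-Toeplitz extension, and its eigenvalue bounds are obtained in the paper via the spectral-density argument of Lemma \ref{CovEigenBnd} rather than directly from that representation.
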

\begin{Lemma}\label{Cov_mBound}
	If $\widehat{\Upsilon^{(m)}}$'s are defined as in (\ref{S_def}), where $\hat{w}_m$ satisfies (\ref{wbnd}) with probability at least $1-c_1\exp\{-c_2\log M\}$, then 
	\begin{equation*}
\begin{split}
\left\|\Upsilon^{(m)\frac{1}{2}}\left(\widehat{\Upsilon^{(m)}}\right)^{-1}\Upsilon^{(m)\frac{1}{2}}-I\right\|_\infty\leq C\sqrt{\frac{\log M}{T}},
\end{split}
\end{equation*}
with probability at least $1-c_1\exp\{-c_2\log M\}$, when $T>Cs^2 \log M$.
\end{Lemma}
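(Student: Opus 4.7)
The plan is to decompose $\widehat{\Upsilon^{(m)}} - \Upsilon^{(m)}$ into a low-dimensional sample-covariance fluctuation plus perturbation terms coming from plugging $\hat{w}_m$ in place of $w_m^*$, bound each piece using the concentration machinery for dependent sub-Gaussian vectors together with Assumption~\ref{Assump_w_bound}, and finally convert the additive error into the multiplicative form required by the statement. Writing $Z_t = \mathcal{X}_{t,D_m} - w_m^{*\top}\mathcal{X}_{t,D_m^c}$ and $\delta = \hat{w}_m - w_m^*$, so that $\mathcal{X}_{t,D_m} - \hat{w}_m^\top \mathcal{X}_{t,D_m^c} = Z_t - \delta^\top \mathcal{X}_{t,D_m^c}$, expanding the outer product in (\ref{S_def}) gives
\[
\widehat{\Upsilon^{(m)}} - \Upsilon^{(m)} = \underbrace{\Bigl(\tfrac{1}{T}\sum_t Z_t Z_t^\top - \Upsilon^{(m)}\Bigr)}_{(\mathrm{I})} - \underbrace{\tfrac{1}{T}\sum_t Z_t \mathcal{X}_{t,D_m^c}^\top \delta}_{(\mathrm{II})} - \underbrace{\delta^\top \tfrac{1}{T}\sum_t \mathcal{X}_{t,D_m^c} Z_t^\top}_{(\mathrm{III})} + \underbrace{\delta^\top \Bigl(\tfrac{1}{T}\sum_t \mathcal{X}_{t,D_m^c} \mathcal{X}_{t,D_m^c}^\top\Bigr) \delta}_{(\mathrm{IV})}.
\]

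For (I), since $d_m$ is a fixed constant and each entry of $Z_t Z_t^\top$ is a quadratic form in the stationary sub-Gaussian process, I would invoke the concentration bound for quadratic forms of dependent sub-Gaussian vectors developed earlier in the paper to obtain $\|(\mathrm{I})\|_{\max}\lesssim \sqrt{\log M/T}$ with probability $1-c_1 M^{-c_2}$. For (II) and (III), the defining orthogonality (\ref{w_m_def}) gives $\mathbb{E}[Z_t\mathcal{X}_{t,D_m^c}^\top]=0$, so the same concentration machinery combined with a union bound over the $d_m(pM-d_m)$ entries yields $\|\tfrac{1}{T}\sum_t Z_t\mathcal{X}_{t,D_m^c}^\top\|_{\max}\lesssim \sqrt{\log M/T}$; a H\"older step then bounds each entry of (II) and (III) by this maximum times $\|\delta\|_1\lesssim s\sqrt{\log M/T}$ from Assumption~\ref{Assump_w_bound}, yielding an $O(s\log M/T)$ contribution. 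Term (IV) is, entry-wise, dominated by the trace quadratic form in the second line of Assumption~\ref{Assump_w_bound}, so $\|(\mathrm{IV})\|_{\max}\lesssim s\log M/T$ as well. Summing, $\|\widehat{\Upsilon^{(m)}}-\Upsilon^{(m)}\|_{\max}\lesssim \sqrt{\log M/T}+s\log M/T$, which collapses to the advertised $\sqrt{\log M/T}$ rate precisely under the sample-size condition $T>Cs^2\log M$.

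To conclude, I would convert the additive error into the multiplicative form. Because the stability condition (\ref{StbARpsG}) forces $\Upsilon$, and hence its Schur complement $\Upsilon^{(m)}$, to have eigenvalues bounded uniformly above and away from zero, and because $d_m$ is fixed so all matrix norms on $\mathbb{R}^{d_m\times d_m}$ are equivalent, the max-entry bound yields a spectral-norm bound of the same order; in particular $\widehat{\Upsilon^{(m)}}$ inherits a well-conditioned inverse once $T$ is large enough. Writing
\[
\Upsilon^{(m)\frac{1}{2}}\bigl(\widehat{\Upsilon^{(m)}}\bigr)^{-1}\Upsilon^{(m)\frac{1}{2}} - I = \Upsilon^{(m)\frac{1}{2}}\bigl(\widehat{\Upsilon^{(m)}}\bigr)^{-1}\bigl(\Upsilon^{(m)} - \widehat{\Upsilon^{(m)}}\bigr)\Upsilon^{(m)-\frac{1}{2}}
\]
and bounding the outer factors by constants delivers the stated rate. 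The main obstacle is the concentration claim underlying (I) and the max-norm bound on $\tfrac{1}{T}\sum_t Z_t\mathcal{X}_{t,D_m^c}^\top$: both are concentration statements for (bi)linear and quadratic forms of \emph{dependent} sub-Gaussian vectors, where the Gaussian rotation-to-independence trick is unavailable, so one must rely on the paper's truncation-based concentration tool. Once that is in hand, the lemma follows cleanly from the decomposition above.
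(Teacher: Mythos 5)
Your decomposition of $\widehat{\Upsilon^{(m)}}-\Upsilon^{(m)}$ into the four terms (I)--(IV) is exactly the paper's $E_1^{(m)}-E_2^{(m)}-(E_2^{(m)})^\top+E_3^{(m)}$ once one observes $Z_t=W_m^*\mathcal{X}_t$, and the bounds you invoke (quadratic-form concentration for (I), the deviation bound for $\frac{1}{T}\sum_t Z_t\mathcal{X}_{t,D_m^c}^\top$ combined with the $\ell_1$ bound on $\hat w_m-w_m^*$ for (II)/(III), the trace bound from Assumption~\ref{Assump_w_bound} for (IV)) match the paper's Lemmas \ref{QuaBound}, \ref{true_w_dev} and Assumption~\ref{Assump_w_bound}. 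The final conversion to multiplicative form via a resolvent identity is equivalent to the paper's use of Lemma~\ref{InvBnd}, so this is the same proof.
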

Note here Lemma \ref{Cov_mBound} is stronger than Assumption \ref{Assump_Cov_m_bound}.
The proof of these Lemmas are deferred to Appendix \ref{lemma_proof}. By these lemmas and Theorem \ref{NullThm}, \ref{AltThm}, we arrive at following Corollary.
\begin{Corollary}\label{Cor}
Under model (\ref{ARp2}) with i.i.d sub-Gaussian noise $\epsilon_{ti}$ with parameter $\tau$, if $\widehat{A}=\widehat{A}^{(L)}$ or $\widehat{A}^{(D)}$, $\hat{w}_m=\hat{w}_m^{(L)}$ or $\hat{w}_m^{(D)}$, and $\widehat{\Upsilon^{(m)}}$'s are defined as in (\ref{S_def}) for $1\leq m\leq k$ with $\lambda_A\asymp\lambda_w\asymp \sqrt{\frac{\log M}{T}}$, then if $(\rho\vee s)\log M=o(\sqrt{T})$ and $T > C$ for some constant $C>0$, bounds (\ref{thm1_bound}) to (\ref{thm2_bound3}) from Theorems \ref{NullThm} and \ref{AltThm} hold.
\end{Corollary}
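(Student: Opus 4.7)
The plan is essentially to chain together Lemmas \ref{Abound}, \ref{wbound}, and \ref{Cov_mBound} with Theorems \ref{NullThm} and \ref{AltThm}, so the proof is a bookkeeping argument rather than a new technical contribution. First I would fix the tuning parameters $\lambda_A \asymp \sqrt{\log M / T}$ and $\lambda_w \asymp \sqrt{\log M / T}$ as prescribed, and observe that the condition $(\rho \vee s)\log M = o(\sqrt T)$ in particular implies $T > C\rho\log M$ and $T > Cs\log M$ (and for Lemma \ref{Cov_mBound}, $T > Cs^2\log M$, which follows from $s \log M = o(\sqrt T)$ by squaring). Hence all three lemmas apply to the estimators $\widehat A \in \{\widehat A^{(L)}, \widehat A^{(D)}\}$, $\hat w_m \in \{\hat w_m^{(L)}, \hat w_m^{(D)}\}$, and $\widehat{\Upsilon^{(m)}}$ defined in \eqref{S_def}.

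Next I would verify that Assumptions \ref{Assump_A_bound}--\ref{Assump_Cov_m_bound} are simultaneously satisfied. Lemma \ref{Abound} yields Assumption \ref{Assump_A_bound} with probability at least $1 - c_1\exp\{-c_2\log M\}$; Lemma \ref{wbound} yields Assumption \ref{Assump_w_bound} with the same form of probability bound for each $m = 1,\ldots,k$; and Lemma \ref{Cov_mBound} actually gives a stronger rate of $\sqrt{\log M / T}$, which implies Assumption \ref{Assump_Cov_m_bound} since $1 \leq (s \vee \rho)\sqrt{\log M}$ for $M$ large. Taking a union bound over the $k$ rows (note $k \leq d$ is fixed) and over the three events, the joint event that Assumptions \ref{Assump_A_bound}--\ref{Assump_Cov_m_bound} all hold has probability at least $1 - c_1'\exp\{-c_2'\log M\} = 1 - c_1'/M^{c_2'}$, which is absorbed into the $C_3/M^{C_4}$ term appearing in the bounds \eqref{thm1_bound}--\eqref{thm2_bound3}.

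Given the assumptions hold on this high-probability event, I would invoke Theorem \ref{NullThm} on $\Omega_0$ and Theorem \ref{AltThm} on $\Omega_1$ for each of the three cases $\phi = 1/2$, $0 < \phi < 1/2$, $\phi > 1/2$. Since these theorems already package the probability of estimator failure into the $C_3/M^{C_4}$ term of their own error bounds, one only needs to check that the failure probability from our chosen estimators can be absorbed into that term by possibly increasing $C_3$ and shrinking $C_4$; this is immediate because $\exp\{-c\log M\} = M^{-c}$.

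The only real subtlety — and the closest thing to an obstacle — is making sure the constants $C$ in the scaling hypotheses ``$T > C\rho\log M$'', ``$T > Cs\log M$'', and ``$T > Cs^2\log M$'' in the three lemmas, together with the threshold ``$T > C$'' in Theorems \ref{NullThm} and \ref{AltThm}, are all subsumed by the single condition $(\rho\vee s)\log M = o(\sqrt T)$ stated in the Corollary. Since $s^2\log M \leq (s\log M)^2/\log M$ and $s\log M = o(\sqrt T)$ implies $s^2\log M = o(T/\log M) = o(T)$, all three scaling conditions are automatically implied for $T$ larger than a universal constant depending on $p, d, \beta, \tau$. Combining, the bounds \eqref{thm1_bound}--\eqref{thm2_bound3} transfer verbatim to the Lasso/Dantzig/sample-covariance triple, establishing the Corollary.
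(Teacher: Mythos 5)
Your proposal is correct and follows essentially the same route as the paper, which states the corollary as an immediate consequence of Lemmas \ref{Abound}--\ref{Cov_mBound} together with Theorems \ref{NullThm} and \ref{AltThm} without writing out the bookkeeping. The only nitpick is cosmetic: the inequality $s^2\log M \le (s\log M)^2/\log M$ is in fact an equality, but the chain $s\log M = o(\sqrt T) \Rightarrow s^2(\log M)^2 = o(T) \Rightarrow s^2\log M = o(T)$ that you then use is sound, so the scaling hypotheses of all three lemmas are indeed subsumed by $(\rho\vee s)\log M = o(\sqrt T)$ once $T$ exceeds a constant.
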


\subsection{Variance Estimation}\label{EstVarSec}
In this section, we consider the case where $\sigma^{*2}=\text{Var}(\epsilon_{ti})$ is unknown under model \eqref{ARp2}. Actually, if $\sigma^*\neq 1$ is known, it is straightforward to extend Theorem \ref{NullThm} to Theorem \ref{AltThm} for $\widehat{U}_T$ defined as follows:
\begin{equation}\label{U_T_knv}
\widehat{U}_T=T\sum_{m=1}^k \widehat{S}_m^\top (\widehat{\Upsilon^{(m)}})^{-1}\widehat{S}_m/\sigma^{*2}.
\end{equation}
This follows since if we consider $Y_t=X_t/\sigma^*$, time series data $Y_t$ would satisfy the same model but with unit variance noise. 

When $\sigma^{*2}$ is unknown, we apply the estimator
\begin{equation}\label{VarEst}
\hat{\sigma}^2=\frac{1}{MT}\sum_{t=0}^{T-1}\|X_{t+1}-\widehat{A}\mathcal{X}_t\|_2^2,
\end{equation}
and define the test statistic 
\begin{equation}\label{EstVarTS}
\widetilde{U}_T=T\sum_{m=1}^k\widehat{S}_m^\top(\widehat{\Upsilon^{(m)}})^{-1}\widehat{S}_m/\hat{\sigma}^2.
\end{equation}
We show that $\widetilde{U}_T$ has the same convergence results we derive for the unit variance noise case.
\begin{Theorem}\label{EstVarThm}
Consider the model \eqref{ARp2} with i.i.d. sub-Gaussian noise $\epsilon_{ti}$ of variance $\sigma^{*2}=\text{Var}(\epsilon_{ti})\geq\sigma_0^2>0$ and scale factor $\tau\sigma^*$. Then Theorem \ref{NullThm} and \ref{AltThm} hold for $\widetilde{U}_T$ under each corresponding condition, and constants $C_i$'s also depend on $\sigma_0$.
\end{Theorem}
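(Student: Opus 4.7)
}

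The plan is to reduce the unknown-variance problem to the unit-variance setting already handled by Theorems \ref{NullThm} and \ref{AltThm}, at the cost of controlling a single multiplicative factor $\sigma^{*2}/\hat\sigma^2$. Define $Y_t = X_t/\sigma^*$; then $Y_{t+1}=A^*\mathcal{Y}_t+\tilde\epsilon_t$ with $\tilde\epsilon_t=\epsilon_t/\sigma^*$ i.i.d.\ entrywise sub-Gaussian with unit variance and scale factor $\tau$. Because the Lasso/Dantzig problems and the regression coefficient $w_m^*$ are scale-equivariant, the natural estimators computed on the $Y$-scale give $\widehat{A}^{(Y)}=\widehat{A}$, $\hat{w}_m^{(Y)}=\hat{w}_m$, $\widehat{S}_m^{(Y)}=\widehat{S}_m/\sigma^{*2}$, and $\widehat{\Upsilon^{(m)},Y}=\widehat{\Upsilon^{(m)}}/\sigma^{*2}$, so the \emph{known-variance} test statistic (\ref{U_T_knv}) equals
\begin{equation*}
\widehat{U}_T^{(1)} \;\triangleq\; T\sum_{m=1}^{k}\widehat{S}_m^{\top}\bigl(\widehat{\Upsilon^{(m)}}\bigr)^{-1}\widehat{S}_m/\sigma^{*2}
\;=\; T\sum_{m=1}^{k}\widehat{S}_m^{(Y)\top}\bigl(\widehat{\Upsilon^{(m)},Y}\bigr)^{-1}\widehat{S}_m^{(Y)}.
\end{equation*}
Applying Theorems \ref{NullThm} and \ref{AltThm} to the $Y_t$ model gives the stated convergence rates for $\widehat{U}_T^{(1)}$ directly.

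Next, since $\widetilde{U}_T = \widehat{U}_T^{(1)}\cdot(\sigma^{*2}/\hat\sigma^2)$, it suffices to show $|\hat\sigma^2/\sigma^{*2}-1|$ is small, uniformly over $\Omega_0\cup\Omega_1$, at a rate no worse than those in (\ref{thm1_bound})--(\ref{thm2_bound3}). Expand
\begin{equation*}
\hat\sigma^2 \;=\; \frac{1}{MT}\sum_{t,i}\epsilon_{ti}^{2} \;+\; \frac{2}{MT}\sum_{t=0}^{T-1}\epsilon_t^{\top}(A^*-\widehat{A})\mathcal{X}_t \;+\; \frac{1}{M}\sum_{m=1}^{M}(\widehat{A}_m-A_m^*)^{\top}\Bigl(\tfrac{1}{T}\sum_t\mathcal{X}_t\mathcal{X}_t^{\top}\Bigr)(\widehat{A}_m-A_m^*).
\end{equation*}
The first term concentrates around $\sigma^{*2}$ at rate $O_P(1/\sqrt{MT})$ by Bernstein's inequality for sub-exponential variables (the $\epsilon_{ti}^{2}$ are i.i.d.\ sub-exponential with parameter depending only on $\tau$ and $\sigma^*$, and $\sigma^{*2}\ge \sigma_0^{2}$ controls the relative error). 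For the prediction-error term, the row-wise bound in Assumption \ref{Assump_A_bound} must be extended from the $k$ tested rows to all $M$ rows; this is immediate for the Lasso/Dantzig estimators of Lemma \ref{Abound}, whose proof delivers the row-wise bound $\le C\rho\log M/T$ with probability $1-c_1\exp(-c_2\log M)$ per row, yielding a uniform bound over all rows after a union bound (at the cost of absorbing one power of $M$ into the exponent). The cross term is then handled by Cauchy--Schwarz against the first two. Altogether,
\begin{equation*}
\bigl|\hat\sigma^2 - \sigma^{*2}\bigr| \;\le\; C\Bigl(\sqrt{\rho\log M/T}\;+\;1/\sqrt{MT}\Bigr)
\end{equation*}
with probability $1-c_1 M^{1-c_2}$, and dividing by $\sigma^{*2}\ge\sigma_0^{2}$ shows $\sigma^{*2}/\hat\sigma^2-1$ has the same order.

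Combining the two pieces, $\widetilde{U}_T - \widehat{U}_T^{(1)} = \widehat{U}_T^{(1)}\cdot(\sigma^{*2}/\hat\sigma^2 - 1)$. Under $\mathcal{H}_0$ or $\mathcal{H}_A$ with $\phi\ge \tfrac12$, $\widehat{U}_T^{(1)}=O_P(1)$ by Theorems \ref{NullThm}--\ref{AltThm}, so the perturbation is $O_P(\sqrt{\rho\log M/T})$, which is absorbed into the existing $((s\vee\rho)\log M/\sqrt{T})^{1/2}$ term in (\ref{thm1_bound})/(\ref{thm2_bound1})/(\ref{thm2_bound3}); the Kolmogorov distance to the (non-central) $\chi_d^{2}$ limit then follows from the Lipschitz continuity of $F_{d,\|\widetilde\Delta\|_2^2}$ together with Slutsky-type handling of the multiplicative perturbation. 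Under $\mathcal{H}_A$ with $0<\phi<\tfrac12$, $\widehat{U}_T^{(1)}\to+\infty$ in probability by (\ref{thm2_bound2}), and since $\sigma^{*2}/\hat\sigma^2\to 1$ in probability, $\widetilde{U}_T\to+\infty$ as well at the same exponential rate.

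The main obstacle is the control of $\hat\sigma^2$: Assumption \ref{Assump_A_bound} is stated only for the $k$ tested rows, but the variance estimator (\ref{VarEst}) averages squared residuals over \emph{all} $M$ rows. Resolving this requires inspecting the proof of Lemma \ref{Abound} to extract a row-wise prediction-error bound valid simultaneously for all $m\in\{1,\dots,M\}$ (which the Lasso/Dantzig analysis naturally provides, since the RE-type event and the event $\{\|T^{-1}\sum_t \mathcal{X}_t\epsilon_t^{\top}\|_\infty\le\lambda_A/2\}$ can be established uniformly via the novel sub-Gaussian quadratic-form concentration bound already developed in the paper). Once this uniform-in-row bound is in hand, the remainder of the argument is a routine concentration and Slutsky-style combination.
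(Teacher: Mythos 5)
Your proposal follows essentially the same route as the paper: rescale to reduce to the unit-variance case for $\widehat{U}_T^{(1)}=\widehat{U}_T$ of~\eqref{U_T_knv}, write $\widetilde{U}_T = \widehat{U}_T^{(1)}\cdot(\sigma^{*2}/\hat\sigma^2)$, decompose $\hat\sigma^2-\sigma^{*2}$ into the pure-noise average plus the two prediction-error terms, control the first by Bernstein and the other two via Assumption~\ref{Assump_A_bound} together with Lemma~\ref{DevBnd}, pick $\delta\asymp\sqrt{\rho\log M/T}$, and transfer the multiplicative perturbation through the Lipschitz-type bound on $F_{d,\|\mu\|_2^2}$. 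One thing you flag that the paper's proof does not address explicitly is the scoping of Assumption~\ref{Assump_A_bound}: it is stated only for the $k$ tested rows, yet the variance estimator~\eqref{VarEst} averages squared residuals over all $M$ rows, and the paper invokes the assumption directly for $\frac1M\sum_{i=1}^M(\widehat{A}_i-A_i^*)^\top H(\widehat{A}_i-A_i^*)$. Your proposed remedy---observe that the proof of Lemma~\ref{Abound} already yields the per-row bound for an arbitrary row via the uniform deviation bound (Lemma~\ref{DevBnd}) and the uniform RE event (Lemma~\ref{REC}), so a union bound over all $M$ rows gives the required estimate with probability $1-c_1\exp\{-c_2\log M\}$---is correct and is implicitly what the paper's argument is relying on, so your write-up is if anything slightly more careful on this point.
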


Theorem \ref{EstVarThm} shows that when we have to estimate the unknown $\sigma^{*2}$, test statistic $\widetilde{U}_T$ maintains the same asymptotic behavior as $\widehat{U}_T$ under the known variance case, given that all the assumptions for estimation errors are satisfied and $\sigma^*$ is lower bounded by some constant.
\begin{Remark}
With sub-Gaussian noise $\epsilon_{ti}$, if we still assume the scale factor $\tau\sigma^*$ of $\epsilon_{ti}$ to be bounded by constant, then Lemma \ref{Abound} to \ref{Cov_mBound} would still hold. Thus the assumptions imposed on estimation errors of $\widehat{A}$, $\hat{w}_m$ and $\widehat{\Upsilon^{(m)}}$ are all satisfied. However, if we don't assume $\sigma^*$ to be bounded, then the tuning parameters $\lambda_A$ and $\lambda_w$  have to scale with $\sigma^*$. 
\end{Remark}
\begin{Remark}
\cite{neykov2018unified} proposes another estimator for the variance of $\epsilon_{ti}$, based on the fact that $\Sigma=A\Sigma A^\top+Cov(\epsilon_{t})$. Both these estimators are consistent and lead to convergence in distribution results.
\end{Remark}

\subsection{Semi-parametric Optimal Confidence Region}\label{CRSec}
In this section, we construct a confidence region for $\widetilde{A}_D$, under model \eqref{ARp2} with unknown noise variance $\sigma^{*2}$. Similar to \cite{ning2017general}, we consider the one-step estimator $\hat{a}(m)$ for each $(A_m^*)_{D_m}$, based on the decorrelated score function:
\begin{equation}
\hat{a}(m)=(\widehat{A}_{m})_{D_m}-\left(\widetilde{\Upsilon^{(m)}}\right)^{-1}\widetilde{S}_m,
\end{equation}
where $\widehat{A}_{m}$ is any estimator satisfying the Assumptions \ref{Assump_A_bound} on error bounds for $\widehat{A}_m-A_m^*$, and both the Lasso or Dantzig Estimator for $A_{m}^*$ are suitable. $\widetilde{\Upsilon^{(m)}}$ takes the form:
\begin{equation}
\widetilde{\Upsilon^{(m)}}=\frac{1}{T}\sum_{t=0}^{T-1}\left(\mathcal{X}_{t,D_m}-\hat{w}_m^\top \mathcal{X}_{t,D_m^c}\right)\mathcal{X}_{t,D_m}^\top,
\end{equation}
which is another estimator for $\Upsilon^{(m)}$, and 
\begin{equation*}
\widetilde{S}_m=-\frac{1}{T}\sum_{t=0}^{T-1}\left(X_{t+1,m}-\widehat{A}_m^\top \mathcal{X}_t\right)\left(\mathcal{X}_{t,D_m}-\hat{w}_m^\top \mathcal{X}_{t,D_m^c}\right).
\end{equation*}
We will show that $\hat{a}(m)-(A^*_m)_{D_m}$ is asymptotically Gaussian with covariance matrix $(\Upsilon^{(m)})^{-1}$. Thus we construct the following confidence region for $\widetilde{A}_D$, with asymptotic confidence coefficient $1-\alpha$:
\begin{equation}\label{ConfRegDef}
\begin{split}
CR(\alpha)=\bigg\{&\theta=(\theta_1^\top,\dots,\theta_k^\top)^\top: \theta_m\in \mathbb{R}^{d_m}, \\
&\frac{T}{\hat{\sigma}^2}\sum_{m=1}^{k}(\hat{a}(m)-\theta_m)^\top \widehat{\Upsilon^{(m)}}(\hat{a}(m)-\theta_m)\leq \chi^2_{d}(1-\alpha)\bigg\}.
\end{split}
\end{equation}
This is a $d$ dimensional elliptical ball with center vector $(\hat{a}(1)^\top, \dots \hat{a}(k)^\top)^\top$. The following theorem shows the weak convergence result of  
\begin{equation}\label{CR_TS}
    \widehat{R}_T\triangleq\frac{T}{\hat{\sigma}^2}\sum_{m=1}^{k}(\hat{a}(m)-(A_m^*)_{D_m})^\top \widehat{\Upsilon^{(m)}}(\hat{a}(m)-(A_m^*)_{D_m}).
\end{equation}
\begin{Theorem}\label{ConfRegThm}
Under model \eqref{ARp2} with i.i.d. sub-Gaussian noise $\epsilon_{ti}$ with variance $\sigma^{*2}=\text{Var}(\epsilon_{ti})\geq\sigma_0^2>0$ and sub-Gaussian parameter $\tau\sigma^*$, then Theorem \ref{NullThm} and \ref{AltThm} hold for $\widehat{R}_T$ under each corresponding condition, and the constants $C_i$'s also depend on $\sigma_0$.
\end{Theorem}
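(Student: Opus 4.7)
The plan is to reduce Theorem \ref{ConfRegThm} to the analysis already performed for $\widetilde U_T$ in Theorem \ref{EstVarThm} via an exact algebraic identity for $\hat a(m)-(A_m^*)_{D_m}$. First I would observe that substituting $X_{t+1,m}-\widehat A_m^\top\mathcal{X}_t=\epsilon_{t,m}-(\widehat A_m-A_m^*)^\top\mathcal{X}_t$ into the definition of $\widetilde S_m$ and splitting the inner product along $D_m$ versus $D_m^c$ gives
\begin{equation*}
\widetilde S_m=O_m+\widetilde{\Upsilon^{(m)}}\bigl((\widehat A_m)_{D_m}-(A_m^*)_{D_m}\bigr)+R_m,
\end{equation*}
where $O_m=-\tfrac{1}{T}\sum_{t=0}^{T-1}\epsilon_{t,m}\bigl(\mathcal{X}_{t,D_m}-\hat w_m^\top\mathcal{X}_{t,D_m^c}\bigr)$ is the oracle decorrelated score with $\hat w_m$ in place of $w_m^*$, and
\begin{equation*}
R_m=\Bigl[\tfrac{1}{T}\sum_{t=0}^{T-1}\bigl(\mathcal{X}_{t,D_m}-\hat w_m^\top\mathcal{X}_{t,D_m^c}\bigr)\mathcal{X}_{t,D_m^c}^\top\Bigr]\bigl((\widehat A_m)_{D_m^c}-(A_m^*)_{D_m^c}\bigr).
\end{equation*}
Plugging this into $\hat a(m)=(\widehat A_m)_{D_m}-(\widetilde{\Upsilon^{(m)}})^{-1}\widetilde S_m$, the $(\widehat A_m)_{D_m}$ cancels exactly and yields the clean decorrelated-score representation
\begin{equation*}
\hat a(m)-(A_m^*)_{D_m}=-(\widetilde{\Upsilon^{(m)}})^{-1}\bigl(O_m+R_m\bigr),
\end{equation*}
which notably does not depend on $\widehat A_m$ through its $D_m$-coordinates at all.

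Second, I would control the remainder and recycle the CLT machinery from Theorem \ref{EstVarThm}. By the KKT conditions for the Lasso $\hat w_m^{(L)}$ or the feasibility constraint for the Dantzig $\hat w_m^{(D)}$ with $\lambda_w\asymp\sqrt{\log M/T}$, the bracketed matrix in $R_m$ has $\ell_\infty$-norm at most $C\sqrt{\log M/T}$, and by Assumption \ref{Assump_A_bound} $\|(\widehat A_m)_{D_m^c}-(A_m^*)_{D_m^c}\|_1\leq C\rho\sqrt{\log M/T}$; hence $\sqrt T\|R_m\|_2\leq C\rho\log M/\sqrt T=o(1)$ under the scaling $(s\vee\rho)\log M=o(\sqrt T)$. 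The oracle piece $\sqrt T O_m$ is precisely the martingale-CLT quantity whose sub-Gaussian weak-convergence rate is established in the proof of Theorem \ref{NullThm}, with the substitution $\hat w_m\leftarrow w_m^*$ costing only an extra $o(T^{-1/2})$ term (handled exactly as in the $\widehat S_m$ analysis). Because $O_m+R_m$ is mean-zero regardless of whether $A^*\in\Omega_0$ or $A^*\in\Omega_1$ (the shift $T^{-\phi}\widetilde{\Upsilon^{(m)}}\Delta_m$ that appeared in $\widehat S_m$ is absent here, thanks to the cancellation above), the corresponding quadratic form converges in both cases to a central $\chi_d^2$, with the rate bounds of Theorems \ref{NullThm} and \ref{AltThm} applied to $\sqrt T(O_m+R_m)$ rather than $\sqrt T\widehat S_m$. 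The variance estimator $\hat\sigma^2$ is the same one as in $\widetilde U_T$ and is handled verbatim as in Theorem \ref{EstVarThm}.

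The principal technical obstacle is showing that the asymmetric matrix $\widetilde{\Upsilon^{(m)}}$ satisfies an analogue of Assumption \ref{Assump_Cov_m_bound}, so that the sandwich $(\widetilde{\Upsilon^{(m)}})^{-\top}\widehat{\Upsilon^{(m)}}(\widetilde{\Upsilon^{(m)}})^{-1}$ arising in $\widehat R_T$ after substituting the identity above collapses to $(\Upsilon^{(m)})^{-1}$ up to the already-controlled error. Unlike $\widehat{\Upsilon^{(m)}}$ from (\ref{S_def}), which is a symmetric sample covariance and was treated in Lemma \ref{Cov_mBound}, $\widetilde{\Upsilon^{(m)}}$ is not symmetric. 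I would write
\begin{equation*}
\widetilde{\Upsilon^{(m)}}-\widehat{\Upsilon^{(m)}}=\tfrac{1}{T}\sum_{t=0}^{T-1}\bigl(\mathcal{X}_{t,D_m}-\hat w_m^\top\mathcal{X}_{t,D_m^c}\bigr)\bigl(\hat w_m^\top\mathcal{X}_{t,D_m^c}\bigr)^\top
\end{equation*}
and bound the entrywise $\ell_\infty$-norm of this correction by combining the KKT/feasibility $\ell_\infty$-bound on $\tfrac{1}{T}\sum_t(\mathcal{X}_{t,D_m}-\hat w_m^\top\mathcal{X}_{t,D_m^c})\mathcal{X}_{t,D_m^c}^\top$ with the $\ell_1$-bound on $\hat w_m$ from Assumption \ref{Assump_w_bound}, yielding $\|\widetilde{\Upsilon^{(m)}}-\widehat{\Upsilon^{(m)}}\|_\infty\leq Cs\log M/T$. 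Combined with Lemma \ref{Cov_mBound} this produces $\|\Upsilon^{(m)\frac{1}{2}}(\widetilde{\Upsilon^{(m)}})^{-1}\Upsilon^{(m)\frac{1}{2}}-I\|_\infty=O((s\vee\rho)\log M/\sqrt T)$ with the required high probability. With this in hand the sandwich reduces to $(\Upsilon^{(m)})^{-1}$ at the same rate, and the rest is bookkeeping identical to the end of the proof of Theorem \ref{EstVarThm}: $\widehat R_T$ equals the target central quadratic form plus $o_{\bbP}(1)$ uniformly over $\Omega_0\cup\Omega_1$, yielding the claimed bounds.
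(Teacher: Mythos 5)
Your algebraic identity $\hat a(m)-(A_m^*)_{D_m}=-(\widetilde{\Upsilon^{(m)}})^{-1}(O_m+R_m)$ is correct, and under $\mathcal{H}_0$ (where $(A_m^*)_{D_m}=0$, so $O_m+R_m=\widehat S_m$) it reproduces the paper's reduction $\hat a(m)=-(\widetilde{\Upsilon^{(m)}})^{-1}\widehat S_m$; your treatment of the sandwich $(\widetilde{\Upsilon^{(m)}})^{-\top}\widehat{\Upsilon^{(m)}}(\widetilde{\Upsilon^{(m)}})^{-1}$ via $\|\widetilde{\Upsilon^{(m)}}-\widehat{\Upsilon^{(m)}}\|_\infty$ is also essentially the paper's Assumption-\ref{Assump_Cov_m_bound} argument. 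For the null-hypothesis part, the routes coincide.

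For the alternative, however, you conclude that $\widehat R_T$ converges to a \emph{central} $\chi^2_d$ regardless of whether $A^*\in\Omega_0$ or $\Omega_1$, because the shift $T^{-\phi}\widetilde{\Upsilon^{(m)}}\Delta_m$ present in $\widehat S_m$ is cancelled by the explicit $-(A_m^*)_{D_m}$ in $\widehat R_T$. That calculation is correct, but it is \emph{not} what Theorem~\ref{ConfRegThm} asserts: the theorem claims Theorem~\ref{AltThm} carries over verbatim, i.e.\ non-central $\chi^2_{d,\|\widetilde\Delta\|_2^2}$ at $\phi=\tfrac12$ and divergence for $\phi<\tfrac12$. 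The paper reaches that conclusion by writing $\widetilde S_m-\widehat S_m=\widetilde{\Upsilon^{(m)}}\bigl((\widehat A_m)_{D_m}-(A_m^*)_{D_m}\bigr)$, but a direct computation of the residuals in $\widetilde S_m$ and $\widehat S_m$ shows the difference is exactly $\widetilde{\Upsilon^{(m)}}(\widehat A_m)_{D_m}$: the paper has inserted a spurious $-(A_m^*)_{D_m}$, which vanishes only under $\mathcal{H}_0$. It is precisely this slip that makes the paper's identity $\widehat R_T=T\sum_m\widehat S_m^\top(\widetilde{\Upsilon^{(m)}})^{-\top}\widehat{\Upsilon^{(m)}}(\widetilde{\Upsilon^{(m)}})^{-1}\widehat S_m/\hat\sigma^2$ hold under $\mathcal{H}_A$ and hence inherit the non-centrality. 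Your decomposition shows the extra term should not be there, and the pivotal central-$\chi^2$ limit you obtain is exactly what one expects of a Wald-type statistic centered at the truth. So you are not proving the $\mathcal{H}_A$ part of the theorem as written—you are disproving it—and you should say so explicitly rather than write that ``the rate bounds of Theorems~\ref{NullThm} and~\ref{AltThm}'' apply to $\sqrt T(O_m+R_m)$, which is internally inconsistent with the non-central conclusion of Theorem~\ref{AltThm}.

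One smaller point: your bound $\|\widetilde{\Upsilon^{(m)}}-\widehat{\Upsilon^{(m)}}\|_\infty\leq Cs\log M/T$ uses only $\|\hat w_m-w_m^*\|_1$ and drops the contribution of $\|w_m^*\|_1\asymp\sqrt{s}$; accounting for it gives $C\sqrt{s\log M/T}$, and the paper's proof establishes $Cs\sqrt{\log M/T}$. Any of these rates suffices for Assumption~\ref{Assump_Cov_m_bound} under the scaling $(s\vee\rho)\log M=o(\sqrt T)$, so the final conclusion there is unaffected, but the stated intermediate bound is too strong.
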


\begin{Remark}
In the definition of  one-step estimator $\hat{a}(m)$, we use $\widetilde{\Upsilon^{(m)}}$ instead of $\widehat{\Upsilon^{(m)}}$ for theoretical convenience. Theorem \ref{ConfRegThm} would still hold true if $\hat{a}(m)$ is defined as $(\widehat{A}_{m})_{D_m}-\left(\widehat{\Upsilon^{(m)}}\right)^{-1}\widetilde{S}_m$.
\end{Remark}

\begin{Remark}
We have exactly the same theoretical result for $\widetilde{U}_T$ and $\widehat{R}_T$, and this is due to the close relationship between these two quantities. In particular,
\begin{equation*}
\widehat{R}_T=T\sum_{m=1}^k\widehat{S}_m^\top\left(\widetilde{\Upsilon^{(m)}}^\top\right)^{-1}\widehat{\Upsilon^{(m)}}\left(\widetilde{\Upsilon^{(m)}}\right)^{-1}\widehat{S}_m/\hat{\sigma}^2,
\end{equation*}
compared to $\widetilde{U}_T=T\sum_{m=1}^k\widehat{S}_m^\top(\widehat{\Upsilon^{(m)}})^{-1}\widehat{S}_m/\hat{\sigma}^2.$ We show in the proof of Theorem \ref{ConfRegThm} that $\left(\widetilde{\Upsilon^{(m)}}^\top\right)^{-1}\widehat{\Upsilon^{(m)}}\left(\widetilde{\Upsilon^{(m)}}\right)^{-1}$ also satisfies Assumption \ref{Assump_Cov_m_bound} as an estimator for $\left(\Upsilon^{(m)}\right)^{-1}$.
\end{Remark}

\begin{Remark}
The one-step estimator $\hat{a}(m)$ is asymptotically unbiased, and shares a similar form to the de-biased estimator proposed by \cite{zhang2014confidence}, \cite{van2014asymptotically}. The de-biased estimator in \cite {van2014asymptotically} would take the following form under our setting:
\begin{equation*}
\widehat{b}_m=(\widehat{A}_m)_{D_m}+\widehat{\Theta}_{D_m,\cdot}\frac{1}{T}\sum_{t=0}^{T-1}
\mathcal{X}_t\left(X_{t+1,m}-\mathcal{X}_t^\top \widehat{A}_m\right),
\end{equation*}
where $\widehat{\Theta}$ is computed by node-wise regression, as an estimator for $\Upsilon^{-1}$. When $d_m=|D_m|=1$, this is essentially the same as our estimator $\hat{a}(m)$, but would be slightly different in the multivariate case. Note that the asymptotic covariance matrix for $\hat{a}(m)$ equals to the partial information matrix $I^*(A_{m,D_m}|A_{m,D_m^c}$), and thus is semi-parametric efficient, while $\hat{b}_m$ is only efficient when it is a scalar.
\end{Remark}

\begin{Remark}
$\widehat{R}_T$ is also very similar to the test statistic proposed by \cite{neykov2018unified} for VAR model with lag 1. The only difference lies in the estimation of Var$(\epsilon_{ti})$, and they only consider Dantzig selector for estimating $A^*$ and $w_m^*$. We will provide a detailed comparison between their theoretical result with ours in section \ref{GaussAR1}.
\end{Remark}

\subsection{Special case: AR(1) with Gaussian noise}\label{GaussAR1}
Our theoretical guarantee covers VAR models with lag $p$ and sub-Gaussian noise, of which AR(1) model and Gaussian noise are special cases. Here we explain the consequences of our result under this special case and provide comparison with \cite{neykov2018unified}. 

When we consider lag $p=1$, the constraint for $A^*$ becomes
\begin{equation*}
\sum_{i=0}^{\infty}\left(\sum_{j=0}^{\infty}\left\|(A^*)^{i+j}\right\|_2^2\right)^{\frac{1}{2}}\leq  \beta, \max_m\rho_m(A^*) \leq \rho, \max_m s_m(A^*)\leq s,
\end{equation*}
with $(\rho\vee s)\log M=o(\sqrt{T})$.
The two sparsity conditions and sample size requirement are included in the conditions \cite{neykov2018unified} proposes. In addition, they assume the following:

\begin{equation*}
    \|A^*\|_1\leq C, \|A^*\|_2\leq 1-\varepsilon, \left\|\Sigma^{-1}\right\|_1\leq C.
\end{equation*}
for some $0<\varepsilon<1$. Note that we don't require these conditions, among which the first and third are quite strong, and the second one $\|A^*\|_2\leq 1-\varepsilon$ is sufficient for our condition $\sum_{i=0}^{\infty}\left(\sum_{j=0}^{\infty}\left\|(A^*)^{i+j}\right\|_2^2\right)^{\frac{1}{2}}\leq  \beta$. This follows since if $\|A^*\|_2\leq 1-\varepsilon$,
\begin{equation*}
    \sum_{i=0}^{\infty}\left(\sum_{j=0}^{\infty}\left\|(A^*)^{i+j}\right\|_2^2\right)^{\frac{1}{2}}\leq \sum_{i=0}^{\infty}\left(\sum_{j=0}^{\infty}\left\|A^*\right\|_2^{2(i+j)}\right)^{\frac{1}{2}}\leq \frac{\sum_{i=0}^{\infty}(1-\varepsilon)^i}{\sqrt{1-(1-\varepsilon)^2}} \leq \left(2\varepsilon-\varepsilon^2\right)^{-\frac{1}{2}}.
\end{equation*}

Until now the discussion focuses on the case where $\epsilon_{ti}$ are i.i.d. sub-Gaussian noise of scale factor $C\sigma^*$, with $(\sigma^*)^2$ being the variance of $\epsilon_{ti}$ and lower bounded by some constant. Thus our setting covers the case where $\epsilon_t\sim \mathcal{N}(0,(\sigma^*)^2I)$ with $\sigma^*\geq c$. If $\epsilon_t\sim \mathcal{N}(0,\Psi)$ with $\Psi_{ii}\geq c$ as assumed in \cite{neykov2018unified}, we can still prove the same theoretical guarantee, under even weaker condition based on spectral density, due to established concentration bounds in \cite{basu2015regularized}.  

\section{Numerical Experiments}\label{SimSec}
 
In this section, we provide a simulation study to validate our theoretical results. For simplicity, our simulation is based on the AR(1) model:
\begin{align}
X_{t+1}=A^*X_t+\epsilon_{t},\quad t=0,\dots,T,
\end{align} 
where $A^*\in \mathbb{R}^{M\times M}$ is set to be row-wise sparse. Symmetricity is not required in our theory, but in order to ensure the sparsity of $w_m^*$, we focus on symmetric matrices under $\mathcal{H}_0$, and slightly asymmetric ones under $\mathcal{H}_A$. The eigenvalues of $A^*$ all fall in the unit circle of the complex plane, which ensures the existence of stationary solution to this model. White noise $\epsilon_{ti}$ is simulated as independent $\mbox{Uniform}(-1,1)$ in order to satisfy the sub-Gaussianity condition. Other distributions were also used but not reported since the results were very similar.

To consider multi-variate test sets, throughout the simulation we test the index set $D$ with $d=|D|=6$, which involves three different rows and two columns in each row: $$D=\{(1,3),(1,5),(3,3),(3,4),(5,4),(5,8)\}.$$
The null hypothesis takes the form $\mathcal{H}_0: \widetilde{A}_D=\mu$ with some $d$-dimensional vector $\mu$. Correspondingly, we consider alternative hypothesis $\mathcal{H}_A:\widetilde{A}_D=\mu+T^{-\phi}\Delta$, with $\Delta$ randomly selected from $d$-dimensional Gaussian distribution, and $\phi$ ranges from $0.25$ to $1.2$. 

Under $\mathcal{H}_0$, we generate $A^*$ with different row-wise sparsity levels and structures, and for each $A^*$, vector $\mu$ may differ depending on the corresponding $\widetilde{A}_D$. Under $\mathcal{H}_A$, $A^*$ are still the same matrices as under $\mathcal{H}_0$, but only adding the tested indices $\widetilde{A}_D$ by $T^{-\phi}\Delta$. The experiments are repeated under different settings of $A^*$, $\Delta$, $M, T$ and $\phi$. 

We use Lasso estimators defined in \eqref{A_Ldef}, \eqref{w_Ldef} for the estimation of $A^*$ and $w_m^*$, $1\leq m\leq k$, and tuning parameters $\lambda_A$, $\lambda_w$ are selected using cross validation. In cross validation, the training sets are composed of consecutive time series data, with the remaining 10\% of the original data set being testing sets. Under $\mathcal{H}_0$, 1000 simulations are carried out under each parameter setting, while under $\mathcal{H}_A$, we have 100 simulations. In the following sections, we look into false positive rates (FPR) and true positive rates (TPR) of test statistics $\widetilde{U}_T$ and $\widehat{R}_T$ as defined in \eqref{EstVarTS} and \eqref{CR_TS}, when we set the level of test as $\alpha=0.05$.

\subsection{Under the Null Hypothesis}\label{H0sim}
\begin{itemize}[leftmargin=6mm]
	\item[(1)] Varying sparsity\\
	\begin{figure}
		\centering
		\includegraphics[width=0.6\textwidth]{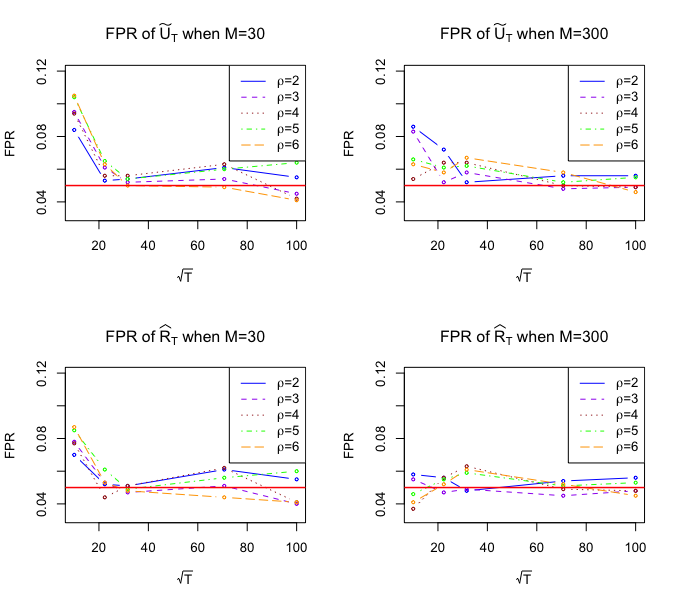}
		\caption{False positive rate (FPR) of $\widetilde{U}_T$ and $\widehat{R}_T$ v.s.$\sqrt{T}$, with various dimension $M$ and sparsity level $\rho$. The red line is the significance level $\alpha=0.05$.}
		\label{H0vsN01}
	\end{figure}
	Here we summarize the experiments with randomly generated $A^*$, that are symmetric and row-wise sparse, with different sparsity levels $\rho$ defined in \eqref{sparsity_def}. Figure \ref{H0vsN01} shows how FPR of $\widetilde{U}_T$ and $\widehat{R}_T$ averaged over 1000 experiments vary with $\sqrt{T}$. We can see that when $T$ increases to about 500, the FPR becomes stable and close to $\alpha=0.05$ regardless of $\rho, M$, choice between $\widetilde{U}_T$ and $\widehat{R}_T$. 

	When the sample size $T$ is small, the test tends to be conservative, which is the consequence of estimating variance $\sigma^{*2}$ and covariances $\Upsilon^{(m)}$'s. In the simulation we use naive estimators for these two quantities, as defined in \eqref{VarEst} and \eqref{S_def} which tend to be smaller than the true parameters. This is because we usually fit noise in the regression, as noticed by \cite{fan2012variance}. As shown in these two figures, $\widehat{R}_T$ is less conservative than $\widetilde{U}_T$ when $T$ is small, since the magnitude of $\widetilde{\Upsilon}^{(m)}$ is larger than $\widehat{\Upsilon}^{(m)}$, which makes $\left(\widetilde{\Upsilon}^{(m)^\top}\right)^{-1}\widehat{\Upsilon}^{(m)}\left(\widetilde{\Upsilon}^{(m)}\right)^{-1}$ probably a better estimator for $\Upsilon^{(m)}$. 
		\begin{figure}
			\centering
			\includegraphics[width=0.6\textwidth]{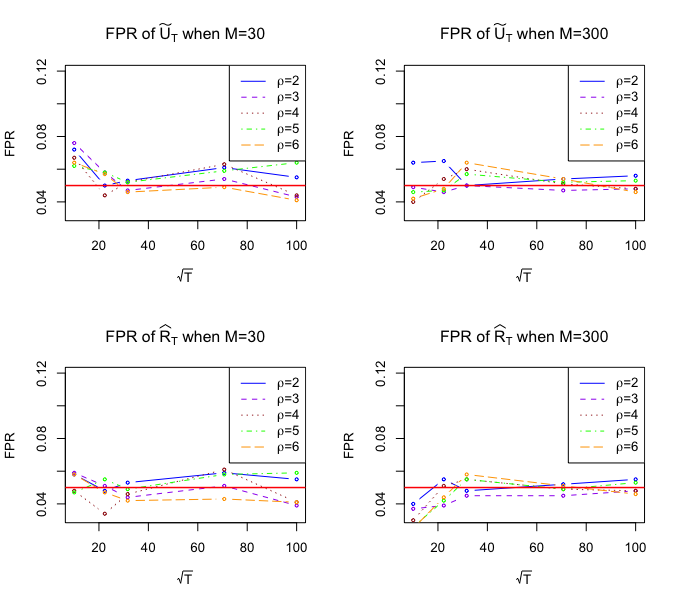}
			\caption{FPR of $\widetilde{U}_T$ and $\widehat{R}_T$ when residual variance is known.}
			\label{H0vsN12}
		\end{figure}
	We also summarize the FPR when the variance $\sigma^{*2}$ of $\epsilon_{ti}$ is known in Figure \ref{H0vsN12}. We can see from these figures that $\widehat{U}_T$ is still a little conservative when $T$ is small, while $\widehat{R}_T$ with $\hat{\sigma}^2$ substituted by $\sigma^{*2}$ is not conservative. 

	\item[(2)] Different Graph Structures\\
	If we consider the $M$ actors in the time series as nodes in a network, and a nonzero $A_{ij}^*$ represents an directed edge from $j$ to $i$, then each matrix $A^*$ corresponds to a $M$-dimensional directed graph. We experiment with different structures of $A^*$, which also correspond to different graph structure, including block graph or chain graph. Specifically, we consider matrices with $\ell_2$ norm equal to 0.75:
	\begin{equation*}
	A^{(1)}=\begin{pmatrix}
	1/4 &1/2&0&0&\cdots&0&0\\
	1/2 & 1/4&0&0&\cdots&0&0\\
	0&0&1/4 &1/2&\cdots&\vdots&\vdots\\
	0&0&1/2 & 1/4&\cdots&\vdots&\vdots\\
	\vdots&\vdots&\ddots&\ddots&\ddots&\vdots&\vdots\\
	0&0&\cdots&\cdots&\cdots&1/4&1/2\\
	0&0&\cdots&\cdots&\cdots&1/2&1/4
	\end{pmatrix},
	\end{equation*}
	which is a block graph;
	\begin{equation*}
	A^{(2)}=\begin{pmatrix}
	c & c & 0 &\cdots &\cdots &0\\
	c & 0 & c &\cdots &\cdots &0\\
	0 &c &0 &c &\cdots &0\\
	\vdots &\ddots& \ddots &\ddots &\ddots &\vdots\\
	0&\cdots&\cdots&c&0&c\\
	0&\cdots&\cdots&\cdots&c&0
	\end{pmatrix},
	\end{equation*}
	with constant $c$ chosen to ensure $\left\|A^{(2)}\right\|_2=0.75$, which is a chain graph; and $A^{(3)}$ being randomly generated symmetric matrix of sparsity level $\rho=2$, and largest eigenvalue equal to 0.75. Figure \ref{H0vsst01} shows the difference among these three different structures. We can see that block graph is less accurate than the other two, which is due to a larger variance for each $X_{t,D_m}-w_m^{*\top}X_{t,D_m^c}$. Investigating the question of how graph structure theoretically influences testing performance remains an open and interesting direction.
	\begin{figure}
		\centering
		\includegraphics[width=0.7\textwidth]{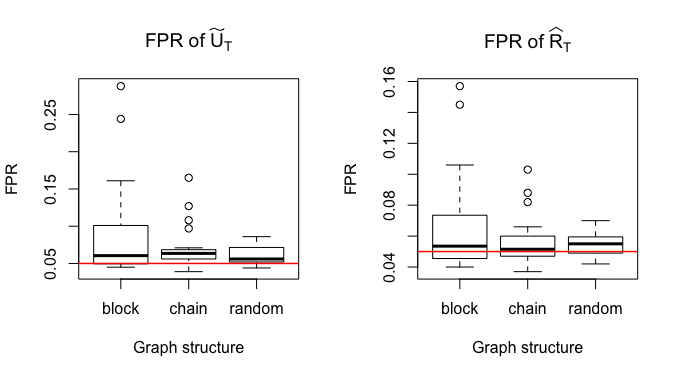}
		\caption{FPR under different graph structure. Block refers to $A^{(1)}$, chain refers to $A^{(2)}$ and random refers to $A^{(3)}$.}
		\label{H0vsst01}
	\end{figure}
\end{itemize}

\subsection{Alternative Hypothesis}\label{HAsim}
    First we look into how the true positive rate (TPR) varies with $\|T^{-\phi}\Delta\|_2$, since we set $\mathcal{H}_A$ as $\widetilde{A}_D=\mu+T^{-\phi}\Delta$ and $\|T^{-\phi}\Delta\|_2$ may be viewed as a measure of distance from the null hypothesis. Fig.~\ref{H1vsdelta11} only presents the simulation results when $A^*=A^{(1)}$ and $M=300$, while the other choices of $A^*$ and $M$ generate very similar results. We can see from these two figures that as $\|T^{-\phi}\Delta\|_2$ increases, TPR approaches 1. The slope increases when sample size $T$ gets larger, or when the test statistic changes from $\widehat{R}_T$ to $\widetilde{U}_T$. This aligns with intuition, since when $T$ increases, we are supposed to distinguish between $\mathcal{H}_0$ and $\mathcal{H}_A$ better, and $\widetilde{U}_T$ is more conservative than $\widehat{R}_T$ as we show in subsection \ref{H0sim}.
    \begin{figure}
        \centering
        \includegraphics[width=0.7\textwidth]{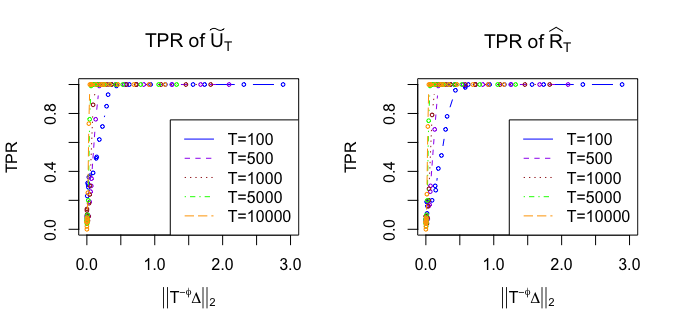}
        \caption{True positive rate of $\widetilde{U}_T$ and $\widehat{R}_T$, when $A^*=A^{(1)}$ and $M=300$}
        \label{H1vsdelta11}
    \end{figure}

We also check the influence of $\phi$. Figure \ref{H1vsphi01} reveals how TPR changes when $T$ increases, if we set $\left\|\widetilde{\Delta}\right\|_2$ and $\phi$ fixed. If $\phi<0.5$, TPR converges to 1 very quickly, while if $\phi>0.5$, TPR converges to 0.05, but the convergence is slower when $\phi$ or $\left\|\widetilde{\Delta}\right\|_2$ increases. When $\phi=0.5$, Theorem \ref{EstVarThm} and \ref{ConfRegThm} states that $\widetilde{U}_T$ and $\widehat{R}_T$ would converge to $\chi_{d,\left\|\widetilde{\Delta}\right\|_2^2}$, thus the TPR should converge to some value between 0.05 and 1, depending on $d$ and $\left\|\widetilde{\Delta}\right\|_2^2$. The black lines in figure \ref{H1vsphi01} indicate this convergence value, but since the test tends to be conservative when $T$ is not large enough, TPR when $\phi=0.5$ is usually above the black line. The conservative issue is more severe under $\mathcal{H}_A$ since the deviation $\widetilde{\Delta}$ is also multiplied by the estimated variances, which exaggerates the conservative tendency. However, this may not be a big concern under $\mathcal{H}_A$, since we always want the TPR to be large.
\begin{figure}
    \centering
    \includegraphics[width=0.7\textwidth]{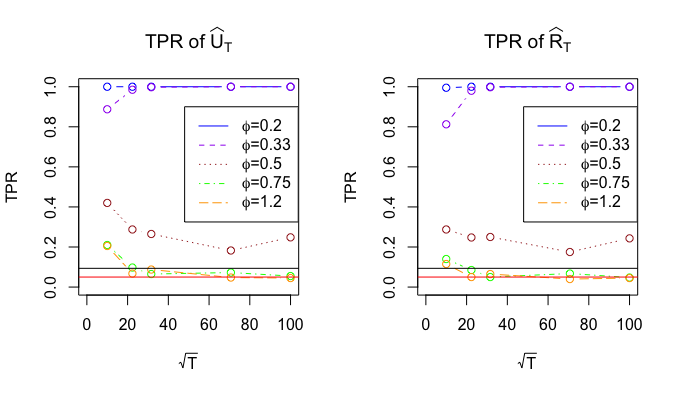}
    \caption{TPR of $\widetilde{U}_T$ and $\widehat{R}_T$ when $\left\|\tilde{\Delta}\right\|_2=1$, $A^*=A^{(1)}$. Results for different graph size $M$ from 30 to 300 are combined together and average TPR is taken. Red line is significance level $\alpha$, the value that TPR should converge to when $\phi<0.5$; while the black line is the convergence point specified in Theorem \ref{AltThm} when $\phi=0.5$.}
    \label{H1vsphi01}
\end{figure}

\section{Proof Overview}\label{main_proof}

One of the main contributions of this work is the proof technique, which addresses a number of technical challenges and develops novel concentration bounds for dependent sub-Gaussian random vectors. In this section, we present and discuss key lemmas for the proof and provide the main steps for proving Theorems \ref{NullThm} and \ref{AltThm}, deferring the more technically intensive steps to the supplement. 
\subsection{Key Lemmas}\label{KeyLemma}
The major technical challenge lies in proving the following two concentration bounds for dependent sub-Gaussian random vectors.
\begin{Lemma}[Deviation Bound for $A^*$]\label{DevBnd}
Under model \eqref{ARp2}, when $\epsilon_{ti}$ are sub-Gaussian noise with scale factor $\tau$, and $A^*\in \Omega_0\cup \Omega_1$,
\begin{equation*}
    \mathbb{P}\left(\left\|\frac{1}{T}\sum_{t=0}^{T-1}\epsilon_{t}\mathcal{X}_t^\top\right\|_\infty>C\sqrt{\frac{\log M}{T}}\right)\leq c_1\exp\{-c_2\log M\},
\end{equation*}
When $T\geq C\log M$.
\end{Lemma}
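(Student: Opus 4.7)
The plan is to bound each entry of the $M \times pM$ matrix $\frac{1}{T}\sum_{t=0}^{T-1}\epsilon_t \mathcal{X}_t^\top$ separately and then take a union bound over the $pM^2$ entries. Fix indices $i \in \{1,\ldots,M\}$ and $j \in \{1,\ldots,pM\}$ and let $Z_T := \sum_{t=0}^{T-1}\epsilon_{t,i}\mathcal{X}_{t,j}$; it suffices to show $\mathbb{P}(|Z_T| \geq C\sqrt{T\log M}) \leq c_1'\exp(-c_2'\log M)$ for some sufficiently large $c_2'$, so that the union bound absorbs the extra $\log(pM^2)$ factor.

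The key structural observation is that $\{\epsilon_{t,i}\mathcal{X}_{t,j}\}_{t\geq 0}$ is a martingale difference sequence with respect to the filtration $\mathcal{F}_t := \sigma(\epsilon_s : s < t)$. Indeed, through the stationary MA representation of $\mathcal{X}_t$ (assembled block-wise from $\{\Psi_k\}$), $\mathcal{X}_{t,j}$ is $\mathcal{F}_t$-measurable, while $\epsilon_{t,i}$ is independent of $\mathcal{F}_t$, mean zero, and sub-Gaussian with parameter $\tau$. Iterating the conditional MGF inequality yields the supermartingale bound
\[
\mathbb{E}\exp\!\bigl(\lambda Z_T - \tfrac{\lambda^2 \tau^2}{2}V_T\bigr) \leq 1, \qquad V_T := \sum_{t=0}^{T-1}\mathcal{X}_{t,j}^2.
\]
Naively trying to continue by controlling $\mathbb{E}\exp(\tfrac{\lambda^2\tau^2}{2}V_T)$ fails, since $V_T$ is sub-exponential and its MGF diverges for large enough $\lambda$. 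I would circumvent this via a standard stopping-time trick: set $\eta := \inf\{t : V_t > KT\}$ for a constant $K$ slightly above $\mathbb{E}[V_T]/T$ (which is uniformly bounded in terms of $\beta,\tau$ via the stability condition \eqref{StbARpsG}), and evaluate the supermartingale at $T\wedge\eta$. This localizes the tail on the good event $\{V_T\leq KT\}$, giving
\[
\mathbb{P}(|Z_T| \geq u,\; V_T \leq KT) \leq 2\exp\!\bigl(-u^2/(2\tau^2 KT)\bigr),
\]
so that choosing $u = C\sqrt{T\log M}$ for $C$ large yields the required decay on the good event.

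The remaining, and genuinely difficult, step is to bound $\mathbb{P}(V_T > KT)$: this is a concentration inequality for a sum of squares of temporally dependent sub-Gaussian random variables, and is precisely the setting in which the paper's novel quadratic-form concentration inequality for dependent sub-Gaussians is needed. Using the MA representation, $V_T = \epsilon^\top Q \epsilon$ for an (infinite-dimensional) coefficient matrix $Q$; the form of the stability condition \eqref{StbARpsG} is calibrated so that, after truncating the MA expansion at a suitable lag and absorbing the tail contribution, the resulting finite-dimensional kernel satisfies $\|Q\|_F^2 = O(T)$ and $\|Q\|_2 = O(1)$ uniformly over $A^*\in \Omega_0\cup\Omega_1$. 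A Hanson--Wright-type tail bound for sub-Gaussian quadratic forms then yields $\mathbb{P}(V_T > KT) \leq 2\exp(-cT)$, which under $T \geq C\log M$ dominates the target rate $\exp(-c_2'\log M)$. Combining this with the MDS tail bound on $\{V_T\leq KT\}$ and the union bound over $(i,j)$ completes the proof. The main obstacle is exactly this last quadratic-form concentration: standard Hanson--Wright demands independent coordinates, so the delicate part is carrying out a truncation of the MA expansion that preserves the MDS structure used above while producing explicit Frobenius and spectral norm control, under only the stability condition \eqref{StbARpsG} rather than the stronger $\beta$-mixing or $\ell_1$-type constraints used in prior work.
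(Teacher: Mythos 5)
Your proposal is correct, but it takes a genuinely different route from the paper's proof. You treat $\{\epsilon_{t,i}\mathcal{X}_{t,j}\}$ as a martingale difference sequence with respect to $\mathcal{F}_t=\sigma(\epsilon_s:s<t)$, form the exponential supermartingale $\exp\bigl(\lambda Z_t-\tfrac{\lambda^2\tau^2}{2}V_t\bigr)$, localize on the event $\{V_T\le KT\}$ to get a self-normalized Chernoff bound, and then reduce the remaining work to a one-sided tail bound for the predictable quadratic variation $V_T=\sum_t\mathcal{X}_{t,j}^2$. That last step is exactly Lemma~\ref{QuaBound} specialized to $B=e_je_j^\top$ (trace norm and operator norm both $1$), and with $T\ge C\log M$ the resulting $\exp(-cT)$ tail dominates the target rate, so the union bound over the $pM^2$ entries goes through.

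The paper's argument is more direct and does not invoke the martingale structure at all: it observes that $\sum_t\epsilon_{t,i}\mathcal{X}_{t,j}$ is already, via the MA representation $\mathcal{X}_t=\sum_{k\ge 0}\Psi^{(p)}_k\epsilon_{t-k-1}$, a bilinear form in the independent noise sequence, truncates the MA expansion at lag $m$, writes the truncated term as $\tilde\epsilon^\top Q\,\tilde\epsilon$ for an explicit finite matrix $Q$, and applies Hanson--Wright once to that object. The operator and Frobenius norms of $Q$ are controlled through a Toeplitz-matrix operator-norm bound (Lemma~\ref{Toeplitz}), using the same stability coefficients $\alpha_j=\|\Psi_j\|_2$; the truncation residual is controlled separately with a sub-exponential tail. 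So both routes ultimately rest on the same truncation-plus-Hanson--Wright machinery, applied to different objects: the paper applies it once to the cross term $Z_T$, whereas you apply it to $V_T$ and cover the $\epsilon_{t,i}$ part by conditioning. Your decomposition is arguably cleaner when Lemma~\ref{QuaBound} is already available, since it isolates the sub-Gaussian martingale step from the dependent quadratic-form step; the paper's version has the modest advantage of being self-contained and not requiring the exponential supermartingale/optional-stopping scaffolding, which in turn keeps the constants and the dependence on $\tau$ and $\beta$ explicit in a single Hanson--Wright application.
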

Lemma \ref{DevBnd} is a standard deviation bound for proving estimation error bound of Lasso type or Dantzig selector type estimators. We apply this lemma both in the proof of Theorem \ref{NullThm}, \ref{AltThm} and Lemma \ref{Abound}.
\begin{Lemma}\label{QuaBound}
	    Under model \eqref{ARp2}, when $\epsilon_{ti}$ are sub-Gaussian noise with constant scale factor $\tau$, and $A^*\in \Omega_0\cup \Omega_1$, if $B\in \mathbb{R}^{pM\times pM}$ is a symmetric matrix, we have
   	\begin{align*}
   	\mathbb{P}\left(\left|\frac{1}{T}\sum_{t=0}^{T-1}\mathcal{X}_t^\top B\mathcal{X}_t-\text{tr}(B\Upsilon)\right|>\delta\right)\leq c_1\exp\left\{-c_2T\min\left\{\frac{\delta}{\|B\|_2},\frac{\delta^2}{\|B\|_{\tr}\|B\|_2}\right\}\right\}.
   	\end{align*}
\end{Lemma}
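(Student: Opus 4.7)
The plan is to reduce the quadratic form in the dependent vectors $\mathcal{X}_t$ to a quadratic form $V^{\top}AV$ in the underlying independent sub-Gaussian innovations, and then to invoke the classical Hanson--Wright inequality for independent sub-Gaussian random variables. This sidesteps the main obstruction flagged in the paper---that sub-Gaussian vectors cannot be rotated to independent coordinates---because the original noise entries $\epsilon_{ti}$ are already independent both across $t$ and across $i$, so once everything is rewritten in the $\epsilon$-coordinates the only dependence is through a fixed linear operator $W$ defined below.

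First I would use the MA$(\infty)$ representation $\mathcal{X}_t=\sum_{j=0}^{\infty}\Phi_j\epsilon_{t-j-1}$, where the block matrices $\Phi_j\in\mathbb{R}^{pM\times M}$ are built from $\{\Psi_k\}$ in the natural block-wise way; the stability assumption (\ref{StbARpsG}) translates to $\sum_{r\ge 0}\alpha_r\lesssim\beta$ with $\alpha_r\triangleq(\sum_{j\ge 0}\|\Phi_{j+r}\|_2^2)^{1/2}$. Fix a cut-off $L$ (to be chosen later) and set $\mathcal{X}_t^{(L)}=\sum_{j=0}^{L-1}\Phi_j\epsilon_{t-j-1}$. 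Stacking the finitely many innovations into one independent sub-Gaussian vector $V\in\mathbb{R}^{N}$ with entries of scale $\tau$ yields sparse matrices $W_t$ such that $\mathcal{X}_t^{(L)}=W_tV$ and
\[\frac{1}{T}\sum_{t=0}^{T-1}\mathcal{X}_t^{(L)\top}B\,\mathcal{X}_t^{(L)}=V^{\top}AV,\qquad A=\frac{1}{T}\sum_{t=0}^{T-1}W_t^{\top}B\,W_t.\]
Hanson--Wright now bounds the deviation of $V^{\top}AV$ from its mean by $c\min(\delta^{2}/\|A\|_F^{2},\,\delta/\|A\|_2)$.

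The core computation is to bound $\|A\|_2$ and $\|A\|_F$, each carrying a factor of $1/T$. Stacking $W_0,\ldots,W_{T-1}$ vertically into $W$, a direct calculation gives $\|A\|_2\le T^{-1}\|B\|_2\|W\|_2^{2}$, and $\|W\|_2$ may be viewed as the spectral norm of a (truncated) block-Toeplitz convolution with symbol $\sum_j\Phi_je^{ij\omega}$, so it is bounded by $\sum_j\|\Phi_j\|_2$, which by (\ref{StbARpsG}) is at most a constant depending only on $\beta$ and $p$. For the Frobenius norm, writing $G_{st}=W_sW_t^{\top}$ and using $|\tr(BG_{st}BG_{st}^{\top})|\le\|B\|_F^{2}\|G_{st}\|_2^{2}$ produces
\[\|A\|_F^{2}=\frac{1}{T^{2}}\sum_{s,t}\tr(BG_{st}BG_{st}^{\top})\le\frac{\|B\|_F^{2}}{T^{2}}\sum_{s,t}\|G_{st}\|_2^{2}.\]
Matching the two MA expansions gives $\|G_{st}\|_2\le\alpha_0\alpha_{|s-t|}$, so the double sum is at most $2T\alpha_0^{2}\sum_r\alpha_r^{2}\lesssim T\beta^{4}$. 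Combining with $\|B\|_F^{2}\le\|B\|_2\|B\|_{\tr}$ for symmetric $B$, we obtain $\|A\|_2\le C\|B\|_2/T$ and $\|A\|_F^{2}\le C\|B\|_2\|B\|_{\tr}/T$, so Hanson--Wright yields exactly the stated bound $\exp\bigl(-cT\min(\delta^{2}/(\|B\|_2\|B\|_{\tr}),\,\delta/\|B\|_2)\bigr)$ for the truncated quadratic form.

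The step I expect to be the main obstacle is the truncation argument itself: the difference $\frac{1}{T}\sum_t(\mathcal{X}_t^{\top}B\,\mathcal{X}_t-\mathcal{X}_t^{(L)\top}B\,\mathcal{X}_t^{(L)})$ must be absorbed into the same tail with at most logarithmic-in-$T$ overhead. I would expand it as $\sum_t(\mathcal{R}_t^{\top}B\,\mathcal{X}_t^{(L)}+\mathcal{X}_t^{(L)\top}B\,\mathcal{R}_t+\mathcal{R}_t^{\top}B\,\mathcal{R}_t)$ with $\mathcal{R}_t=\sum_{j\ge L}\Phi_j\epsilon_{t-j-1}$, control the pure-residual piece by a second application of Hanson--Wright to $(\mathcal{R}_t)$ (which again has an independent-innovation representation, now with coefficients $\{\Phi_j\}_{j\ge L}$), and handle the cross terms by Cauchy--Schwarz against the bound already obtained for the truncated part. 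Since $\sum_{j\ge L}\|\Phi_j\|_2$ decays to zero under (\ref{StbARpsG}), taking $L$ to be a small polynomial in $\log T$ renders the truncation error negligible compared to $\delta$ on an event of probability at least $1-c_1\exp(-c_2\log M)$. It is in this step that the sub-Gaussian structure (as opposed to Gaussian) makes the bookkeeping delicate, since the residual cannot be rotated to independence and its moments must be tracked by hand.
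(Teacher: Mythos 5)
Your proposal follows essentially the same route as the paper's proof: truncate the MA$(\infty)$ expansion, rewrite the truncated quadratic form as $V^\top A V$ in the finitely many independent sub-Gaussian innovations, apply Hanson--Wright, bound $\|A\|_2$ and $\|A\|_F$ via block-Toeplitz arguments and $\|B\|_F^2\le\|B\|_2\|B\|_{\tr}$, and absorb the residual and cross terms by $\psi_1$-norm control using the sub-exponential tail of products of sub-Gaussians. The only cosmetic deviation is your suggestion to apply Hanson--Wright a second time to the residual $(\mathcal{R}_t)$---which is infinite-dimensional as written, so the paper instead bounds $\|E_2\|_{\psi_1}$ and $\|E_3\|_{\psi_1}$ directly and uses a one-sided sub-exponential tail, consistent with your own remark that the residual moments "must be tracked by hand."
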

Lemma \ref{QuaBound} provides concentration bound for the sample average of general quadratic form $\mathcal{X}_t^\top B\mathcal{X}_t$, and is very helpful in proving martingale CLT under our setting, REC, Lemma \ref{Cov_mBound}, etc.

In the Gaussian case, both these lemmas follow from prior work in~\cite{basu2015regularized} which relies on the fact that dependent Gaussian vectors can be rotated to be independent. Since dependent sub-Gaussian random variables cannot be rotated to be independent (only uncorrelated), we exploit the independence of $\epsilon_t$ by representing each $\mathcal{X}_t$ by linear function of the infinite series $\{\epsilon_i\}_{i=-\infty}^{i=t}$ and then use a careful truncation argument. We analyze sufficiently many terms in the summation, and control the infinite residues. 

\subsection{Proof of Theorem \ref{NullThm}}
\begin{proof}
Suppose $A^*\in \Omega_0$. We will use $C_i,c_i$ to refer to constants that only depend on $p,d,\beta,\tau$ (not $M$ or $T$), and different constants might share the same notation.

The proof can be divided into two major parts: showing the convergence of $U_T$ to $\chi_d^2$, and bounding the estimation error $\left|\widehat{U}_T-U_T\right|$. Formally, for any $\varepsilon>0$,
\begin{equation*}
\begin{split}
&\mathbb{P}(\widehat{U}_T\leq x)-F_{d}(x)\\
\leq &\mathbb{P}(U_T\leq x+\varepsilon)+\mathbb{P}(\left|\widehat{U}_T-U_T\right|>\varepsilon)-F_{d}(x)\\
	\leq &\left|\mathbb{P}(U_T\leq x+\varepsilon)-F_{d}(x+\varepsilon)\right|+F_{d}(x+\varepsilon)-F_{d}(x)+\mathbb{P}\left(\left|\widehat{U}_T-U_T\right|>\varepsilon\right),
\end{split}
\end{equation*}
	and
	\begin{equation*}
	\begin{split}
	    &F_{d}(x)-\mathbb{P}(\widehat{U}_T\leq x)\\
	    =&\mathbb{P}(\widehat{U}_T> x)-(1-F_{d}(x))\\
	\leq &\mathbb{P}(U_T> x-\varepsilon)+\mathbb{P}(\left|\widehat{U}_T-U_T\right|>\varepsilon)-1+F_{d}(x)\\
	\leq &\left|F_{d}(x-\varepsilon)-\mathbb{P}\left(U_T\leq x-\varepsilon\right)\right|+F_{d}(x)-F_{d}(x-\varepsilon)+\mathbb{P}\left(\left|\widehat{U}_T-U_T\right|>\varepsilon\right),
	\end{split}
	\end{equation*}
	which implies
	\begin{equation}\label{main_prob_diff}
	\begin{split}
	    &\left|\mathbb{P}(\widehat{U}_T\leq x)-F_{d}(x)\right|\\
	\leq &\sup_{y\in \mathbb{R}}\left|\mathbb{P}(U_T\leq y)-F_{d}(y)\right|+F_d(x+\varepsilon)-F_d(x-\varepsilon)+\mathbb{P}\left(\left|\widehat{U}_T-U_T\right|>\varepsilon\right).
	\end{split}
	\end{equation}
	
	In the following, we provide bounds on each of the three terms. The following lemma shows the uniform weak convergence rate of $\left\|V_T+\mu\right\|_2^2$ to $\chi^2_{d,\|\mu\|_2^2}$, of which the convergence of $U_T=\|V_T\|_2^2$ to $\chi_d^2$ is a special case.
	\begin{Lemma}[Convergence Rate of $\left\|V_T+\mu\right\|_2^2$]\label{CLT}
	Under model (\ref{ARp2}) with $\epsilon_{ti}$ being sub-Gaussian noise of scale factor $\tau$, then for any $A^*\in \Omega_0$, $\forall \mu \in \mathbb{R}^d$, 
	\begin{equation}\label{CLT_boundsG}
	\sup_{x\in \mathbb{R}}\left|\mathbb{P}(\|V_T+\mu\|_2^2\leq x)-F_{d,\|\mu\|_2^2}(x)\right|\leq C(\|\mu\|_2)T^{-\frac{1}{8}},
	\end{equation}
	when $T>C$ for some absolute constant $C$, where $C(\|\mu\|_2)$ is a constant depending on and is non-decreasing with respect to $\|\mu\|_2$. 
\end{Lemma}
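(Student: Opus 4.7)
The plan is to prove Lemma \ref{CLT} in two stages: first establish a multivariate martingale CLT for $V_T$ with $\mathcal{N}(0, I_d)$ limit at the $T^{-1/8}$ rate on the class of Euclidean balls, and then lift this to a rate for the CDF of $\|V_T + \mu\|_2^2$. Writing $V_T = \sum_{t=0}^{T-1} \xi_t$ with $m$-th block
\[
\xi_{t,m} = -\tfrac{1}{\sqrt{T}}\, (\Upsilon^{(m)})^{-1/2}\, \epsilon_{t,m} \big(\mathcal{X}_{t,D_m} - w_m^{*\top} \mathcal{X}_{t,D_m^c}\big),
\]
I note that $\xi_t = (\xi_{t,1}^\top, \dots, \xi_{t,k}^\top)^\top$ is a martingale difference with respect to $\mathcal{F}_{t+1} = \sigma(\epsilon_s : s \leq t)$: $\mathcal{X}_t$ is $\mathcal{F}_t$-measurable, $\epsilon_{t,m}$ is independent of $\mathcal{F}_t$ with mean zero, and distinct coordinates $\epsilon_{t,m}, \epsilon_{t,m'}$ are independent.

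The first key step is verifying the conditional-variance condition. Using coordinate independence of the noise, the $(m,m')$ block of $\sum_t \mathbb{E}[\xi_t \xi_t^\top \mid \mathcal{F}_t]$ vanishes for $m \neq m'$, while for $m = m'$ it equals
\[
\tfrac{1}{T} (\Upsilon^{(m)})^{-1/2} \sum_{t=0}^{T-1} \big(\mathcal{X}_{t,D_m} - w_m^{*\top}\mathcal{X}_{t,D_m^c}\big)\big(\mathcal{X}_{t,D_m} - w_m^{*\top}\mathcal{X}_{t,D_m^c}\big)^\top (\Upsilon^{(m)})^{-1/2}.
\]
I would apply Lemma \ref{QuaBound} to quadratic forms of the type $u^\top \mathcal{X}_t \mathcal{X}_t^\top u$ with $u$ built from columns of $(\Upsilon^{(m)})^{-1/2}$ extended by $-w_m^*$, obtaining convergence to $I_{d_m}$ at rate $T^{-1/2}$ up to logarithmic factors. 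The symmetric matrix $B$ entering Lemma \ref{QuaBound} here has operator norm and trace controlled uniformly over $\Omega_0$ because $\Upsilon^{(m)}$ has bounded conditioning under the stability hypothesis.

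Next, I plan to invoke a multivariate martingale Berry--Esseen inequality on the class of convex sets (of Bentkus / Rinott--Rotar type), combining the conditional-variance rate with Lindeberg-type control of $\mathbb{E}\|\xi_t\|_2^3$. Sub-Gaussianity of $\epsilon_{t,m}$ together with the stability bound $\sum_i (\sum_j \|\Psi_{i+j}\|_2^2)^{1/2} \leq \beta$ on $\Omega_0$ forces $\|\xi_t\|_2$ to be sub-exponential of order $T^{-1/2}$, which supplies the third-moment estimate. Balancing the $T^{-1/2}$ conditional-variance fluctuation against the third-moment term through a smoothing inequality, with an intermediate truncation of $\|\xi_t\|_2$ to tame the sub-Gaussian tails, yields the $T^{-1/8}$ Kolmogorov rate over convex sets. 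Since $\{\|V_T + \mu\|_2^2 \leq x\} = \{V_T \in B(-\mu, \sqrt{x})\}$ is a Euclidean ball (hence convex), this rate transfers directly to the target CDF bound; the dependence of the constant on $\|\mu\|_2$ enters only through the uniform density of $F_{d, \|\mu\|_2^2}$ used to absorb the residual smoothing error, which is monotone in $\|\mu\|_2$.

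The main obstacle will be producing a sharp enough multivariate martingale Berry--Esseen rate under sub-Gaussian dependence. Unlike the Gaussian setting of \cite{neykov2018unified}, dependent sub-Gaussian vectors cannot be rotated to independence, so the conditional-covariance fluctuations must be controlled in operator norm at a quantitative rate; this is exactly where the truncation argument underlying Lemma \ref{QuaBound} becomes indispensable. The $T^{-1/8}$ exponent reflects two successive square-root smoothings: one to pass from the $T^{-1/2}$ concentration of the conditional variance to a distributional statement, and one to balance this against the error from truncating $\|\xi_t\|_2$ in the third-moment bound.
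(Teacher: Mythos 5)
Your high-level decomposition is the same as the paper's: set up $V_T$ as a sum of martingale differences $\xi_{T,t}$ adapted to $\sigma(\epsilon_s : s\le t)$, verify that the accumulated conditional covariance concentrates around $I_d$ by applying Lemma \ref{QuaBound} to quadratic forms $\mathcal{X}_t^\top B \mathcal{X}_t$ with $B$ built from $W_m^*$ and $(\Upsilon^{(m)})^{-1/2}$, then pass from a CLT for $V_T$ to a Kolmogorov-type bound for $\|V_T+\mu\|_2^2$ over ball events. That much matches.

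The gap is in the central ingredient. You invoke a ``multivariate martingale Berry--Esseen inequality on the class of convex sets (of Bentkus / Rinott--Rotar type)'' and assert this yields $T^{-1/8}$. No such off-the-shelf result exists for the setting you need: Bentkus's convex-set bound is for sums of i.i.d.\ vectors with fixed covariance, and Rinott--Rotar/Stein-method results for dependent variables do not directly accommodate a random, adapted conditional covariance of the kind appearing here. The paper instead proves a bespoke Lemma \ref{convergence.rate}, a modification of Lemma~4 in Grama--Haeusler (2006), whose bound has the precise structure $C(\|\mu\|_2,d,\delta)\,(R_\delta^{n,d})^{1/(3+2\delta)}$ with $R_\delta = L_\delta + N_\delta$, where $L_\delta$ is a $(2+2\delta)$-moment sum of $\|\xi_{T,t}\|_2$ and $N_\delta$ is a $(1+\delta)$-moment bound on $\|\langle V_T\rangle - I\|_{\tr}$. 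This machinery is not generic; the paper's proof of Lemma \ref{convergence.rate} requires a random-projection construction to build an auxiliary martingale whose conditional covariance sums exactly to $I$, then compares it to a Gaussian by a Lindeberg swap with smoothed indicators $f_*(g_{l,\mu,r,\varepsilon}(\cdot))$ and controls all derivatives via Lemma \ref{DerivativeBnd}. Your plan skips over whether and how the conditional-covariance \emph{high-probability} concentration you get from Lemma \ref{QuaBound} can be converted into the $(1+\delta)$-\emph{moment} bound $N_\delta^{T,d}\le C T^{-(1+\delta)/2}$ actually needed; the paper integrates the tail bound from Lemma \ref{QuaBound} to get this.

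Finally, your heuristic ``two successive square-root smoothings'' is not the right accounting. The exponent comes from optimizing $\delta\mapsto\delta/(3+2\delta)$ over $\delta\in(0,\tfrac12]$: one shows $L_\delta^{T,d}\lesssim T^{-\delta}$ (this is the bottleneck, via Lemma \ref{subGaussnorm}) and $N_\delta^{T,d}\lesssim T^{-(1+\delta)/2}$, then takes $\delta=\tfrac12$ and the exponent $1/(3+2\cdot\tfrac12)=\tfrac14$, giving $(T^{-1/2})^{1/4}=T^{-1/8}$. So the rate is driven by the trade-off between the moment order $\delta$ one can afford and the power $1/(3+2\delta)$ in the smoothing bound, not by two independent square-root losses. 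Without identifying the Grama--Haeusler-type lemma and proving the moment bounds $L_\delta$ and $N_\delta$ explicitly, the argument as written would not close.
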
 
This Lemma is proved in section \ref{lemma_proof2}, by applying a uniform martingale central limit theorem result. 
Thus, by Lemma \ref{CLT}, if $T>C$ for some constant $C$,
\begin{equation*}
    \sup_{y\in \mathbb{R}}\left|\mathbb{P}(U_T\leq y)-F_{d}(y)\right|\leq CT^{-\frac{1}{8}}.
\end{equation*}
Meanwhile,
	$$F_d(x+\varepsilon)-F_d(x-\varepsilon)\leq C_2\varepsilon
	$$ since $\chi_d^2$ has bounded density. 
	
	Now we only need to choose a proper $\varepsilon$ and bound $\mathbb{P}\left(\left|\widehat{U}_T-U_T\right|>\varepsilon\right)$. 
	\begin{equation}\label{EstErr}
	    \begin{split}
	        \left|\widehat{U}_T-U_T\right|\leq &\sum_{m=1}^k\left|T\widehat{S}_m^\top(\widehat{\Upsilon^{(m)}})^{-1}\widehat{S}_m-\left\|V_{T,m}\right\|_2^2\right|\\
	\leq &\sum_{m=1}^k\Bigg|T\widehat{S}_m^\top\left((\widehat{\Upsilon^{(m)}})^{-1}-(\Upsilon^{(m)})^{-1}\right)\widehat{S}_m+\left\|\sqrt{T}(\Upsilon^{(m)})^{-\frac{1}{2}}\widehat{S}_m\right\|_2^2-\left\|V_{T,m}\right\|_2^2\Bigg|\\
	\leq &\sum_{m=1}^k \left\|\Upsilon^{(m)\frac{1}{2}}\left(\widehat{\Upsilon^{(m)}}\right)^{-1}\Upsilon^{(m)\frac{1}{2}}-I\right\|_\infty\left\|\sqrt{T}(\Upsilon^{(m)})^{-\frac{1}{2}}\widehat{S}_m\right\|_1^2\\
	&+\left\|\sqrt{T}(\Upsilon^{(m)})^{-\frac{1}{2}}(\widehat{S}_m-S_m)\right\|_2^2+2\left\|V_{T,m}\right\|_2\left\|\sqrt{T}(\Upsilon^{(m)})^{-\frac{1}{2}}(\widehat{S}_m-S_m)\right\|_2.
	    \end{split}
	\end{equation}
	Define $E_m =\sqrt{T}(\Upsilon^{(m)})^{-\frac{1}{2}}\left(\widehat{S}_m-S_m\right)$, then \eqref{EstErr} turns into
	\begin{equation}\label{U_T_dev}
	    \begin{split}
	        \left|\widehat{U}_T-U_T\right|\leq &\sum_{m=1}^k \left\|E_m\right\|_2^2+2\left\|V_{T,m}\right\|_2\left\|E_m\right\|_2\\
	&+\left\|\Upsilon^{(m)\frac{1}{2}}\left(\widehat{\Upsilon^{(m)}}\right)^{-1}\Upsilon^{(m)\frac{1}{2}}-I\right\|_\infty\left(\left\|V_{T,m}\right\|_2+\left\|E_m\right\|_2\right)^2.
	    \end{split}
	\end{equation}
	
	We can bound $\|V_{T,m}\|_2$ using Lemma \ref{CLT} and $\left\|\Upsilon^{(m)\frac{1}{2}}\left(\widehat{\Upsilon^{(m)}}\right)^{-1}\Upsilon^{(m)\frac{1}{2}}-I\right\|_\infty$ using Lemma \ref{Cov_mBnd}, while for bounding the estimation induced error $\|E_m\|_2$, we first apply the following lemma to bound the eigenvalues of $\Upsilon^{(m)}$.

\begin{Lemma}\label{CovEigenBnd}
Consider the model (\ref{ARp1}) with independent noise $\epsilon_{ti}$ of unit variance, $A^*$ satisfies \eqref{StbARpsG}, then the eigenvalues of $\Upsilon$ can be bounded as follows:
\begin{equation*}
    0<C_1(\beta)\leq \Lambda_{\min}\left(\Upsilon\right)\leq \Lambda_{\max}\left(\Upsilon\right)\leq C_2(\beta).
\end{equation*}
\end{Lemma}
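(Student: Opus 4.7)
The plan is to bound both extreme eigenvalues directly from the moving-average representation $X_t=\sum_{j=0}^\infty\Psi_j\epsilon_{t-j-1}$, noting that $\Psi_0=I$ since $\mathcal{A}(0)=I$. First, I would extract two elementary consequences of the stability condition \eqref{StbARpsG}: taking $i=0$ gives $\sum_{j\geq 0}\|\Psi_j\|_2^2\leq\beta^2$, while the pointwise inequality $\|\Psi_i\|_2\leq(\sum_{k\geq i}\|\Psi_k\|_2^2)^{1/2}$ then yields the absolute summability $\sum_{j\geq 0}\|\Psi_j\|_2\leq\beta$. These are the only structural properties of $\{\Psi_j\}$ I will need; in particular, $\beta\geq\|\Psi_0\|_2=1$.

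For any partitioned vector $v=(v_1^\top,\ldots,v_p^\top)^\top\in\mathbb{R}^{pM}$ with $v_i\in\mathbb{R}^M$, substituting the MA expansion, re-indexing via $s=i+j$, and using the independence and unit covariance of the $\epsilon$'s yields
\begin{equation*}
v^\top\Upsilon v=\Var\Big(\sum_{i=1}^p v_i^\top X_{t-i+1}\Big)=\sum_{s=1}^\infty\|b_s\|_2^2,\qquad b_s:=\sum_{i=1}^{\min(p,s)}\Psi_{s-i}^\top v_i,
\end{equation*}
with the convention $\Psi_k=0$ for $k<0$. For the upper bound, two applications of Cauchy--Schwarz give $\|b_s\|_2^2\leq p\sum_{i=1}^p\|\Psi_{s-i}\|_2^2\|v_i\|_2^2$; summing over $s$ and swapping the order of summation collapses to $v^\top\Upsilon v\leq p\beta^2\|v\|_2^2$, hence $\Lambda_{\max}(\Upsilon)\leq p\beta^2$.

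The lower bound is the delicate step. I would retain only the first $p$ terms of the series and observe that $(b_1^\top,\ldots,b_p^\top)^\top=Lv$, where $L$ is the block lower-triangular $pM\times pM$ matrix with identity diagonal blocks and off-diagonal blocks $L_{ij}=\Psi_{i-j}^\top$ for $i>j$. Writing $L=I+N$ with $N$ strictly block-lower-triangular (so $N^p=0$), the inverse is the finite Neumann series $L^{-1}=\sum_{k=0}^{p-1}(-N)^k$. The crucial estimate is that $N$ admits the Kronecker decomposition $N=\sum_{k=1}^{p-1}S_k\otimes\Psi_k^\top$, where $S_k$ is the $p\times p$ $k$-th sub-diagonal shift satisfying $\|S_k\|_2=1$; therefore $\|N\|_2\leq\sum_{k=1}^{p-1}\|\Psi_k\|_2\leq\beta$, giving $\|L^{-1}\|_2\leq\sum_{k=0}^{p-1}\beta^k$. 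Consequently $\sigma_{\min}(L)\geq(\sum_{k=0}^{p-1}\beta^k)^{-1}$, and
\begin{equation*}
v^\top\Upsilon v\;\geq\;\sum_{s=1}^p\|b_s\|_2^2=\|Lv\|_2^2\;\geq\;\sigma_{\min}(L)^2\|v\|_2^2,
\end{equation*}
which produces a strictly positive constant $C_1(\beta)>0$ depending only on $p$ and $\beta$.

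The main obstacle is precisely the lower bound: a more conventional spectral-density route via $f(\omega)=(2\pi)^{-1}\Psi(e^{-i\omega})\Psi(e^{-i\omega})^{*}$ would require bounding $\|\mathcal{A}(e^{-i\omega})\|_2$ from above, which is not directly available from \eqref{StbARpsG}. The truncation-to-$p$-terms device sidesteps this by exploiting $\Psi_0=I$, so that the first $p$ innovations already reconstruct $v$ through an explicitly invertible triangular transformation whose inverse norm is controlled in terms of $p$ and $\beta$ alone.
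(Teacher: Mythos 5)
Your proof is correct, but it follows a genuinely different path from the paper's. The paper's proof invokes an external result (Proposition 2.3 and equation (2.6) of Basu and Michailidis, stated as Lemma~\ref{CovEigenBndGen}) that bounds $\Lambda_{\min}(\Upsilon)$ and $\Lambda_{\max}(\Upsilon)$ by $\mu_{\max}(\mathcal{A})^{-1}$ and $\mu_{\min}(\mathcal{A})^{-1}$, where $\mu_{\min/\max}(\mathcal{A})$ are extremes of $\Lambda(\mathcal{A}^*(z)\mathcal{A}(z))$ over $|z|=1$; it then lower-bounds $\mu_{\min}(\mathcal{A})$ by controlling $\|\mathcal{A}^{-1}(z)\|_2=\|\sum_j\Psi_j z^j\|_2\leq\beta$, and upper-bounds $\mu_{\max}(\mathcal{A})$ via an induction on $\|A_n\|_2$ driven by the recursion $A_n=-\sum_{i=1}^n\Psi_i A_{n-i}$. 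You bypass the spectral-density machinery entirely and work directly from $v^\top\Upsilon v=\sum_{s\geq 1}\|b_s\|_2^2$: the upper bound falls out of Cauchy--Schwarz and the $\ell^2$ summability of $\|\Psi_j\|_2$, while the lower bound comes from truncating to the first $p$ innovation coefficients and observing that $(b_1,\dots,b_p)=Lv$ with $L=I+N$ block-lower-triangular, $N^p=0$, and $\|N\|_2\leq\sum_{k\geq 1}\|\Psi_k\|_2\leq\beta$ via the Kronecker decomposition over shift matrices. The two approaches are structurally parallel in an interesting way: the paper's induction on $\|A_n\|_2$ and your Neumann series for $L^{-1}$ both exploit $\Psi_0=I$ and the finite lag $p$, and produce essentially the same geometric-sum constant (you get $(\sum_{k=0}^{p-1}\beta^k)^{-2}$, the paper $(\sum_{k=0}^p\beta^k)^{-2}$). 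Your route buys self-containedness --- no reliance on the Basu--Michailidis spectral-density inequalities --- at the modest cost of a factor of $p$ in the upper constant ($p\beta^2$ versus $\beta^2$), which is immaterial since $p$ is fixed. One small cleanup you could add: the interchange of expectation and the infinite sum in $\Var(v^\top\mathcal{X}_t)=\sum_s\|b_s\|_2^2$ should be justified by the absolute summability $\sum_j\|\Psi_j\|_2\leq\beta$, which guarantees $L^2$ convergence of the MA series.
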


Lemma \ref{CovEigenBnd} is proved based on established results in \cite{basu2015regularized}. Note that we assumed unit variance in Theorem \ref{NullThm} and \ref{AltThm}, so we can apply Lemma \ref{CovEigenBnd} here. Since $\left(\Upsilon^{(m)}\right)^{-1}=\left(\Upsilon^{-1}\right)_{D_m,D_m}$, applying Lemma \ref{CovEigenBnd} would lead us to the following:
\begin{equation}\label{Cov_mEigenBnd}
    \begin{split}
        \Lambda_{\min}\left((\Upsilon^{(m)})^{-1}\right)\geq \Lambda_{\min}(\Upsilon^{-1})=\Lambda_{\max}(\Upsilon)^{-1}\geq C,\\
        \Lambda_{\max}\left((\Upsilon^{(m)})^{-1}\right)\leq \Lambda_{\max}(\Upsilon^{-1})=\Lambda_{\min}(\Upsilon)^{-1}\leq C.
    \end{split}
\end{equation}
Thus we have
\begin{equation*}
        \left\|E_m\right\|_2\leq C\sqrt{T}\left\|\widehat{S}_m-S_m\right\|_2,
\end{equation*}
with 
\begin{equation}\label{NullSmErr}
\begin{split}
    \widehat{S}_m-S_m=&(\hat{w}_m-w_m^*)^\top\frac{1}{T}\sum_{t=0}^{T-1} \mathcal{X}_{t,D_m^c}\epsilon_{t,m}\\
    &+\frac{1}{T}\sum_{t=0}^{T-1}(\mathcal{X}_{t,D_m}-w_m^{*\top} \mathcal{X}_{t,D_m^c})\mathcal{X}_{t,D_m^c}^\top\left((\widehat{A}_m)_{D_m^c}-(A_m^*)_{D_m^c}\right)\\
    &-(\hat{w}_m-w_m^*)^\top \left(\frac{1}{T}\sum_{t=0}^{T-1}\mathcal{X}_{t,D_m^c}\mathcal{X}_{t,D_m^c}^\top\right)\left((\widehat{A}_m)_{D_m^c}-(A^*_m)_{D_m^c}\right).
\end{split}
\end{equation}
    The following two lemmas provide bounds for $\left\|\frac{1}{T}\sum_{t=0}^{T-1} \mathcal{X}_{t,D_m^c}\epsilon_{t,m}\right\|_\infty$, and 
    $$\left\|\frac{1}{T}\sum_{t=0}^{T-1}(\mathcal{X}_{t,D_m}-w_m^{*\top}\mathcal{X}_{t,D_m^c})\mathcal{X}_{t,D_m^c}^\top\right\|_{\infty}.$$
\begin{Lemma}
When $T\geq C\log M$,
	 \begin{align*}
	\mathbb{P}\left(\left\|\frac{1}{T}\sum_{t=0}^{T-1}\epsilon_t\mathcal{X}_t^\top\right\|_\infty>C\sqrt{\frac{\log M}{T}}\right)\leq c_1\exp\{-c_2\log M\}.
	\end{align*}
\end{Lemma}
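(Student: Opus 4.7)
The plan is to reduce to an entrywise martingale concentration bound and then union bound over the $pM^2$ entries. The key structural fact is that under the stationary solution $X_t = \sum_{k\geq 0}\Psi_k\epsilon_{t-k-1}$, each $\mathcal{X}_t$ is measurable with respect to $\mathcal{F}_{t-1} = \sigma(\epsilon_s: s \leq t-1)$ while $\epsilon_t$ is independent of $\mathcal{F}_{t-1}$, so a martingale structure is available even though the $\mathcal{X}_t$'s themselves are strongly dependent across $t$.

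First I would fix indices $i \in \{1,\ldots,M\}$ and $j \in \{1,\ldots,pM\}$ and set $\xi_t := \epsilon_{t,i}\mathcal{X}_{t,j}$. Since $\epsilon_{t,i}$ is sub-Gaussian with parameter $\tau$ and independent of $\mathcal{F}_{t-1}$, while $\mathcal{X}_{t,j}$ is $\mathcal{F}_{t-1}$-measurable, $(\xi_t)$ is a martingale difference sequence with conditional MGF
\[
\mathbb{E}\!\left[\exp(\lambda \xi_t)\,\middle|\,\mathcal{F}_{t-1}\right] \leq \exp\!\left(\tfrac{\lambda^2 \tau^2}{2}\mathcal{X}_{t,j}^2\right).
\]
Applying the exponential-supermartingale argument to $\exp\!\bigl(\lambda M_T - \tfrac{\lambda^2\tau^2}{2}V_T\bigr)$ with $M_T = \sum_{t=0}^{T-1}\xi_t$ and $V_T = \sum_{t=0}^{T-1}\mathcal{X}_{t,j}^2$, followed by a Markov-and-optimize over $\lambda$, yields the quadratic-variation peeling bound
\[
\mathbb{P}\!\left(|M_T| > x,\ V_T \leq v\right) \leq 2\exp\!\left(-\tfrac{x^2}{2\tau^2 v}\right)
\]
for every $x,v > 0$.

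Next I would control $V_T = \sum_t \mathcal{X}_t^\top e_j e_j^\top \mathcal{X}_t$ by invoking Lemma \ref{QuaBound} with $B = e_j e_j^\top$ (so $\|B\|_2 = \|B\|_{\tr} = 1$). Combined with Lemma \ref{CovEigenBnd}, which ensures $\Upsilon_{jj}$ is uniformly bounded by a constant depending only on $\beta$, this gives $V_T \leq C_1 T$ with probability at least $1 - c_1\exp(-c_2 T)$. Choosing $x = C_0\sqrt{T\log M}$ and $v = C_1 T$ in the peeling bound and using $T \geq C\log M$ to absorb the $\exp(-c_2 T)$ term, each single entry satisfies
\[
\mathbb{P}\!\left(\bigl|\tfrac{1}{T}M_T\bigr| > C_0\sqrt{\tfrac{\log M}{T}}\right) \leq c_1'\exp(-c_2'\log M).
\]
A union bound over the $pM^2$ pairs $(i,j)$ absorbs the $\log(pM^2)$ factor into the constant once $C_0$ is taken large enough, giving the claimed $\ell_\infty$ deviation bound.

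The main obstacle in this plan lies entirely in the quadratic-variation step: Lemma \ref{QuaBound} is the nontrivial ingredient, since for sub-Gaussian innovations the coordinates of $\mathcal{X}_t$ cannot be rotated to independence as they could in the Gaussian case. Once that lemma is granted, the remainder is a routine martingale Bernstein plus union bound. Establishing Lemma \ref{QuaBound} itself requires the truncation strategy highlighted by the authors---approximate $\mathcal{X}_t$ by the finite sum $\sum_{k=0}^{K-1}\Psi_k\epsilon_{t-k-1}$ with $K$ of order $\log T$, control the tail via the stability condition $\sum_i\bigl(\sum_j\|\Psi_{i+j}\|_2^2\bigr)^{1/2}\leq \beta$, and handle the truncated part by a block decoupling argument exploiting independence of far-apart innovations.
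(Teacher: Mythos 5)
Your proposal is correct, and it takes a genuinely different route from the paper's own proof. The paper proves Lemma~\ref{DevBnd} directly by treating the entrywise sum $\frac{1}{T}\sum_t \epsilon_{t,i}\mathcal{X}_{t,j}$ as a quadratic form in the stacked innovation vector $\tilde\epsilon$, after truncating the infinite moving-average representation of $\mathcal{X}_t$ at level $m$; it then applies the Hanson--Wright inequality to the finite part, bounds the relevant operator and Frobenius norms via a Toeplitz spectral bound (Lemma~\ref{Toeplitz}) together with the stability condition $\sum_i(\sum_j\|\Psi_{i+j}\|_2^2)^{1/2}\leq\beta$, and separately controls the residual tail term by a sub-exponential bound before letting $m$ grow. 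You instead exploit the martingale-difference structure of $\xi_t = \epsilon_{t,i}\mathcal{X}_{t,j}$ directly: since $\mathcal{X}_t$ is $\mathcal{F}_{t-1}$-measurable and $\epsilon_t \perp \mathcal{F}_{t-1}$, the conditional MGF bound gives a Freedman-type exponential supermartingale, and the quadratic-variation event $V_T\leq C_1T$ is handed off to Lemma~\ref{QuaBound} with $B=e_je_j^\top$ (a legitimate use, since the paper's proof of Lemma~\ref{QuaBound} does not rely on Lemma~\ref{DevBnd}, so there is no circularity). The trade-off is that your route is shorter and more modular, cleanly separating the martingale cancellation from the variance control and reusing the truncation machinery already needed for Lemma~\ref{QuaBound}; the paper's route is self-contained and handles the deviation bound in a single quadratic-form estimate, at the cost of re-executing the truncation and Toeplitz arguments. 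Both ultimately hinge on the same technical ingredient for the dependent sub-Gaussian setting (the truncation plus Hanson--Wright argument inside Lemma~\ref{QuaBound}), so neither is obviously stronger, but your decomposition makes the martingale structure of the score more transparent.
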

Lemma \ref{DevBnd} is a common condition in high-dimensional regression problems, and is usually referred to as deviation bound. We will prove it in Section \ref{lemma_proof2}.
\begin{Lemma}[Deviation Bound for $w_m^*$]\label{true_w_dev}
With probability at least $1-c_1\exp\{-c_2\log M\}$, for all $1\leq m\leq k$,
    \begin{align*}
	\left\|\frac{1}{T}\sum_{t=0}^{T-1}(\mathcal{X}_{t,D_m}-w_m^{*\top}\mathcal{X}_{t,D_m^c})\mathcal{X}_{t,D_m^c}^\top\right\|_{\infty}\leq C\sqrt{\frac{\log M}{T}}.
	\end{align*}
\end{Lemma}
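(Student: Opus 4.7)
The plan is to rewrite each entry of the matrix inside the $\ell_\infty$ norm as a centered quadratic form in $\mathcal{X}_t$ and then invoke the quadratic-form concentration bound of Lemma \ref{QuaBound}. Fix $m\in\{1,\ldots,k\}$, $i\in D_m$, and $j\in D_m^c$. Let $(w_m^*)_{\cdot,i}\in\mathbb{R}^{pM-d_m}$ denote the column of $w_m^*$ corresponding to coordinate $i$, and define $q_i\in\mathbb{R}^{pM}$ by $(q_i)_i=1$, $(q_i)_k=-(w_m^*)_{k,i}$ for $k\in D_m^c$, and $(q_i)_k=0$ for $k\in D_m\setminus\{i\}$, so that $\mathcal{X}_{t,i}-(w_m^*)_{\cdot,i}^\top\mathcal{X}_{t,D_m^c}=q_i^\top \mathcal{X}_t$. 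Writing $e_j$ for the $j$-th standard basis vector of $\mathbb{R}^{pM}$, the $(i,j)$ entry of the matrix of interest equals $T^{-1}\sum_{t=0}^{T-1}\mathcal{X}_t^\top B_{ij}\mathcal{X}_t$ with $B_{ij}:=\tfrac{1}{2}(q_i e_j^\top+e_j q_i^\top)$, and the defining property \eqref{w_m_def} of $w_m^*$ yields $\tr(B_{ij}\Upsilon)=\mathbb{E}[(q_i^\top\mathcal{X}_t)(e_j^\top\mathcal{X}_t)]=0$, so the average is indeed a centered quadratic form.

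Next I would bound $\|B_{ij}\|_2$ and $\|B_{ij}\|_{\tr}$ uniformly in $(i,j,m)$. Since $B_{ij}$ is a symmetric rank-two matrix, the triangle inequality together with $\|q_ie_j^\top\|_{\tr}=\|q_i\|_2\|e_j\|_2$ gives $\|B_{ij}\|_2\leq\|B_{ij}\|_{\tr}\leq\|q_i\|_2\|e_j\|_2=\|q_i\|_2$. To control $\|q_i\|_2$, I note that $q_i^\top\Upsilon q_i=\Var(q_i^\top\mathcal{X}_t)=(\Upsilon^{(m)})_{ii}$, hence
\[
\|q_i\|_2^2\leq\frac{(\Upsilon^{(m)})_{ii}}{\Lambda_{\min}(\Upsilon)}\leq\frac{\Lambda_{\max}(\Upsilon^{(m)})}{\Lambda_{\min}(\Upsilon)}.
\]
Using the Schur-complement identity $(\Upsilon^{(m)})^{-1}=(\Upsilon^{-1})_{D_m,D_m}$ together with eigenvalue interlacing for principal submatrices gives $\Lambda_{\max}(\Upsilon^{(m)})\leq\Lambda_{\max}(\Upsilon)$. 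Lemma \ref{CovEigenBnd} then produces a constant $C$ depending only on $\beta$ with $\|q_i\|_2\leq C$, and so $\max(\|B_{ij}\|_2,\|B_{ij}\|_{\tr})\leq C$ uniformly.

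With these uniform bounds, I would apply Lemma \ref{QuaBound} to each $B_{ij}$ with $\delta=C_0\sqrt{\log M/T}$ for a sufficiently large constant $C_0$. Provided $T\geq C\log M$, one has $\delta/\|B_{ij}\|_2\geq \delta^2/(\|B_{ij}\|_{\tr}\|B_{ij}\|_2)$, so the quadratic regime controls the exponent and the per-entry failure probability is at most $c_1\exp\{-c_2C_0^2\log M\}$. A union bound over the at most $d\cdot pM$ index pairs $(i,j)$ and the finite collection $m\in\{1,\ldots,k\}$ inflates this by a factor of order $M$, which is absorbed into the exponent by choosing $C_0$ large enough, delivering the claimed bound. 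The main technical obstacle is the uniform control of $\|q_i\|_2$: the vector $w_m^*$ may have many nonzero entries whose sizes are not directly governed by the sparsity or stability hypotheses, so one cannot argue by absolute summability; instead the spectral bounds on $\Upsilon$ supplied by Lemma \ref{CovEigenBnd} are essential, playing the same structural role that spectral density bounds play for Gaussian innovations in \cite{basu2015regularized}.
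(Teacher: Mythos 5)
Your proof is correct and follows essentially the same route as the paper: both rewrite the $(i,j)$ entry as a centered quadratic form $T^{-1}\sum_t\mathcal{X}_t^\top B_{ij}\mathcal{X}_t$ with $B_{ij}$ the symmetrization of $q_ie_j^\top$ (the paper calls $q_i$ the $i$-th row of $W_m^*$), bound $\|B_{ij}\|_2$ and $\|B_{ij}\|_{\tr}$ by a constant multiple of $\|q_i\|_2$, invoke Lemma~\ref{QuaBound}, and take a union bound over the $O(M)$ entries. Your control of $\|q_i\|_2$ via $q_i^\top\Upsilon q_i=(\Upsilon^{(m)})_{ii}\leq\Lambda_{\max}(\Upsilon)$ is a slightly cleaner route to the same constant that the paper obtains in \eqref{Wl2Bnd} through $\Lambda_{\max}(\Upsilon_{D_m^c,D_m^c}^{-1})\|\Upsilon_{\cdot,i}\|_2$, but both rest on Lemma~\ref{CovEigenBnd}.
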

Lemma \ref{true_w_dev} can also be viewed as a deviation bound, if we consider a regression problem with $\mathcal{X}_{t,D_m}$ as response and $\mathcal{X}_{t,D_m^c}$ as covariates. This is also proved in Section \ref{lemma_proof2}. Applying Assumptions \ref{Assump_A_bound} and \ref{Assump_w_bound}, with probability at least $1-c_1\exp\{-c_2\log M\}$,
\begin{equation*}
    \|E_m\|_2\leq C\frac{(s_m\vee \rho_m)\log M}{\sqrt{T}}+\sqrt{T}Q_1^{\frac{1}{2}}Q_2^{\frac{1}{2}}\leq C\frac{(s_m\vee \rho_m)\log M}{\sqrt{T}},
\end{equation*}
where 
\begin{equation*}
    \begin{split}
        Q_1=&\left(\left(\widehat{A}_m\right)_{D_m^c}-\left(A^*_m\right)_{D_m^c}\right)^\top \left(\frac{1}{T}\sum_{t=0}^{T-1}\mathcal{X}_{t,D_m^c}\mathcal{X}_{t,D_m^c}^\top\right)\left(\left(\widehat{A}_m\right)_{D_m^c}-(A^*_m)_{D_m^c}\right)\\
        Q_2=&\text{tr}\left[(\hat{w}_m-w^*_{m})^\top \left(\frac{1}{T}\sum_{t=0}^{T-1}\mathcal{X}_{t,D_m^c}\mathcal{X}_{t,D_m^c}^\top\right)(\hat{w}_{m}-w^*_{m})\right],
    \end{split}
\end{equation*}
and Assumption \ref{Assump_A_bound} and \ref{Assump_w_bound} implies $Q_1\leq C\frac{\rho_{m}\log M}{T}$ and $Q_2\leq C\frac{s_{m}\log M}{T}$. The former is not straightforward: to see why it holds true, let $\hat{h}_m=\widehat{A}_m-A_m^*$ and $H=\frac{1}{T}\sum_{t=0}^{T-1}\mathcal{X}_t\mathcal{X}_t^\top$, then we have
    \begin{equation}\label{qua_error}
        \begin{split}
            Q_1=&\frac{1}{T}\sum_{t=0}^{T-1}\left[\mathcal{X}_{t,D_m^c}^\top \left(\hat{h}_m\right)_{D_m^c}\right]^2\\
            =&\frac{1}{T}\sum_{t=0}^{T-1}\left[\mathcal{X}_t^\top\hat{h}_m - \mathcal{X}_{t,D_m}^\top \left(\hat{h}_m\right)_{D_m}\right]^2\\
            \leq &\frac{2}{T}\sum_{t=0}^{T-1}\left[\left(\mathcal{X}_t^\top\hat{h}_m\right)^2+\left(\mathcal{X}_{t,D_m}^\top \left(\hat{h}_m\right)_{D_m}\right)^2\right]\\
            =&2\hat{h}_m^\top H \hat{h}_m+2\left(\hat{h}_m\right)_{D_m}^\top H_{D_m,D_m}\left(\hat{h}_m\right)_{D_m}\\
            \leq &C\frac{\rho_m\log M}{T}.
        \end{split}
    \end{equation}
    Here we apply Assumption \ref{Assump_A_bound}, and the fact that
    \begin{equation*}
    \begin{split}
        &\left(\hat{h}_m\right)_{D_m}^\top H_{D_m,D_m}\left(\hat{h}_m\right)_{D_m}\\
        \leq &d_m\|H\|_\infty\|\hat{h}_m\|_2^2\\
        \leq &d_m\left(\left\|H-\Upsilon\right\|_{\infty}+\Lambda_{\max}(\Upsilon)\right)\frac{\rho_m\log M}{T}\\
        \leq &C\frac{\rho_m\log M}{T}.
    \end{split}
    \end{equation*}
    The last inequality is due to Lemma \ref{CovEigenBnd} and the following lemma:
    \begin{Lemma}\label{CovBound}
With probability at least $1-c_1\exp\{-c_2\log M\}$,
    \begin{align*}
	\left\|\frac{1}{T}\sum_{t=0}^{T-1}\mathcal{X}_t \mathcal{X}_t^\top-\Upsilon\right\|_\infty\leq C\sqrt{\frac{\log M}{T}}.
	\end{align*}
\end{Lemma}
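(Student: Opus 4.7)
The plan is to reduce the entrywise control of the sample covariance deviation to scalar quadratic-form concentration, and then take a union bound. Specifically, for each index pair $(i,j) \in \{1,\ldots,pM\}^2$, I write
\[
\Bigl(\tfrac{1}{T}\sum_{t=0}^{T-1}\mathcal{X}_t\mathcal{X}_t^\top-\Upsilon\Bigr)_{ij} \;=\; \tfrac{1}{T}\sum_{t=0}^{T-1}\mathcal{X}_t^\top B^{(ij)} \mathcal{X}_t \;-\; \mathrm{tr}\bigl(B^{(ij)}\Upsilon\bigr),
\]
where I take the symmetric choice $B^{(ij)} = \tfrac{1}{2}(e_i e_j^\top + e_j e_i^\top)$ for $i\neq j$ and $B^{(ii)} = e_i e_i^\top$. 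In both cases $\mathrm{tr}(B^{(ij)}\Upsilon) = \Upsilon_{ij}$, and the matrix $B^{(ij)}$ has rank at most two with eigenvalues in $\{\pm 1/2\}$ or $\{1\}$, so $\|B^{(ij)}\|_2 \leq 1$ and $\|B^{(ij)}\|_{\tr} \leq 1$, independent of $M$.

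Next I apply Lemma~\ref{QuaBound} to each $B^{(ij)}$ with $\delta = C\sqrt{\log M/T}$. Under the assumption $T \geq C_0 \log M$, we have $\delta \leq 1$, so the minimum in the exponent of Lemma~\ref{QuaBound} is achieved by the term $\delta^2/(\|B^{(ij)}\|_{\tr}\|B^{(ij)}\|_2)$, yielding
\[
\mathbb{P}\Bigl(\bigl|(\tfrac{1}{T}\sum_t \mathcal{X}_t\mathcal{X}_t^\top -\Upsilon)_{ij}\bigr| > C\sqrt{\tfrac{\log M}{T}}\Bigr) \;\leq\; c_1 \exp\{-c_2 C^2 \log M\}.
\]

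Finally I take a union bound over all $(pM)^2$ index pairs. The overall failure probability is at most $(pM)^2 c_1 \exp\{-c_2 C^2 \log M\}$, and choosing the constant $C$ large enough that $c_2 C^2 > 3$ (say) absorbs the polynomial factor into the exponent and yields the stated bound $c_1' \exp\{-c_2'\log M\}$.

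The heavy lifting is entirely inside Lemma~\ref{QuaBound}, so no genuine obstacle arises here; the main thing to verify carefully is that the symmetrized $B^{(ij)}$ indeed reproduces $\mathcal{X}_{t,i}\mathcal{X}_{t,j}$ as the scalar quadratic form and that its operator and trace norms are dimension-free constants, both of which are routine. The sample-size condition $T \geq C\log M$ implicit in choosing $\delta \leq 1$ is already in force throughout the paper.
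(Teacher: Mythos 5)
Your proof is correct and follows essentially the same route as the paper: write each entry as the symmetric quadratic form $\mathcal{X}_t^\top\bigl(\tfrac12(e_i e_j^\top + e_j e_i^\top)\bigr)\mathcal{X}_t$, note that its operator and trace norms are dimension-free constants, invoke Lemma~\ref{QuaBound} with $\delta \asymp \sqrt{\log M / T}$, and union-bound over the $(pM)^2$ entries. The only cosmetic difference is that you compute the eigenvalues exactly to get the sharper bounds $\|B^{(ij)}\|_2 \le 1$ and $\|B^{(ij)}\|_{\tr} \le 1$, whereas the paper routes through Lemma~\ref{trl2norm} to get the slightly looser constant $2$; both suffice.
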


Therefore, by taking a union bound, we show that
\begin{equation*}
    \|E_m\|_2\leq  C\frac{(s_m\vee \rho_m)\log M}{\sqrt{T}},
\end{equation*}
for any $1\leq m\leq k$, with probability at least $1-c_1\exp\{-c_2\log M\}$.
    
    Meanwhile, by applying Lemma \ref{CLT}, one can show that for $y>\sqrt{5d}$,
    \begin{equation}\label{V_T_TB}
      \begin{split}
         \mathbb{P}\left(\left\|V_{T,m}\right\|_2>y\right)\leq &CT^{-\frac{1}{8}}+1-F_d(y^2)\\
        \leq &CT^{-\frac{1}{8}}+\exp\{-(y^2-d)/4\}\\
        \leq &CT^{-\frac{1}{8}}+Cy^{-2}, 
      \end{split}        
    \end{equation}
    where the second inequality is due to a $\chi_d^2$ tail bound established in \cite{laurent2000adaptive} (see Lemma 1 in \cite{laurent2000adaptive}), and the third inequality comes from the fact that, $\forall$ constant $C_1>0$, $\exists$ constant $C_2$ such that
    \begin{equation*}
        \sup_{y\geq 0}y^2e^{-C_1y^2}\leq C_2.
    \end{equation*} 
    Let $y=\left(\frac{(s\vee\rho)\log M}{\sqrt{T}}\right)^{-\frac{1}{4}}$ and plug it into \eqref{U_T_dev}, then with Assumption \ref{Assump_Cov_m_bound}, we can show that with probability at least 
    \begin{align*}
    1-c_1\exp\{-c_2\log M\}-c_3T^{-\frac{1}{8}}-c_4\left(\frac{(s\vee\rho)\log M}{\sqrt{T}}\right)^{\frac{1}{2}},
    \end{align*}
    the following holds:
    \begin{align*}
        \left|\widehat{U}_T-U_T\right|\leq &C_1\frac{(s\vee\rho)\log M}{\sqrt{T}}\left(\frac{(s\vee\rho)\log M}{\sqrt{T}}\right)^{-\frac{1}{2}}+C_2\left(\frac{(s\vee\rho)\log M}{\sqrt{T}}\right)^{\frac{3}{4}}\\
        \leq &C\left(\frac{(s\vee \rho)\log M}{\sqrt{T}}\right)^{\frac{1}{2}},
    \end{align*}
    if $(s\vee \rho)\log M=o(\sqrt{T})$ and $T>C$ for some constant $C$.
	Therefore, applying (\ref{main_prob_diff}) with $\varepsilon=C\left(\frac{(s\vee \rho)\log M}{\sqrt{T}}\right)^{\frac{1}{2}}$,
	\begin{align*}
	\left|\mathbb{P}(\widehat{U}_T\leq x)-F_{d}(x)\right|\leq C_1T^{-\frac{1}{8}}+C_2\left(\frac{(s\vee \rho)\log M}{\sqrt{T}}\right)^{\frac{1}{2}}+C_3\exp\{-c\log M\}.
	\end{align*}
	Since constants $C_i$ only depend on $d,\beta$ and $\tau$, this bound also holds for supremum over $A^*\in\Omega_0$ and $x\in \mathbb{R}$. Note that for a clear presentation, we are not showing the sharpest bound, which can be obtained by choosing a different $y$.
\end{proof}

\subsection{Proof of Theorem \ref{AltThm}}
\begin{proof}[proof of Theorem \ref{AltThm}]
We prove this case by case. We will use $C_i,c_i$ to refer to constants that only depend on $d,\beta,\Delta, \phi$, and different constants might share the same notation. 

Similar from the proof of Theorem \ref{NullThm}, the major part of the proof is devoted to bounding $\left|\widehat{U}_T-\left\|V_T+\mu\right\|_2^2\right|$ with high probability for some vector $\mu\in \mathbb{R}^d$.
\begin{enumerate}
\item[(1)] $\phi=\frac{1}{2}$\\
Suppose $A^*\in \Omega_1$. Using similar deduction as in the proof of Theorem \ref{NullThm}, for any $\varepsilon>0$,
	\begin{equation}\label{main_prob_diff_p1}
	\begin{split}
	    &\left|\mathbb{P}(\widehat{U}_T\leq x)-F_{d,\|\widetilde{\Delta}\|_2^2}(x)\right|\\
	    \leq &\sup_{y\in \mathbb{R}}\left|\mathbb{P}\left(\|V_T-\widetilde{\Delta}\|_2^2\leq y\right)-F_{d,\|\widetilde{\Delta}\|_2^2}(y)\right|\\
	&+F_{d,\|\widetilde{\Delta}\|_2^2}(x+\varepsilon)-F_{d,\|\widetilde{\Delta}\|_2^2}(x-\varepsilon)+\mathbb{P}\left(\left|\widehat{U}_T-\left\|V_T-\widetilde{\Delta}\right\|_2^2\right|>\varepsilon\right).
	\end{split}
	\end{equation}
	
\begin{enumerate}[leftmargin = 0cm]
    \item[(a)] Bounding the first two terms\\
    The first term is the convergence rate of $\|V_T-\widetilde{\Delta}\|_2^2$ to $\chi^2_{d,\|\widetilde{\Delta}\|_2^2}$. By Lemma \ref{CLT}, 
	$$
	\sup_{y\in \mathbb{R}}\left|\mathbb{P}\left(\|V_T-\widetilde{\Delta}\|_2^2\leq y\right)-F_{d,\|\widetilde{\Delta}\|_2^2}(y)\right|\leq C(\|\widetilde{\Delta}\|_2)T^{-\frac{1}{8}}\leq C\|\Delta\|_2T^{-\frac{1}{8}}.
	$$
	The last inequality is due to
	\begin{equation*}
\|\widetilde{\Delta}\|_2^2=\sum_{m=1}^k \|\widetilde{\Delta}_m\|_2^2\leq \sum_{m=1}^k\Lambda_{\max}\left(\Upsilon^{(m)}\right)\|\Delta\|_2^2,
\end{equation*}
and an upper bound for $\Lambda_{\max}\left(\Upsilon^{(m)}\right)$ in \eqref{Cov_mEigenBnd}.

    Bounding the second term in \eqref{main_prob_diff_p1} is not straightforward as bounding $F_d(x+\varepsilon)-F_d(x-\varepsilon)$ in the proof of Theorem \ref{NullThm}, since $\widetilde{\Delta}$ is not a constant vector when $A^*$ takes different values in $\Omega_1^*$. We only have a uniform bound of $\left\|\widetilde{\Delta}\right\|_2$ as shown above. One can show that
	\begin{equation*}
                  \begin{split}
	    &F_{d,\|\widetilde{\Delta}\|_2^2}(x+\varepsilon)-F_{d,\|\widetilde{\Delta}\|_2^2}(x-\varepsilon)= \mathbb{P}\left(\left\|Z+\widetilde{\Delta}\right\|_2^2\in (x-\varepsilon,x+\varepsilon]\right)\\
                  \leq &\begin{cases}
                  C(d)\left((x+\varepsilon)^{\frac{d}{2}}-(x-\varepsilon)^{\frac{d}{2}}\right)e^{-(\sqrt{x-\varepsilon}-\|\widetilde{\Delta}\|_2)^2/2}, & \sqrt{x-\varepsilon}\geq 2\|\widetilde{\Delta}\|_2\\
                   C(d)\left((x+\varepsilon)^{\frac{d}{2}}-(x-\varepsilon)^{\frac{d}{2}}\right), & \sqrt{x-\varepsilon}<2\|\widetilde{\Delta}\|_2
                  \end{cases},
                  \end{split}
	\end{equation*}
	where $Z$ is a $d$-dimensional standard Gaussian random vector with density $\phi(z)=C(d)\exp\{-\|z\|_2^2/2\}$. The last inequality holds because that, for any set $\mathcal{C}\subset \mathbb{R}^d$,
	\begin{equation*}
	    \mathbb{P}\left(Z\in \mathcal{C}\right)\leq \sup_{z\in \mathcal{C}}\phi(z)\int_{z\in \mathcal{C}} \mathrm{d} z.
	\end{equation*}
	Suppose $0<\varepsilon\leq 1$, then if $\sqrt{x-\varepsilon}\geq 2\|\widetilde{\Delta}\|_2$,
	\begin{equation*}
    \begin{split}
            &\left((x+\varepsilon)^{\frac{d}{2}}-(x-\varepsilon)^{\frac{d}{2}}\right)\exp\left\{-(\sqrt{x-\varepsilon}-\|\widetilde{\Delta}\|_2)^2/2\right\}\\
            \leq &d\varepsilon(x+\varepsilon)^{\frac{d}{2}-1}\exp\{-(x-\varepsilon)/8\}\\
            \leq &d\varepsilon e^{\frac{\varepsilon}{4}}\sup_{y\geq 0}y^{\frac{d}{2}-1}\exp\{-y/8\}\leq C(d)\varepsilon,
            \end{split}
    \end{equation*}
     otherwise,
    \begin{equation*}
\left((x+\varepsilon)^{\frac{d}{2}}-(x-\varepsilon)^{\frac{d}{2}}\right)\leq d\varepsilon(x+\varepsilon)^{\frac{d}{2}-1}\leq C(d)\varepsilon.
\end{equation*}
Thus, 
\begin{equation*}
    F_{d,\|\widetilde{\Delta}\|_2^2}(x+\varepsilon)-F_{d,\|\widetilde{\Delta}\|_2^2}(x-\varepsilon)\leq C(d)\varepsilon.
\end{equation*}
\item[(b)] Bounding $\left|\widehat{U}_T-\left\|V_T-\widetilde{\Delta}\right\|_2^2\right|$\\
Similar from \eqref{U_T_dev} in the proof of Theorem \ref{NullThm}, it is straightforward to show that
\begin{equation}\label{Alt1U_T_dev}
\begin{split}
	&\left|\widehat{U}_T-\left\|V_T-\widetilde{\Delta}\right\|_2^2\right|\\
	\leq &\sum_{m=1}^k \left\|E_m\right\|_2^2+2\left\|V_{T,m}-\widetilde{\Delta}_m\right\|_2\left\|E_m\right\|_2\\
	&+\left\|\Upsilon^{(m)\frac{1}{2}}\left(\widehat{\Upsilon^{(m)}}\right)^{-1}\Upsilon^{(m)\frac{1}{2}}-I\right\|_\infty\left(\|V_{T,m}-\widetilde{\Delta}_m\|_2+\|E_m\|_2\right)^2,
	\end{split}
	\end{equation}
	where $E_m=\sqrt{T}(\Upsilon^{(m)})^{-\frac{1}{2}}\widehat{S}_m-V_{T,m}+\widetilde{\Delta}_m$. To bound $\|E_m\|_2$, note that
	$$
	V_{T,m}-\widetilde{\Delta}_m=\sqrt{T}(\Upsilon^{(m)})^{-\frac{1}{2}}S_m-\widetilde{\Delta}_m=\sqrt{T}(\Upsilon^{(m)})^{-\frac{1}{2}}(S_m-\Upsilon^{(m)}(A_m^*)_{D_m}),
    $$
    and 
    \begin{align*}
       S_m-\Upsilon^{(m)}(A_m^*)_{D_m}=&\left[\frac{1}{T}\sum_t(\mathcal{X}_{t,D_m}-w_m^{*\top}\mathcal{X}_{t,D_m^c})\mathcal{X}_{t,D_m}^\top-\Upsilon^{(m)}\right](A_m^*)_{D_m}\\
       -&\frac{1}{T}\sum_{t=0}^{T-1}\left(\mathcal{X}_{t+1,m}-(A_m^*)_{D_m^c}^{\top}\mathcal{X}_{t,D_m^c}\right)\left(\mathcal{X}_{t,D_m}-w_m^{*\top}\mathcal{X}_{t,D_m^c}\right)\\
       =&\widetilde{S}_m+W_m^*\left(\frac{1}{T}\sum_t \mathcal{X}_t \mathcal{X}_{t,D_m}^\top-\Upsilon_{\cdot,D_m}\right)(A_m^*)_{D_m},
    \end{align*}
    with $\widetilde{S}_m\in \mathbb{R}^{d_m}$ and $W_m^*\in \mathbb{R}^{d_m\times M}$ defined as follows:
    \begin{equation*}
    \begin{split}
        \widetilde{S}_m=&-\frac{1}{T}\sum_{t=0}^{T-1}(\mathcal{X}_{t+1,m}-(A_m^*)_{D_m^c}^{\top}\mathcal{X}_{t,D_m^c})(\mathcal{X}_{t,D_m}-w_m^{*\top}\mathcal{X}_{t,D_m^c}),
    \end{split}
    \end{equation*}
    \begin{equation}\label{W_def}
        (W_m^*)_{\cdot,D_m}=I_{d_m\times d_m},\quad (W_m^*)_{\cdot,D_m^c}=w_m^{*\top}.
    \end{equation}
	Therefore,
	\begin{align*}
	    \|E_m\|_2\leq &\left\|\sqrt{T}(\Upsilon^{(m)})^{-\frac{1}{2}}(\widehat{S}_m-\widetilde{S}_m)\right\|_2\\
	    &+\left\|(\Upsilon^{(m)})^{-\frac{1}{2}}W_m^*\left(\frac{1}{T}\sum_t \mathcal{X}_t \mathcal{X}_{t,D_m}^\top-\Upsilon_{\cdot,D_m}\right)\Delta_m\right\|_2\\
	    \leq &C\sqrt{T}\left\|\widehat{S}_m-\widetilde{S}_m\right\|_2+C\sqrt{d_m}\max_i\|(W_m^*)_{i\cdot}\|_1\left\|\frac{1}{T}\sum_t \mathcal{X}_t \mathcal{X}_t^\top-\Upsilon\right\|_\infty.
	\end{align*}
	The last inequality applies \eqref{Cov_mEigenBnd}.
    Meanwhile, 
	\begin{equation}\label{Wl1Bnd}
	\begin{split}
	    \max_i\left\|(W_m^*)_{i\cdot}\right\|_1=&1+\max_i\left\|(w_m^*)_{\cdot i}\right\|_1\\
	    \leq &1+\max_i\sqrt{s_m}\left\|(w_m^*)_{\cdot i}\right\|_2\\
	    \leq &1+\sqrt{s_m}\Lambda_{\min}(\Upsilon_{D_m^c,D_m^c})^{-1}\max_i\|\Upsilon_{\cdot i}\|_2\\
	    \leq &1+C\sqrt{s_m}\max_i\sqrt{(\Upsilon^2)_{ii}}\\
	    \leq &1+C\sqrt{s_m}\Lambda_{\max}(\Upsilon)\leq C\sqrt{s_m}.
	\end{split}
    \end{equation}
    The first equality and second inequality come from the definition of $W_m^*$ and $w_m^*$; the third inequality is because that $\left\|\Upsilon_{\cdot i}\right\|_2^2=\left(\Upsilon^2\right)_{ii}$; the fourth inequality is due to that $\left(\Upsilon^2\right)_{ii}=e_i^\top \Upsilon^2 e_i\leq \Lambda_{\max}(\Upsilon)^2$; and the last inequality is obtained from Lemma \ref{CovEigenBnd}.
    Applying Lemma \ref{CovBound} leads us to  
    \begin{equation*}
        \|E_m\|_2\leq C\sqrt{\frac{s_m\log M}{T}}+C\sqrt{T}\|\widehat{S}_m-\widetilde{S}_m\|_2.
    \end{equation*}
	    We can write $\widehat{S}_m-\widetilde{S}_m$ as
	    \begin{equation*}
	\begin{split}
	        \widehat{S}_m-\widetilde{S}_m=&(\hat{w}_m-w_m^*)^\top \frac{1}{T}\sum_{t=0}^{T-1}\mathcal{X}_{t,D_m^c}\left(\epsilon_{t,m}+T^{-\frac{1}{2}}\Delta_m^{\top}\mathcal{X}_{t,D_m}\right)\\
	        &+\left(\left(\widehat{A}_m\right)_{D_m^c}-\left(A_m^*\right)_{D_m^c}\right)^\top \frac{1}{T}\sum_{t=0}^{T-1}\mathcal{X}_{t, D_m^c}\left(\mathcal{X}_{t,D_m}-w_m^{*\top} \mathcal{X}_{t, D_m^c}\right)^\top\\
	        &-\left(\left(\widehat{A}_m\right)_{D_m^c}-\left(A_m^*\right)_{D_m^c}\right)^\top \frac{1}{T}\sum_{t=0}^{T-1}\mathcal{X}_{t, D_m^c}\mathcal{X}_{t, D_m^c}^\top(\hat{w}_m-w_m^{*\top}).
	   \end{split}
    \end{equation*}
    Note that 
    \begin{equation}\label{CovInfNorm}
    \begin{split}
        \left\|\frac{1}{T}\sum_{t=0}^{T-1}\mathcal{X}_{t,D_m^c}\mathcal{X}_{t,D_m}^\top\right\|_{\infty}\leq &\left\|\frac{1}{T}\sum_t \mathcal{X}_t \mathcal{X}_t^\top-\Upsilon\right\|_\infty+\left\|\Upsilon\right\|_{\infty}\\
        \leq &C\sqrt{\frac{\log M}{T}}+\left\|\Upsilon\right\|_2\leq C,
    \end{split}
    \end{equation}
    due to Lemma \ref{CovEigenBnd} and \ref{CovBound}, which further implies 
    \begin{equation*}
        \left\|(\hat{w}_m-w_m^*)^\top \frac{1}{T}\sum_{t=0}^{T-1}\mathcal{X}_{t,D_m^c}\mathcal{X}_{t,D_m}^\top \Delta_m\right\|_2\leq C\|\hat{w}_m-w_m^*\|_1.
    \end{equation*}
    Applying Assumption \ref{Assump_A_bound} to \ref{Assump_Cov_m_bound}, Lemma \ref{DevBnd}, \ref{true_w_dev}, one can show that with probability at least $1-c_1\exp\{-c_2\log M\}$,
    \begin{equation}\label{Alt1E_m}
        \|E_m\|_2\leq C\frac{(s_m\vee \rho_m)\log M}{\sqrt{T}},
    \end{equation}
    with the same arguments as bounding $\|\widehat{S}_m-S_m\|_2$ under $\mathcal{H}_0$.
	    
    While for $\left\|V_{T,m}-(\Upsilon^{(m)})^{\frac{1}{2}}\Delta_m\right\|_2$, applying Lemma \ref{CLT} leads us to 
    \begin{equation*}
	\begin{split}
        &\mathbb{P}\left(\left\|V_{T,m}-(\Upsilon^{(m)})^{\frac{1}{2}}\Delta_m\right\|_2>y\right)\\
        \leq &C_1T^{-\frac{1}{8}}+1-F_{d,\|\widetilde{\Delta}\|_2^2}(y^2)\\
        = &C_1T^{-\frac{1}{8}}+\mathbb{P}\left(\|Z+\widetilde{\Delta}\|_2^2>y^2\right)\\
        \leq &C_1T^{-\frac{1}{8}}+\mathbb{P}\left(\|Z\|_2^2>\left(y-C\|\Delta\|_2\right)^2\right),
    \end{split}
    \end{equation*}
    for any $y\geq 0$, where $Z\sim \mathcal{N}(0,I_d)$. We apply the tail bound for $\chi^2_d$ (Lemma 1 in \cite{laurent2000adaptive}) as in \eqref{V_T_TB}, and obtain
    \begin{equation*}
	\begin{split}
        \mathbb{P}\left(\|Z\|_2^2>\left(y-C\|\Delta\|_2\right)^2\right)\leq C\left(y-C\|\Delta\|_2\right)^{-2}\leq Cy^{-2},
    \end{split}
    \end{equation*}
    when $y>C$ for some constant $C$.
    Let $y=\left(\frac{(s\vee\rho)\log M}{\sqrt{T}}\right)^{-\frac{1}{4}}$, and plug $\left\|V_{T,m}-(\Upsilon^{(m)})^{\frac{1}{2}}\Delta_m\right\|_2\leq y$, \eqref{Alt1E_m} and \eqref{Cov_mBnd} into \eqref{Alt1U_T_dev}, one can show that 
    \begin{equation*}
\begin{split}
        &\left|\widehat{U}_T-\left\|V_T-\widetilde{\Delta}\right\|_2^2\right|\\
        \leq &C_1\left(\frac{(s\vee\rho)\log M}{\sqrt{T}}\right)^{\frac{3}{4}}+C_2\frac{(s\vee \rho)\log M}{\sqrt{T}}\left(\frac{(s\vee\rho)\log M}{\sqrt{T}}\right)^{-\frac{1}{2}}\\
        \leq &C\left(\frac{(s\vee \rho)\log M}{\sqrt{T}}\right)^{\frac{1}{2}},
 \end{split}
    \end{equation*}
    with probability at least 
    \begin{align*}
    1-c_1\exp\{-c_2\log M\}-c_3T^{-\frac{1}{8}}-c_4\left(\frac{(s\vee\rho)\log M}{\sqrt{T}}\right)^{\frac{1}{2}},
    \end{align*}
if $(s\vee \rho)\log M=o(T)$ and $T>C$.
\end{enumerate}
	
Therefore, applying (\ref{main_prob_diff_p1}) with $\varepsilon=C\left(\frac{(s\vee \rho)\log M}{\sqrt{T}}\right)^{\frac{1}{2}}$ leads to
\begin{equation*}
\begin{split}
	&\left|\mathbb{P}(\widehat{U}_T\leq x)-F_{d}(x)\right|\\
	\leq &C_1T^{-\frac{1}{8}}+C_2\left(\frac{(s\vee \rho)\log M}{\sqrt{T}}\right)^{\frac{1}{2}}+C_3\exp\{-C_4\log M\}.
	\end{split}
    \end{equation*}
	Since constants $C_i$ only depend on $d,\beta,\Delta,\tau$, this bound also holds for supremum over $A^*\in\Omega_1$ and $x\in \mathbb{R}$.

   \item[(2)] $0<\phi<\frac{1}{2}$\\
   First we provide a lower bound for $\widehat{U}_T$ with high probability. Since bounds in Assumption \ref{Assump_A_bound} to \ref{Assump_Cov_m_bound},  Lemma \ref{DevBnd} to \ref{CovBound} hold with probability at least $1-c_1\exp\{-c_2\log M\}$, we apply these bounds directly in following deduction. Meanwhile, we always assume $(\rho\vee s)\log M=o(\sqrt{T})$ and $T>C$ for desired constant $C$. With these conditions, one can show that
   \begin{equation}\label{Alt2U_T_dev}
      \begin{split}
          \widehat{U}_T=&\sum_{m=1}^k T\widehat{S}_m^\top (\widehat{\Upsilon^{(m)}})^{-1}\widehat{S}_m\\
          \geq& \sum_{m=1}^k T\|\Upsilon^{(m)-\frac{1}{2}}\widehat{S}_m\|_2^2 \left(1-d_m\left\|\Upsilon^{(m)\frac{1}{2}}(\widehat{\Upsilon^{(m)}})^{-1}\Upsilon^{(m)\frac{1}{2}}-I\right\|_{\infty}\right)\\
          \geq &CT\sum_{m=1}^k \left\|(\Upsilon^{(m)})^{-\frac{1}{2}}\widehat{S}_m\right\|_2^2\\
          \geq &C\left(\left(T\sum_{m=1}^k\left\|(\Upsilon^{(m)})^{-\frac{1}{2}}(\widehat{S}_m-S_m)\right\|_2^2\right)^{\frac{1}{2}}-\|V_T\|_2\right)^2.
      \end{split}
       \end{equation}
      The third line is due to Assumption \ref{Assump_Cov_m_bound}, which implies $\left\|\Upsilon^{(m)\frac{1}{2}}(\widehat{\Upsilon^{(m)}})^{-1}\Upsilon^{(m)\frac{1}{2}}-I\right\|_{\infty}$ converges to 0 under our scaling $(\rho\vee s)\log M=o(\sqrt{T})$.
      
      We provide a lower bound for $\left\|(\Upsilon^{(m)})^{-\frac{1}{2}}(\widehat{S}_m-S_m)\right\|_2^2$ in the following. First write $\widehat{S}_m-S_m$ as
      \begin{equation*}
      \begin{split}
          \widehat{S}_m-S_m=&(\hat{w}_m-w_m^*)^\top\left(\frac{1}{T}\sum_{t=0}^{T-1}\epsilon_{t,m}\mathcal{X}_{t,D_m^c}\right)\\
          &-\frac{1}{T}\sum_{t=0}^{T-1}(\mathcal{X}_{t,D_m}-\hat{w}_m^\top \mathcal{X}_{t,D_m^c})\mathcal{X}_{t,D_m}^\top (A_m^*)_{D_m}\\
          &+\frac{1}{T}\sum_{t=0}^{T-1}(\mathcal{X}_{t,D_m}-\hat{w}_m^\top \mathcal{X}_{t,D_m^c})\mathcal{X}_{t,D_m^c}^{\top}((\widehat{A}_m)_{D_m^c}-(A_m^*)_{D_m^c})\\
          \triangleq&E_m^{(1)}+E_m^{(2)}+E_m^{(3)},
      \end{split}
      \end{equation*}
      we find the upper bounds for $\left\|E_m^{(1)}\right\|_2, \left\|E_m^{(3)}\right\|_2$ and lower bound for $\left\|E_m^{(2)}\right\|_2$ in the following.
      Applying Assumption \ref{Assump_w_bound} and Lemma \ref{DevBnd} provides an upper bound for $\left\|E_m^{(1)}\right\|_2$:
      \begin{align*}
          \|E_m^{(1)}\|_2\leq  \|\hat{w}_m-w_m^*\|_1\left\|\frac{1}{T}\sum_{t=0}^{T-1}\mathcal{X}_t\epsilon_t^\top\right\|_\infty\leq \frac{s_m\log M}{T}.
      \end{align*}
      Since
      \begin{equation*}
      \begin{split}
          \left\|E_m^{(3)}\right\|_2\leq &\left\|(\hat{w}_m-w_m^*)^\top \frac{1}{T}\sum_{t=0}^{T-1}\mathcal{X}_{t,D_m^c}\mathcal{X}_{t,D_m^c}^\top((\widehat{A}_m)_{D_m^c}-(A_m^*)_{D_m^c})\right\|_2\\
          &+\sqrt{d_m}\left\|\frac{1}{T}\sum_{t=0}^{T-1}(\mathcal{X}_{t,D_m}-w_m^{*\top} \mathcal{X}_{t,D_m^c})\mathcal{X}_{t,D_m^c}^{\top}\right\|_\infty\left\|(\widehat{A}_m)_{D_m^c}-(A_m^*)_{D_m^c})\right\|_1,
      \end{split}
      \end{equation*}
      then using the same argument as bounding $\|\widehat{S}_m-S_m\|_2$ when proving Theorem \ref{NullThm}, we have
      \begin{equation*}
          \left\|E_m^{(3)}\right\|_2\leq C\frac{(s_m\vee \rho_m)\log M}{T}.
      \end{equation*}
      To lower bound $\|E_m^{(2)}\|_2$, first note that
      \begin{equation}\label{Upsilon_mBnd}
          \begin{split}
              &\left\|\frac{1}{T}\sum_{t=0}^{T-1}\left(\mathcal{X}_{t,D_m}-\hat{w}_m^{\top} \mathcal{X}_{t,D_m}\right)\mathcal{X}_{t,D_m^c}^{\top}-\Upsilon^{(m)}\right\|_\infty\\
              \leq& \max_i\left\|(W_m^*)_{i\cdot}\right\|_1\left\|\frac{1}{T}\sum_{t=0}^{T-1}\mathcal{X}_t\mathcal{X}_t^\top-\Upsilon\right\|_{\infty}+\|\hat{w}_m-w_m^*\|_1\left\|\frac{1}{T}\sum_{t=0}^{T-1}\mathcal{X}_t\mathcal{X}_t^\top\right\|_\infty\\
              \leq &Cs_m\sqrt{\frac{\log M}{T}},
          \end{split}
      \end{equation}
      where we apply \eqref{Wl1Bnd}, Lemma \ref{CovBound}, Assumption \ref{Assump_w_bound}, and bound $\left\|\frac{1}{T}\sum_{t=0}^{T-1}\mathcal{X}_t\mathcal{X}_t^\top\right\|_\infty$ using the same argument as in \eqref{CovInfNorm}. Thus,
      \begin{equation*}
          \|E_m^{(2)}\|_2\geq T^{-\phi}\left\|\Upsilon^{(m)}\Delta_m\right\|_2-Cs_m\sqrt{\frac{\log M}{T}}T^{-\phi}\geq CT^{-\phi},
      \end{equation*}
      since $\Delta_m$ is a constant vector, and $\Lambda_{\min}(\Upsilon^{(m)}$ is lower bounded by constant as in \eqref{Cov_mEigenBnd}.
      
      Applying these bounds for $\|E_m^{(i)}\|_2, 1\leq i\leq 3$, one can show that,
      
      \begin{equation*}
      \begin{split}
          T\sum_{m=1}^k\left\|(\Upsilon^{(m)})^{-\frac{1}{2}}(\widehat{S}_m-S_m)\right\|_2^2\geq \sum_{m=1}^k\left(C_1T^{\frac{1}{2}-\phi}-C_2\frac{(s\vee\rho)\log M}{\sqrt{T}}\right)^2
      \geq CT^{1-2\phi}.
      \end{split}
      \end{equation*}
      Plug this into \eqref{Alt2U_T_dev} and apply Lemma \ref{CLT}, we have
      \begin{equation*}
      \begin{split}
          &\mathbb{P}(\widehat{U}_T\leq x)\leq C\exp\{-c\log M\}+\mathbb{P}\left(\|V_T\|_2\geq C_1T^{\frac{1}{2}-\phi}-C_2\sqrt{x}\right)\\
          \leq &C_1\exp\{-c\log M\}+C_2T^{-\frac{1}{8}}+1-F_d((C_3T^{\frac{1}{2}-\phi}-C_4\sqrt{x})^2)\\
          \leq &C_1\exp\{-c\log M\}+C_2T^{-\frac{1}{8}}+C_3\exp\{-(C_3T^{\frac{1}{2}-\phi}-C_4\sqrt{x})^2\},
    \end{split}
      \end{equation*}
     where in the last line we apply the $\chi_d^2$ tail bound as in \eqref{V_T_TB}. Since the constants here only depend on $d,\beta,\Delta,\tau$, this bound holds when taking supremum over $A^*\in \Omega_1$ and $x\in \mathbb{R}$. 
     \item[(3)] $\phi>\frac{1}{2}$\\
     The proof of this case is similar to that of Theorem \ref{NullThm}. The only thing different lies in the choice of $\varepsilon$ and bounding $\mathbb{P}\left(\left|\widehat{U}_T-U_T\right|>\varepsilon\right)$. The bound (\ref{U_T_dev}) for $\left|\widehat{U}_T-U_T\right|$ still holds here, with $E_m=\sqrt{T}(\Upsilon^{(m)})^{-\frac{1}{2}}(\widehat{S}_m-S_m)$. We directly apply the bounds in Assumptions \ref{Assump_A_bound} to \ref{Assump_Cov_m_bound}, and Lemma \ref{DevBnd} to Lemma \ref{CovBound} in the following. First we write
     \begin{equation*}
         \begin{split}
             \widehat{S}_m-S_m= &(\hat{w}_m-w_m^*)^\top\frac{1}{T}\sum_{t=0}^{T-1} \mathcal{X}_{t,D_m^c}\epsilon_{t,m}\\
         &+\frac{1}{T}\sum_{t=0}^{T-1}\left(\mathcal{X}_{t,D_m}-w_m^{*\top} \mathcal{X}_{t,D_m^c}\right)\mathcal{X}_{t,D_m^c}^\top\left((\widehat{A}_m)_{D_m^c}-(A_m^*)_{D_m^c}\right)\\
        &-(\hat{w}_m-w_m^*)^\top \left(\frac{1}{T}\sum_{t=0}^{T-1}\mathcal{X}_{t,D_m^c}\mathcal{X}_{t,D_m^c}^\top\right)\left((\widehat{A}_m)_{D_m^c}-(A^*_m)_{D_m^c}\right)\\
         &-T^{-(1+\phi)}\sum_{t=0}^{T-1}(\mathcal{X}_{t,D_m}-\hat{w}_m^\top \mathcal{X}_{t,D_m^c})\mathcal{X}_{t,D_m}^\top\Delta_m.
         \end{split}
     \end{equation*}
         Note here that the first three terms are exactly the same as in \eqref{NullSmErr}, and thus can be bounded as in the proof of Theorem \ref{NullThm}. We only have to tackle the last term. By \eqref{Upsilon_mBnd}, one can show that,
         \begin{equation*}
             \begin{split}
                 \left\|\frac{1}{T}\sum_{t=0}^{T-1}(\mathcal{X}_{t,D_m}-\hat{w}_m^\top \mathcal{X}_{t,D_m^c})\mathcal{X}_{t,D_m}^\top\Delta_m\right\|_2
         \leq \left\|\Upsilon^{(m)}\Delta_m\right\|_2+Cs_m\sqrt{\frac{\log M}{T}}\leq C,
             \end{split}
         \end{equation*}
     Thus, going through the same arguments as bounding $\left\|\widehat{S}_m-S_m\right\|_2$ under $\mathcal{H}_0$, we have
     \begin{equation*}
             \|E_m\|_2\leq C_1\frac{(s\vee \rho)\log M}{\sqrt{T}}+C_2T^{\frac{1}{2}-\phi},
     \end{equation*}
     with probability at least $1-C\exp\{-c\log M\}$.
     Recall that in \eqref{V_T_TB}, when $y>C$ for some constant $C$,
     $$
     \mathbb{P}(\|V_{T,m}\|_2\geq y)\leq C_1T^{-\frac{1}{8}}+C_2y^{-2}.
     $$
     Let $y=\left(\frac{(s\vee \rho)\log M}{\sqrt{T}}\right)^{-\frac{1}{4}}\wedge T^{\frac{2\phi-1}{6}}$, then by (\ref{U_T_dev}) one can show that
    \begin{align*}
        &\left|\widehat{U}_T-U_T\right|\\
        \leq &C_1\frac{(s\vee \rho)\log M}{\sqrt{T}}\left(\frac{(s\vee\rho)\log M}{\sqrt{T}}\right)^{-\frac{1}{2}}+C_2\left(\frac{(s\vee\rho)\log M}{\sqrt{T}}\right)^{\frac{3}{4}}+C_3T^{\frac{1-2\phi}{3}}\\
        \leq &C_1\left(\frac{(s\vee \rho)\log M}{\sqrt{T}}\right)^{\frac{1}{2}}+C_2T^{\frac{1-2\phi}{3}},
    \end{align*}
    with probability at least 
    \begin{align*}
    1-c_1\exp\{-c_2\log M\}-c_3T^{-\frac{1}{8}}-c_4\left(\frac{(s\vee\rho)\log M}{\sqrt{T}}\right)^{\frac{1}{2}}-c_5T^{\frac{1-2\phi}{3}},
    \end{align*}
    if $(s\vee \rho)\log M=o(\sqrt{T})$ and $T>C$ for some constant $C$.
	Therefore, applying (\ref{main_prob_diff}) with $\varepsilon=C_1\left(\frac{(s\vee \rho)\log M}{\sqrt{T}}\right)^{\frac{1}{2}}+C_2T^{\frac{1-2\phi}{3}}$,
    \begin{align*}
	&\left|\mathbb{P}(\widehat{U}_T\leq x)-F_{d}(x)\right|\\
	\leq &C_1T^{-\frac{1}{8}}+C_2\left(\frac{(s\vee \rho)\log M}{\sqrt{T}}\right)^{\frac{1}{2}}+C_3T^{\frac{1-2\phi}{3}}+C_4\exp\{-C_5\log M\}.
	\end{align*}
	Since constants $C_i$ only depend on $d,\beta,\tau,\Delta$, this bound also holds for supremum over $A^*\in\Omega_1$ and $x\in \mathbb{R}$. 
  \end{enumerate}
\end{proof}

\section{Conclusion}
In this paper, we have provided theoretical guarantees for hypothesis tests for sparse high-dimensional auto-regressive models with sub-Gaussian innovations. Specific upper bounds for the convergence rates of test statistics are given. Importantly, our results go beyond the Gaussian assumption and do not rely on mixing assumptions. As a consequence of our theory, we also develop novel concentration bounds for quadratic forms of dependent sub-Gaussian random variables using a careful truncation argument. 

It would be of interest to consider other variance estimation method, e.g., scaled Lasso \cite{sun2012scaled}, or cross-validation based method \cite{fan2012variance}, and establish corresponding theoretical guarantee. There also remain a number of open questions/challenges including extensions to generalized linear models, heavy-tailed innovations and incorporating hidden variables under time series setting.

\section*{Acknowledgements}
We would like to thank both Sumanta Basu and Yiming Sun for useful discussions and comments. LZ and GR were supported by ARO W911NF-17-1-0357 and NGA HM0476-17-1-2003. GR was also supported by NSF DMS-1811767.

\bibliographystyle{abbrvnat} 
\bibliography{reference.bib}



\appendix
\section{Proof of Lemmas in Section \ref{Estimator}}\label{lemma_proof}
\begin{proof}[Proof of Lemma \ref{Abound}]
We prove the error bounds for each $\widehat{A}_m$ and then take a union bound. Without loss of generality, we consider the estimation of $A_1^*\in \mathbb{R}^M$. With a little abuse of notation, let $S=\text{supp}(A_1^*)$, $\hat{h}=\widehat{A}_1-A_1^*$, $S=\text{supp}(A_1^*)$, and $H=\frac{1}{T}\sum_{t=0}^{T-1}\mathcal{X}_t\mathcal{X}_t^\top$ ($S$ is not the decorrelated score function we defined in section \ref{TestStatistic}). We would like to bound $\|\hat{h}\|_1$, $\|\hat{h}\|_2$ and $\hat{h}^\top H\hat{h}$ under two cases separately:
    \begin{itemize}
        \item[(1)] $\widehat{A}=\widehat{A}^{(L)}$.\\
        Here we adopt the standard proof framework for Lasso. By (\ref{A_Ldef}) we know that $\widehat{A}_1\in \mathbb{R}^M$ satisfies
	$$
	\widehat{A}_1=\argmin_{\beta\in \mathbb{R}^M}\frac{1}{T}\sum_{t=0}^{T-1}(X_{t+1,1}-\mathcal{X}_t^\top \beta)^2+\lambda_A\|\beta\|_1,
	$$
	which implies
	\begin{equation*}
	\frac{1}{T}\sum_{t=0}^{T-1}(X_{t+1,1}-\mathcal{X}_t^\top \widehat{A}_1)^2+\lambda_A\|\widehat{A}_1\|_1\leq \frac{1}{T}\sum_{t=0}^{T-1}(X_{t+1,{1}}-\mathcal{X}_t^\top A_1^*)^2+\lambda_A\|A_1^*\|_1.
	\end{equation*}
	Rearranging the terms, we have
	\begin{equation*}
	    \begin{split}
	      \hat{h}^\top H\hat{h}&\leq 2\hat{h}^\top\left(\frac{1}{T}\sum_{t=0}^{T-1}\epsilon_{t,{1}}\mathcal{X}_t\right)+\lambda_A\|A_1^*\|_1-\lambda_A\|\widehat{A}_1\|_1\\
	&\leq 2\left\|\frac{1}{T}\sum_{t=0}^{T-1}\epsilon_t\mathcal{X}_t^\top\right\|_\infty \|\hat{h}\|_1+\lambda_A\|\hat{h}_{S}\|_1-\lambda_A\|\hat{h}_{S^c}\|_1.
	    \end{split}
	\end{equation*}
	The last line is due to that 
	\begin{align*}
	 \|A_1^*\|_1-\|\widehat{A}_1\|_1&=\|(A_1^*)_{S}\|_1-\|(\widehat{A}_1)_{S}\|_1-\|(\widehat{A}_1)_{S^c}\|_1\\
	 &=\|(A_1^*)_{S}\|_1-\|(\widehat{A}_1)_{S}\|_1-\|\hat{h}_{S^c}\|_1\\
	 &\leq \|\hat{h}_{S}\|_1-\|\hat{h}_{S^c}\|_1.
	\end{align*}
	By Lemma \ref{DevBnd}, with probability at least $1-c_1\exp\{-c_2\log M\}$, 
	$$
	\left\|\frac{1}{T}\sum_{t=0}^{T-1}\epsilon_{t}\mathcal{X}_t^\top\right\|_\infty\leq \frac{1}{4}\lambda_A=C\sqrt{\frac{\log M}{T}}.
	$$ 
	Meanwhile, since $H$ is positive semi-definite, 
	\begin{equation*}
	\begin{split}
	0\leq \hat{h}^\top H\hat{h}&\leq \frac{3\lambda_A}{2}\|\hat{h}_{S}\|_1-\frac{\lambda_A}{2}\|\hat{h}_{S^c}\|_1,\\
	\|\hat{h}_{S^c}\|_1&\leq 3\|\hat{h}_{S}\|_1.
	\end{split}
	\end{equation*}
	We have the following restricted eigenvalue condition for $H$.
	\begin{Lemma}\label{REC}
    Under the model specified in \eqref{ARp2} with independent sub-Gaussian noise $\epsilon_{ti}$ of constant scale factor, and $A^*\in \Omega_0\cup \Omega_1$, for any set $J\subset \{1,2,\cdots,pM\}$, positive integer $\kappa>0$, $H$ satisfies the following REC:
	$$
	\inf\{v^\top H v:v\in \mathcal{C}(J,\kappa),\|v\|_2\leq 1\}\geq C_1>0,
	$$
	with probability at least $1-2\exp\left\{-cT\right\}$, when $|J|\log pM\leq C_2T$.
 Here $\mathcal{C}(J,\kappa)=\{v:\|v_{J^c}\|_1\leq \kappa\|v_{J}\|_1\}$, constant $C_1$ depends on $\beta$, $c$ and $C_2$ depend on $\kappa$ and $\beta$.
\end{Lemma}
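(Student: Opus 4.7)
The plan is to establish the REC by decomposing $v^\top H v = v^\top \Upsilon v + v^\top(H - \Upsilon)v$, lower bounding the first term via the minimum eigenvalue of the population covariance, and controlling the remainder uniformly over the cone by a deviation inequality derived from Lemma \ref{QuaBound}. First, Lemma \ref{CovEigenBnd} gives $v^\top \Upsilon v \geq \Lambda_{\min}(\Upsilon) \|v\|_2^2 \geq C(\beta)\|v\|_2^2$ for all $v$, so it suffices to show that the empirical-to-population deviation of the quadratic form is small on $\mathcal{C}(J,\kappa) \cap S^{pM-1}$.

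The key step is to prove a deviation inequality of the form
\begin{equation*}
\sup_{\|v\|_2 = 1} \bigl|v^\top (H-\Upsilon) v\bigr| \;\leq\; \alpha \|v\|_2^2 + \tau \|v\|_1^2,
\qquad \alpha \text{ a small constant}, \quad \tau \asymp \frac{\log(pM)}{T},
\end{equation*}
with probability at least $1 - 2\exp(-cT)$. To obtain this, fix a sparsity level $k$ (to be chosen $\asymp |J|$) and first bound the supremum over $k$-sparse unit vectors. For any such $v$, setting $B = vv^\top$ yields $\|B\|_2 = \|B\|_{\tr} = 1$, so Lemma \ref{QuaBound} gives $\mathbb{P}(|v^\top(H-\Upsilon)v| > \delta) \leq c_1 \exp(-c_2 T \delta^2)$ for $\delta$ bounded. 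A standard covering-net argument over the union of $\binom{pM}{k}$ coordinate subspaces intersected with the sphere (net cardinality $\exp(C k \log(pM/k))$) then yields, after a union bound and setting $\delta^2 \asymp k\log(pM)/T$,
\begin{equation*}
\sup_{\|v\|_0 \leq k,\,\|v\|_2 \leq 1} \bigl|v^\top(H-\Upsilon)v\bigr| \;\leq\; C\sqrt{\frac{k\log(pM)}{T}},
\end{equation*}
with probability at least $1-2\exp(-cT)$, provided $k\log(pM) \leq C_2 T$. Extending this bound to arbitrary $v$ via the Loh--Wainwright-style decomposition (splitting $v$ into blocks of size $k$ according to decreasing magnitude and applying Cauchy--Schwarz) then yields the desired mixed $\ell_2/\ell_1$ deviation inequality, with $\alpha \leq C\sqrt{k\log(pM)/T}$ and $\tau \leq C\log(pM)/(Tk)$; choosing $k \asymp |J|$ drives $\alpha$ below $C(\beta)/2$ under the hypothesis $|J|\log(pM) \leq C_2 T$.

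With this deviation bound in hand, for $v \in \mathcal{C}(J,\kappa)$ we have $\|v\|_1 \leq (1+\kappa)\|v_J\|_1 \leq (1+\kappa)\sqrt{|J|}\,\|v\|_2$, so $\|v\|_1^2 \leq (1+\kappa)^2|J|\,\|v\|_2^2$. Combining this with the deviation bound gives
\begin{equation*}
v^\top H v \;\geq\; \bigl(C(\beta) - \alpha - \tau(1+\kappa)^2|J|\bigr)\|v\|_2^2 \;\geq\; \tfrac{1}{2}C(\beta)\|v\|_2^2,
\end{equation*}
once $|J|\log(pM) \leq C_2 T$ for $C_2$ small enough depending on $\kappa$ and $\beta$, which is exactly the REC claimed.

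The main obstacle will be the uniform-over-the-cone deviation step: Lemma \ref{QuaBound} is only a pointwise concentration bound, and the cone $\mathcal{C}(J,\kappa)$ is not a union of low-dimensional subspaces, so the covering argument must be combined with a peeling/decomposition trick to upgrade from sparse $v$ to arbitrary $v$ in the cone. Care is also required because Lemma \ref{QuaBound} gives a mixed sub-exponential/sub-Gaussian tail, so the net size has to be matched to the sub-Gaussian regime of $\delta$; this is routine provided $k\log(pM)/T$ stays bounded, which is guaranteed by the hypothesis.
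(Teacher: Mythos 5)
Your overall strategy is sound and genuinely differs from the paper in the step that upgrades a sparse-vector bound to a cone-wide bound. The paper instead invokes a package of discretization results from \cite{basu2015regularized} (here Lemmas \ref{cone_sparse}, \ref{conv_sparse}, \ref{sparseREC}): the cone intersected with the unit ball is embedded in $(\kappa+2)\,\mathrm{cl}\{\mathrm{conv}\,\mathcal{K}(|J|)\}$, the sup of a quadratic form over that convex hull is bounded by $3$ times its sup over $\mathcal{K}(2|J|)$, and then Lemma \ref{QuaBound} plus a union bound over $2|J|$-sparse directions finishes. You instead prove a global mixed $\ell_2/\ell_1$ deviation inequality via a Loh--Wainwright peeling/block-decomposition and then feed in $\|v\|_1\le(1+\kappa)\sqrt{|J|}\,\|v\|_2$ on the cone. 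Both routes work and have the same two ingredients (Lemma \ref{QuaBound} pointwise, Lemma \ref{CovEigenBnd} for $\Lambda_{\min}(\Upsilon)$); the convex-hull argument is slightly tighter and avoids peeling, while yours yields a reusable restricted-strong-convexity inequality valid for all $v$ simultaneously, which is a somewhat stronger intermediate result.

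One imprecision worth fixing: you set $\delta^2\asymp k\log(pM)/T$ to get the deviation rate $\sqrt{k\log(pM)/T}$, but then the failure probability after the net and union bound is $\exp(Ck\log(pM/k))\cdot\exp(-c_2T\delta^2)\lesssim\exp(-c\,k\log(pM))$, not $\exp(-cT)$. To obtain the $1-2\exp(-cT)$ probability stated in the lemma you should instead keep $\delta$ a fixed small constant $\eta$; then the exponent is $Ck\log(pM)-c_2T\eta^2$, which is $\le -c'T$ once $k\log(pM)\le C_2 T$ with $C_2$ chosen small relative to $\eta^2$. This is essentially what the paper's Lemma \ref{sparseREC} does. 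The deviation bound you carry forward then becomes a constant $\eta$ rather than $\sqrt{k\log(pM)/T}$, but that is all that is needed: with $k=|J|$ the peeling gives $\alpha\asymp\eta$ and $\tau\asymp\eta/k$ (your stated $\tau\lesssim\log(pM)/(Tk)$ also has a stray square on the rate; it should be $\tau\asymp\delta_k/k$), and the cone inequality gives $\tau\|v\|_1^2\le\eta(1+\kappa)^2\|v\|_2^2$, which is still dominated by $C(\beta)/2$ for small $\eta$. With those corrections the argument closes.
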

Here $\hat{h}\in \mathcal{C}(S,3)$, $|S|=\rho_{1}$, by Lemma \ref{REC}, when $T\geq C\rho \log M$,
$$
\hat{h}^\top H\hat{h}\geq C\|\hat{h}\|_2^2,
$$
with probability at least $1-2\exp\{-cT\}$, when $T>C\rho \log M$. 
	Thus
	\begin{equation}\label{h_l2_bound}
	  \|\hat{h}\|_2^2\leq C\hat{h}^\top H\hat{h}\leq C\lambda_A\|\hat{h}_{S}\|_1\leq C\sqrt{\frac{\rho_{1}\log M}{T}}\|\hat{h}\|_2,
	\end{equation}
	which implies
	\begin{equation*}
	\begin{split}
	    \|\hat{h}\|_2\leq &C\sqrt{\frac{\rho_{1}\log M}{T}},\quad \hat{h}^\top H\hat{h}\leq C\frac{\rho_1\log M}{T},\\
	    \|\hat{h}\|_1\leq &4\|\hat{h}_{S}\|_1\leq 4\sqrt{\rho_{1}}\|\hat{h}\|_2\leq C\rho_{1}\sqrt{\frac{\log M}{T}},
	\end{split}
	\end{equation*}
	with probability at least $1-c_1\exp\{-c_2\log M\}$.
\item[(2)] $\widehat{A}=\widehat{A}^{(D)}$.\\
        Here we adopt the standard proof framework for Dantzig selector. By (\ref{A_Ddef}), 
        \begin{equation}\label{A_Dcond}
            \widehat{A}_1=\argmin_{\beta\in \mathbb{R}^M}\|\beta\|_1,\quad\text{s.t.}\quad \left\|\frac{1}{T}\sum_{t=0}^{T-1}(X_{t+1,1}-\mathcal{X}_t^\top \beta)\mathcal{X}_t\right\|_\infty\leq\lambda_A.
        \end{equation}
	     By Lemma \ref{DevBnd}, when $T\geq C\log M$, with probability at least $1-c_1\exp\{-c_2\log M\}$, 
	     $$
	     \left\|\frac{1}{T}\sum_{t=0}^{T-1}(X_{t+1,1}-\mathcal{X}_t^\top A_1^*)\mathcal{X}_t\right\|_\infty=\left\|\frac{1}{T}\sum_{t=0}^{T-1}\epsilon_{t,1}\mathcal{X}_t\right\|_\infty\leq \lambda_A,
	     $$
	     which implies
	     $$
	     \|H\hat{h}\|_\infty\leq C\sqrt{\frac{\log M}{T}}.
	     $$
	     Meanwhile, by (\ref{A_Dcond}),
	     \begin{equation*}
	         \begin{split}
	             \|\widehat{A}_1\|_1\leq \|A_1^*\|_1,\quad \|\hat{h}_{S^c}\|_1\leq \|\hat{h}_S\|_1.
	         \end{split}
	     \end{equation*}
Here $\hat{h}\in \mathcal{C}(S,1)$, $|S|=\rho_{1}$, by Lemma \ref{REC}, when $T\geq C\rho \log M$,
$$
\hat{h}^\top H\hat{h}\geq C\|\hat{h}\|_2^2,
$$
with probability at least $1-2\exp\{-cT\}$, when $T>C\rho \log M$. Thus
	\begin{equation}\label{h_l2_bound}
	  \|\hat{h}\|_2^2\leq C\hat{h}^\top H\hat{h}\leq \|H\hat{h}\|_\infty\|\hat{h}\|_1\leq C\sqrt{\frac{\log M}{T}}\|\hat{h}\|_1\leq C\sqrt{\frac{\rho_1\log M}{T}}\|\hat{h}\|_2,
	\end{equation}
	which implies
	\begin{equation*}
	\begin{split}
	    \|\hat{h}\|_2\leq &C\sqrt{\frac{\rho_{1}\log M}{T}},\quad \hat{h}^\top H\hat{h}\leq C\frac{\rho_{1}\log M}{T},\\
	    \|\hat{h}\|_1\leq &4\|\hat{h}_{S}\|_1\leq 4\sqrt{\rho_{1}}\|\hat{h}\|_2\leq C\rho_{1}\sqrt{\frac{\log M}{T}},
	\end{split}
	\end{equation*}
	with probability at least $1-c_1\exp\{-c_2\log M\}$.
    \end{itemize}
Therefore, after taking a union bound over $m=1,\cdots,k$, proof complete.
\end{proof}

\begin{proof}[Proof of Lemma \ref{wbound}]
Without loss of generality, we consider the estimation of $(w_1^*)_{\cdot,1}$ and then take a union bound. Let $v^*=(w_1^*)_{\cdot,1}$, $\hat{v}=(\hat{w}_1)_{\cdot,1}$, $\hat{h}=\hat{v}-v^*\in\mathbb{R}^{M-d_1}$ and $S=\text{supp}(v^*)$. Then we prove upper bounds for $\|\hat{h}\|_1$ and $\hat{h}^\top H_{D_1^c,D_1^c}\hat{h}$ with high probability under two cases. 
\begin{itemize}
    \item[(1)] $\hat{w}_m=\hat{w}_m^{(L)}$.\\
    Looking into the definition (\ref{w_Ldef}) of $\hat{w}_1$, it is clear that the optimization can be viewed as $d_1$ separate optimization problems, in terms of each column of $\hat{w}_1$. Thus
    \begin{equation*}
        \hat{v}=\arg\min_{v\in \mathbb{R}^{M-d_1}}\frac{1}{T}\sum_{t=0}^{T-1}\left(\left(\mathcal{X}_{t,D_1}\right)_1-\mathcal{X}_{t,D_1^c}^\top v\right)^2+\lambda_w\|v\|_1.
    \end{equation*}
    The following proof is almost identical to the proof in Lemma \ref{Abound} under $\widehat{A}=\widehat{A}^{(L)}$, except some difference in notation and application of Lemmas. 
    One can show that,
    \begin{equation*}
    \begin{split}
        &\frac{1}{T}\sum_{t=0}^{T-1}\left(\left(\mathcal{X}_{t,D_1}\right)_1-\mathcal{X}_{t,D_1^c}^\top \hat{v}\right)^2+\lambda_w\|\hat{v}\|_1\\
        \leq &\frac{1}{T}\sum_{t=0}^{T-1}\left(\left(\mathcal{X}_{t,D_1}\right)_1-\mathcal{X}_{t,D_1^c}^\top v^*\right)^2+\lambda_w\|v^*\|_1,
        \end{split}
        \end{equation*}
        Rearranging the inequality gives us
        \begin{equation*}
            \begin{split}
                \hat{h}^\top H_{D_1^c,D_1^c}\hat{h}\leq & 2\hat{h}^\top\left(\frac{1}{T}\sum_{t=0}^{T-1}\left(\left(\mathcal{X}_{t,D_1}\right)_1-\mathcal{X}_{t,D_1^c}v^*\right)\mathcal{X}_{D_1^c}\right)+\lambda_w\|v^*\|_1-\lambda_w\|\hat{v}\|_1\\
	\leq &2\left\|\frac{1}{T}\sum_{t=0}^{T-1}(\mathcal{X}_{t,D_1}-w_1^{*\top}\mathcal{X}_{t,D_1^c})\mathcal{X}_{D_1^c}^\top\right\|_\infty \|\hat{h}\|_1+\lambda_w\|\hat{h}_{S}\|_1-\lambda_A\|\hat{h}_{S^c}\|_1.
            \end{split}
        \end{equation*}

	By Lemma \ref{true_w_dev}, with probability at least $1-c_1\exp\{-c_2\log M\}$, 
	$$
	\left\|\frac{1}{T}\sum_{t=0}^{T-1}(\mathcal{X}_{t,D_1}-w_1^{*\top}\mathcal{X}_{t,D_1^c})\mathcal{X}_{D_1^c}^\top\right\|_\infty\leq \frac{1}{4}\lambda_w=C\sqrt{\frac{\log M}{T}},
	$$ 
	which implies, 
	\begin{equation*}
	\begin{split}
	0\leq \hat{h}^\top H_{D_1^c,D_1^c}\hat{h}&\leq \frac{3\lambda_w}{2}\|\hat{h}_{S}\|_1-\frac{\lambda_w}{2}\|\hat{h}_{S^c}\|_1,\\
	\|\hat{h}_{S^c}\|_1&\leq 3\|\hat{h}_{S}\|_1.
	\end{split}
	\end{equation*}
	Let $\tilde{h}\in\mathbb{R}^{M}$ be defined as the following:
\begin{equation}\label{h_def}
    \tilde{h}_{D_1}=0,\quad \tilde{h}_{D_1^c}=\hat{h},
\end{equation}
	By Lemma \ref{REC}, when $T\geq Cs\log M$, with probability at least $1-2\exp\{-cT\}$, 
	\begin{equation*}
	\begin{split}
	    \|\hat{h}\|_2^2=\|\tilde{h}\|_2^2\leq &C\tilde{h}^\top H\tilde{h}=2\hat{h}^\top H_{D_1^c,D_1^c}\hat{h}\leq C\lambda_w\|\hat{h}_{S}\|_1\leq C\sqrt{\frac{s_{1}\log M}{T}}\|\hat{h}\|_2,
	\end{split}
	\end{equation*}
	which implies
	\begin{equation*}
	     \hat{h}^\top H\hat{h}\leq C\frac{s_1\log M}{T},
	\end{equation*}
	and
	\begin{equation*}
	    \|\hat{h}\|_1\leq 4\|\hat{h}_{S}\|_1\leq 4\sqrt{s_{1}}\|\hat{h}\|_2\leq Cs_{1}\sqrt{\frac{\log M}{T}},
	\end{equation*}
	with probability at least $1-c_1\exp\{-c_2\log M\}$.
	\item[(2)]$\hat{w}_m=\hat{w}_m^{(D)}$.\\
	By (\ref{w_Ddef}),
   \begin{equation}\label{v_def}
    \hat{v}=\argmin_{v\in \mathbb{R}^{M-d_1}}{\|v\|_1},\quad \text{s.t. }\left\|\frac{1}{T}\sum_{t=0}^{T-1}\left((\mathcal{X}_{t,D_1})_1-v^\top \mathcal{X}_{t,D_1^c}\right)\mathcal{X}_{t,D_1^c}\right\|_\infty\leq \lambda_w.
    \end{equation}
    This proof is also pretty similar to the proof of Lemma \ref{Abound} under the case where $\widehat{A}=\widehat{A}^{(D)}$.
By Lemma \ref{true_w_dev}, 
$$
\left\|\frac{1}{T}\sum_{t=0}^{T-1}\left(\left(\mathcal{X}_{t,D_1}\right)_1-v^{*\top}\mathcal{X}_{t,D_1^c}\right)_1\mathcal{X}_{t,D_1^c}\right\|_\infty\leq \lambda_w=C\sqrt{\frac{\log M}{T}},
$$
with probability at least $1-c_1\exp\{-c_2\log M\}$. Thus,
\begin{equation*}
    \left\|H_{D_1^c,D_1^c}^\top\hat{h}\right\|_\infty\leq C\sqrt{\frac{\log M}{T}}.
\end{equation*}
Meanwhile, by (\ref{v_def}),
\begin{equation*}
    \left\|\hat{v}\right\|_1=\left\|\hat{v}_S\right\|_1+\left\|\hat{v}_{S^c}\right\|_1\leq \left\|v^*\right\|_1=\left\|v^*_S\right\|_1,
\end{equation*} 
which further implies
\begin{equation}\label{w_cone}
    \begin{split}
        \left\|\hat{h}_{S^c}\right\|_1&\leq \left\|\hat{h}_{S}\right\|_1.
    \end{split}
\end{equation}
Recall the definition of $\tilde{h}$ in (\ref{h_def}),then by Lemma \ref{REC}, (\ref{w_cone}) and (\ref{h_def}), when $T\geq Cs\log M$,
\begin{equation*}
\begin{split}
    \|\hat{h}\|_2^2=\|\tilde{h}\|_2^2\leq &C\tilde{h}^\top H\tilde{h}\\
    =&C\hat{h}^\top H_{D_1^c,D_1^c}\hat{h}\\
    \leq &C\left\|\hat{h}\right\|_1\left\|H_{D_1^c,D_1^c}^\top\hat{h}\right\|_\infty\\
    \leq &C\sqrt{\frac{\log M}{T}}\|\hat{h}_S\|_1\\
    \leq &C\sqrt{\frac{s_1\log M}{T}}\|\hat{h}\|_2,
\end{split}
\end{equation*}
which implies
\begin{equation*}
    \hat{h}^\top H_{D_1^c,D_1^c}\hat{h}\leq C\frac{s_1\log M}{T},
\end{equation*}
and 
\begin{equation*}
    \|\hat{h}\|_1\leq C\sqrt{s_1}\|\hat{h}\|_2\leq Cs_1\sqrt{\frac{\log M}{T}},
\end{equation*}
with probability at least $1-c_1\exp\{-c_2\log M\}$.
\end{itemize}
Since 
$$
\|(\hat{w}_1)-(w_1^*)\|_1=\sum_{j=1}^{d_1}\|(\hat{w}_1)_{\cdot,j}-(w_1^*)_{\cdot,j}\|_1,
$$
and
\begin{equation*}
\begin{split}
     &\text{tr}\left\{(\hat{w}_1-w^*_1)^\top \left(\frac{1}{T}\sum_{t=0}^{T-1}\mathcal{X}_{t,D_m^c}\mathcal{X}_{t,D_m^c}^\top\right)(\hat{w}_1-w^*_1)\right\}\\
     =&\sum_{j=1}^{d_1}\left((\hat{w}_1)_{\cdot,j}-(w_1^*)_{\cdot,j}\right)^\top \left(\frac{1}{T}\sum_{t=0}^{T-1}\mathcal{X}_{t,D_1^c}\mathcal{X}_{t,D_1^c}^\top\right)\left((\hat{w}_1)_{\cdot,j}-(w_1^*)_{\cdot,j}\right),
\end{split}
\end{equation*}
taking a union bound over $\{\hat{w}_m:m=1,\cdots,k\}$ and all columns of $\hat{w}_m$, proof is complete.
\end{proof}

\begin{proof}[Proof of Lemma \ref{Cov_mBound}]
The following established result can be applied here: 
\begin{Lemma}\label{InvBnd}
For any invertible matrix $B$, if $B+\Delta$ is also invertible, then
\begin{equation}
\|(B+\Delta)^{-1}-B^{-1}\|_2\leq \frac{\|B^{-1}\|_2^2\|\Delta\|_2}{1-\|B^{-1}\|_2\|\Delta\|_2}.
\end{equation}
\end{Lemma}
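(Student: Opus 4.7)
The plan is to prove Lemma \ref{InvBnd}, which is a standard matrix perturbation bound for the inverse under an additive perturbation $\Delta$.

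The key algebraic identity I would start from is
\begin{equation*}
(B+\Delta)^{-1} - B^{-1} = -(B+\Delta)^{-1}\Delta B^{-1},
\end{equation*}
which follows by right-multiplying $B+\Delta - B = \Delta$ by $B^{-1}$ and left-multiplying by $(B+\Delta)^{-1}$. Taking the spectral norm and using submultiplicativity immediately gives
\begin{equation*}
\|(B+\Delta)^{-1} - B^{-1}\|_2 \le \|(B+\Delta)^{-1}\|_2\,\|\Delta\|_2\,\|B^{-1}\|_2,
\end{equation*}
so the only remaining task is to bound $\|(B+\Delta)^{-1}\|_2$ in terms of $\|B^{-1}\|_2$ and $\|\Delta\|_2$.

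For that bound I would factor $B+\Delta = B(I + B^{-1}\Delta)$, so that $(B+\Delta)^{-1} = (I + B^{-1}\Delta)^{-1}B^{-1}$. Under the implicit assumption $\|B^{-1}\|_2\|\Delta\|_2 < 1$ (needed to make the right-hand side of the claim meaningful and positive), one has $\|B^{-1}\Delta\|_2 < 1$ and hence the Neumann series gives
\begin{equation*}
\|(I + B^{-1}\Delta)^{-1}\|_2 \le \sum_{k=0}^{\infty}\|B^{-1}\Delta\|_2^{k} = \frac{1}{1-\|B^{-1}\Delta\|_2} \le \frac{1}{1-\|B^{-1}\|_2\|\Delta\|_2}.
\end{equation*}
Combining, $\|(B+\Delta)^{-1}\|_2 \le \|B^{-1}\|_2 / (1-\|B^{-1}\|_2\|\Delta\|_2)$, which plugged into the earlier inequality yields the claimed bound.

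There is essentially no obstacle here: the lemma is a textbook perturbation estimate, and the only subtlety is that the Neumann expansion requires $\|B^{-1}\Delta\|_2 < 1$, which is automatically subsumed by the positivity of the stated bound (the denominator $1-\|B^{-1}\|_2\|\Delta\|_2$ must be positive for the right-hand side to be meaningful). The reason the paper invokes this lemma is that to prove Lemma~\ref{Cov_mBound} one writes $\widehat{\Upsilon^{(m)}} = \Upsilon^{(m)} + \Delta_m$ where $\Delta_m$ is a residual matrix coming from plugging in $\hat w_m$ instead of $w_m^*$; the perturbation $\Delta_m$ can be controlled at rate $\sqrt{\log M/T}$ (up to a factor involving $s$) using Lemma~\ref{CovBound}, Assumption~\ref{Assump_w_bound}, and the eigenvalue bound from Lemma~\ref{CovEigenBnd}, after which Lemma~\ref{InvBnd} converts this into a bound on $(\widehat{\Upsilon^{(m)}})^{-1} - (\Upsilon^{(m)})^{-1}$ and hence on the quantity $\|\Upsilon^{(m)1/2}(\widehat{\Upsilon^{(m)}})^{-1}\Upsilon^{(m)1/2} - I\|_\infty$.
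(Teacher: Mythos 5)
Your proof is correct and starts from the same key resolvent identity $Y - B^{-1} = -Y\Delta B^{-1}$ (with $Y = (B+\Delta)^{-1}$) that the paper uses. The only difference is the sub-step of bounding $\|(B+\Delta)^{-1}\|_2$: you factor $B+\Delta = B(I + B^{-1}\Delta)$ and invoke the Neumann series, which requires making the hypothesis $\|B^{-1}\Delta\|_2 < 1$ explicit; the paper instead applies the triangle inequality $\|Y\|_2 \le \|Y - B^{-1}\|_2 + \|B^{-1}\|_2$ to the same identity, obtaining the self-referential bound $\|Y\|_2 \le \|B^{-1}\|_2 + \|Y\|_2\|\Delta\|_2\|B^{-1}\|_2$ and rearranging. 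Both routes land on $\|(B+\Delta)^{-1}\|_2 \le \|B^{-1}\|_2/(1 - \|B^{-1}\|_2\|\Delta\|_2)$ and both implicitly need the denominator positive (in the paper's version, rearranging would otherwise flip the inequality). The paper's bootstrap is marginally more self-contained since it avoids invoking the Neumann expansion and only needs the identity already in hand, but the content and logical structure are the same.
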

Since $\left\|I\right\|_2=1$, one can show that for $1\leq m\leq k$,
\begin{equation*}
    \begin{split}
        \left\|\Upsilon^{(m)\frac{1}{2}}\widehat{\Upsilon^{(m)}}^{-1}\Upsilon^{(m)\frac{1}{2}}-I\right\|_\infty\leq\left\|\Upsilon^{(m)\frac{1}{2}}\widehat{\Upsilon^{(m)}})^{-1}\Upsilon^{(m)\frac{1}{2}}-I\right\|_2\leq  \frac{\|\Delta\|_2}{1-\|\Delta\|_2},
    \end{split}
\end{equation*}
where $\Delta=\Upsilon^{(m)-\frac{1}{2}}\widehat{\Upsilon^{(m)}}\Upsilon^{(m)-\frac{1}{2}}-I$. Due to \eqref{Cov_mEigenBnd}, 
\begin{equation*}
\begin{split}
    &\left\|\Delta\right\|_2\leq \left(\Lambda_{\min}\Upsilon^{(m)}\right)^{-1}\left\|\widehat{\Upsilon^{(m)}}-\Upsilon^{(m)}\right\|_2\\
    \leq &C\left\|\widehat{\Upsilon^{(m)}}-\Upsilon^{(m)}\right\|_{F}\leq d_m \left\|\widehat{\Upsilon^{(m)}}-\Upsilon^{(m)}\right\|_\infty.
    \end{split}
\end{equation*}
In the following we bound $\left\|\widehat{\Upsilon^{(m)}}-\Upsilon^{(m)}\right\|_\infty$. Write $\widehat{\Upsilon^{(m)}}-\Upsilon^{(m)}$ as
\begin{align*}
\widehat{\Upsilon^{(m)}}-\Upsilon^{(m)}=&W_m^*\left(\frac{1}{T}\sum_{t=0}^{T-1}\mathcal{X}_t\mathcal{X}_t^\top-\Upsilon\right)W_m^{*\top}\\
&-(\hat{w}_m-w_m^*)^\top \frac{1}{T}\sum_{t=0}^{T-1}\mathcal{X}_{t,D_m^c}(\mathcal{X}_{t,D_m}-w_m^{*\top}\mathcal{X}_{t,D_m^c})^\top\\
&-\frac{1}{T}\sum_{t=0}^{T-1}(\mathcal{X}_{t,D_m}-w_m^{*\top}\mathcal{X}_{t,D_m^c}) \mathcal{X}_{t,D_m^c}^\top(\hat{w}_m-w_m^*)\\
&+(\hat{w}_m-w_m^*)^\top\left(\frac{1}{T}\sum_{t=0}^{T-1}\mathcal{X}_{t,D_m^c}\mathcal{X}_{t,D_m^c}^\top\right)(\hat{w}_m-w_m^*)\\
\triangleq &E_1^{(m)}-E_2^{(m)}-\left(E_2^{(m)}\right)^\top+E_3^{(m)},
\end{align*}
where $W_m^*$ is defined as in \eqref{W_def}.
Actually,
\begin{align*}
\|E_1^{(m)}\|_\infty&=\max_{i,j}\left|W_{m,i\cdot}^*\left(\frac{1}{T}\sum_{t=0}^{T-1}\mathcal{X}_t\mathcal{X}_t^\top-\Upsilon\right)W_{m,j\cdot}^{*\top}\right|\\
&=\max_{i,j}\left|\frac{1}{T}\sum_{t=0}^{T-1}\mathcal{X}_t^\top W_{m,i\cdot}^{*\top}W_{m,j\cdot}^*\mathcal{X}_t-\text{tr}(W_{m,i\cdot}^{*\top}W_{m,j\cdot}^*\Upsilon)\right|,
\end{align*}
which is the maximum over deviations of some quadratic forms from their expectation.
The following lemma provides a bound for quadratic form $\frac{1}{T}\sum_{t=0}^{T-1}\mathcal{X}_t^\top B\mathcal{X}_t$, with $B\in \mathbb{R}^{M\times M}$ being any symmetric matrix.

By Lemma \ref{QuaBound}, we only need to bound the trace norm and operator norm of 
$$
\frac{1}{2}\left((W_m^*)_{i\cdot}^\top (W_m^*)_{j\cdot}+(W_m^*)_{j\cdot}^\top (W_m^*)_{i\cdot}\right).
$$ The following lemma establishes the relationship between $\|\cdot\|_{\tr}$ and $\|\cdot\|_2$ for symmetric matrices.
\begin{Lemma}\label{trl2norm}
For any symmetric matrix $U$ of rank $r$, $\|U\|_{\tr}\leq r\|U\|_2$.
\end{Lemma}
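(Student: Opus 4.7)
The plan is to exploit the spectral decomposition of the symmetric matrix $U$ to reduce both norms to explicit expressions in terms of its eigenvalues, after which the bound becomes a one-line comparison between a sum of $r$ numbers and $r$ times their maximum.

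First, since $U$ is symmetric, I would write $U = Q \Lambda Q^\top$ with $Q$ orthogonal and $\Lambda = \diag(\lambda_1,\dots,\lambda_n)$ real. Because $\rank(U)=r$, exactly $r$ of the $\lambda_i$'s are nonzero; call these $\lambda_{i_1},\dots,\lambda_{i_r}$. Then $U^2 = Q\Lambda^2 Q^\top$ and $\sqrt{U^2} = Q|\Lambda|Q^\top$, so by the definition $\|U\|_{\tr}=\tr(\sqrt{U^2})$ given in the notation section, we get
\begin{equation*}
\|U\|_{\tr} \;=\; \sum_{i=1}^n |\lambda_i| \;=\; \sum_{j=1}^r |\lambda_{i_j}|.
\end{equation*}
On the other hand, the operator norm of a symmetric matrix satisfies $\|U\|_2 = \max_i |\lambda_i|$, which can be seen by maximizing $v^\top U v / \|v\|_2^2$ using the eigenbasis.

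With these two identifications, the inequality reduces to the elementary fact
\begin{equation*}
\sum_{j=1}^r |\lambda_{i_j}| \;\leq\; r\cdot \max_{j} |\lambda_{i_j}| \;=\; r\,\|U\|_2,
\end{equation*}
which completes the argument. There is essentially no obstacle here; the lemma is simply a convenient packaging of the bound $\sum |\lambda_i| \leq r\max|\lambda_i|$ for later use in bounding $\|E_1^{(m)}\|_\infty$ via Lemma \ref{QuaBound}, where the relevant symmetric matrices $\tfrac12\bigl((W_m^*)_{i\cdot}^\top (W_m^*)_{j\cdot}+(W_m^*)_{j\cdot}^\top (W_m^*)_{i\cdot}\bigr)$ have rank at most $2$, so the trace norm can be controlled by the operator norm up to a universal constant.
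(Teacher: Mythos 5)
Your proof is correct and follows essentially the same route as the paper: both diagonalize $U$, identify $\|U\|_{\tr}$ with the sum of absolute eigenvalues (equivalently $\tr\sqrt{\Lambda^2}$) and $\|U\|_2$ with the largest absolute eigenvalue, and then bound the sum of the $r$ nonzero terms by $r$ times their maximum.
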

Since $\frac{1}{2}\left((W_m^*)_{i\cdot}^\top(W_m^*)_{j\cdot}+(W_m^*)_{j\cdot}^\top(W_m^*)_{i\cdot}\right)$ is of rank 2, 
\begin{equation}\label{WtrBnd}
\begin{split}
    &\left\|\frac{1}{2}\left((W_{m}^*)_{i\cdot}^\top (W_{m}^*)_{j\cdot}+(W_{m}^*)_{j\cdot}^{\top}(W_m^*)_{i\cdot}\right)\right\|_{\tr}\\
    \leq &2\left\|\frac{1}{2}\left((W_{m}^*)_{i\cdot})^{\top}(W_{m}^*)_{j\cdot}+(W_{m}^*)_{j\cdot}^{\top}(W_{m}^*)_{i\cdot}\right)\right\|_2\\
    \leq &2\left\|(W_{m}^*)_{i\cdot}^{\top}(W_{m}^*)_{j\cdot}\right\|_2= 2\|(W_{m}^*)_{i\cdot}\|_2\|(W_{m}^*)_{j\cdot}\|_2.
\end{split}
\end{equation}
Meanwhile, similar from \eqref{Wl1Bnd}, we bound $\max_i \|(W_m^*)_{i\cdot}\|_2^2$ by
\begin{equation}\label{Wl2Bnd}
    \begin{split}
        \|(W_m^*)_{i\cdot}\|_2^2=&1+\|(w_m^*)_{\cdot,i}\|_2^2\\
        \leq &1+\Lambda_{\max}(\Upsilon_{D_m^c,D_m^c}^{-1})^2\left\|\Upsilon_{\cdot,i}\right\|_2^2\\
        \leq &1+\Lambda_{\min}(\Upsilon)^{-2}\Lambda_{\max}(\Upsilon)^2\leq C,
    \end{split}
    \end{equation}
    where the second inequality is due to that $\left\|\Upsilon_{\cdot,i}\right\|_2^2=(\Upsilon^2)_{ii}\leq \Lambda_{\max}(\Upsilon^2)\leq \Lambda_{\max}(\Upsilon)^2$.
    Thus, both the trace norm and $\ell_2$ norm of $\frac{1}{2}\left(W_{m,i\cdot}^{*\top}W_{m,j\cdot}^*+W_{m,j\cdot}^{*\top}W_{m,i\cdot}^*\right)$ can be bounded by constant, and applying Lemma \ref{QuaBound} gives us
\begin{equation*}
    \mathbb{P}\left(\|E_1^{(m)}\|_\infty>C\sqrt{\frac{\log M}{T}}\right)\leq c_1\exp\{-c_2\log M\}.
\end{equation*}
 Meanwhile, by Lemma \ref{true_w_dev} and Assumption \ref{Assump_w_bound}, with probability at least $1-c_1\exp\{-c_2\log M\}$,
\begin{align*}
\|E_2^{(m)}\|_\infty&\leq \left\|\frac{1}{T}\sum_{t=0}^{T-1}(\mathcal{X}_{t,D_m}-w_m^{*\top}\mathcal{X}_{t,D_m^c})\mathcal{X}_{t,D_m^c}\right\|_\infty\|\hat{w}_m-w_m^*\|_1\\
&\leq C\frac{s_m\log M}{T},
\end{align*}
and 
\begin{equation*}
    \begin{split}
        \left\|E_3^{(m)}\right\|_\infty=&\max_{i,j}\left|(E_3^{(m)})_{ij}\right|=\max_{i,j}(\hat{w}_m-w_m^*)_{\cdot,i}^\top H_{D_m^c,D_m^c}(\hat{w}_m-w_m^*)_{\cdot,j}\\
        \leq &\max_l(\hat{w}_m-w_m^*)_{\cdot,l}^\top H_{D_m^c,D_m^c}(\hat{w}_m-w_m^*)_{\cdot,l}\\
        \leq &\text{tr}\left\{(\hat{w}_m-w_m^*)^\top H_{D_m^c,D_m^c}(\hat{w}_m-w_m^*)\right\}\\
        \leq &C \frac{s_m\log M}{T}.
    \end{split}
\end{equation*}
Here the second line is because that $H_{D_m^c,D_m^c}=\frac{1}{T}\sum_{t=0}^{T-1}\mathcal{X}_{t,D_m^c}\mathcal{X}_{t,D_m^c}^\top$ is symmetric and positive semi-definite, thus we can apply Cauchey-Schwartz inequality. When $T\geq Cs^2\log M$.
$$
\frac{s_m\log M}{T}\leq \sqrt{\frac{\log M}{T}},
$$
which implies
\begin{align*}
\left\|\widehat{\Upsilon^{(m)}}-\Upsilon^{(m)}\right\|_2&\leq \|\widehat{\Upsilon^{(m)}}-\Upsilon^{(m)}\|_{F}\leq d_m\|\widehat{\Upsilon^{(m)}}-\Upsilon^{(m)}\|_\infty\leq C\sqrt{\frac{\log M}{T}}.
\end{align*}
Therefore, take a union bound over $1\leq m\leq k$, with probability at least $1-c_1\exp\{-c_2\log M\}$, 
\begin{equation*}
\left\|\Upsilon^{(m)\frac{1}{2}}\widehat{\Upsilon^{(m)}}^{-1}\Upsilon^{(m)\frac{1}{2}}-I\right\|_\infty\leq C\sqrt{\frac{\log M}{T}}.
\end{equation*}
when $T\geq Cs^2\log M$.
\end{proof}

\section{Proof of Theorem \ref{EstVarThm} and Theorem \ref{ConfRegThm}}
\begin{proof}[Proof of Theorem \ref{EstVarThm}]
Now we consider model \eqref{ARp2}, with unknown $\sigma^{*2}=\text{Var}(\epsilon_{ti})\geq \sigma_0^2$. Under this model, we use the notation $\widehat{U}_T$ for the quantity defined in the following:
\begin{equation*}
    \widehat{U}_T=T\sum_{m=1}^k \widehat{S}_m^\top (\widehat{\Upsilon^{(m)}})^{-1}\widehat{S}_m/\sigma^{*2}.
\end{equation*}
As explained in Section \ref{EstVarSec}, $\widehat{U}_T$ satisfies Theorem \ref{NullThm} and \ref{AltThm} under each corresponding condition. We show in the following that we only need to control the estimation error of $\hat{\sigma}^2$. 
Note that for any $0<\delta<1$,
\begin{equation*}
    \mathbb{P}\left(\widetilde{U}_T\leq x\right)\leq \mathbb{P}\left(\widehat{U}_T\leq \frac{x}{1-\delta}\right)+\mathbb{P}\left(\frac{\sigma^{*2}}{\hat{\sigma}^2}<1-\delta\right),
\end{equation*}
and
\begin{equation*}
    \mathbb{P}\left(\widetilde{U}_T> x\right)\leq \mathbb{P}\left(\widehat{U}_T> \frac{x}{1+\delta}\right)+\mathbb{P}\left(\frac{\sigma^{*2}}{\hat{\sigma}^2}> 1+\delta\right).
\end{equation*}
For any distribution function $F(x)$,
\begin{equation*}
\begin{split}
    \left|\mathbb{P}\left(\widetilde{U}_T\leq x\right)-F(x)\right|\leq &\sup_y \left|\mathbb{P}\left(\widehat{U}_T\leq y\right)-F(y)\right|+\sup_y \left|F(y)-F(y(1-\delta))\right|\\
    &+\mathbb{P}\left(\hat{\sigma}^2< \frac{\sigma^{*2}}{1+\delta}\right)+\mathbb{P}\left(\hat{\sigma}^2> \frac{\sigma^{*2}}{1-\delta}\right).
\end{split}
\end{equation*}
Recall that Theorem \ref{NullThm} and \ref{AltThm} establish bounds for $\mathbb{P}\left(\widehat{U}_T\leq x\right)-F_d(x)$ under $\mathcal{H}_0$, or under $\mathcal{H}_A$ with $\phi>\frac{1}{2}$, for $\mathbb{P}\left(\widehat{U}_T\leq x\right)-F_{d,\|\widetilde{\Delta}\|_2^2}(x)$ when $\phi=\frac{1}{2}$, and for $\mathbb{P}\left(\widehat{U}_T\leq x\right)$ when $0<\phi<\frac{1}{2}$. Thus we only need to bound $\mathbb{P}\left(\hat{\sigma}^2< \frac{\sigma^{*2}}{1+\delta}\right)$, $\mathbb{P}\left(\hat{\sigma}^2> \frac{\sigma^{*2}}{1-\delta}\right)$ and $\sup_y \left|F(y)-F(y(1-\delta))\right|$ with $F(x)=F_d(x)$ or $F(x)=F_{d,\|\widetilde{\Delta}\|_2^2}(x)$. Since $0<\delta<1$,
\begin{equation*}
    \mathbb{P}\left(\hat{\sigma}^2< \frac{\sigma^{*2}}{1+\delta}\right)+\mathbb{P}\left(\hat{\sigma}^2> \frac{\sigma^{*2}}{1-\delta}\right)\leq \mathbb{P}\left(|\hat{\sigma}^2-\sigma^{*2}|>\frac{\delta\sigma^{*2}}{2}\right)\leq \mathbb{P}\left(|\hat{\sigma}^2-\sigma^{*2}|>\frac{\delta\sigma_0^2}{2}\right).
\end{equation*}
Meanwhile,
\begin{equation*}
    \begin{split}
        \hat{\sigma}^2-\sigma^{*2}=&\frac{1}{MT}\sum_{t=0}^{T-1}\left\|X_{t+1}-\widehat{A}\mathcal{X}_t\right\|_2^2-\sigma^{*2}\\
        =&\frac{1}{MT}\sum_{t=0}^{T-1}\|\epsilon_t\|_2^2-\sigma^{*2}+\frac{1}{MT}\sum_{t=0}^{T-1}\left\|(\widehat{A}-A^*)\mathcal{X}_t\right\|_2^2\\
        &+\frac{2}{MT}\sum_{t=0}^{T-1}\left|\epsilon_t^\top (\widehat{A}-A^*)\mathcal{X}_t\right|\\
        =&\frac{1}{MT}\sum_{t=0}^{T-1}\|\epsilon_t\|_2^2-\sigma^{*2}+\frac{1}{M}\sum_{i=1}^M(\widehat{A}_i-A^*_i)^\top H(\widehat{A}_i-A_i^*)\\
        &+\frac{2}{M}\sum_{i=1}^M (\widehat{A}_i-A_i^*)^\top \left(\frac{1}{T}\sum_{t=0}^{T-1}\epsilon_{ti}\mathcal{X}_t\right).
    \end{split}
\end{equation*}
By Assumption \ref{Assump_A_bound} and Lemma \ref{DevBnd}, with probability at least $1-c_1\exp\{-c_2\log M\}$, 
\begin{equation*}
\begin{split}
    \frac{1}{M}\sum_{i=1}^M(\widehat{A}_i-A^*_i)^\top H(\widehat{A}_i-A_i^*)\leq C\frac{\rho \log M}{T}\leq C\sqrt{\frac{\rho \log M}{T}},
\end{split}
\end{equation*}
and
\begin{equation*}
\begin{split}
    \frac{2}{M}\sum_{i=1}^M(\widehat{A}_i-A^*_i)^\top H\left(\frac{1}{T}\sum_{t=0}^{T-1}\epsilon_{ti} \mathcal{X}_t\right)\leq &2\max_i\left\|\widehat{A}_i-A_i^*\right\|_1\left(\frac{1}{T}\sum_{t=0}^{T-1}\epsilon_t\mathcal{X}_t^\top\right)\\
    \leq &C\frac{\rho \log M}{T}\leq C\sqrt{\frac{\rho \log M}{T}}.
\end{split}
\end{equation*}
Also, since $\epsilon_{ti}$ are independent sub-Gaussian random variables with scale factor $C\sigma^*$, the first term can be bounded by Bernstein type inequality of sub-exponential random variables(see proposition 5.16 in \cite{vershynin2010introduction}):
\begin{equation*}
    \mathbb{P}\left(\left|\frac{1}{MT}\sum_{t=0}^{T-1}\|\epsilon_t\|_2^2-\sigma^{*2}\right|>\frac{\delta\sigma^{*2}}{2} \right)\leq 2\exp\left\{-cMT\min\{\delta^2,\delta\}\right\}.
\end{equation*}
Let $\delta=C\sqrt{\frac{\rho \log M}{T}}$, then 
\begin{equation*}
    \begin{split}
        &\mathbb{P}\left(\hat{\sigma}^2< \frac{\sigma^{*2}}{1+\delta}\right)+\mathbb{P}\left(\hat{\sigma}^2> \frac{\sigma^{*2}}{1-\delta}\right)\\
        \leq &2\exp\left\{-c_1\rho M\log M\right\}+c_2\exp\{-c_3\log M\}.
    \end{split}
\end{equation*}
While for $\sup_x F_{d,\|\mu\|_2^2}(x)-F_{d,\|\mu\|_2^2}\left(x(1-\delta)\right)$ with any $\mu\in \mathbb{R}^d$ satisfying $\|\mu\|_2\leq C$, if $\delta<\frac{1}{2}$,
\begin{equation*}
    \begin{split}
        &F_{d,\|\mu\|_2^2}(x)-F_{d,\|\mu\|_2^2}\left(x(1-\delta)\right)\\
        =&\mathbb{P}\left(\left\|Z+\mu\right\|_2^2\in \left(x(1-\delta),x\right]\right)\\
        \leq&C(d)\left(x^{\frac{d}{2}}-(x(1-\delta))^{\frac{d}{2}}\right)\sup_{\|z+\mu\|_2^2\in (x(1-\delta),x]}e^{-\|z\|_2^2/2}\\
        \leq& C(d)\delta x^{\frac{d}{2}}\exp\left\{-\frac{1}{2}\left(\sqrt{x(1-\delta)}-\|\mu\|_2\right)^21(\sqrt{x(1-\delta)}\geq \|\mu\|_2)\right\}.
    \end{split}
\end{equation*}
Here $Z\in \mathbb{R}^d$ is a standard Gaussian random vector, the third line is due to that the density of $Z$ is $(2\pi)^{-\frac{d}{2}}e^{-\|z\|_2^2/2}$, and the fourth line applies the fact that when $0< \delta<\frac{1}{2}$,
\begin{equation*}
    \left[1-(1-\delta)^{\frac{d}{2}}\right]\leq \frac{d}{2}\sup_{\xi\in (1-\delta,1)}\xi^{\frac{d}{2}-1}\delta=\frac{d}{2}(1-\delta)^{(\frac{d}{2}-1)1(d\leq 2)}\delta\leq C(d)\delta.
\end{equation*}
Meanwhile, when $\sqrt{x(1-\delta)}<\|\mu\|_2$, 
\begin{equation*}
    x^{\frac{d}{2}}\leq \frac{\|\mu\|_2^d}{(1-\delta)^{\frac{d}{2}}}\leq C(d),
\end{equation*} 
and when $\sqrt{x(1-\delta)}\geq\|\mu\|_2$, 
\begin{equation*}
\begin{split}
    &x^{\frac{d}{2}}\exp\left\{-\frac{1}{2}\left(\sqrt{x(1-\delta)}-\|\mu\|_2\right)^21(\sqrt{x(1-\delta)}\geq \|\mu\|_2)\right\}\\
    \leq &\sup_{y\geq 0}(y+C)^de^{-y^2/2}\leq C(d),
\end{split}
\end{equation*}
which implies 
\begin{equation*}
    F_{d,\|\mu\|_2^2}(x)-F_{d,\|\mu\|_2^2}\left(x(1-\delta)\right)\leq C(d)\delta.
\end{equation*}
To see why all the bounds for $\widehat{U}_T$ still hold for $\widetilde{U}_T$, note that we only need to add $C\sqrt{\frac{\rho \log M}{T}}+2\exp\left\{-c_1\rho M\log M\right\}+c_2\exp\{-c_3\log M\}$ to the bounds under $\mathcal{H}_0$, and under $\mathcal{H}_A$ when $\phi\geq \frac{1}{2}$, which only changes the constant factors of the previous bounds. For the bound under $\mathcal{H}_A$ when $0<\phi<\frac{1}{2}$, we substitute $x$ by $\frac{x}{1-\delta}$ with $\delta=C\sqrt{\frac{\log M}{T}}$, and add $2\exp\left\{-c_1\rho M\log M\right\}+c_2\exp\{-c_3\log M\}$, which only changes the constant factors as well. Therefore, all the conclusions for $\widehat{U}_T$ in Theorem \ref{NullThm} and \ref{AltThm} still hold for $\widetilde{U}_T$ under each corresponding condition.
\end{proof}
\begin{proof}[Proof of Theorem \ref{ConfRegThm}]
First we show the connection between $R_T$ and $\widetilde{U}_T$. Note that
\begin{equation*}
    \begin{split}
        \widetilde{S}_m=&-\frac{1}{T}\sum_{t=0}^{T-1}\left(\mathcal{X}_{t,D_m}-\hat{w}_m^\top \mathcal{X}_{t,D_m^c}\right)\left(X_{t+1,m}-\widehat{A}_m^\top \mathcal{X}_t\right)\\
        &=\widehat{S}_m+\left[\frac{1}{T}\sum_{t=0}^{T-1}\left(\mathcal{X}_{t,D_m}-\hat{w}_m^\top \mathcal{X}_{t,D_m^c}\right)\mathcal{X}_{t,D_m}^\top \right]\left(\left(\widehat{A}_m\right)_{D_m}-\left(A_m^*\right)_{D_m}\right)\\
        &=\widehat{S}_m+\widetilde{\Upsilon^{(m)}}\left(\left(\widehat{A}_m\right)_{D_m}-\left(A_m^*\right)_{D_m}\right),
    \end{split}
\end{equation*}
which implies
\begin{equation*}
    \begin{split}
        \hat{a}(m)-(A_m^*)_{D_m}=&(\widehat{A_m})_{D_m}-(A_m^*)_{D_m}-\left(\widetilde{\Upsilon^{(m)}}\right)^{-1}\widetilde{S}_m=-\left(\widetilde{\Upsilon^{(m)}}\right)^{-1}\widehat{S}_m.
    \end{split}
\end{equation*}
Thus
\begin{equation*}
\begin{split}
    R_T=&\frac{T}{\hat{\sigma}^2}\sum_{m=1}^k \left(\hat{a}(m)-(A_m^*)_{D_m}\right)^{\top} \widehat{\Upsilon^{(m)}}\left(\hat{a}(m)-(A_m^*)_{D_m}\right)\\
    =&\frac{T}{\hat{\sigma}^2}\sum_{m=1}^k \widehat{S}_m^\top \left(\widetilde{\Upsilon^{(m)}}^\top\right)^{-1}\widehat{\Upsilon^{(m)}}\left(\widetilde{\Upsilon^{(m)}}\right)^{-1}\widehat{S}_m,
\end{split}
\end{equation*}
and the only difference between $R_T$ and $\widetilde{U}_T$ is that we substitute $\left(\widehat{\Upsilon^{(m)}}\right)^{-1}$ by $\left(\widetilde{\Upsilon^{(m)}}^\top\right)^{-1}\widehat{\Upsilon^{(m)}}\left(\widetilde{\Upsilon^{(m)}}\right)^{-1}$. We only need to prove that $\left(\widetilde{\Upsilon^{(m)}}^\top\right)^{-1}\widehat{\Upsilon^{(m)}}\left(\widetilde{\Upsilon^{(m)}}\right)^{-1}$ satisfies Assumption \ref{Assump_Cov_m_bound}. The argument is very similar to the proof of Lemma \ref{Cov_mBound}, but we need to bound $\left\|\widetilde{\Upsilon^{(m)}}\left(\widehat{\Upsilon^{(m)}}\right)^{-1}\widetilde{\Upsilon^{(m)}}^\top-\Upsilon^{(m)}\right\|_{\infty}$ instead of $\left\|\widehat{\Upsilon^{(m)}}-\Upsilon^{(m)}\right\|_{\infty}$ here.

Let $E=\widetilde{\Upsilon^{(m)}}-\widehat{\Upsilon^{(m)}}$, then
\begin{equation*}
    \begin{split}
        &\widetilde{\Upsilon^{(m)}}\left(\widehat{\Upsilon^{(m)}}\right)^{-1}\widetilde{\Upsilon^{(m)}}^\top\\
        =&\left(\widehat{\Upsilon^{(m)}}+E\right)\left(\widehat{\Upsilon^{(m)}}\right)^{-1}\left(\widehat{\Upsilon^{(m)}}+E^\top\right)\\
        =&\widehat{\Upsilon^{(m)}}+E+E^\top+E\left(\widehat{\Upsilon^{(m)}}\right)^{-1}E^\top.
    \end{split}
\end{equation*}
Recall that when proving Lemma \ref{Cov_mBound}, we already upper bound $\left\|\widehat{\Upsilon^{(m)}}-\Upsilon^{(m)}\right\|_{\infty}$ by $C\sqrt{\frac{\log M}{T}}$ with probability at least $1-c_1\exp\{-c_2\log M\}$. Thus for any vector $u\in \mathbb{R}^{d_m}$ s.t $\|u\|_2=1$,
\begin{equation*}
\begin{split}
     u^\top\widehat{\Upsilon^{(m)}}u=&u^\top \Upsilon^{(m)}u+u^\top \left(\widehat{\Upsilon^{(m)}}-\Upsilon^{(m)}\right)u\\
     \geq &\Lambda_{\min}\left(\Upsilon^{(m)}\right)-d_m\left\|\widehat{\Upsilon^{(m)}}-\Upsilon^{(m)}\right\|_{\infty}\geq C,
\end{split}
\end{equation*}
which implies $\Lambda_{\max}\left(\left(\widehat{\Upsilon^{(m)}}\right)^{-1}\right)\leq C$, and $\left\|E\left(\widehat{\Upsilon^{(m)}}\right)^{-1}E^\top\right\|_{\infty}\leq Cd_m\|E\|_{\infty}$. We bound $\|E\|_{\infty}$ in the following. One can show that
\begin{equation*}
\begin{split}
    \|E\|_{\infty}=&\left\|\frac{1}{T}\sum_{t=0}^{T-1}\left(\mathcal{X}_{t,D_m}-\hat{w}_m^\top \mathcal{X}_{t,D_m^c}\right)\mathcal{X}_{t,D_m^c}^\top \hat{w}_m\right\|_{\infty}\\
    \leq &\left\|\frac{1}{T}\sum_{t=0}^{T-1}\left(\mathcal{X}_{t,D_m}-w_m^{*\top} \mathcal{X}_{t,D_m^c}\right)\mathcal{X}_{t,D_m^c}^\top\right\|_{\infty}\left(\|w_m^*\|_1+\|\hat{w}_m-w_m^*\|_1\right)\\
    &+\max_{i,j}\left|\left((\hat{w}_m-w_m^*)\right)_{\cdot i}^\top \frac{1}{T}\sum_{t=0}^{T-1}\left(\mathcal{X}_{t,D_m^c}\mathcal{X}_{t,D_m^c}^\top\right)\left((\hat{w}_m-w_m^*)\right)_{\cdot j}\right|\\
    &+\left\|\frac{1}{T}\sum_{t=0}^{T-1}\mathcal{X}_{t,D_m^c}\mathcal{X}_{t,D_m^c}^\top w_m^*\right\|_{\infty}\|\hat{w}_m-w_m^*\|_1.
\end{split}
\end{equation*}
Applying \eqref{Cov_mEigenBnd}, \eqref{Wl2Bnd}, Lemma \ref{CovBound}, we have
\begin{equation}\label{UpsilonWBnd}
\begin{split}
    &\left\|\frac{1}{T}\sum_{t=0}^{T-1}\mathcal{X}_{t,D_m^c}\mathcal{X}_{t,D_m^c}^\top w_m^*\right\|_{\infty}\\
    \leq &\left\|\Upsilon_{D_m^c,D_m^c}w_m^*\right\|_{\infty}+\left\|\frac{1}{T}\sum_{t=0}^{T-1}\mathcal{X}_{t,D_m^c}\mathcal{X}_{t,D_m^c}^\top-\Upsilon_{D_m^c,D_m^c}\right\|_{\infty}\|w_m^*\|_1\\
    \leq &\Lambda_{\max}(\Upsilon)\max_i\|(w_m^*)_{\cdot,i}\|_2+C\frac{s_m\log M}{T}\leq C.
\end{split}
\end{equation}
Thus, with Lemma \ref{true_w_dev}, Assumption \ref{Assump_w_bound}, and \eqref{UpsilonWBnd}, we show that with probability at least $1-c_1\exp\{-c_2\log M\}$,
\begin{equation*}
    \|E\|_{\infty}\leq C\sqrt{\frac{\log M}{T}}+C\frac{s_m\log M}{T}+Cs_m\sqrt{\frac{\log M}{T}}\leq Cs_m\sqrt{\frac{\log M}{T}}.
\end{equation*}
Therefore, using the same arguments as in the proof of Lemma \ref{Cov_mBound},
\begin{equation*}
\begin{split}
    &\left\|\Upsilon^{(m)\frac{1}{2}}\widetilde{\Upsilon^{(m)}}\left(\widehat{\Upsilon^{(m)}}\right)^{-1}\widetilde{\Upsilon^{(m)}}^\top\Upsilon^{(m)\frac{1}{2}}-I\right\|_2\\
    \leq&C\left\|\widetilde{\Upsilon^{(m)}}\left(\widehat{\Upsilon^{(m)}}\right)^{-1}\widetilde{\Upsilon^{(m)}}^\top-\Upsilon^{(m)}\right\|_2\\
    \leq& Cd_m\left\|\widetilde{\Upsilon^{(m)}}\left(\widehat{\Upsilon^{(m)}}\right)^{-1}\widetilde{\Upsilon^{(m)}}^\top-\Upsilon^{(m)}\right\|_{\infty}\\
    \leq &C\left\|\widehat{\Upsilon^{(m)}}-\Upsilon^{(m)}\right\|_{\infty}+C\left\|E\right\|_{\infty}\\
    \leq &Cs_m\sqrt{\frac{\log M}{T}}.
\end{split}
\end{equation*}
By Lemma \ref{InvBnd}, 
\begin{equation*}
    \left\|\Upsilon^{(m)-\frac{1}{2}}\left(\widetilde{\Upsilon^{(m)}}^\top\right)^{-1}\widehat{\Upsilon^{(m)}}\left(\widetilde{\Upsilon^{(m)}}\right)^{-1}\Upsilon^{(m)-\frac{1}{2}}-I\right\|_{\infty}\leq Cs_m\sqrt{\frac{\log M}{T}}.
\end{equation*}

\end{proof}
\section{Proof of Lemmas in Section \ref{main_proof}}\label{lemma_proof2}
\begin{proof}[Proof of Lemma \ref{CLT}]
	Let
	$$
	\xi_{T,t}=-\frac{1}{\sqrt{T}}\begin{pmatrix}
	\epsilon_{t,1}(\Upsilon^{(1)})^{-\frac{1}{2}}W_1^*\mathcal{X}_t\\
	\vdots\\
	\epsilon_{t,k}(\Upsilon^{(k)})^{-\frac{1}{2}}W_k^*\mathcal{X}_t
	\end{pmatrix}.
	$$
	Define filtration $\mathcal{F}_{T,t}=\sigma(X_{-p+1},X_{-p+2},\cdots, X_{t+1})$, then $(\xi_{Tt},\mathcal{F}_{Tt})_{0\leq t\leq T-1}$ is a martingale difference sequence, and $V_T=\sum_{t=0}^{T-1}\xi_{T,t}$. To bound the convergence rate, we are going to use a modified version of Lemma 4 in \textsl{Grama and Haeusler} (2006).
	\begin{Lemma}\label{convergence.rate}
		Let $(\xi_{ni}, \mathcal{F}_{ni})_{0\leq i\leq n}$ be a martingale difference sequence taking values in $\mathbb{R}^d$. Let $X_k^n=\sum_{i=1}^k \xi_{ni}$, and $\left\langle X^n\right\rangle_k=\sum_{i=1}^k a_{ni}\triangleq \sum_{i=1}^k \mathbb{E}(\xi_{ni}\xi_{ni}^\top|\mathcal{F}_{n,i-1})$. Define $R_{\delta}^{n,d}=L_{\delta}^{n,d}+N_{\delta}^{n,d}$, $$L_\delta^{n,d}=\sum_{i=1}^n \mathbb{E}\|\xi_{ni}\|_2^{2+2\delta},
		N_\delta^{n,d}=\mathbb{E}\|\left\langle X^n\right\rangle _n-I\|_{\tr}^{1+\delta}.$$
		Then $\forall \mu\in \mathbb{R}^d, r\geq 0, 0<\delta\leq\frac{1}{2}$, when $R_{\delta}^{n,d}\leq 1$,
		$$
		\mathbb{P}(\|X_n^n+\mu\|_2\geq r)-\mathbb{P}(\|Z+\mu\|_2\geq r)\leq C(\|\mu\|_2,d,\delta)\left(R_\delta^{n,d}\right)^{\frac{1}{3+2\delta}},
		$$
		where $Z_{d\times 1}\sim \mathcal{N}(0,I)$, $C(\|\mu\|_2,d,\delta)$ is non-decreasing as $\|\mu\|_2$ increases. 
	\end{Lemma}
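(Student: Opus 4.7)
The plan is to deduce this statement from a multivariate extension of the classical Grama-Haeusler martingale CLT rate, applied to the Euclidean ball $B_\mu(r) = \{x \in \mathbb{R}^d : \|x + \mu\|_2 \leq r\}$, whose complement gives the event of interest. Thus it suffices to bound $|\mathbb{P}(X_n^n \in B_\mu(r)) - \mathbb{P}(Z \in B_\mu(r))|$ uniformly in $r \geq 0$, where $Z \sim \mathcal{N}(0, I_d)$; the stated upper bound then follows by taking complements.

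The first step is smoothing: replace $1_{B_\mu(r)}$ by $f_\eta = 1_{B_\mu(r)} * \phi_\eta$ where $\phi_\eta$ is a $C^\infty$ mollifier of scale $\eta$, yielding $\|\nabla^k f_\eta\|_\infty \leq C_{k,d}\,\eta^{-k}$ for $k = 1, 2, 3$. Next I would run the standard Lindeberg-type telescoping along the filtration $\mathcal{F}_{ni}$, interpolating between $X_n^n$ and $Z$ by replacing $\xi_{ni}$ with a conditional Gaussian $\tilde\eta_{ni}\mid \mathcal{F}_{n,i-1} \sim \mathcal{N}(0, a_{ni})$. The martingale property kills the first-order contribution; the mismatch in the second-order term telescopes to a global term controlled by $\|\nabla^2 f_\eta\|_\infty \cdot \mathbb{E}\|\langle X^n\rangle_n - I\|_{\tr}^{1+\delta}$, hence $\eta^{-1} N_\delta^{n,d}$ after interpolation against $\|f_\eta\|_\infty \leq 1$. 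The third-order remainder contributes $\|\nabla^3 f_\eta\|_\infty \sum_i \mathbb{E}\|\xi_{ni}\|_2^{2+2\delta}\cdot\eta^{1-2\delta}$ by the same interpolation trick, collapsing to $\eta^{-(1+2\delta)} L_\delta^{n,d}$.

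To pass from $f_\eta$ back to the indicator one needs anticoncentration on the shell $\{x : |\|x+\mu\|_2 - r| \leq \eta\}$. For the Gaussian, this probability equals $\mathbb{P}(\|Z+\mu\|_2 \in [r-\eta, r+\eta])$, bounded by $\kappa(\|\mu\|_2, d)\,\eta$ where $\kappa$ is a uniform-in-$r$ upper bound on the non-central chi density. The corresponding shell mass for $X_n^n$ is controlled by applying the smoothed comparison with a slightly widened bandwidth, which amounts to a two-step bootstrap of the same argument. Optimizing $\eta$ by balancing $\eta^{-(1+2\delta)} L_\delta^{n,d} + \eta^{-1} N_\delta^{n,d} + \kappa\eta$ and using $R_\delta^{n,d} = L_\delta^{n,d} + N_\delta^{n,d}$ yields $\eta \asymp (R_\delta^{n,d})^{1/(3+2\delta)}$ and the claimed rate; the hypothesis $R_\delta^{n,d} \leq 1$ ensures this $\eta$ lies in the admissible regime.

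The principal obstacle is the smoothing-anticoncentration interface: one must control the prefactor in the shell bound uniformly over $r$ but monotonically in $\|\mu\|_2$, which reduces to verifying that the supremum over $r$ of the non-central chi density is a non-decreasing function of $\|\mu\|_2$. A secondary difficulty is that the conditional covariance matrices $a_{ni}$ need not commute, so the multivariate Lindeberg expansion must be carried out matrix-by-matrix against a single Gaussian reference; this is precisely why the trace norm $\|\cdot\|_{\tr}$ (rather than the operator norm) is the natural gauge of deviation of $\langle X^n\rangle_n$ from $I$, matching the $N_\delta^{n,d}$ ingredient in the hypothesis. The non-decreasing dependence of the constant $C(\|\mu\|_2, d, \delta)$ on $\|\mu\|_2$ then inherits from the corresponding monotonicity of $\kappa$.
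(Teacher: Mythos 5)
Your plan has the right broad shape — smooth indicator, Lindeberg swap of increments against conditional Gaussians, Gaussian shell anticoncentration, optimize the bandwidth — and this is indeed the skeleton of the paper's adaptation of Grama and Haeusler. But there are two substantive gaps.

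First, the paper does \emph{not} telescope directly on the original increments $\xi_{ni}$. Doing so, as you propose, leaves you at the end of the chain with $\sum_i\tilde\eta_{ni}$, whose conditional total covariance is $\langle X^n\rangle_n$, a random matrix that is close to but not equal to $I$. Converting that leftover second-order mismatch into a clean $\eta^{-1}N_\delta^{n,d}$ term — rather than $\eta^{-2}\,\mathbb{E}\|\langle X^n\rangle_n-I\|_{\tr}$ — needs an interpolation against $\|f_\eta\|_\infty\leq 1$ that you gesture at but do not carry out, and a naive Jensen/H\"older attempt produces the wrong power of $N_\delta^{n,d}$. The paper sidesteps the issue entirely by first invoking the random-projection construction of Grama--Haeusler (Lemma 3 there, reproduced inside the proof) to build a surrogate martingale $M^n_{n+1}=\sum_{k\le n}P_{nk}\xi_{nk}+D_n^{1/2}\eta_{n,n+1}$ with $\langle M^n\rangle_{n+1}=I$ \emph{exactly}. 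It then splits the error into (a) $\mathbb{P}(\|X^n_n-M^n_{n+1}\|_2>\varepsilon)\le C\varepsilon^{-2-2\delta}R_\delta^{n,d}$, which absorbs the quadratic-variation mismatch, and (b) a Lindeberg chain from $M^n_{n+1}$ to $Z$ in which the conditional covariances now telescope to $I$, so the leading contribution is purely the higher-order $\sum_k\mathbb{E}\|m_{nk}\|_2^{2+2\delta}$ term. Without the projection step your proposal has no clean mechanism to produce the $\|\cdot\|_{\tr}^{1+\delta}$ exponent in $N_\delta^{n,d}$.

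Second, the exponent bookkeeping on the Lindeberg remainder is off by one power of the bandwidth. You assert the third-order remainder contributes $\eta^{-(1+2\delta)}L_\delta^{n,d}$; the correct order (and what the paper obtains via the Taylor expansion to order $\lceil 2+2\delta\rceil-1$ and Lemma \ref{DerivativeBnd}) is $\varepsilon^{-(2+2\delta)}L_\delta^{n,d}$. Balancing your claimed $\eta^{-(1+2\delta)}R_\delta^{n,d}$ against the anticoncentration term $\eta$ would give the optimizer $\eta\asymp(R_\delta^{n,d})^{1/(2+2\delta)}$, a \emph{faster} rate than the lemma states and inconsistent with your own concluding claim $\eta\asymp(R_\delta^{n,d})^{1/(3+2\delta)}$. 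The correct $\varepsilon^{-(2+2\delta)}$ is what makes $(R_\delta^{n,d})^{1/(3+2\delta)}$ emerge from the balance. So either the orders in your sketch need fixing, or the final optimization does not follow from them.

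Your anticoncentration step and the observation that the constant's monotone dependence on $\|\mu\|_2$ is inherited from the non-central chi shell bound are correct and match the paper. The main missing idea is the random-projection renormalization that decouples the quadratic-variation error from the moment error; with it, the interpolation you were reaching for becomes unnecessary, and the exponents come out right.
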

By Lemma \ref{convergence.rate}, to bound $\sup_{x>0,}\left|\mathbb{P}(\|V_T+\mu\|_2^2\leq x)-F_{d,\|\mu\|_2^2}(x)\right|$, we only need to bound $R_{\delta}^{T,d}=L_{\delta}^{T,d}+N_{\delta}^{T,d}$. 
	\begin{align*}
	L_{\delta}^{T,d}&=\sum_{t=0}^{T-1}\mathbb{E}\left(\|\xi_{T,t}\|_2^{2+2\delta}\right)\\
	&\leq CT^{-(1+\delta)}\sum_{t=1}^T \mathbb{E}\left(\sum_{m=1}^k \|W_m^*\mathcal{X}_t\|_2^2\epsilon_{t,m}^2\right)^{1+\delta}\\
	&\leq CT^{-(1+\delta)}\sum_{t=0}^{T-1}k^{\delta}\sum_{m=1}^k \mathbb{E}\left(|\epsilon_{t,m}|^{2+2\delta}\|W_m^*\mathcal{X}_t\|_2^{2+2\delta}\right)\\
	&=T^{-\delta}k^{\delta}C(\delta)\sum_{m=1}^k \mathbb{E}\left(\|W_m^*\mathcal{X}_0\|_2^{2+2\delta}\right)
	\end{align*}
	Here the second line is due to $\Lambda_{\min}(\Upsilon^{(m)})\geq 1$, and the third line is due to $f(x)=x^{1+\delta}$ is a convex function. More specifically,
	\begin{align*}
	    &\left(\sum_{m=1}^k \|W_m^*\mathcal{X}_t\|_2^2\epsilon_{t,K}^2\right)^{1+\delta}\leq \frac{1}{k}\sum_{m=1}^k\left(k\|W_m^*\mathcal{X}_t\|_2^2\epsilon_{t,K}^2\right)^{1+\delta}=k^{\delta}\sum_{m=1}^k\left(\|W_m^*\mathcal{X}_t\|_2^{2+2\delta}\epsilon_{t,K}^{2+2\delta}\right).
	\end{align*}
	While for the last line, since $\epsilon_{t,m}$ is sub-Gaussian with parameter $\tau$, $\mathbb{E}|\epsilon_{t,m}|^{2+2\delta}\leq C(\delta)$. Note that $d,\beta,\tau$ are all viewed as constants here. Due to the sub-Gaussianity of $\epsilon_{t,i}$'s, we have the following lemma.
	\begin{Lemma}\label{subGaussnorm}
	    $$
	    \mathbb{E}\left(\|W_m^*\mathcal{X}_t\|_2^q\right)^{\frac{1}{q}}\leq Cq\quad \text{for all }q\geq 1.
	    $$ 
	\end{Lemma}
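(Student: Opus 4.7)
The plan is to exploit the MA$(\infty)$ representation of the stationary solution so that $W_m^* \mathcal{X}_t$ becomes a linear functional of the independent sub-Gaussian innovations $\{\epsilon_s\}_{s\leq t-1}$, and then to convert coordinate-wise sub-Gaussianity of the output into a moment bound. First, I will rewrite $\mathcal{X}_t = \sum_{k=0}^{\infty}\tilde{\Psi}_k\,\epsilon_{t-k-1}$, where $\tilde\Psi_k \in \mathbb{R}^{pM\times M}$ is the block-stacking of $\Psi_{k}, \Psi_{k-1},\dots, \Psi_{k-p+1}$ (with $\Psi_{j}=0$ for $j<0$) coming from the fact that $X_{t-\ell+1}=\sum_j \Psi_j \epsilon_{t-\ell-j}$. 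A direct consequence is $\sum_{k\ge 0}\|\tilde\Psi_k\|_2^2 \le p\sum_{k\ge 0}\|\Psi_k\|_2^2 \le p\beta^2$, since the stability condition \eqref{StbARpsG} with $i=0$ yields $\sum_j\|\Psi_j\|_2^2\leq \beta^2$.

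Next, fix a row index $1\leq i\leq d_m$ and write $Y_i \triangleq (W_m^* \mathcal{X}_t)_i = \sum_{k\ge 0}(W_m^*)_{i\cdot}\tilde\Psi_k \epsilon_{t-k-1}$. Because the entries of the $\epsilon_s$'s across $s$ and coordinates are independent sub-Gaussian with scale factor $\tau$, $Y_i$ is itself sub-Gaussian (finite sums truncated at $K$ are sub-Gaussian with parameter $\tau^2 \sum_{k\le K}\|(W_m^*)_{i\cdot}\tilde\Psi_k\|_2^2$, and letting $K\to\infty$ the parameter converges by monotone convergence). Its sub-Gaussian parameter is bounded by
\begin{equation*}
\tau^2\sum_{k\ge 0}\|(W_m^*)_{i\cdot}\tilde\Psi_k\|_2^2 \le \tau^2 \|(W_m^*)_{i\cdot}\|_2^2 \sum_{k\ge 0}\|\tilde\Psi_k\|_2^2 \le C\tau^2 p\beta^2,
\end{equation*}
where $\|(W_m^*)_{i\cdot}\|_2\le C$ is exactly \eqref{Wl2Bnd}. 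Standard moment bounds for sub-Gaussian variables then give $\mathbb{E}|Y_i|^q \le (C\sqrt{q})^q$ for every $q\ge 1$.

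Finally, I pass from coordinates to the Euclidean norm. For $q\ge 2$, the inequality $(\sum_{i=1}^{d_m} Y_i^2)^{q/2}\le d_m^{q/2-1}\sum_i |Y_i|^q$ yields
\begin{equation*}
\mathbb{E}\|W_m^*\mathcal{X}_t\|_2^q \le d_m^{q/2-1} \sum_{i=1}^{d_m}\mathbb{E}|Y_i|^q \le d_m^{q/2}\,(C\sqrt{q})^q,
\end{equation*}
so $(\mathbb{E}\|W_m^*\mathcal{X}_t\|_2^q)^{1/q}\le \sqrt{d_m}\,C\sqrt{q}\le Cq$, using that $d_m\le d$ is a fixed constant. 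For $1\le q<2$, Jensen's inequality for the concave map $x\mapsto x^{q/2}$ gives $\mathbb{E}\|W_m^*\mathcal{X}_t\|_2^q\le (\mathbb{E}\|W_m^*\mathcal{X}_t\|_2^2)^{q/2}$, and the $q=2$ case above (together with $\sqrt{2}\le 2$) closes this regime. The only technically delicate point is the convergence/well-definedness of the infinite MA series in $L^q$, but this is handled cleanly by the truncation $\mathcal{X}_t^{(K)}=\sum_{k=0}^K \tilde\Psi_k\epsilon_{t-k-1}$: the bound above holds uniformly in $K$, and Fatou's lemma then delivers the same bound for $\mathcal{X}_t$. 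I do not expect any substantive obstacle beyond this routine step.
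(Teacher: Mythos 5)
Your argument is correct and mirrors the paper's proof in all essentials: both hinge on the MA$(\infty)$ representation $\mathcal{X}_t=\sum_k \Psi_k^{(p)}\epsilon_{t-k-1}$, the fact that a linear functional of independent sub-Gaussian entries is sub-Gaussian with parameter $\tau^2\|(W_m^*)_{i\cdot}\|_2^2\sum_k\|\Psi_k^{(p)}\|_2^2$, and the uniform bound on $\|(W_m^*)_{i\cdot}\|_2$ from \eqref{Wl2Bnd}. The only differences are cosmetic bookkeeping — the paper passes from the MGF bound through the $\psi_1/\psi_2$ Orlicz norms (Lemma \ref{psinorm}) and justifies the infinite series by dominated convergence, whereas you invoke sub-Gaussian moment bounds directly, aggregate coordinates via the power-mean inequality, and handle the tail by Fatou — and both routes yield the (sharper) bound $C\sqrt{q}$, which implies the stated $Cq$.
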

	Therefore, 
	$$
	\mathbb{E}\left(\|W_m^*\mathcal{X}_0\|_2^{2+2\delta}\right)\leq C(\delta),
	$$
	which implies
	$$
	L_{\delta}^{T,d}\leq C(\delta)T^{-\delta}.
	$$
	While for $N_{\delta}^{T,d}$, since 
	\begin{align*}
	&\sum_{t=0}^{T-1}\mathbb{E}\left(\xi_{T,t}\xi_{T,t}^\top|\mathcal{F}_{T,t-1}\right)-I\\
	=&\begin{pmatrix}
	(\Upsilon^{(1)})^{-\frac{1}{2}}B_1(\Upsilon^{(1)})^{-\frac{1}{2}}&\cdots&\cdots&0\\
	0&(\Upsilon^{(2)})^{-\frac{1}{2}}B_2(\Upsilon^{(2)})^{-\frac{1}{2}}&\cdots&0\\
	\vdots&\ddots&\ddots&\vdots\\
	0&\cdots&\cdots&(\Upsilon^{(k)})^{-\frac{1}{2}}B_k(\Upsilon^{(k)})^{-\frac{1}{2}}
	\end{pmatrix},
	\end{align*}
	where $B_m=W_m^*\left(\frac{1}{T}\sum_{t=0}^{T-1}\mathcal{X}_t\mathcal{X}_t^\top-\Upsilon\right)W_m^{*\top}$,
	\begin{align*}
	N_{\delta}^{T,d}=
	&\mathbb{E}\left(\left(\sum_{m=1}^k\left\|(\Upsilon^{(m)})^{-\frac{1}{2}}B_m(\Upsilon^{(m)})^{-\frac{1}{2}}\right\|_{\tr}\right)^{1+\delta}\right)\\
	\leq&\mathbb{E}\left(\left(\sum_{m=1}^kd_m\left\|(\Upsilon^{(m)})^{-\frac{1}{2}}B_m(\Upsilon^{(m)})^{-\frac{1}{2}}\right\|_2\right)^{1+\delta}\right)\\	
	\leq&\mathbb{E}\left(\left(\sum_{m=1}^kd_m^2\|B_m\|_{\infty}\right)^{1+\delta}\right),
	\end{align*}
	where the second line is because that $(\Upsilon^{(m)})^{-\frac{1}{2}}B_m(\Upsilon^{(m)})^{-\frac{1}{2}}$ is of rank at most $d_m$, and we can apply Lemma \ref{trl2norm}; the last line is due to 
	\begin{align*}
	    \|B_m\|_2=\sup_{\|u\|_2=1}\|B_mu\|_2\leq \sup_{\|u\|_2=1}\sqrt{d_m}\|B_mu\|_{\infty}\leq \sup_{\|u\|_2=1}\sqrt{d_m}\|B_m\|_{\infty}\|u\|_1=d_m\|B_m\|_\infty.
	\end{align*}
	Since 
	$$
	(B_m)_{ij}=\frac{1}{T}\sum_{t=0}^{T-1}\mathcal{X}_t^\top (W_m^*)_{i\cdot}^\top(W_m^*)_{j\cdot}\mathcal{X}_t-\text{tr}\left((W_m^*)_{i\cdot}^\top(W_m^*)_{j\cdot}\Upsilon\right),
	$$ 
	by Lemma \ref{QuaBound}, we only need to bound the operator norm and trace norm of 
	$$
	\frac{1}{2}\left((W_m^*)_{i\cdot}^\top(W_m^*)_{j\cdot}+(W_m^*)_{j\cdot}^\top(W_m^*)_{i\cdot}\right).
	$$
	By \eqref{WtrBnd} and \eqref{Wl2Bnd}, we have the following:
	\begin{align*}
	    &\left\|\frac{1}{2}\left((W_m^*)_{i\cdot}^\top(W_m^*)_{j\cdot}+(W_m^*)_{j\cdot}^\top(W_m^*)_{i\cdot}\right)\right\|_{\tr}\\
	    \leq& 2\left\|\frac{1}{2}\left((W_m^*)_{i\cdot}^\top(W_m^*)_{j\cdot}+(W_m^*)_{j\cdot}^\top(W_m^*)_{i\cdot}\right)\right\|_2\leq C.
	\end{align*}
	Therefore, applying Lemma \ref{QuaBound} leads us to 
	\begin{align*}
	    &\mathbb{P}\left(\left(\sum_{m=1}^kd_m^2\|B_m\|_{\infty}\right)^{1+\delta}>x\right)\\
	    \leq &\sum_{m=1}^k\mathbb{P}\left(\|B_m\|_{\infty}>\frac{x^{\frac{1}{1+\delta}}}{d^2}\right)\\
	    \leq &c_1\exp\left\{-c_2T\min\left\{x^{\frac{1}{1+\delta}},x^{\frac{2}{1+\delta}}\right\}\right\},
	\end{align*}
	which implies
	\begin{align*}
	N_{\delta}^{T,d}\leq &\int_0^\infty \mathbb{P}\left(\left(\sum_{m=1}^k d_m^2\|B_m\|_{\infty}\right)^{1+\delta}>x\right) dx\\
	\leq & \int_0^\infty c_1\exp\left\{-c_2T\min\left\{x^{\frac{2}{1+\delta}},x^{\frac{1}{1+\delta}}\right\}\right\} dx\\
	\leq & C(\delta)\left(\int_0^1 u^{\delta}\exp\{-cTu^2\}du+\int_1^{\infty} u^{\delta}\exp\{-cTu\} du\right)\\
	\leq & C(\delta)\left(T^{-\frac{1+\delta}{2}}\Gamma\left(\frac{1+\delta}{2}\right)+T^{-1-\delta}\Gamma(1+\delta)\right)\\
	\leq &C(\delta)T^{-\frac{1+\delta}{2}}.
	\end{align*}
	Thus,
	$$
	R_\delta^{T,d}=N_{\delta}^{T,d}+L_{\delta}^{T,d}\leq C(\delta)\left(T^{-\delta}+T^{-\frac{1+\delta}{2}}\right).
	$$
	By Lemma \ref{convergence.rate}, for any $x\geq 0$, $\mu\in \mathbb{R}^d$, and $0\leq\delta\leq \frac{1}{2}$, when $T>C(\delta)$,
	\begin{align*}
	\left|\mathbb{P}\left(\|V_T+\mu\|_2^2\leq x\right)-F_{d,\|\mu\|_2^2}(x)\right| \leq C(\|\mu\|_2,\delta)\left(R_\delta^{T,d}\right)^{\frac{1}{3+2\delta}}.
	\end{align*}
	The best rate is achieved when $\delta=\frac{1}{2}$, and thus when $T>C$,
	\begin{align*}
	\sup_{x\geq 0}\left|\mathbb{P}\left(\|V_T+\mu\|_2^2\leq x\right)-F_{d,\|\mu\|_2^2}(x)\right| \leq C(\|\mu\|_2)T^{-\frac{1}{8}},
	\end{align*}
	\end{proof}
	
	\begin{proof}[Proof of Lemma \ref{Cov_mEigenBnd}]
	We prove the lower and upper bounds for eigenvalues of $\Upsilon$, by establishing a connection between our stability condition \eqref{StbARpsG} and another spectral density based condition proposed in \cite{basu2015regularized}. First we introduce the following lemma, which is a direct result of proposition 2.3 and (2.6) in \cite{basu2015regularized} under our setting.
	\begin{Lemma}\label{CovEigenBndGen}
	Under the model specified in \eqref{ARp2} with independent noise $\epsilon_{ti}$ of unit variance, the eigenvalues of $\Upsilon$ can be bounded as follows:
	\begin{equation*}
	    \left(\mu_{\max}(\mathcal{A})\right)^{-1}\leq \Lambda_{\min}(\Upsilon)\leq \Lambda_{\max}(\Upsilon)\leq \left(\mu_{\min}(\mathcal{A})\right)^{-1},
	\end{equation*}
	where $\mu_{\min}(\mathcal{A})=\min_{|z|=1}\Lambda_{\min}\left(\mathcal{A}^*(z)\mathcal{A}(z)\right)$, and $\mu_{\max}(\mathcal{A})=\max_{|z|=1}\Lambda_{\max}\left(\mathcal{A}^*(z)\mathcal{A}(z)\right)$.
	\end{Lemma}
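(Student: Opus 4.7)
The plan is to derive Lemma \ref{CovEigenBndGen} from the spectral density representation of the stationary VAR$(p)$ process together with a standard block-Toeplitz eigenvalue bound. The stability assumption ensures $\det(\mathcal{A}(z))\neq 0$ on $|z|\leq 1$, so the transfer function $\mathcal{A}(z)^{-1}$ is real-analytic on a neighborhood of the unit circle and the MA representation exhibited before \eqref{Upsilon_def} is causal. Since $\Cov(\epsilon_t)=I_M$, the standard formula gives
\begin{equation*}
f_X(\omega) = \frac{1}{2\pi}\mathcal{A}(e^{-i\omega})^{-1}(\mathcal{A}(e^{-i\omega})^*)^{-1} = \frac{1}{2\pi}\bigl(\mathcal{A}^*(e^{-i\omega})\mathcal{A}(e^{-i\omega})\bigr)^{-1}, \qquad \omega\in[-\pi,\pi],
\end{equation*}
so that $\Gamma(h) = \Cov(X_{t+h},X_t) = \int_{-\pi}^{\pi} e^{ih\omega} f_X(\omega)\,d\omega$ for every integer $h$.

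Next I would exhibit $\Upsilon$ as a block-Toeplitz matrix with symbol $f_X$ and bound it via a Rayleigh-quotient argument. The $(j,k)$-th $M\times M$ block of $\Upsilon$ is $\Gamma(k-j)$, so for any $u=(u_0^\top,\ldots,u_{p-1}^\top)^\top\in\mathbb{R}^{pM}$, writing $g_u(\omega)=\sum_{j=0}^{p-1} u_j e^{-ij\omega}$ and substituting the Fourier representation of $\Gamma$ yields
\begin{equation*}
u^\top \Upsilon u = \int_{-\pi}^{\pi} g_u(\omega)^* f_X(\omega)\, g_u(\omega)\,d\omega,
\end{equation*}
while Parseval's identity on the trigonometric polynomial $g_u$ gives $\int_{-\pi}^{\pi}\|g_u(\omega)\|_2^2\,d\omega = 2\pi\|u\|_2^2$. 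Since $f_X$ is continuous on the unit circle, dividing by $\|u\|_2^2$ produces the sandwich $2\pi\min_{|z|=1}\Lambda_{\min}(f_X) \leq \Lambda_{\min}(\Upsilon) \leq \Lambda_{\max}(\Upsilon) \leq 2\pi\max_{|z|=1}\Lambda_{\max}(f_X)$. Plugging in the explicit spectral density and using $\Lambda_{\max}(B^{-1})=\Lambda_{\min}(B)^{-1}$ for any Hermitian positive-definite $B$ then yields exactly $\mu_{\max}(\mathcal{A})^{-1}\leq \Lambda_{\min}(\Upsilon)$ and $\Lambda_{\max}(\Upsilon) \leq \mu_{\min}(\mathcal{A})^{-1}$.

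The main obstacle is not any single computation but rather packaging the block-Toeplitz-to-symbol translation cleanly when the symbol is matrix-valued; this is precisely the content of Proposition 2.3 and equation (2.6) of \cite{basu2015regularized}, which is why the paper invokes their result directly. In practice I would only need to check that, under $\Cov(\epsilon_t)=I_M$, the spectral density derived above coincides with the one they use, and that the $pM\times pM$ covariance $\Upsilon$ is indeed the relevant finite block-Toeplitz section of that symbol.
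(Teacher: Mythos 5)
Your proposal is correct and follows exactly the block-Toeplitz/spectral-density route that underlies the cited Proposition 2.3 and equation (2.6) of \cite{basu2015regularized}; the paper itself proves the lemma only by citation, whereas you unpack that citation into an explicit Rayleigh-quotient argument on the symbol $2\pi f_X(\omega)=(\mathcal{A}^*(e^{-i\omega})\mathcal{A}(e^{-i\omega}))^{-1}$. The computations (spectral density under $\Cov(\epsilon_t)=I_M$, Parseval on $g_u$, and the identity $\Lambda_{\max}(B^{-1})=\Lambda_{\min}(B)^{-1}$) all check out, so this is essentially the same argument made self-contained.
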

	By Lemma \ref{CovEigenBndGen}, we only need to prove that condition \eqref{StbARpsG} implies a lower bound for $\mu_{\min}(\mathcal{A})$ and upper bound for $\mu_{\max}(\mathcal{A})$. First note that
\begin{equation*}
\begin{split}
        \mu_{\min}(\mathcal{A})=&\min_{|z|=1}\Lambda_{\min}\left(\mathcal{A}(z)\mathcal{A}^*(z)\right)\\
        =&\min_{|z|=1}\inf_u\frac{\left\|\mathcal{A}^*(z)u\right\|_2^2}{\|u\|_2^2}\\
        =&\min_{|z|=1}\inf_v\frac{\|v\|_2^2}{\left\|\left(\mathcal{A}^*(z)\right)^{-1}v\right\|_2^2}\\
        =&\min_{|z|=1}\left(\left\|\left(\mathcal{A}^*(z)\right)^{-1}\right\|_2\right)^{-2},
\end{split}
\end{equation*}
where the last equality is due to that $\left\|\left(\mathcal{A}^*(z)\right)^{-1}\right\|_2=\sup_v\frac{\left\|\left(\mathcal{A}^*(z)\right)^{-1}v\right\|_2}{\|v\|_2}$. Meanwhile, for any $|z|=1$,
\begin{equation*}
\begin{split}
    \left\|\left(\mathcal{A}^*(z)\right)^{-1}\right\|_2=\left\|\mathcal{A}^{-1}(z)\right\|_2=& \left\|\sum_{j=0}^{\infty}\Psi_jz^j\right\|_2\leq \sum_{j=0}^{\infty}\|\Psi_j\|_2\leq \beta,
\end{split}
\end{equation*}
where we apply condition \eqref{StbARpsG} in the last inequality. Thus $ \mu_{\min}(\mathcal{A})\geq \beta^{-2}$.

While for bounding $\mu_{\max}(\mathcal{A})$, we start by bounding $\|A_n\|_2$ for $0\leq n\leq p$. Here we define $A_0=I_{M\times M}$, and $A_n=0$ for all $n>p$. Since 
\begin{equation*}
    I=\mathcal{A}^{-1}(z)\mathcal{A}(z)=\left(\sum_{j=0}^{\infty}\Psi_j z^j\right)\left(\sum_{i=0}^pA_i z^i\right)=\sum_{n=0}^{\infty}\left(\sum_{i=0}^{\infty}\Psi_iA_{n-i}\right)z^n,
\end{equation*}
one can show that $\Psi_0=I$, and $\sum_{i=0}^{n} \Psi_iA_{n-i}=0$ for $n\geq 1$. Thus 
$$
A_n=-\sum_{i=1}^n \Psi_iA_{n-i} \text{ for } n\geq 1,
$$ and $\|A_n\|_2\leq \sum_{i=1}^n \|\Psi_i\|_2\|A_{n-i}\|_2$. We have the following claim:
\begin{equation}\label{ARpAopnorm}
    \text{For } 0\leq n\leq p, \quad \|A_n\|_2\leq \beta^n \vee 1.
\end{equation}
This can be proved by induction. It is clear that $\|A_0\|_2=\|I\|_2=\beta^0$, and if \eqref{ARpAopnorm} holds for $0\leq n=k\leq p$, 
\begin{equation*}
    \|A_{k+1}\|_2\leq \sum_{i=1}^n \|\Psi_i\|_2(\beta^{n-i}\vee 1)\leq \beta\max_i(\beta^{n-i}\vee 1)\leq \beta^n\vee 1.
\end{equation*}
Therefore, $\mu_{\max}(\mathcal{A})$ can be bounded in the following:
\begin{equation*}
\begin{split}
    \mu_{\max}(\mathcal{A})=&\max_{|z|=1}\Lambda_{\max}\left(\mathcal{A}(z)\mathcal{A}^*(z)\right)\\
    =&\max_{|z|=1}\left\|\mathcal{A}^*(z)\right\|_2^2\\
        \leq &\left(\sum_{i=0}^p \|A_i\|_2\right)^2\\
        \leq &\left(\frac{\beta^{p+1}-1}{\beta-1}\right)^21(\beta>1)+(p+1)^21(0\leq \beta\leq 1).
\end{split}
\end{equation*}
With Lemma \ref{CovEigenBndGen}, we conclude that
\begin{equation*}
    C_1(\beta)\leq \Lambda_{\min}(\Upsilon)\leq \Lambda_{\max}(\Upsilon)\leq C_2(\beta),
\end{equation*}
where $C_1(\beta)=\left(\frac{1-\beta}{1-\beta^{p+1}}\right)^{2}1(\beta>1)+(p+1)^{-2}1(0\leq \beta\leq 1)$, and $C_2(\beta)=\beta^2$.
\end{proof}

	\begin{proof}[Proof of Lemma \ref{DevBnd}]
    Recall that $X_t=\sum_{j=0}^{\infty}\Psi_j\epsilon_{t-j-1}$. Define $\Psi_j^{(p)}\in \mathbb{R}^{pM\times M}$ as the following:
    \begin{equation}\label{PsipDef}
        \Psi_{j}^{(p)}=\begin{pmatrix}
        \Psi_j1(j\geq 0)\\
        \vdots\\
        \Psi_{j-p+1}1(j-p+1\geq 0)
        \end{pmatrix},
    \end{equation}   
    then we can also write $\mathcal{X}_t$ as an infinite sum $\mathcal{X}_t=\sum_{j=0}^{\infty}\Psi_j^{(p)}\epsilon_{t-j-1}$. Without loss of generality, we consider the first entry of $\frac{1}{T}\sum_{t=0}^{T-1}\epsilon_t\mathcal{X}_t^\top$:
    \begin{equation}\label{FstEntryDev}
         \frac{1}{T}\sum_{t=0}^{T-1}\epsilon_{t,1}\sum_{j=0}^{\infty}(\Psi_j)_{1\cdot}\epsilon_{t-j-1}.
    \end{equation}
    In the following, we tackle the infinite sum in \eqref{FstEntryDev}, by focusing our analysis on the finite sum and let the residue converges to 0. Rigorously, for any positive integer $m$, let $$\tilde{\epsilon}=(\epsilon_{-m-1}^\top,\dots,\epsilon_{T-1}^\top)^\top,\quad \eta^{(t)}=((\Psi_{t+m})_{1\cdot},\dots,(\Psi_0)_{1,\cdot},0,\dots,0)^\top\in \mathbb{R}^{(T+m+1)M},
    $$ 
    and $e^{(t)}\in \mathbb{R}^{(T+m+1)M}$ satisfying $e^{(t)}_i=\mathbf{1}(i=(t+m)M+1)$, then we have
    \begin{equation*}
        \begin{split}
            &\frac{1}{T}\sum_{t=0}^{T-1}\epsilon_{t,1}\sum_{j=0}^{\infty}(\Psi_j)_{1\cdot}\epsilon_{t-j-1}\\
            =&\tilde{\epsilon}^\top \left(\frac{1}{T}\sum_{t=0}^{T-1}e^{(t)}\eta^{(t)^\top}\right)\tilde{\epsilon}+\frac{1}{T}\sum_{t=0}^{T-1}\epsilon_{t,1}\sum_{j=t+m+1}^\infty (\Psi_j)_{1\cdot}\epsilon_{t-j-1}\\
            \triangleq& E_1+E_2.
        \end{split}
    \end{equation*}
    We will let $m$ be sufficiently large in later argument. The following arguments are devided into two parts: bounding $E_1$ and $E_2$.
    \begin{itemize}[leftmargin = *]
        \item[(1)] Bounding $E_1$\\
        Since all entries of $\tilde{\epsilon}$ are independent sub-Gaussian with constant parameter, we can apply the following Hanson-Wright inequality:
		\begin{Lemma}\label{HansonWright}
		Let $X=(X_1,\dots,X_n)\in \mathbb{R}^n$ be a random vector with independent components $X_i$ which satisfy $\mathbb{E}(X_i)=0$ and $\|X_i\|_{\psi_2}\leq K$. Let $A$ be an $n\times n$ matrix. Then, for every $t\geq 0$,
		\begin{equation*}
		    \mathbb{P}\left(|X^\top AX-\mathbb{E}X^\top AX|>t\right)\leq 2\exp\left\{-c\min\left(\frac{t^2}{K^4\|A\|_{F}^2},\frac{t}{K^2\|A\|_2}\right)\right\}
		\end{equation*}
		\end{Lemma}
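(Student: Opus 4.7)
The plan is to follow the classical Rudelson--Vershynin strategy. The first step is to decompose the quadratic form into its diagonal and off-diagonal pieces: since $\mathbb{E}(X_iX_j)=0$ for $i\neq j$,
\begin{equation*}
X^\top AX-\mathbb{E}X^\top AX \;=\; \sum_{i=1}^n a_{ii}\bigl(X_i^2-\mathbb{E}X_i^2\bigr) \;+\; \sum_{i\neq j} a_{ij}X_iX_j.
\end{equation*}
The diagonal piece is a sum of independent centered random variables; since each $X_i$ is sub-Gaussian with parameter $K$, each $X_i^2-\mathbb{E}X_i^2$ is sub-exponential with sub-exponential norm of order $K^2$. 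Bernstein's inequality then yields a tail bound of the advertised form, using $\sum_i a_{ii}^2\leq \|A\|_F^2$ and $\max_i |a_{ii}|\leq \|A\|_2$ (the latter from $|a_{ii}|=|e_i^\top A e_i|$).

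The off-diagonal sum is the main obstacle, because its summands are no longer independent. The standard remedy is decoupling: by the de la Pe\~na--Montgomery-Smith inequality, for any convex $F$,
\begin{equation*}
\mathbb{E}\,F\!\Bigl(\sum_{i\neq j}a_{ij}X_iX_j\Bigr)\;\leq\;\mathbb{E}\,F\!\Bigl(4\sum_{i,j}a_{ij}X_iX'_j\Bigr),
\end{equation*}
where $X'$ is an independent copy of $X$. Applying this with $F(x)=e^{\lambda x}$ and conditioning on $X'$ turns the inner sum into a linear form in the independent sub-Gaussians $X_i$, whose moment generating function is bounded by $\exp\bigl(c\lambda^2K^2\|AX'\|_2^2\bigr)$. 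After this step the problem is reduced to controlling $\mathbb{E}\exp\bigl(c\lambda^2K^2\,(X')^\top A^\top A X'\bigr)$, which is itself a quadratic chaos in independent sub-Gaussians but now with a positive semidefinite kernel $B=A^\top A$ of squared Frobenius norm at most $\|A\|_2^2\|A\|_F^2$.

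The hard part is the final MGF bound for the positive semidefinite chaos. The cleanest route is a comparison argument: a centered sub-Gaussian vector with parameter $K$ is dominated in the MGF sense by a Gaussian vector $g$ with covariance of order $K^2 I$, so it suffices to estimate $\mathbb{E}\exp(sg^\top A^\top A g)$ for $s\asymp\lambda^2K^4$. Diagonalizing $A^\top A$ through the SVD gives the explicit formula $\prod_i(1-2s\sigma_i(A)^2)^{-1/2}$, which is finite for $s<1/(2\|A\|_2^2)$ and, after subtracting the mean via $-\log(1-x)-x\leq x^2$ for $|x|\leq 1/2$, is bounded there by $\exp\bigl(cs^2\|A\|_F^2\|A\|_2^2\bigr)$. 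Plugging this back into the Chernoff bound $\mathbb{P}(Y>t)\leq e^{-\lambda t}\mathbb{E}e^{\lambda Y}$ and optimizing $\lambda$ produces the two-regime exponent $\min\!\bigl(t^2/(K^4\|A\|_F^2),\,t/(K^2\|A\|_2)\bigr)$, corresponding to Gaussian behaviour for small $t$ and exponential behaviour for large $t$. Combining the diagonal and off-diagonal tail bounds with a union bound and absorbing constants yields the claimed inequality.
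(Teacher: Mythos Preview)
The paper does not prove this lemma at all: immediately after the statement it writes ``This lemma is a result in \cite{rudelson2013hanson}'' and moves on. Your sketch is precisely the Rudelson--Vershynin argument that the paper is citing, so you have supplied what the paper defers to a reference.

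One small caveat on your write-up: the step ``a centered sub-Gaussian vector with parameter $K$ is dominated in the MGF sense by a Gaussian vector $g$'' is not literally true for quadratic forms and would need justification. The actual Rudelson--Vershynin mechanism is to introduce an \emph{auxiliary} standard Gaussian $g$ independent of $X'$ and use rotation invariance to write $\mathbb{E}_{X'}\exp(s\|AX'\|_2^2)=\mathbb{E}_{X',g}\exp(\sqrt{2s}\,\langle g,AX'\rangle)$; conditioning now on $g$ turns this into a linear sub-Gaussian form in $X'$, which after one more sub-Gaussian MGF bound leaves you with the genuinely Gaussian quantity $\mathbb{E}_g\exp(c s K^2\|A^\top g\|_2^2)$ that you then diagonalize. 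With that clarification your outline is complete.
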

		This lemma is a result in \cite{rudelson2013hanson}.By Lemma \ref{HansonWright}, we only need to bound the norms of $\frac{1}{T}\sum_{t=0}^{T-1}e^{(t)}\eta^{(t)^\top}$. 
        
        First note that 
        $$
        \left\|\frac{1}{T}\sum_{t=0}^{T-1}e^{(t)}\eta^{(t)\top}\right\|_2=\sup_{\|u\|_2=\|v\|_2=1}\frac{1}{T}\sum_{t=0}^{T-1}u^\top e^{(t)}\eta^{(t)\top}v.
        $$ 
        For any $u,v\in \mathbb{R}^{(T+m+1)M}$ with unit $\ell_2$ norm, one can show that
        \begin{equation*}
            \begin{split}
                &\frac{1}{T}\sum_{t=0}^{T-1}u^\top e^{(t)}\eta^{(t)\top}v\\
                =&\frac{1}{T}\sum_{t=0}^{T-1}u_{(t+m)M+1}\sum_{i=0}^{t+m}(\Psi_{t+m-i})_{1\cdot}v^{(i+1)}\\
                \leq &\frac{1}{T}\sum_{t=0}^{T-1}u_{(t+m)M+1}\sum_{i=0}^{t+m}\alpha_{t+m-i}\|v^{(i+1)}\|_2\\
                \leq &\frac{1}{T}(u_{mM+1},\cdots,u_{(T+m-1)M+1})\Gamma \begin{pmatrix}\|v^{(1)}\|_2\\
                \vdots\\
                \|v^{(T+m)}\|_2
                \end{pmatrix}\\
                \leq &\frac{\|\Gamma\|_2}{T},
            \end{split}
        \end{equation*}
        where $v^{(i)}=(v_{(i-1)M+1},\dots,v_{iM})^\top$, $\alpha_i=\left\|\Psi_i\right\|_2\geq \left\|(\Psi_i)_{1\cdot}\right\|_2$, and $\Gamma\in \mathbb{R}^{T\times (T+m)}$ is a matrix with each entry $\Gamma_{ij}=\alpha_{m+i-j}1(m+i-j\geq 0)$.  Since $\Gamma$ is a Toeplitz matrix, we will use the following lemma to bound its $\ell_2$ norm.
         \begin{Lemma}\label{Toeplitz}
         Let $f(\lambda)$ be a Fourier series defined as $f(\lambda)=\sum_{t=-\infty}^{\infty}t_k\exp\{ik\lambda\}$, with $\sum_{k=-\infty}^{\infty}|t_k|<\infty$. We define a sequence of Toeplitz matrices $T_n$ with $(T_n)_{i,j}=t_{i-j}$, then the operator norm of $T_n$ is bounded by
         \begin{equation*}
         \|T_n\|_2\leq 2\text{ess}\sup f.
         \end{equation*}
        where ess $\sup f$ the essential supremum.
        \end{Lemma}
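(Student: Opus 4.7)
The plan is to realize each finite Toeplitz matrix $T_n$ as the compression of the multiplication operator by $f$ on $L^2(-\pi,\pi)$, and then exploit the fact that multiplication by $f$ is bounded on $L^2$ with operator norm equal to the essential supremum of $|f|$.

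Concretely, I would proceed as follows. Fix any $u,v\in\mathbb{C}^n$ with unit $\ell_2$ norm, and associate to them the trigonometric polynomials
\begin{equation*}
U(\lambda)=\frac{1}{\sqrt{2\pi}}\sum_{j=1}^n u_j e^{-ij\lambda},\qquad V(\lambda)=\frac{1}{\sqrt{2\pi}}\sum_{j=1}^n v_j e^{-ij\lambda}.
\end{equation*}
By Parseval's identity, $\|U\|_{L^2(-\pi,\pi)}=\|u\|_2$ and $\|V\|_{L^2(-\pi,\pi)}=\|v\|_2$. The next step is to verify the identity
\begin{equation*}
u^*T_nv=\int_{-\pi}^{\pi} f(\lambda)\,\overline{U(\lambda)}\,V(\lambda)\,d\lambda,
\end{equation*}
which follows directly by expanding the right-hand side, substituting $f(\lambda)=\sum_k t_k e^{ik\lambda}$, and using $\int_{-\pi}^{\pi} e^{ik\lambda}\,d\lambda=2\pi\mathbf{1}(k=0)$ to pick out the coefficient $t_{i-j}$ in each term of the double sum.

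Given this Fourier representation, I would bound $|u^*T_nv|$ by pulling $|f|$ out in $L^\infty$ and applying the Cauchy–Schwarz inequality:
\begin{equation*}
|u^*T_nv|\le (\mathrm{ess}\sup|f|)\int_{-\pi}^{\pi}|\overline{U(\lambda)}\,V(\lambda)|\,d\lambda\le (\mathrm{ess}\sup|f|)\,\|U\|_{L^2}\|V\|_{L^2}=(\mathrm{ess}\sup|f|)\,\|u\|_2\|v\|_2.
\end{equation*}
Taking the supremum over unit vectors $u,v$ yields $\|T_n\|_2\le\mathrm{ess}\sup|f|$, which is stronger than the stated bound (the factor of $2$ merely absorbs any conventions about real- versus complex-valued symbols, or a Hermitian decomposition $f=\tfrac{1}{2}(f+\bar f)+\tfrac{1}{2}(f-\bar f)$ if one prefers to reduce to the self-adjoint case first).

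The main ``obstacle'' here is essentially bookkeeping: checking that the Fourier coefficient identity reproduces precisely the entries $(T_n)_{ij}=t_{i-j}$ with the correct indexing, and keeping track of normalization constants so that Parseval's theorem yields $\|u\|_2$ and $\|v\|_2$ exactly. There is no delicate analytic issue — the hypothesis $\sum_k|t_k|<\infty$ guarantees that $f\in C(\mathbb{T})\subset L^\infty(\mathbb{T})$, so $\mathrm{ess}\sup|f|$ is finite and the interchange of summation and integration used in verifying the Fourier identity is fully justified. This is a classical result (Grenander–Szegő), included here only as a tool used in bounding $\|\Gamma\|_2$ inside the proof of Lemma \ref{DevBnd}.
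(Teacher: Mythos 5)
Your proof is correct, but note that the paper does not prove this lemma at all --- it cites Lemma 4.1 of Gray (the line immediately following the statement reads ``This is actually Lemma 4.1 in \cite{gray2006toeplitz}, and we directly apply it here''). So there is no in-paper argument to compare against; the relevant comparison is to the classical Grenander--Szeg\H{o}/Gray proof, and yours is the standard one: realize $u^\ast T_n v$ as a pairing $\int f\,\overline{U}V$, then pull $|f|$ out in $L^\infty$ and apply Cauchy--Schwarz with Parseval. Two small remarks. First, with your choice of signs $U(\lambda)=\tfrac{1}{\sqrt{2\pi}}\sum u_j e^{-ij\lambda}$ and the paper's convention $f(\lambda)=\sum_k t_k e^{ik\lambda}$, the integral $\int f\,\overline{U}V\,d\lambda$ actually produces $t_{j-i}$ rather than $t_{i-j}$, i.e., it gives $u^\ast T_n^{\!\top} v$; this is immaterial since $\lVert T_n^{\!\top}\rVert_2=\lVert T_n\rVert_2$, but worth flagging as part of the ``bookkeeping.'' Second, your bound $\lVert T_n\rVert_2\le\operatorname{ess\,sup}|f|$ is strictly sharper than the stated $2\operatorname{ess\,sup} f$; as you observe, the factor of $2$ in Gray's statement accounts for the general non-Hermitian case via the decomposition $T_n=\tfrac12(T_n+T_n^\ast)+\tfrac12(T_n-T_n^\ast)$, and also note the lemma as printed in the paper has a small slip: for complex-valued $f$ the essential supremum must be of $|f|$, which is indeed what the paper uses in its applications (both bounds on $\lVert\Gamma\rVert_2$ involve $\sup_\lambda|\cdot|$). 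No gap.
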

 This is actually Lemma 4.1 in \cite{gray2006toeplitz}, and we directly apply it here. By Lemma \ref{Toeplitz}, 
        $$
        \|\Gamma\|_2\leq 2\sup_{\lambda}\left|\sum_{k=-m}^{\infty}\alpha_{m+k} e^{ik\lambda}\right|\leq 2\sum_{k=0}^\infty \alpha_k\leq \sum_{i=0}^{\infty}\left(\sum_{j=0}^\infty\alpha_{i+j}^2\right)^{\frac{1}{2}}\leq \beta.
        $$
        Thus $\left\|\frac{1}{T}\sum_{t=0}^{T-1}e^{(t)}\eta^{(t)^\top}\right\|_2\leq \frac{\beta}{T}$. While for the Frobenius norm, we have
        \begin{equation*}
        \begin{split}
            \left\|\frac{1}{T}\sum_{t=0}^{T-1}e^{(t)}\eta^{(t)^\top}\right\|_{F}^2=&\text{tr}\left(\left(\frac{1}{T}\sum_{t=0}^{T-1}\eta^{(t)}e^{(t)\top}\right)\left(\frac{1}{T}\sum_{l=0}^{T-1}e^{(t)}\eta^{(t)\top}\right)\right)\\
            =&\frac{1}{T^2}\sum_{t=0}^{T-1}\|\eta^{(t)}\|_2^2\\
           \leq &\frac{1}{T^2}\sum_{t=0}^{T-1}\sum_{i=0}^{t+m}\alpha_i^2\leq \frac{\beta^2}{T}.
        \end{split}
        \end{equation*}
        Therefore, by Lemma \ref{HansonWright}, for any $\delta>0$,
        \begin{equation*}
            \mathbb{P}\left(|E_1|>\delta\right)\leq 2\exp\left\{-cT\min\{\delta,\delta^2\}\right\}.
        \end{equation*}
        \item[(2)] Bounding $E_2$\\
        First note that 
        \begin{equation*}
            \begin{split}
                |E_2|=&\left|\frac{1}{T}\sum_{t=0}^{T-1}\epsilon_{t,1}\sum_{j=t+m+1}^\infty (\Psi_j)_{1\cdot}\epsilon_{t-j-1}\right|\\
                \leq &\frac{1}{2T}\sum_{t=0}^{T-1}\epsilon_{t,1}^2+\frac{1}{2T}\sum_{t=0}^{T-1}\left(\sum_{j=t+m+1}^\infty (\Psi_j)_{1\cdot}\epsilon_{t-j-1}\right)^2.
            \end{split}
        \end{equation*}
        Recall the definition of $\|\cdot\|_{\psi_1}$ and $\|\cdot\|_{\psi_2}$ in the proof of Lemma \ref{subGaussnorm}. Since $\|\epsilon_{t,1}^2\|_{\psi_1}\leq 2\|\epsilon_{t,1}\|_{\psi_2}^2\leq 2\tau^2$, 
        \begin{equation*}
            \mathbb{P}\left(\left|\frac{1}{2T}\sum_{t=0}^{T-1}\epsilon_{t,1}^2\right|>\delta\right)\leq 2\exp\{-cT\min\{\delta,\delta^2\}\},
        \end{equation*}
        by Bernstein type inequality of sub-exponential random variables(see proposition 5.16 in \cite{vershynin2010introduction}).
        
        Now we bound the second term $\frac{1}{2T}\sum_{t=0}^{T-1}\left(\sum_{j=t+m+1}^\infty (\Psi_j)_{1\cdot}\epsilon_{t-j-1}\right)^2$.
         Since
        \begin{equation*}
            \left|\sum_{j=t+m+1}^\infty(\Psi_j)_{1\cdot}\epsilon_{t-j-1}\right|\leq \sum_{j=t+m+1}^{\infty}\alpha_j\|\epsilon_{t-j-1}\|_2,
        \end{equation*}
        one can show that
        \begin{equation*}
            \begin{split}
                &\left\|\frac{1}{2T}\sum_{t=0}^{T-1}\left(\sum_{j=t+m+1}^\infty (\Psi_j)_{1\cdot}\epsilon_{t-j-1}\right)^2\right\|_{\psi_1}\\
                \leq &\left\|\sum_{j=t+m+1}^\infty(\Psi_j)_{1\cdot}\epsilon_{t-j-1}\right\|_{\psi_2}^2\\
                \leq &CM\tau^2\left(\sum_{j=t+m+1}^{\infty}\alpha_j\right)^2,
            \end{split}
        \end{equation*}
        where we apply the fact that $\left\|\left\|\epsilon_t\right\|_2\right\|_{\psi_2}\leq C\sqrt{M}\tau$, which is shown in the proof of Lemma \ref{subGaussnorm}. Thus we have 
        \begin{equation*}
            \begin{split}
                &\mathbb{P}\left(\frac{1}{2T}\sum_{t=0}^{T-1}\left(\sum_{j=t+m+1}^\infty (\Psi_j)_{1\cdot}\epsilon_{t-j-1}\right)^2>\delta\right)\\
                \leq &C\exp\left\{-\frac{c\delta}{M\tau^2\left(\sum_{j=t+m+1}^{\infty}\alpha_j\right)^2}\right\}.
            \end{split}
        \end{equation*}
        due to the tail bound of sub-exponential r.v. (also see \cite{vershynin2010introduction}). Since 
        $$
        \sum_{i=0}^{\infty}\alpha_i\leq \sum_{i=0}^{\infty}\left(\sum_{j=0}^{\infty}\alpha_{i+j}^2\right)^{\frac{1}{2}}\leq \beta,
        $$ 
        $$
        \lim_{m\rightarrow \infty} \left(\sum_{j=t+m+1}^{\infty}\alpha_j\right)^2=0.
        $$
        Let $m$ be sufficiently large such that $\left(\sum_{j=t+m+1}^{\infty}\alpha_j\right)^2\leq \frac{1}{MT}$, then we arrive at the following
        \begin{equation*}
            \mathbb{P}\left(\frac{1}{T}\sum_{t=0}^{T-1}\epsilon_{t,1}(\mathcal{X}_t)_1\right)\leq C\exp\{-cT\min\{\delta,\delta^2\}\}.
        \end{equation*}
        Let $\delta=C\sqrt{\log M}{T}$ and take a union bound over the $pM^2$ entries of $\frac{1}{T}\sum_{t=0}^{T-1}\epsilon_t\mathcal{X}_t^\top$, the conclusion follows.
    \end{itemize}

\end{proof}

\begin{proof}[Proof of Lemma \ref{true_w_dev}]
Without loss of generality, consider 
$$\frac{1}{T}\sum_{t=0}^{T-1}\left(\mathcal{X}_{t,D_m}-w_m^{*\top}\mathcal{X}_{t,D_m^c}\right)_i\mathcal{X}_{t,j}
$$
for any $1\leq i\leq d_m$, and $j\in D_m^c$. Similar from the proof of Lemma \ref{true_w_dev}, We can write it as a quadratic form 
\begin{equation*}
    \frac{1}{T}\sum_{t=0}^{T-1}\mathcal{X}_t^\top \frac{1}{2}\left((W_m^*)_{i\cdot}^\top e_j^\top+e_j(W_m^*)_{i\cdot}\right)\mathcal{X}_t,
\end{equation*}
where $W_m^*$ is defined as in \eqref{W_def}. Since $\frac{1}{2}\left((W_m^*)_{i\cdot}^\top e_j^\top+e_j(W_m^*)_{i\cdot}\right)$ is of rank 2, and we have bounded $\left\|(W_m^*)_{i\cdot}\right\|_2$ in \eqref{Wl2Bnd}, applying Lemma \ref{trl2norm} leads to
\begin{equation*}
    \begin{split}
        \left\|\frac{1}{2}\left((W_m^*)_{i\cdot}^\top e_j^\top+e_j(W_m^*)_{i\cdot}\right)\right\|_{\tr}\leq & 2\left\|\frac{1}{2}\left((W_m^*)_{i\cdot}^\top e_j^\top+e_j(W_m^*)_{i\cdot}\right)\right\|_2\\
        \leq&\left\|(W_m^*)_{i\cdot}\right\|_2\leq C.
    \end{split}
\end{equation*}
Applying Lemma \ref{QuaBound}, and taking a union bound over all entries of 
$$
\frac{1}{T}\sum_{t=0}^{T-1}\left(\mathcal{X}_{t,D_m}-w_m^{*\top}\mathcal{X}_{t,D_m^c}\right)\mathcal{X}_{t},
$$
the conclusion follows.
\end{proof}

\begin{proof}[Proof of Lemma \ref{CovBound}]
Similar from the proof of Lemma \ref{DevBnd}, we consider $\left|\frac{1}{T}\sum_{t=0}^{T-1}X_{ti}X_{tj}-\Upsilon_{ij}\right|$. Since 
$$
\frac{1}{T}\sum_{t=0}^{T-1}X_{ti}X_{tj}=\frac{1}{T}\sum_{t=0}^{T-1}X_t^\top\left(\frac{1}{2}(e_ie_j^\top+e_je_i^\top)\right) X_t,
$$
by Lemma \ref{QuaBound}, we need to bound norms of $\frac{1}{2}(e_ie_j^\top+e_je_i^\top)$, which is of rank at most 2. One can show that
\begin{equation*}
\begin{split}
    \left\|\frac{1}{2}(e_ie_j^\top+e_je_i^\top)\right\|_{\tr}\leq 2 \left\|\frac{1}{2}(e_ie_j^\top+e_je_i^\top)\right\|_2\leq 2\|e_i\|_2\|e_j\|_2,
    \end{split}
\end{equation*}
with Lemma \ref{trl2norm}.
Therefore, by taking a union bound, it is clear that 
$$
\left\|\frac{1}{T}\sum_{t=0}^{T-1}X_tX_t^\top-\Upsilon\right\|_{\infty}\leq C\sqrt{\frac{\log M}{T}},
$$
with probability at least $1-c_1\exp\{-c_2\log M\}$.
\end{proof}

\section{Proof of Lemmas in Section \ref{lemma_proof} and Appendix \ref{lemma_proof2}}
\begin{proof}[Proof of Lemma \ref{convergence.rate}]
	Here we adopt the proof framework for Lemma 4 in \cite{grama2006asymptotic}, but with some small adjustments. First we construct a new martingale difference sequence $(m_{nk},\mathcal{G}_{nk})_{1\leq k\leq n+1}$, sum of whose covariances equal to $I_{d\times d}$. Random projections are used for construction. The following lemma on random projections is stated as Lemma 3 in \cite{grama2006asymptotic}.
	\begin{Lemma}
	Let $V$ and $a_1, \cdots, a_n$ be positive semi-definite $d\times d$ matrices. Set $A_k = a_1 +\cdots+a_k$, for $k =1, \cdots,n$. Then there exist a sequence of integers $1\leq \tau_1\leq \cdots\leq \tau_d\leq n$ and a corresponding sequence $\mathcal{S}_1\supseteq\cdots \supseteq \mathcal{S}_d$ of subspaces of $\mathbb{R}^d$ such that, with $P_k$ defined as the projection matrix of subspace $\mathcal{S}_i$, for $\tau_i\leq k <\tau_{i+1}$ (where $\tau_0=1, \tau_{d+1}=n+1,\mathcal{S}_0=\mathbb{R}^d$), the following statements hold true for $k=1,\cdots,n$:\\
	$(a)V-\widehat{A}_k$ is non-negative definite, where $\widehat{A}_k =P_1a_1P_1+\cdots+P_ka_kP_k$;\\
	$(b)x^\top (\widehat{A}_k-A_k)x=0$, for all $x\in \Pi_k\triangleq \{P_kx: x\in \mathbb{R}^d\}$;\\
	$(c)x^\top(\widehat{A}_k-V+\alpha_k I)x\geq 0$ for all $x\in \Pi_k^\top$, where $\alpha_k=\max\{\|a_{\tau_j}\|_2:\tau_j\leq k\}$.\\
	Meanwhile, $P_k$ is determined by $a_1,\cdots,a_k$ and $V$. 
	\end{Lemma}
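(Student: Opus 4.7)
The plan is to construct the triples $(\tau_i, \mathcal{S}_i, P_i)$ greedily by induction, removing at least one dimension from the current subspace exactly at those indices where the partial sum $\widehat{A}_k$ first threatens to violate $V \succeq \widehat{A}_k$. I would initialize $\tau_0 = 1$, $\mathcal{S}_0 = \mathbb{R}^d$, $P_0 = I_d$. Given the triple at stage $i-1$, tentatively use $P_k := P_{i-1}$ for all $k \geq \tau_{i-1}$ and define $\tau_i$ as the smallest index at which $V - \widehat{A}_k$ fails to be positive semi-definite on $\mathcal{S}_{i-1}$; if no such index occurs, set the remaining $\tau_j$ to $n+1$ and stop. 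Otherwise, take a unit eigenvector $x_i \in \mathcal{S}_{i-1}$ corresponding to the largest positive eigenvalue of the restricted excess matrix $(\widehat{A}_{\tau_i} - V)|_{\mathcal{S}_{i-1}}$, and set $\mathcal{S}_i = \mathcal{S}_{i-1} \cap x_i^\perp$ with $P_i$ the orthogonal projection onto $\mathcal{S}_i$. Since $\dim \mathcal{S}_i$ strictly decreases at each stage, the process terminates within $d$ steps, yielding a valid sequence $\tau_1 \leq \cdots \leq \tau_d \leq n$ with nested subspaces.

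Property $(b)$ is immediate from the nesting $\mathcal{S}_0 \supseteq \mathcal{S}_1 \supseteq \cdots$: for any $x \in \Pi_k = \mathcal{S}_i$ with $\tau_i \leq k < \tau_{i+1}$, one has $P_j x = x$ for every $j \leq k$, hence $x^\top P_j a_j P_j x = x^\top a_j x$ term by term, giving $x^\top \widehat{A}_k x = x^\top A_k x$. For property $(a)$, the construction guarantees that $V - \widehat{A}_k \succeq 0$ as a quadratic form on $\mathcal{S}_{i-1}$ for each $k < \tau_i$; after switching to $P_i$ at index $\tau_i$, all subsequent increments $P_j a_j P_j$ with $j \geq \tau_i$ are supported inside $\mathcal{S}_i \subseteq \mathcal{S}_{i-1}$, so the orthogonal complement blocks of $V - \widehat{A}_k$ are preserved. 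Assembling these stagewise PSD statements across orthogonal invariant blocks yields global positive semi-definiteness on $\mathbb{R}^d$.

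Property $(c)$ is the technically delicate part. I would decompose any $x \in \Pi_k^\perp = \mathcal{S}_i^\perp$ along the one-dimensional lines killed at successive stages, $x = \sum_{j=1}^{i} c_j x_j$ with $x_j \in \mathcal{S}_{j-1} \cap \mathcal{S}_j^\perp$ of unit norm. The defining minimality of $\tau_j$ gives $x_j^\top \widehat{A}_{\tau_j - 1} x_j \leq x_j^\top V x_j$, while the single-step increment satisfies
\[ x_j^\top \bigl(\widehat{A}_{\tau_j} - \widehat{A}_{\tau_j - 1}\bigr) x_j \;=\; x_j^\top P_{j-1} a_{\tau_j} P_{j-1} x_j \;\leq\; \|a_{\tau_j}\|_2 \;\leq\; \alpha_k, \]
using $x_j \in \mathcal{S}_{j-1}$ so that $P_{j-1} x_j = x_j$. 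Since every later projection $P_{j'}$ with $j' \geq j$ annihilates $x_j$, the accumulated tail $\widehat{A}_k - \widehat{A}_{\tau_j}$ contributes nothing to the $x_j$-diagonal for $k \geq \tau_j$. Summing over $j$ and controlling the cross terms $x_j^\top \widehat{A}_k x_{j'}$ via the orthogonality forced by the eigenvector choice of $x_j$ yields $x^\top (\widehat{A}_k - V + \alpha_k I) x \geq 0$.

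The main obstacle will be controlling the cross terms in property $(c)$: pairwise orthogonality of the $x_j$'s as vectors is automatic from the nested structure, but making the bilinear form $\widehat{A}_k - V$ block-diagonal enough in the basis $\{x_1, \ldots, x_i\}$ so that the diagonal bounds actually imply the quadratic form bound requires the eigenvector choice at each step. This refinement, rather than an arbitrary selection of a violating direction, is the key technical ingredient and must be tracked carefully through the induction.
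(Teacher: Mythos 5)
The paper does not prove this lemma at all: it is quoted verbatim as Lemma~3 of Grama and Haeusler (2006), and the statement is used as an imported black box inside the proof of Lemma~\ref{convergence.rate}. So the comparison is between your sketch and the original construction in that reference, not something in this manuscript.

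Your plan has genuine gaps. For property~(a), you pass from ``$V-\widehat{A}_k$ is PSD restricted to $\mathcal{S}_{i-1}$'' to global PSD on $\mathbb{R}^d$ by invoking ``orthogonal invariant blocks.'' But $V-\widehat{A}_k$ is \emph{not} block-diagonal with respect to the decomposition $\mathcal{S}_i\oplus\mathcal{S}_i^\perp$: the early increments $P_j a_j P_j$ with $j<\tau_i$ are supported on larger subspaces, and $V$ itself is an arbitrary PSD matrix, so both contribute nonzero cross blocks $\mathcal{S}_i \times \mathcal{S}_i^\perp$. Knowing positivity of two diagonal blocks of a symmetric matrix with unknown off-diagonal coupling does not give global positivity; that step simply fails as stated. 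Relatedly, you never show that deleting the single eigenvector $x_i$ at index $\tau_i$ actually restores PSD of $V-\widehat{A}_{\tau_i}$; if several eigenvalues of the restricted excess are positive, one deletion is not enough, and the construction must allow $\tau_i=\tau_{i+1}$ (several dimensions dropped at the same index) — a case your induction does not handle.

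For property~(c) you explicitly write that the cross terms $x_j^\top(\widehat{A}_k - V)x_{j'}$ must be ``tracked carefully through the induction'' and that this is ``the main obstacle.'' That is an accurate self-diagnosis, but it means the proof of~(c) is not actually given: the diagonal bounds $x_j^\top(\widehat{A}_k-V)x_j\geq -\alpha_k$ do not by themselves yield $x^\top(\widehat{A}_k-V+\alpha_k I)x\geq 0$ for a generic $x=\sum c_j x_j$ unless the bilinear form is shown to be diagonal (or at least diagonally dominant) in the chosen basis, and choosing $x_i$ as an eigenvector of the \emph{restricted} excess $(\widehat{A}_{\tau_i}-V)|_{\mathcal{S}_{i-1}}$ does not automatically make $\widehat{A}_k-V$ diagonal in $\{x_1,\dots,x_i\}$ for later $k$, since subsequent increments change the matrix. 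Until both the block-structure claim in~(a) and the cross-term control in~(c) are supplied, the argument does not establish the lemma.
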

	Given this claim, $m_{nk}$ can be constructed as follows:\\
	Recall the martingale sequence we consider is $(\xi_{nk},\mathcal{F}_{nk})_{1\leq k\leq n+1}$, and $a_{nk}=\mathbb{E}\left(\xi_{nk}\xi_{nk}^{\top}\right)$. Apply the fact with $V=I$, $a_k=a_{nk}$, and let $\{P_{nk}\}_{k=1}^n$ be the corresponding projection matrices. Let $D_n=I-\sum_{k=1}^n P_{nk}a_{nk}P_{nk}$, which is non-negative definite. Define
	$$
	M_{k}^n=\sum_{k=1}^n m_{nk},\quad 1\leq k\leq n+1,
	$$
	where
	$$
	m_{nk}=P_{nk}\xi_{nk}, \text{ for }1\leq k\leq n, \quad m_{n,n+1}=D_n^\frac{1}{2}\eta_{n,n+1}.
	$$
	Since $P_{nk}\in \mathcal{F}_{n,k-1}$, $m_{nk}\in \mathcal{F}_{nk}$ for $1\leq k\leq n$.Thus $(m_{nk},\mathcal{G}_{nk})$ is also a martingale difference sequence with $\mathcal{G}_{nk}=\mathcal{F}_{nk}$, when $1\leq k\leq n$, and $\mathcal{G}_{n,n+1}=\sigma(F_{nn},\eta_{n,n+1})$. Meanwhile, 
	$$
	\left\langle M^n\right\rangle _{n+1}=\sum_{k=1}^{n+1}\mathbb{E}(m_{nk}m_{nk}^\top|\mathcal{F}_{n,k-1})=I_{d\times d}.
	$$
	This construction is from \cite{grama2006asymptotic}. They also prove that, for any $\varepsilon, \delta>0$, 
	\begin{equation}\label{ErrorXM}
	   \mathbb{P}\left(\|X_n^n-M_{n+1}^n\|_2\geq \varepsilon\right)\leq C(d,\delta)\varepsilon^{-2-2\delta}\left(L_{\delta}^{n,d}+N_{\delta}^{n,d}\right), 
	\end{equation}
	Since
	\begin{equation}\label{prob_diff}
	\begin{split}
	&-\mathbb{P}(\|X_n^n-M_{n+1}^n\|_2>\varepsilon)-\mathbb{P}(\|Z+\mu\|_2\geq r+2\varepsilon)\\
	&+\mathbb{P}(\|M_{n+1}^n+\mu\|_2\geq r+\varepsilon)-\mathbb{P}(Z\in [r,r+2\varepsilon))\\
	\leq &\mathbb{P}(\|X_n^n+\mu\|_2\geq r)-\mathbb{P}(\|Z+\mu\|_2\geq r)\\
	\leq &\mathbb{P}(\|M_{n+1}^n+\mu\|_2\geq r-\varepsilon)-\mathbb{P}(\|Z+\mu\|_2\geq r-2\varepsilon)\\
	&+\mathbb{P}(\|X_n^n-M_{n+1}^n\|_2>\varepsilon)+\mathbb{P}(Z\in [r-2\varepsilon, r)),
	\end{split}
	\end{equation}
	for any $\mu\in \mathbb{R}^d, r\geq 0, \varepsilon>0$,
	we need to bound $$\mathbb{E}(1(\|Z+\mu\|_2\geq r+2\varepsilon))-\mathbb{E}(1(\|M_{n+1}^n+\mu\|_2\geq r+\varepsilon))$$ and $$\mathbb{E}(1(\|M_{n+1}^n+\mu\|_2\geq r-\varepsilon))-\mathbb{E}(1(\|Z+\mu\|_2\geq r-2\varepsilon)).$$ The following functions are defined as a smooth relaxation for indicator function. Let 
	\begin{equation}\label{f*_def}
	   f_*(z)=\int_{-\infty}^{z-\frac{1}{2}} \phi(t) \mathrm{d}t, \text{ with }\phi(t)=\frac{1}{C}\exp\{-\frac{4}{1-4t^2}\}1_{\left(-\frac{1}{2},\frac{1}{2}\right)}(t), 
	\end{equation}
	where $C$ is a normalizing constant s.t. $\int \phi(t) dt=1$. Then we have $f_*(z)=0$ if $z\leq 0$, $0\leq f_*(z)\leq 1$ if $0\leq z\leq 1$, and $f_*(z)=1$ if $z\geq 1$. $f_*(z)$ is infinitely many times differentiable on $\mathbb{R}$, and since $f_*(z)$ is constant when $z\leq 0$ or $z\geq 1$, for any fixed order, the derivative of $f_*(z)$ is bounded. For any $z\in \mathbb{R}^d$, let 
	\begin{equation}\label{fl_def}
	    f_{l,\mu,r,\varepsilon}(z)=f_*(g_{l,\mu,r,\varepsilon}(z)),
	\end{equation}
	where
	\begin{equation}\label{gl_def}
	    g_{1,\mu,r,\varepsilon}(z)=\frac{\|z+\mu\|_2-r-\varepsilon}{\varepsilon}, \quad g_{2,\mu,r,\varepsilon}(z)=\frac{\|z+\mu\|_2-r+2\varepsilon}{\varepsilon}.
	\end{equation}
	In the following proof, we will denote $f_{l,\mu,r,\varepsilon}(z)$ and $g_{l,\mu,r,\varepsilon}(z)$ as $f_l(z)$ and $g_l(z)$, $l=1,2$ for brevity. Therefore,
	$$\mathbb{E}(1(\|Z+\mu\|_2\geq r+2\varepsilon))-\mathbb{E}(1(\|M_{n+1}^n+\mu\|_2\geq r+\varepsilon))\leq \mathbb{E}(f_1(Z)-f_1(M_{n+1}^n)),$$ $$\mathbb{E}(1(\|M_{n+1}^n+\mu\|_2\geq r-\varepsilon))-\mathbb{E}(1(\|Z+\mu\|_2\geq r-2\varepsilon))\leq \mathbb{E}(f_2(M_{n+1}^n)-f_1(Z)).$$
	Thus,
	\begin{equation*}
    \begin{split}
	&|\mathbb{P}(\|X_n^n+\mu\|_2\geq r)-\mathbb{P}(\|Z+\mu\|_2\geq r)|\\
	\leq &\max_{l=1,2}|\mathbb{E}(f_l(M_{n+1}^n)-f_l(Z))|+\mathbb{P}(\|X_n^n-M_{n+1}^n\|_2>\varepsilon)\\
	&+\mathbb{P}(\|Z+\mu\|_2\in [r-2\varepsilon, r+2\varepsilon]).
    \end{split}
	\end{equation*}
	Actually, when $r\leq 3\varepsilon$, the right hand side of (\ref{prob_diff}) can be substituted by 
	$$\mathbb{P}(\|Z+\mu\|_2< 3\varepsilon),$$
	and 
	\begin{equation}\label{prob_diff_gen}
	    \begin{split}
	        &|\mathbb{P}(\|X_n^n+\mu\|_2\geq r)-\mathbb{P}(\|Z+\mu\|_2\geq r)|\\
	\leq &\max\{|\mathbb{E}(f_1(M_{n+1}^n)-f_1(Z))|+\mathbb{P}(\|X_n^n-M_{n+1}^n\|_2>\varepsilon)\\
	&+\mathbb{P}(\|Z+\mu\|_2\in [r, r+2\varepsilon]), \mathbb{P}(\|Z+\mu\|_2\in [0,3\varepsilon))\}.
	    \end{split}
	\end{equation}
     To bound $\mathbb{E}(f_l(M_{n+1}^n)-f_l(Z))$, we will use the following lemma.
     \begin{Lemma}\label{DerivativeBnd}
     For $f_l(\cdot)$ defined as in \eqref{fl_def}, 
     \begin{align}\label{bounded_deriv}
	    \left|\sum_{1\leq i_1,\cdots,i_k\leq d} y_{i_1}\cdots y_{i_k}\frac{\partial^k}{\partial z_{i_1}\cdots\partial z_{i_k}}f_l(z)\right|\leq C(k)\varepsilon^{-k}\|y\|_2^k,
	\end{align}
	for any $k\in \mathbb{Z}^*$, $y,z \in \mathbb{R}^{d}$, when $l=1$, or when $l=2$ and $r>3\varepsilon$. 
     \end{Lemma}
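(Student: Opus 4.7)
The plan is to recognize the left-hand side as the $k$-th directional derivative of $f_l$ in the direction $y$, namely
\[
\sum_{1\le i_1,\dots,i_k\le d} y_{i_1}\cdots y_{i_k}\,\partial^k_{z_{i_1}\cdots z_{i_k}} f_l(z) \;=\; \frac{d^k}{dt^k}\bigg|_{t=0} f_l(z+ty),
\]
and then apply the one-variable Faà di Bruno formula to the composition $f_l = f_*\circ g_l$:
\[
\frac{d^k}{dt^k}\bigg|_{t=0} f_l(z+ty) \;=\; \sum_{\pi\in\mathcal{P}(k)} f_*^{(|\pi|)}\bigl(g_l(z)\bigr)\,\prod_{B\in\pi}\frac{d^{|B|}}{dt^{|B|}}\bigg|_{t=0} g_l(z+ty),
\]
where $\mathcal{P}(k)$ denotes the set of partitions of $\{1,\dots,k\}$. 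Since $f_*$ defined in \eqref{f*_def} is infinitely differentiable and all its derivatives vanish outside $[0,1]$, each factor $|f_*^{(p)}|$ is bounded by an absolute constant $M_p$, and $f_*^{(p)}(g_l(z))$ is supported where $0<g_l(z)<1$.

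The main step is to control the directional derivatives of $h(z)=\|z+\mu\|_2$. Setting $u=z+\mu$ and rescaling via $\tau = t\|y\|/\|u\|$, one obtains
\[
h(z+ty) \;=\; \|u\|\,\widetilde{\psi}(\tau), \qquad \widetilde{\psi}(\tau) \;=\; \sqrt{1+2\tau c +\tau^2}, \qquad c\;=\;\tfrac{u\cdot y}{\|u\|\|y\|}\in[-1,1].
\]
Since $\widetilde{\psi}(0)=1$ and $\widetilde{\psi}$ is real-analytic in a neighborhood of $\tau=0$ uniformly in $c\in[-1,1]$ (the borderline cases $|c|=1$ reduce to $\widetilde{\psi}(\tau)=1+c\tau$ near zero), $|\widetilde{\psi}^{(j)}(0)|\le C_1(j)$. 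Chain-ruling $\tau=t\|y\|/\|u\|$ gives, for $j\ge 1$,
\[
\Bigl|\tfrac{d^j}{dt^j}\big|_{t=0} h(z+ty)\Bigr| \;\le\; C_1(j)\,\frac{\|y\|^j}{\|u\|^{j-1}}, \qquad \text{hence} \qquad \Bigl|\tfrac{d^j}{dt^j}\big|_{t=0} g_l(z+ty)\Bigr|\;\le\;\frac{C_1(j)\,\|y\|^j}{\varepsilon\,\|u\|^{j-1}},
\]
since $g_l(z)=(h(z)-c_l)/\varepsilon$ for a constant $c_l$.

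Combining these bounds, each partition $\pi$ with $p=|\pi|$ parts of sizes summing to $k$ contributes at most
\[
M_p\,\prod_{B\in\pi}\frac{C_1(|B|)\,\|y\|^{|B|}}{\varepsilon\,\|u\|^{|B|-1}} \;=\; M_p\,\Bigl(\prod_{B}C_1(|B|)\Bigr)\,\frac{\|y\|^{k}}{\varepsilon^{p}\,\|u\|^{k-p}}.
\]
The final step is to eliminate the $\|u\|^{-(k-p)}$ factor using the support of $f_*^{(p)}\bigl(g_l(z)\bigr)$ (recall $p\ge 1$ since $k\ge 1$). For $l=1$ this support forces $r+\varepsilon<\|u\|<r+2\varepsilon$, so $\|u\|>\varepsilon$; for $l=2$ with $r>3\varepsilon$ it forces $r-2\varepsilon<\|u\|<r-\varepsilon$, hence again $\|u\|>\varepsilon$. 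Therefore $\|u\|^{-(k-p)}\le \varepsilon^{-(k-p)}$ on the relevant support, and each summand is bounded by $C(k)\|y\|^k/\varepsilon^k$. Summing over the finitely many partitions in $\mathcal{P}(k)$ yields the claim.

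The main obstacle is the non-smoothness of $\|z+\mu\|_2$ at $z=-\mu$: without the support condition on $l$ and $r$, the $\|u\|^{-(k-p)}$ factors would blow up. The hypothesis $r>3\varepsilon$ for $l=2$ (and the automatic separation $\|u\|>r+\varepsilon\ge \varepsilon$ for $l=1$) is precisely what keeps $z+\mu$ a distance at least $\varepsilon$ from the singularity on the region where $f_l$ has non-trivial higher derivatives; this is what makes the proof work and also explains why the hypothesis is stated with $l=2$, $r>3\varepsilon$ rather than for all $r\ge 0$.
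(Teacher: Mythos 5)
Your proof is correct, and it takes a genuinely different route from the paper. The paper applies the multivariate Fa\`a di Bruno formula twice: once for $f_l = f_* \circ g_l$, and then a second time for $g_l = \varphi_l\circ \psi$ with $\psi(z)=\|z+\mu\|_2^2$, exploiting the fact that $\psi$ is a quadratic polynomial so only singletons and pairs survive in the inner partitions. This forces the paper to introduce the nested partition sets $\mathcal{C}(\pi)$ and track the exponents $\varepsilon^{-|\pi|}\|z+\mu\|_2^{-(2|\tilde\pi|-|\pi|)}$ through the contraction with $y$, using Cauchy--Schwarz to convert $(y^\top(z+\mu))^{|S(\tilde\pi)|}$ into $\|y\|_2^{|S|}\|z+\mu\|_2^{|S|}$ and then cancel powers of $\|z+\mu\|_2$. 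You instead contract with $y$ at the outset, reducing the left-hand side to a one-variable $k$-th derivative of $t\mapsto f_l(z+ty)$, apply the scalar Fa\`a di Bruno once, and control the derivatives of $\|z+ty+\mu\|_2$ by rescaling to the analytic function $\tilde\psi(\tau)=\sqrt{1+2c\tau+\tau^2}$ whose Taylor coefficients at the origin are bounded uniformly in $c\in[-1,1]$ (the complex zeros of $1+2c\tau+\tau^2$ lie on $|\tau|=1$, so Cauchy estimates on a circle of radius $1/2$ give uniform bounds). This avoids the double-composition bookkeeping entirely and is in my view cleaner. Both arguments converge on the same decisive step: the support of $f_*^{(p)}\circ g_l$ forces $\|z+\mu\|_2 > \varepsilon$ (immediately for $l=1$, and via $r>3\varepsilon$ for $l=2$), which is what lets the $\|z+\mu\|_2^{-(k-p)}$ factors be traded for $\varepsilon^{-(k-p)}$. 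One small point worth stating explicitly: when $\|z+\mu\|_2\le\varepsilon$ the substitution $\tau=t\|y\|/\|u\|$ is not available (and is singular at $u=0$), but on that region $g_l$ lies outside $(0,1)$, so $f_l$ is locally constant and all derivatives vanish identically; the bound holds trivially there.
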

	 The proof of this lemma is deferred to Appendix \ref{RestProof}. In the following proof, we will always assume the condition $l=1$ or $l=2$ and $r>3\varepsilon$ hold. Therefore, for any $m\in \mathbb{Z}^*$,
	\begin{equation*}
    \begin{split}
	&\left|f_l(z+y)-f_l(y)-\sum_{k=1}^{m}\sum_{1\leq i_1,\cdots,i_k\leq d} y_{i_1}\cdots y_{i_k}\frac{\partial^k}{\partial z_{i_1}\cdots\partial z_{i_k}}f_l(z)\right|\\
	= &\left|\sum_{1\leq i_1,\cdots,i_{m+1}\leq d} y_{i_1}\cdots y_{i_{m+1}}\frac{\partial^{m+1}}{\partial u_{i_1}\cdots\partial u_{i_{m+1}}}f_l(u)\right|\\
	\leq &C(m+1)\varepsilon^{-m-1}\|y\|_2^{m+1},
    \end{split}
	\end{equation*}
	where $u=z+t_1y$ for some $0\leq t_1\leq 1$.
	Meanwhile,
	\begin{align*}
	&\left|f_l(z+y)-f_l(y)-\sum_{k=1}^{m}\sum_{1\leq i_1,\cdots,i_k\leq d} y_{i_1}\cdots y_{i_k}\frac{\partial^k}{\partial z_{i_1}\cdots\partial z_{i_k}}f_l(z)\right|\\
	= &\Bigg|\sum_{1\leq i_1,\cdots,i_{m}\leq d} y_{i_1}\cdots y_{i_{(m)}}\frac{\partial^{m}}{\partial v_{i_1}\cdots\partial v_{i_{m}}}f_l(v)\\
	&-\sum_{1\leq i_1,\cdots,i_m\leq d} y_{i_1}\cdots y_{i_m}\frac{\partial^m}{\partial z_{i_1}\cdots\partial z_{i_m}}f_l(z)\Bigg|\\
	\leq &2C(m)\varepsilon^{-m}\|y\|_2^{m},
	\end{align*}
	where $v=z+t_2y$ for some $0\leq t_2\leq 1$. Thus, for any $\delta>0$,
	\begin{align*}
	&\left|f_l(z+y)-f_l(y)-\sum_{k=1}^{\lceil 2+2\delta\rceil-1}\sum_{1\leq i_1,\cdots,i_k\leq d} y_{i_1}\cdots y_{i_k}\frac{\partial^k}{\partial z_{i_1}\cdots\partial z_{i_k}}f_l(z)|\right|\\
	\leq &C(\delta)\max\{\varepsilon^{-{\lceil 2+2\delta\rceil}+1}\|y\|_2^{\lceil 2+2\delta\rceil-1}, \varepsilon^{-{\lceil 2+2\delta\rceil}}\|y\|_2^{\lceil 2+2\delta\rceil}\}\\
	\leq &C(\delta)\varepsilon^{-2-2\delta}\|y\|_2^{2+2\delta}.
	\end{align*}
	Let $\tilde{w}_{nk}$, $1\leq k\leq n$ be i.i.d. standard Gaussian random vectors that are independent of $\mathcal{G}_{n,n+1}$, $w_{nk}=(b_{nk})^{\frac{1}{2}}\tilde{w}_{nk}$, for $k=1,\cdots,n+1$, where $b_{nk}=\mathbb{E}(m_{nk}m_{nk}^\top|\mathcal{G}_{n,k-1})$. Define
	$$
	W_{n+2}^n=0,\quad W_k^n=\sum_{i=k}^{n+1}w_{ni}, \quad 1\leq k\leq n+1.
	$$
	Then $W_1^n$ follows standard Gaussian distribution. Let $U_k^n=M_{k-1}^n+W_{k+1}^n$, then
	\begin{equation*}
    \begin{split}
	&\left|\mathbb{E}(f_l(M_{n+1}^n)-f_l(Z))\right|\\
	=&\left|\mathbb{E}(f_l(M_{n+1}^n)-f_l(W_1^n))\right|\\
	=&\left|\sum_{k=1}^{n+1} \mathbb{E}(f_l(U_{k}^n+m_{nk})-f_l(U_k^n+w_{nk}))\right|\\
	\leq &\sum_{k=1}^{n+1}\Bigg|\mathbb{E}(f_l(U_k^n+m_{nk})-f_l(U_k^n)\\
	&-\sum_{j=1}^{\lceil 2+2\delta\rceil-1}\sum_{1\leq i_1,\cdots,i_j\leq d} (m_{nk})_{i_1}\cdots (m_{nk})_{i_j}\frac{\partial^j}{\partial z_{i_1}\cdots\partial z_{i_j}}f_l(U_k^n))\Bigg|\\
	+&\sum_{k=1}^{n+1}\Bigg|\mathbb{E}(f_l(U_{k}^n+w_{nk})-f_l(U_k^n)\\
	&-\sum_{j=1}^{\lceil 2+2\delta\rceil-1}\sum_{1\leq i_1,\cdots,i_j\leq d} (w_{nk})_{i_1}\cdots (w_{nk})_{i_j}\frac{\partial^j}{\partial z_{i_1}\cdots\partial z_{i_j)}}f_l(U_k^n))\Bigg|\\
	\leq &\sum_{k=1}^{n+1}C(\delta)\varepsilon^{-2-2\delta}\mathbb{E}(\|m_{nk}\|_2^{2+2\delta}).
    \end{split}
	\end{equation*}
	Generally this inequality holds for $\delta\in (0,\frac{1}{2}]$, since $w_{nk}$ and $m_{nk}$ have the same second order moments, which justifies the fourth line. 
	By the proof of Lemma 4 in \cite{grama2006asymptotic}, 
	$$
	\sum_{k=1}^{n+1}\mathbb{E}(\|m_{nk}\|_2^{2+2\delta})\leq C(d,\delta)(L_\delta^{n,d}+N_\delta^{n,d}),
	$$
	thus 
	\begin{equation}\label{ErrorMZ}
	    \left|\mathbb{E}\left(f_l(M^n_{n+1})-f_l(z)\right)\right|\leq C(d,\delta)\varepsilon^{-2-2\delta}R_{\delta}^{n,d}.
	\end{equation}
	Now we only need to bound $\mathbb{P}\left(\|Z+\mu\|_2\in [r-2\varepsilon,r+2\varepsilon]\right)$ and $\mathbb{P}\left(\|Z+\mu\|_2\in [0,3\varepsilon)\right)$. Assume $\varepsilon\leq 1$, then
	$$
	\mathbb{P}\left(\|Z+\mu\|_2\in [0,3\varepsilon)\right)= \mathbb{P}\left(Z\in \mathbb{B}_{3\varepsilon}(-\mu)\right)\leq C(d)\varepsilon^d\leq C(d)\varepsilon.
	$$
	Meanwhile,
	\begin{equation*}
    \begin{split}
	    &\mathbb{P}(\|Z+\mu\|_2\in [r-2\varepsilon,r+2\varepsilon])\\
	    = &\mathbb{P}(Z\in \mathbb{B}_{r+2\varepsilon}(-\mu)\backslash \mathbb{B}_{r-2\varepsilon}(-\mu))\\
	    \leq &\begin{cases} C(d)\left((r+2\varepsilon)^d-(r-2\varepsilon)^d\right), & r\leq 2\varepsilon+\|\mu\|\\
	    C(d)\exp\{-(r-2\varepsilon-\|\mu\|_2)^2/2\}\left((r+2\varepsilon)^d-(r-2\varepsilon)^d\right), & r>2\varepsilon+\|\mu\|_2
	    \end{cases}\\
	    \leq &C(d,\|\mu\|_2)\varepsilon.
        \end{split}
	\end{equation*}
	The last line is due to that 
	\begin{equation*}
	    \begin{split}
	        (r+2\varepsilon)^d-(r-2\varepsilon)^d\leq &4\varepsilon d(r+2\varepsilon)^{d-1}\leq 4d\varepsilon(4+\|\mu\|_2)^{d-1},
	    \end{split}
	\end{equation*}
	when $r\leq 2\varepsilon+\|\mu\|_2$, and 
	\begin{equation*}
	    \begin{split}
	        &\exp\{-(r-2\varepsilon-\|\mu\|_2)^2/2\}\left((r+2\varepsilon)^d-(r-2\varepsilon)^d\right)\\
	        \leq &4\varepsilon d\sup_{x>0}(x+4\varepsilon+\|\mu\|_2)^{d-1}\exp\{-x^2/2\}\\
	        \leq &4d\sup_{x>0}(x+4+\|\mu\|_2)^{d-1}\exp\{-x^2/2\} \varepsilon.
	    \end{split}
	\end{equation*}
	Here clearly $C(d,\|\mu\|_2)$ is non-decreasing with respect to $\|\mu\|_2$.
	Therefore, by (\ref{prob_diff_gen}), (\ref{ErrorXM}) and (\ref{ErrorMZ}), when $R_{\delta}^{n,d}\leq 1$, for any $\mu\in \mathbb{R}^d$, $r\geq 0$, $0<\delta\leq \frac{1}{2}$, with $\varepsilon=(R_{\delta}^{n,d})^{\frac{1}{3+2\delta}}$, 
	$$
	\mathbb{P}(\|X_n^n+\mu\|_2\geq r)-\mathbb{P}(\|Z+\mu\|_2\geq r)\leq C(d,\delta,\|\mu\|_2)\left(R_\delta^{n,d}\right)^{\frac{1}{3+2\delta}},
	$$
	where $C(d,\delta,\|\mu\|_2)$ is non-decreasing with respect to $\|\mu\|_2$. \\
\end{proof}

\begin{proof}[Proof of Lemma \ref{subGaussnorm}]
First we introduce the following two norms:\\
For any random variable $X$,
\begin{equation*}
\begin{split}
    \|X\|_{\psi_1}=&\sup_{p\geq 1}p^{-1}\mathbb{E}\left(|X|^p\right)^{\frac{1}{p}},\\
\|X\|_{\psi_2}=&\sup_{p\geq 1}p^{-\frac{1}{2}}\mathbb{E}\left(|X|^p\right)^{\frac{1}{p}}. 
\end{split}
\end{equation*}
These two norms are related to sub-exponential and sub-Gaussian random variables, and the following lemma shows the connections between the two norms and the scale factor for sub-Gaussian r.v.
\begin{Lemma}\label{psinorm}
For any sub-Gaussian r.v. $X$ with scale factor $\tau$, the following hold:
$$
c\tau\leq \|X\|_{\psi_2}\leq C\tau,
$$
with some absolute constants $c, C$, and 
$$
\|X\|_{\psi_2}^2\leq \|X\|_{\psi_1}\leq 2\|X\|_{\psi_2}^2.
$$
\end{Lemma}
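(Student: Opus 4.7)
The plan is to prove the two claims separately. For the second inequality $\|X\|_{\psi_2}^2 \le \|X\|_{\psi_1} \le 2\|X\|_{\psi_2}^2$, I would interpret the middle quantity as $\|X^2\|_{\psi_1}$ in order to match how the bound is actually used in Appendix~C (the step $\|\epsilon_{t,1}^2\|_{\psi_1}\le 2\|\epsilon_{t,1}\|_{\psi_2}^2$), since as written the inequality fails even for a standard Gaussian. Starting from the definitions and the substitution $q=2p$,
\[
\|X^2\|_{\psi_1} \;=\; \sup_{p\ge 1} p^{-1}(\mathbb{E}|X|^{2p})^{1/p} \;=\; 2\sup_{q\ge 2}\left(q^{-1/2}(\mathbb{E}|X|^q)^{1/q}\right)^{2},
\]
and the right-hand side is at most $2\|X\|_{\psi_2}^2$ by enlarging the supremum to $q\ge 1$. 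For the reverse direction I need to show that dropping the constraint $q\ge 2$ costs at most a factor of $2$ in $\sup_{q}q^{-1}(\mathbb{E}|X|^q)^{2/q}$; Lyapunov's inequality gives $(\mathbb{E}|X|^p)^{1/p}\le (\mathbb{E}|X|^2)^{1/2}$ for $p\in[1,2]$, so $p^{-1}(\mathbb{E}|X|^p)^{2/p} \le \mathbb{E}|X|^2$, and $\mathbb{E}|X|^2$ is $2$ times the $q=2$ entry of the sup on the right, which controls the claim.

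For the upper bound $\|X\|_{\psi_2}\le C\tau$, I would use the standard Chernoff argument to pass from the hypothesized MGF bound $\mathbb{E}[e^{\lambda X}]\le e^{\tau^2\lambda^2/2}$ to the tail bound $\mathbb{P}(|X|\ge u)\le 2\exp(-u^2/(2\tau^2))$. Then integration by parts yields
\[
\mathbb{E}|X|^p \;=\; \int_0^\infty p u^{p-1}\mathbb{P}(|X|\ge u)\,du \;\le\; p\,\tau^p\, 2^{p/2}\,\Gamma(p/2)
\]
after the substitution $u=\tau\sqrt{2s}$, and Stirling's formula gives $\bigl(p\,\Gamma(p/2)\bigr)^{1/p}\le C\sqrt{p}$ uniformly in $p\ge 1$. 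Hence $p^{-1/2}(\mathbb{E}|X|^p)^{1/p}\le C\tau$, and taking the supremum yields $\|X\|_{\psi_2}\le C\tau$.

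The lower bound $c\tau\le \|X\|_{\psi_2}$ is the most delicate step: it is only meaningful if $\tau$ is read as the smallest valid scale factor, and its content is that whenever $K\triangleq \|X\|_{\psi_2}$ is finite, $X$ is sub-Gaussian with some scale factor $\le CK$, so the optimal $\tau$ satisfies $\tau\le CK$. I would expand the MGF of the centered variable $X$ in Taylor series,
\[
\mathbb{E}[e^{\lambda X}] \;=\; 1 + \sum_{n\ge 2}\frac{\lambda^n\,\mathbb{E} X^n}{n!},
\]
apply $\mathbb{E}|X|^n\le K^n n^{n/2}$ from the $\psi_2$ definition, use $n!\ge (n/e)^n$, and recognize the resulting series as dominated by $\exp(cK^2\lambda^2)$ for a suitable absolute constant $c$. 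Comparing with the definition of the scale factor then gives $\tau\le \sqrt{2c}\,K$. The main obstacle in the proof is the combinatorial bookkeeping inside this last series: the odd-order terms can have either sign and must be absorbed without losing the Gaussian-in-$\lambda$ shape. The cleanest way is to apply Cauchy--Schwarz termwise so as to bound every odd term against even ones, or equivalently to keep only even $n$ in the series (bounding the odd contribution via $\mathbb{E}|X|^{2n+1}\le (\mathbb{E}|X|^{2n})^{1/2}(\mathbb{E}|X|^{2n+2})^{1/2}$); the remaining even-indexed series admits a direct closed-form bound by $\exp(cK^2\lambda^2/2)$.
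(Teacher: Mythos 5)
The paper does not actually prove this lemma; it simply declares it ``an established result in \cite{vershynin2010introduction}'' and moves on, so there is no in-paper argument to compare your proof against. Your reconstruction is correct and is essentially the argument in Vershynin's notes: Chernoff plus moment computation for $\|X\|_{\psi_2}\le C\tau$, Taylor expansion of the MGF with moment bounds $\mathbb{E}|X|^n\le K^n n^{n/2}$ and $n!\ge (n/e)^n$ for $c\tau\le\|X\|_{\psi_2}$, and the substitution $q=2p$ with a factor-of-two adjustment on the tail range $q\in[1,2)$ for the $\psi_1$--$\psi_2$ comparison.

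You also correctly identify a genuine typo in the lemma's statement: as written, $\|X\|_{\psi_2}^2\le\|X\|_{\psi_1}\le 2\|X\|_{\psi_2}^2$ cannot hold with absolute constants because the two sides scale differently under $X\mapsto cX$ (quadratically vs.\ linearly), so the homogeneity is already wrong before one checks any particular distribution. The middle quantity must be $\|X^2\|_{\psi_1}$, and that is exactly how the lemma is invoked downstream, e.g.\ $\|\epsilon_{t,1}^2\|_{\psi_1}\le 2\|\epsilon_{t,1}\|_{\psi_2}^2$ in the proof of Lemma~\ref{DevBnd} and $\|\,\|\epsilon_t\|_2\,\|_{\psi_2}^2\le\|\,\|\epsilon_t\|_2^2\,\|_{\psi_1}$ in the proof of Lemma~\ref{subGaussnorm}. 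Your proof of the corrected inequality $\|X\|_{\psi_2}^2\le\|X^2\|_{\psi_1}\le 2\|X\|_{\psi_2}^2$ is sound: the substitution identity gives $\|X^2\|_{\psi_1}=2\sup_{q\ge2}q^{-1}(\mathbb{E}|X|^q)^{2/q}$, the upper bound follows by enlarging the index set to $q\ge 1$, and the lower bound follows because Lyapunov bounds the supremand on $[1,2]$ by $\mathbb{E}|X|^2$, which is exactly twice the $q=2$ value. One minor caution on the lower bound $c\tau\le\|X\|_{\psi_2}$: as you note it is only meaningful if $\tau$ is taken to be the smallest valid scale factor, and the definition in the paper requires $X$ centered, so $\mathbb{E}X=0$ kills the $n=1$ term in your MGF expansion; your proposed handling of the remaining odd terms by Cauchy--Schwarz against adjacent even terms is the right way to close the argument.
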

This is an established result in \cite{vershynin2010introduction}. By Lemma \ref{psinorm}, bounding $\left\|\left\|W_m^*\mathcal{X}_t\right\|_2^2\right\|_{\psi_1}$ would be sufficient, and we start from bounding $\mathbb{E}\left(\exp\left\{\lambda \left(W_m^*\right)_{i\cdot}\mathcal{X}_t\right\}\right)$ for any $\lambda\in \mathbb{R}$. Recall that $\mathcal{X}_t=\Psi_j^{(p)}\varepsilon_{t-j-1}$, with $\Psi_j^{(p)}$ defined as in \eqref{PsipDef}, we can write
\begin{equation*}
\begin{split}
    &(W_m^*)_{i\cdot}\mathcal{X}_t=(W_m^*)_{i\cdot}\sum_{k=0}^{\infty}\Psi_k^{(p)} \epsilon_{t-k-1}=\lim_  {N\rightarrow \infty}\sum_{k=0}^N(W_m^*)_{i\cdot}\Psi_k^{(p)} \epsilon_{t-k-1},\\
    &\exp\left\{\lambda \left(W_m^*\right)_{i\cdot}\mathcal{X}_t\right\}=\lim_{N\rightarrow \infty}\exp\left\{\lambda \sum_{k=0}^N(W_m^*)_{i\cdot}\Psi_k^{(p)} \epsilon_{t-k-1}\right\},
\end{split}
\end{equation*}
and 
\begin{equation*}
    \exp\left\{\lambda \sum_{k=0}^N\left(W_m^*\right)_{i\cdot}\Psi_k^{(p)} \epsilon_{t-k-1}\right\}\leq \exp\left\{|\lambda| \sum_{k=0}^{\infty}\left\|(W_m^*)_{i\cdot}\right\|_2\tilde{\alpha}_k \left\|\epsilon_{t-k-1}\right\|_2\right\},
\end{equation*}
where $\tilde{\alpha}_k$ is defined as $\left\|\Psi_k^{(p)}\right\|_2$. The relationship between $\tilde{\alpha}_k$ and $\alpha_k=\left\|\Psi_k\right\|_2$ can be established as follows:
\begin{equation}\label{PsipNorm}
            \begin{split}
                \tilde{\alpha}_k=\sup_{\|u\|_2=1}\left\|\Psi_k^{(p)}u\right\|_2=&\sup_{\|u\|_2=1}\left(\sum_{n=0}^{(p-1)\wedge j}\left\|\Psi_{k-n}u\right\|_2^2\right)^{\frac{1}{2}}\leq \left(\sum_{n=0}^{p-1}\alpha_{k-n}^2\right)^{\frac{1}{2}},
            \end{split}
        \end{equation}
        if we define $\alpha_i=0$ when $i<0$.
We now prove that $\exp\left\{|\lambda| \sum_{k=0}^{\infty}\left\|(W_m^*)_{i\cdot}\right\|_2\tilde{\alpha}_k \left\|\epsilon_{t-k}\right\|_2\right\}$ is integrable so that we can use Dominated Convergence Theorem. Since $\epsilon_{ti}$'s are all independent sub-Gaussian random variables with parameter $\tau$,
\begin{equation}\label{enormNorm}
    \begin{split}
        \left\|\left\|\epsilon_t\right\|_2\right\|_{\psi_2}\leq  \left\|\|\epsilon_t\|_2^2\right\|_{\psi_1}^{\frac{1}{2}}\leq \left(M\|\epsilon_{ti}^2\|_{\psi_1}\right)^{\frac{1}{2}}\leq C\sqrt{M}\tau,
    \end{split}
\end{equation}
where the second inequality is due to Minkowski's inequality. Thus,
\begin{equation*}
    \begin{split}
        &\mathbb{E}\left(\exp\left\{|\lambda| \sum_{k=0}^{\infty}\left\|(W_m^*)_{i\cdot}\right\|_2\tilde{\alpha}_k \left\|\epsilon_{t-k}\right\|_2\right\}\right)\\
        =&\lim_{N\rightarrow \infty}\mathbb{E}\left(\exp\left\{|\lambda| \sum_{k=0}^{N}\left\|(W_m^*)_{i\cdot}\right\|_2\tilde{\alpha}_k \left\|\epsilon_{t-k}\right\|_2\right\}\right)\\
        \leq&\lim_{N\rightarrow \infty}\exp\left\{CM\lambda^2\left\|(W_m^*)_{i\cdot}\right\|_2^2\sum_{k=0}^N\tilde{\alpha}_k^2\right\}\leq\exp\left\{CM\lambda^2\right\},
    \end{split}
\end{equation*}
where the first equality is due to Monotone Convergence Theorem, and the last line is due to (\ref{Wl2Bnd}) and the fact that
\begin{equation*}
    \begin{split}
      \sum_{k=0}^N\tilde{\alpha}_k^2  \leq \sum_{k=0}^N\sum_{n=0}^{p-1}\alpha_{k-n}^2\leq p\sum_{k=0}^N\alpha_k^2\leq \beta^2.
    \end{split}
\end{equation*} 
Therefore, by Dominated Convergence Theorem, 
\begin{equation*}
    \begin{split}
        &\mathbb{E}\left(\exp\left\{\lambda \left(W_m^*\right)_{i\cdot}\mathcal{X}_t\right\}\right)\\
        =&\lim_{N\rightarrow \infty}\mathbb{E}\left(\exp\left\{\lambda \sum_{k=0}^N(W_m^*)_{i\cdot}\Psi_k^{(p)} \epsilon_{t-k}\right\}\right)\\
        \leq &\exp\left\{C\lambda^2 \|(W_m^*)_{i\cdot}\|_2^2\sum_{k=0}^{\infty}\tilde{\alpha}_k^2\right\}\\
        =&\exp\left\{C\lambda^2\right\}.
    \end{split}
\end{equation*}
By Lemma \ref{psinorm}, $\left\|\left(W_m^*\right)_{i\cdot}\mathcal{X}_t\right\|_{\psi_2}\leq C$, and 
\begin{equation*}
    \begin{split}
        \left\|\left\|W_m^*\mathcal{X}_t\right\|_2^2\right\|_{\psi_1}\leq \sum_{i=1}^{d_m} \left\|\left(\left(W_m^*\right)_{i\cdot}\mathcal{X}_t\right)^2\right\|_{\psi_1}\leq 2\sum_{i=1}^{d_m}\left\|\left(W_m^*\right)_{i\cdot}\mathcal{X}_t\right\|_{\psi_2}^2\leq C.
    \end{split}
\end{equation*}
Thus 
\begin{equation*}
    \mathbb{E}\left(\left\|W_m^*\mathcal{X}_t\right\|_2^p\right)^{\frac{1}{p}}\leq C\sqrt{p}.
\end{equation*}
\end{proof}
\begin{proof}[Proof of Lemma \ref{QuaBound}]
Recall that $\mathcal{X}_t=\sum_{j=0}^{\infty}\Psi_j^{(p)}\epsilon_{t-j-1}$, where $\Psi_j^{(p)}$ is defined in \eqref{PsipDef}. Similar from the proof of Lemma \ref{DevBnd}, for any positive integer $m$, we can write down $\frac{1}{T}\sum_{t=0}^{T-1}\mathcal{X}_t^\top B\mathcal{X}_t$ as the following:
\begin{equation*}
    \begin{split}
        \frac{1}{T}\sum_{t=0}^{T-1}\mathcal{X}_t^\top B\mathcal{X}_t=& \frac{1}{T}\sum_{t=0}^{T-1}\left(\sum_{j=0}^{\infty}\Psi_j^{(p)}\epsilon_{t-j-1}\right)^\top B\left(\sum_{j=0}^{\infty}\Psi_j^{(p)}\epsilon_{t-j-1}\right)\\
        =&\frac{1}{T}\sum_{t=0}^{T-1}\left(\sum_{j=0}^{t+m-1}\Psi_j^{(p)}\epsilon_{t-j-1}\right)^\top B\left(\sum_{j=0}^{t+m-1}\Psi_j^{(p)}\epsilon_{t-j-1}\right)\\
        &+\frac{1}{T}\sum_{t=0}^{T-1}\left(\sum_{j=t+m}^{\infty}\Psi_j^{(p)}\epsilon_{t-j-1}\right)^\top B\left(\sum_{j=t+m}^{\infty}\Psi_j^{(p)}\epsilon_{t-j-1}\right)\\
        &+\frac{2}{T}\sum_{t=0}^{T-1}\left(\sum_{j=0}^{t+m-1}\Psi_j^{(p)}\epsilon_{t-j-1}\right)^\top B\left(\sum_{j=t+m}^{\infty}\Psi_j^{(p)}\epsilon_{t-j-1}\right)\\
        \triangleq &E_1+E_2+E_3.
    \end{split}
\end{equation*}
Then we can bound each $E_i$ from its expectation separately, and $m$ will be chosen to be sufficiently large later.
\begin{itemize}[leftmargin = *]
    \item[(1)] Bounding $E_1-\mathbb{E}(E_1)$\\
    Let $\Theta^{(t)}\in \mathbb{R}^{pM\times (T+m)M}$ and $\tilde{\epsilon}\in \mathbb{R}^{(T+m)M}$ be defined as
    $$
    \Theta^{(t)}=\begin{pmatrix}\Psi_{t+m-1}^{(p)}&\cdots&\Psi_0^{(p)}&0&\cdots&0\end{pmatrix},
    $$
    $$
    \tilde{\epsilon}=\begin{pmatrix}
    \epsilon_{-m}^\top &\cdots &\epsilon_{T-1}^\top
    \end{pmatrix}^\top.
    $$
    Then $E_1=\tilde{\epsilon}^\top \left(\frac{1}{T}\sum_{t=0}^{T-1}\Theta^{(t)\top}B\Theta^{(t)}\right)\tilde{\epsilon}$, and by Lemma \ref{HansonWright} we only need to bound the operator norm and Frobenius norm of $\frac{1}{T}\sum_{t=0}^{T-1}\Theta^{(t)\top}B\Theta^{(t)}$.
    \begin{itemize}[leftmargin = *]
        \item[i.] Bounding $\left\|\frac{1}{T}\sum_{t=0}^{T-1}\Theta^{(t)\top}B\Theta^{(t)}\right\|_2$\\
        For any unit vector $u,v\in \mathbb{R}^{(t+m)M}$, 
        \begin{equation*}
        \begin{split}
            u^\top \frac{1}{T}\sum_{t=0}^{T-1}\Theta^{(t)\top}B\Theta^{(t)}v=&\frac{1}{T}\sum_{t=0}^{T-1}\sum_{i,j=1}^{t+m} u^{(i)\top}\Psi_{t+m-1}^{(p)\top} B\Psi_{t+m-j}^{(p)}v^{(j)}\\
            =&\frac{1}{T}\sum_{i,j=1}^{T+m-1}u^{(i)\top}\left[\sum_{t=(i\vee j-m)\vee 0}^{T-1}\Psi_{t+m-1}^{(p)\top}B\Psi_{t+m-j}^{(p)}\right]v^{(j)}\\
            \leq &\frac{1}{T}\sum_{i,j=1}^{T+m-1}\|u^{(i)}\|_2\|v^{(j)}\|_2\|B\|_2\sum_{l=0}^{\infty}\left\|\Psi_{|i-j|+l}^{(p)}\right\|_2\left\|\Psi_{l}^{(p)}\right\|_2,
        \end{split}
        \end{equation*}
        where $u^{(i)}=(u_{(i-1)M+1},\dots,u_{iM})$. Let $\tilde{\alpha}_i=\left\|\Psi_i^{(p)}\right\|_2$, and $\Gamma\in \mathbb{R}^{(t+m)\times (t+m)}$ be defined as $\Gamma_{ij}=\sum_{k=0}^\infty \tilde{\alpha}_{|i-j|+k}\tilde{\alpha}_{k}$, then 
        \begin{equation*}
        \begin{split}
            u^\top \frac{1}{T}\sum_{t=0}^{T-1}\Theta^{(t)\top}B\Theta^{(t)}v\leq 
            &\frac{\|B\|_2}{T}(\|u^{(1)}\|_2,\dots,\|u^{(t+m)}\|_2)\Gamma \begin{pmatrix}
            \|v^{(1)}\|_2\\
            \vdots\\
            \|v^{(t+m)}\|_2
            \end{pmatrix}\\
            \leq & \frac{\|B\|_2\Lambda_{\max}(\Gamma)}{T}.
        \end{split}
        \end{equation*}
        Thus we only need to bound $\Lambda_{\max}(\Gamma)$. Applying Lemma \ref{Toeplitz}, the largest eigenvalue of Toeplitz matrix $\Gamma$ can be bounded by
        \begin{equation*}
            \begin{split}
                \Lambda_{\max}(\Gamma)\leq &\text{ess}\sup_{\lambda}\left|\sum_{l=-\infty}^{\infty}\sum_{j=0}^{\infty}\tilde{\alpha}_{|l|+j}\tilde{\alpha}_{j}e^{il\lambda}\right|\\
                \leq &\left|\sum_{l=-\infty}^{\infty}\sum_{j=0}^{\infty}\tilde{\alpha}_{|l|+j}\tilde{\alpha}_{j}\right|\\
                \leq &2\sum_{l=0}^{\infty}\left(\sum_{j=0}^{\infty}\tilde{\alpha}_{l+j}^2\right)^{\frac{1}{2}}\left(\sum_{j=0}^{\infty}\tilde{\alpha}_{j}^2\right)^{\frac{1}{2}}.
                \end{split}
        \end{equation*}
        where the third inequality is due to Cauchey-Schwartz inequality. Due to \eqref{PsipNorm}, we can further obtain
        \begin{equation*}
            \begin{split}
                \Lambda_{\max}(\Gamma)\leq &2\sum_{l=0}^{\infty}\left(\sum_{j=0}^{\infty}\sum_{n=0}^{p-1}\alpha_{l+j-n}^2\right)^{\frac{1}{2}}\left(\sum_{j=0}^{\infty}\sum_{n=0}^{p-1}\alpha_{j-n}^2\right)^{\frac{1}{2}}\\
                \leq &2p\left(\sum_{i=0}^{\infty}\alpha_{1-p+i}^2\right)^{\frac{1}{2}}\sum_{l=0}^{\infty}\left(\sum_{i=0}^{\infty}\alpha_{l+1-p+i}^2\right)^{\frac{1}{2}}\leq C(\beta).
            \end{split}
        \end{equation*}
         and we define $\alpha_i=0$ when $i<0$ for convenience. Therefore, 
        $$
        \left\|\frac{1}{T}\sum_{t=0}^{T-1}\Theta^{(t)\top}B\Theta^{(t)}\right\|_2\leq \frac{C\|B\|_2}{T}.
        $$
        \item[ii.] Bounding $\left\|\frac{1}{T}\sum_{t=0}^{T-1}\Theta^{(t)\top}B\Theta^{(t)}\right\|_{F}^2$\\
        First note that
        \begin{equation*}
            \begin{split}
                \left\|\frac{1}{T}\sum_{t=0}^{T-1}\Theta^{(t)\top}B\Theta^{(t)}\right\|_{F}^2\leq \frac{1}{T^2}\sum_{s,t=0}^{T-1}\left|\text{tr}\left(\Theta^{(s)\top}B\Theta^{(s)}\Theta^{(t)\top}B\Theta^{(t)}\right)\right|,
            \end{split}
        \end{equation*}
        and if we write $B=P^\top \Lambda P$ with orthogonal $P$ and diagonal $\Lambda$ (since $B$ is symmetric),
        \begin{equation*}
            \begin{split}
                &\left|\text{tr}\left(\Theta^{(s)\top}B\Theta^{(s)}\Theta^{(t)\top}B\Theta^{(t)}\right)\right|\\
                =&\left|\text{tr}\left(P\Theta^{(s)}\Theta^{(t)\top}B\Theta^{(t)}\Theta^{(s)\top}P^\top \Lambda\right)\right|\\
                \leq &\|B\|_{\tr}\left\|\Theta^{(s)}\Theta^{(t)\top}B\Theta^{(t)}\Theta^{(s)\top}\right\|_2\\
                \leq &\|B\|_{\tr}\|B\|_2\left\|\Theta^{(s)}\Theta^{(t)\top}\right\|_2^2.
            \end{split}
        \end{equation*}
        Meanwhile, due to that $\tilde{\alpha}_i=\left\|\Psi_i^{(p)}\right\|_2$ and \eqref{PsipNorm},
        \begin{equation*}
            \begin{split}
                \sum_{s,t=0}^{T-1}\left\|\Theta^{(s)}\Theta^{(t)\top}\right\|_2^2=&\sum_{s,t=0}^{T-1}\left\|\sum_{i=1}^{t\wedge s +m}\Psi_{t+m-i}^{(p)}\Psi_{s+m-i}^{(p)}\right\|_2^2\\
                \leq &\sum_{s,t=0}^{T-1}\left(\sum_{i=1}^{t\wedge s +m}\tilde{\alpha}_{t+m-i}\tilde{\alpha}_{s+m-i}\right)^2\\
                =&\sum_{s,t=0}^{T-1}\left(\sum_{i=0}^{(t\wedge s)+m-1}\tilde{\alpha}_i\tilde{\alpha}_{|t-s|+i}\right)^2\\
                \leq &\sum_{s,t=0}^{T-1}\left(p\sum_{i=0}^{\infty}\alpha_i^2\right)\left(p\sum_{i=1-p}^{\infty}\alpha_{|t-s|+i}^2\right).
            \end{split}
        \end{equation*}
        Note that $\sum_{i=0}^{\infty}\left(\sum_{j=0}^{\infty} \alpha_{i+j}^2\right)^{\frac{1}{2}}\leq \beta$,
        \begin{equation*}
            \begin{split}
                 \sum_{s,t=0}^{T-1}\left\|\Theta^{(s)}\Theta^{(t)\top}\right\|_2^2\leq &Cp^2\sum_{s,t=0}^{T-1}\left(\sum_{i=1-p}^{\infty}\alpha_{|t-s|+i}^2\right)\\
                \leq &Cp^2\sum_{l=0}^{T-1}2(T-l)\left(\sum_{i=1-p}^{\infty}\alpha_{l+i}^2\right)\\
                \leq &CT\sum_{l=0}^{\infty} \left(\sum_{i=0}^{\infty} \alpha_{l+i}^2\right)\\
                \leq &CT\left(\sum_{l=0}^{\infty} \left(\sum_{i=0}^{\infty} \alpha_{l+i}^2\right)^{\frac{1}{2}}\right)^2\leq CT,
            \end{split}
        \end{equation*}
        where the fourth line is due to Cauchey-Schwartz inequality. Therefore,
        \begin{equation*}
            \left\|\frac{1}{T}\sum_{t=0}^{T-1}\Theta^{(t)\top}B\Theta^{(t)}\right\|_{F}^2\leq \frac{C\|B\|_2\|B\|_{\tr}}{T}.
        \end{equation*}
    \end{itemize}
    Now we apply Lemma \ref{HansonWright}, and arrive at
    \begin{equation*}
        \mathbb{P}\left(\left|E_1-\mathbb{E}(E_1)\right|>\delta\right)\leq 2\exp\left\{-cT\min\left\{\frac{\delta}{\|B\|_2},\frac{\delta^2}{\|B\|_2\|B\|_{\tr}}\right\}\right\}.
    \end{equation*}
    \item[(2)] Bounding $E_2-\mathbb{E}(E_2)$\\
    We will show that $\left|E_2-\mathbb{E}(E_2)\right|$ vanishes when $m$ is large enough. First we bound $\|E_2\|_{\psi_1}$. Since
    \begin{equation*}
        \begin{split}
            \left|E_2\right|\leq \frac{1}{T}\sum_{t=0}^{T-1}\|B\|_2\left(\sum_{j=t+m}^\infty \tilde{\alpha}_j\|\epsilon_{t-j-1}\|_2\right)^2,
        \end{split}
    \end{equation*}
    by \eqref{PsipNorm} and \eqref{enormNorm},
    \begin{equation*}
        \begin{split}
            \|E_2\|_{\psi_1}\leq &\frac{2}{T}\sum_{t=0}^{T-1}\|B\|_2\left(\sum_{j=t+m}^\infty \tilde{\alpha}_j\left\|\left\|\epsilon_{t-j-1}\right\|_2\right\|_{\psi_2}\right)^2\\
            \leq &\frac{CM\|B\|_2}{T}\sum_{t=0}^{T-1}\left(\sum_{j=t+m}^\infty \tilde{\alpha}_j\right)^2\\
            \leq &CM\|B\|_2\left(\sum_{j=m}^\infty \tilde{\alpha}_j\right)^2\\
            \leq &CM\|B\|_2p^2\left(\sum_{j=m-p}^\infty \alpha_j\right)^2.
        \end{split}
    \end{equation*}
    Meanwhile,
    \begin{equation*}
        \begin{split}
            \left|\mathbb{E}(E_2)\right|=&\left|\frac{1}{T}\sum_{t=0}^{T-1}\text{tr}\left(B\sum_{j=t+m}^\infty \Psi_j^{(p)}\Psi_j^{(p)\top}\right)\right|\\
            \leq &\left|\frac{1}{T}\sum_{t=0}^{T-1}\|B\|_{\tr}\sum_{j=t+m}^\infty \tilde{\alpha}_j^2\right|\\
            \leq &p\|B\|_{\tr}\sum_{j=m-p}^\infty \alpha_j^2.
        \end{split}
    \end{equation*}
    For any $\delta>0$, let $m$ be sufficiently large such that $\sum_{j=m-p}^\infty \alpha_j^2<\frac{\delta}{2p\|B\|_{\tr}}$, $\|E_2\|_{\psi_1}\leq \frac{C\|B\|_2}{T}$, then by tail bound of sub-exponential random variable (see \cite{vershynin2010introduction}),
    \begin{equation*}
        \mathbb{P}\left(\left|E_2-\mathbb{E}(E_2)\right|>\delta\right)\leq C\exp\left\{-\frac{c\delta T}{\|B\|_2}\right\}.
    \end{equation*}
    \item[(3)] Bounding $E_3-\mathbb{E}(E_3)$\\
    One can show that 
    \begin{equation*}
        |E_3|\leq \frac{2\|B\|_2}{T}\sum_{t=0}^{T-1}\sum_{j=t+m}^\infty\tilde{\alpha}_j\|\epsilon_{t-j-1}\|_2\sum_{j=0}^\infty\tilde{\alpha}_j\|\epsilon_{t-j-1}\|_2,
    \end{equation*}
    and 
    $$
    \left\|\sum_{j=n}^\infty\tilde{\alpha}_j\|\epsilon_{t-j-1}\|_2\right\|_{\psi_2}\leq C\sqrt{M}\tau \sum_{j=n}^\infty\tilde{\alpha}_j\leq Cp\sqrt{M}\tau \sum_{j=n-p}^\infty \alpha_j.
    $$
    Thus 
    \begin{equation*}
    \begin{split}
        \|E_3\|_{\psi_1}\leq &\frac{4\|B\|_2}{T}\sum_{t=0}^{T-1}\left\|\sum_{j=t+m}^\infty\tilde{\alpha}_j\|\epsilon_{t-j-1}\|_2\right\|_{\psi_2}\left\|\sum_{j=0}^\infty\tilde{\alpha}_j\|\epsilon_{t-j-1}\|_2\right\|_{\psi_2}\\
        \leq &C\|B\|_2\sqrt{M}p\tau \left(\sum_{j=m-p}^\infty \alpha_j\right)\left(\sum_{j=0}^\infty \alpha_j\right)\\
        \leq &C\|B\|_2\sqrt{M}\sum_{j=m-p}^\infty \alpha_j.
    \end{split}
    \end{equation*}
    The first line is due to the following fact:
    For any two sub-Gaussian random variables $X$ and $Y$, $\left\|XY\right\|_{\psi_1}\leq 2\|X\|_{\psi_2}\|Y\|_{\psi_2}$. We can prove this in the following:
    \begin{equation*}
    \begin{split}
        \sup_{q\geq 1}q^{-1}\left(\mathbb{E}|XY|^q\right)^{\frac{1}{q}}\leq &\sup_{q\geq 1}q^{-1}\left(\mathbb{E}|X|^{2q}\right)^{\frac{1}{2q}}\left(\mathbb{E}|Y|^{2q}\right)^{\frac{1}{2q}}\\
        \leq &2\sup_{q\geq 1}q^{-\frac{1}{2}}\left(\mathbb{E}|X|^{q}\right)^{\frac{1}{q}}\sup_{q\geq 1}q^{-\frac{1}{2}}\left(\mathbb{E}|Y|^{q}\right)^{\frac{1}{q}}\\
        =&2\|X\|_{\psi_2}\|Y\|_{\psi_2},
    \end{split}
    \end{equation*}
    where the first line applies Cauchey-Schwartz inequality.
    Thus, with large enough $m$, $\|E_3\|_{\psi_1}\leq \frac{\|B\|_2}{T}$. Also, $\mathbb{E}(E_3)=0$, therefore implies the same bound for $E_3-\mathbb{E}(E_3)$ as the one for $E_2-\mathbb{E}(E_2)$:
    \begin{equation*}
        \mathbb{P}\left(\left|E_3-\mathbb{E}(E_3)\right|>\delta\right)\leq C\exp\left\{-\frac{c\delta T}{\|B\|_2}\right\}.
    \end{equation*}
\end{itemize}
In conclusion, for any $\delta>0$, if we choose some $m$ accordingly,
\begin{equation*}
    \begin{split}
        &\mathbb{P}\left(\left|\frac{1}{T}\sum_{t=0}^{T-1}\mathcal{X}_t^\top B\mathcal{X}_t-\text{tr}(B\Upsilon)\right|>\delta\right)\\
        \leq &\sum_{i=1}^3 \mathbb{P}\left(\left|E_i-\mathbb{E}(E_i)\right|>\frac{\delta}{3}\right)\\
        \leq &C\exp\left\{-cT\min\left\{\frac{\delta}{\|B\|_2},\frac{\delta^2}{\|B\|_2\|B\|_{\tr}}\right\}\right\}.
    \end{split}
\end{equation*}

\end{proof}

\begin{proof}[Proof of Lemma \ref{REC}]
     Here we apply some results in \cite{basu2015regularized} with a little change in notation. These results simplifies the original problem to finding a upper bound for $\left|v^\top (H-\Upsilon)v\right|$ with any fixed unit vector $v$. Specifically, the following lemmas are useful:
     \begin{Lemma}\label{cone_sparse}
     For any $J\subset \{1,\cdots,pM\}$, and $\kappa>0$,
     	$$
        \mathcal{C}(J,\kappa)\cap\{v\in \mathbb{R}^{pM}:\|v\|_2\leq 1\}\subset (\kappa+2)\text{cl}\left\{\text{conv}\left\{\mathcal{K}\left(\left|J\right|\right)\right\}\right\},
        $$
        where $\mathcal{K}(l)=\{v\in\mathbb{R}^{pM}: \|v\|_0\leq l, \|v\|_2\leq 1\}$ for any positive integer $l$.
     \end{Lemma}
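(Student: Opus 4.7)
The plan is to build the standard block–decomposition used in compressed sensing: I will partition the coordinates of $v$ into successive blocks of size $s=|J|$, the first being $J$ itself, and express $v$ as a sum of $s$-sparse vectors whose $\ell_2$ norms sum to at most $\kappa+2$. Rescaling by $\kappa+2$ will then realise $v$ as an element of $(\kappa+2)\,\mathrm{cl}\{\mathrm{conv}\{\mathcal{K}(s)\}\}$.

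Concretely, fix $v\in\mathcal{C}(J,\kappa)$ with $\|v\|_2\leq 1$ and set $s=|J|$, $J_0=J$. I order the indices of $J^c$ by the magnitudes $|v_i|$ in decreasing order and split them into consecutive blocks $J_1,J_2,\ldots$ of size $s$ (the last block possibly smaller). The key inequality is the monotonicity bound $\|v_{J_k}\|_2\leq \|v_{J_{k-1}}\|_1/\sqrt{s}$ for $k\geq 1$: every entry of $v_{J_k}$ has magnitude no larger than the smallest entry on $J_{k-1}$, itself at most $\|v_{J_{k-1}}\|_1/s$, so $\|v_{J_k}\|_2\leq \sqrt{s}\cdot\|v_{J_{k-1}}\|_1/s$. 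Summing in $k$ telescopes nicely,
\begin{equation*}
\sum_{k\geq 1}\|v_{J_k}\|_2\;\leq\;\frac{1}{\sqrt{s}}\sum_{k\geq 0}\|v_{J_k}\|_1\;=\;\frac{\|v\|_1}{\sqrt{s}}.
\end{equation*}

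Next I plug in the cone condition $\|v_{J^c}\|_1\leq \kappa\|v_J\|_1$ and Cauchy--Schwarz on $J$: $\|v\|_1\leq (1+\kappa)\|v_J\|_1\leq (1+\kappa)\sqrt{s}\,\|v_J\|_2\leq (1+\kappa)\sqrt{s}$, yielding $\sum_{k\geq 1}\|v_{J_k}\|_2\leq 1+\kappa$. Adding $\|v_{J_0}\|_2\leq \|v\|_2\leq 1$ gives the target bound $\Lambda\triangleq\sum_{k\geq 0}\|v_{J_k}\|_2\leq \kappa+2$. Since each non-zero block $v_{J_k}$ has $\|v_{J_k}\|_0\leq s$, the normalised vector $v_{J_k}/\|v_{J_k}\|_2$ lies in $\mathcal{K}(s)$, and $0\in\mathcal{K}(s)$. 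Writing
\begin{equation*}
\frac{v}{\kappa+2}\;=\;\sum_{k:\,v_{J_k}\neq 0}\frac{\|v_{J_k}\|_2}{\kappa+2}\cdot\frac{v_{J_k}}{\|v_{J_k}\|_2}\;+\;\Bigl(1-\tfrac{\Lambda}{\kappa+2}\Bigr)\cdot 0
\end{equation*}
exhibits $v/(\kappa+2)$ as a convex combination of points of $\mathcal{K}(s)$, proving the inclusion. The sum terminates after at most $\lceil pM/s\rceil$ blocks, so the closure on the right-hand side is only a formality here.

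No serious obstacle is anticipated; the proof is a clean instance of the Candès--Tao block-decomposition trick. The only step warranting care is the passage from the per-block bound to the convex-hull membership, where one must account for the slack $\kappa+2-\Lambda$ by exploiting $0\in\mathcal{K}(s)$ rather than trying to normalise the $\lambda_k$ directly.
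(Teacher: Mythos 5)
The paper does not prove this lemma itself; it cites it from Basu and Michailidis (2015), so there is no in-paper proof to compare against. Your block-decomposition approach is the standard and correct route, and your handling of the convex-hull step (padding with $0\in\mathcal{K}(s)$, noting the closure is cosmetic since the decomposition is finite) is fine. There is, however, a genuine flaw in the key inequality. You claim $\|v_{J_k}\|_2\leq \|v_{J_{k-1}}\|_1/\sqrt{s}$ for all $k\geq 1$, but the monotonicity argument only applies for $k\geq 2$: the blocks $J_1,J_2,\ldots$ are obtained by sorting the \emph{complement} $J^c$, so there is no rank relation between the entries on $J_0=J$ and those on $J_1$. Concretely, take $s=9$, $J=\{1,\dots,9\}$, $\kappa=1$, $v_1=v_{10}=0.1$ and all other coordinates zero; then $v\in\mathcal{C}(J,1)$, $\|v\|_2\leq 1$, $J_1=\{10\}$, and $\|v_{J_1}\|_2=0.1>0.1/3=\|v_{J_0}\|_1/\sqrt{s}$. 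The same example shows that your summed bound $\sum_{k\geq 1}\|v_{J_k}\|_2\leq \|v\|_1/\sqrt{s}$ also fails (it gives $0.1>0.2/3$), so this is not just a misplaced index in an intermediate statement.

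The fix is immediate and recovers your final constant unchanged. Use the monotonicity bound only for $k\geq 2$, which gives
\begin{equation*}
\sum_{k\geq 2}\|v_{J_k}\|_2 \;\leq\; \frac{1}{\sqrt{s}}\sum_{k\geq 1}\|v_{J_k}\|_1 \;=\; \frac{\|v_{J^c}\|_1}{\sqrt{s}} \;\leq\; \frac{\kappa\|v_J\|_1}{\sqrt{s}} \;\leq\; \kappa\|v_J\|_2 \;\leq\; \kappa,
\end{equation*}
and bound $\|v_{J_0}\|_2\leq 1$ and $\|v_{J_1}\|_2\leq \|v\|_2\leq 1$ trivially. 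This yields $\Lambda=\sum_{k\geq 0}\|v_{J_k}\|_2\leq \kappa+2$ as desired, and the rest of your argument goes through verbatim.
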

     
     \begin{Lemma}\label{conv_sparse}
     	$$
     	\sup_{v\in \text{cl}\{\text{conv}(\mathcal{K}(l))\}}\left|v^\top Dv\right|\leq 3\sup_{v\in \mathcal{K}(2l)}\left|v^\top Dv\right|.
     	$$
     \end{Lemma}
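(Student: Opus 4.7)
The plan is to exploit the bilinearity of $v^\top D v$, reduce to sparse vectors via a convex combination representation, and then use a polarization identity to bound cross terms by quadratic forms on vectors of sparsity at most $2l$. Let $M := \sup_{v \in \mathcal{K}(2l)} |v^\top D v|$, which we may assume is finite. Since $v \mapsto v^\top D v$ is continuous, it suffices to prove the bound for $v \in \text{conv}(\mathcal{K}(l))$ and then pass to the closure. Also, $v^\top D v = v^\top (D+D^\top)/2 \cdot v$, so we may replace $D$ by its symmetrization without changing either side of the inequality.

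First, I would fix $v \in \text{conv}(\mathcal{K}(l))$ and write $v = \sum_{i=1}^N \lambda_i v_i$ with $v_i \in \mathcal{K}(l)$, $\lambda_i \geq 0$, and $\sum_i \lambda_i = 1$. Expanding yields
\begin{equation*}
v^\top D v = \sum_{i,j=1}^N \lambda_i \lambda_j \, v_i^\top D v_j.
\end{equation*}
The core step is the polarization identity, which for symmetric $D$ reads
\begin{equation*}
v_i^\top D v_j = \tfrac{1}{2}\bigl[(v_i+v_j)^\top D (v_i+v_j) - v_i^\top D v_i - v_j^\top D v_j\bigr].
\end{equation*}
Since $v_i, v_j \in \mathcal{K}(l)$, we have $\|v_i+v_j\|_0 \le 2l$ and $\|v_i+v_j\|_2 \le 2$, so $(v_i+v_j)/2 \in \mathcal{K}(2l)$. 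Hence $|(v_i+v_j)^\top D (v_i+v_j)| \le 4M$, while $|v_i^\top D v_i|, |v_j^\top D v_j| \le M$ because $\mathcal{K}(l) \subset \mathcal{K}(2l)$. Combining these bounds gives $|v_i^\top D v_j| \le \tfrac{1}{2}(4M + M + M) = 3M$.

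Plugging this into the double sum and using $\sum_{i,j} \lambda_i \lambda_j = 1$ yields $|v^\top D v| \le 3M$, which is exactly the claimed inequality on $\text{conv}(\mathcal{K}(l))$. Passing to $\text{cl}\{\text{conv}(\mathcal{K}(l))\}$ is immediate by continuity of the quadratic form. There is no genuine obstacle here; the only care required is to note that the polarization identity demands $D$ symmetric (handled by symmetrizing upfront) and that the scaling of $v_i+v_j$ by $2$ is exactly what costs us the factor of $3$ rather than the looser factor of $4$ that a naive bound would give.
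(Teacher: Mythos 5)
Your proof is correct. Note that the paper itself does not contain a proof of this lemma---it cites Lemma~\ref{conv_sparse} (together with Lemmas~\ref{cone_sparse} and~\ref{sparseREC}) from Basu and Michailidis~\cite{basu2015regularized}, where the argument runs along essentially the same lines: write the point of the convex hull as a finite convex combination of $l$-sparse unit-ball vectors, expand the quadratic form by bilinearity, and handle the cross terms via polarization, noting that $(v_i+v_j)/2 \in \mathcal{K}(2l)$ while the scaling costs a factor of $4$. Your reduction to symmetric $D$ upfront is a clean way to make the polarization identity valid, and your tally $\tfrac{1}{2}(4M+M+M)=3M$ matches the constant in the cited lemma. (A slightly more careful bookkeeping would separate the diagonal $i=j$ terms, which contribute only $M\sum_i\lambda_i^2$, but since $\sum_i\lambda_i^2$ can be made arbitrarily small the resulting bound is still $3M$, so nothing is lost by your uniform bound.) One small point worth flagging: the representation $v=\sum_{i=1}^N\lambda_i v_i$ with $v_i\in\mathcal{K}(l)$ and finitely many terms is exactly what membership in $\text{conv}(\mathcal{K}(l))$ means, so no appeal to Carath\'eodory is even needed; passing to the closure by continuity is then immediate, as you say.
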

     
     \begin{Lemma}\label{sparseREC}
     	Consider a symmetric matrix $D\in\mathbb{R}^{pM\times pM}$. If for any vector $v\in\mathbb{R}^{pM}$ with $\|v\|_2\leq 1$, and any $\eta\geq0$,
     	$$
     	\mathbb{P}\left(\left|v^\top Dv\right|>\eta\right)\leq c_1\exp\left\{-c_2T\min\left\{\eta,\eta^2\right\}\right\},
     	$$
     	then for any integer $l\geq 1$, 
     	$$
     	\mathbb{P}\left(\sup_{v\in \mathcal{K}(l)}\left|v^\top Dv\right|>\eta\right)\leq c_1\exp\left\{-c_2T\min\left\{\eta,\eta^2\right\}+l\min\left\{\log (pM),\log(21epM/l)\right\}\right\}.
     	$$
     \end{Lemma}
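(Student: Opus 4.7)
The plan is a standard covering and union-bound argument that promotes the pointwise concentration hypothesis to a uniform bound over the sparse unit ball $\mathcal{K}(l)$. First I would fix a support $S\subseteq\{1,\ldots,pM\}$ with $|S|=l$; the set of $v\in\mathcal{K}(l)$ whose support lies in $S$ is an isometric copy of the Euclidean unit ball in $\mathbb{R}^l$, so a standard volume argument produces an $\epsilon$-net $N_S$ of cardinality at most $(1+2/\epsilon)^l$. Taking the union $N=\bigcup_{|S|=l}N_S$ yields a net of $\mathcal{K}(l)$ with $|N|\leq\binom{pM}{l}(1+2/\epsilon)^l$, and every $v\in\mathcal{K}(l)$ lies within distance $\epsilon$ of some $u\in N$ sharing its support.

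Next I would use the standard quadratic-form discretization. For $v$ with support in $S$ and the nearest $u\in N_S$, the vector $v-u$ also has support in $S$ with $\|v-u\|_2\leq\epsilon$, so, writing $M_S=\sup_{\supp(w)\subseteq S,\,\|w\|_2\leq 1}|w^\top Dw|$, the bilinear Cauchy--Schwarz bound on the symmetric form restricted to the coordinate subspace indexed by $S$ gives $|v^\top Dv-u^\top Du|\leq |(v-u)^\top Dv|+|u^\top D(v-u)|\leq 2\epsilon\,M_S$. Taking the supremum over $v$ and rearranging produces $M_S\leq (1-2\epsilon)^{-1}\max_{u\in N_S}|u^\top Du|$, and taking the max over $S$ yields
\[\sup_{v\in\mathcal{K}(l)}|v^\top Dv|\leq (1-2\epsilon)^{-1}\max_{u\in N}|u^\top Du|.\]
Applying the hypothesis to each of the at most $|N|$ net points and taking a union bound then gives
\[\mathbb{P}\Bigl(\sup_{v\in\mathcal{K}(l)}|v^\top Dv|>\eta\Bigr)\leq c_1|N|\exp\bigl\{-c_2T\min\bigl((1-2\epsilon)\eta,(1-2\epsilon)^2\eta^2\bigr)\bigr\}.\]

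Choosing for instance $\epsilon=1/10$, so that $1+2/\epsilon=21$ and $1-2\epsilon=4/5$, and absorbing the factor $4/5$ into $c_2$ leaves the exponent $-c_2T\min(\eta,\eta^2)$ and the covering factor $\binom{pM}{l}\cdot 21^l$. I would conclude by bounding $\binom{pM}{l}$ in the two standard ways: the crude $\binom{pM}{l}\leq (pM)^l$, giving an exponent contribution of $l\log(pM)$ (with $21^l$ absorbed into the constant prefactor $c_1$), and the sharper Stirling-type estimate $\binom{pM}{l}\leq (epM/l)^l$, giving $l\log(21epM/l)$; taking the smaller of the two yields the $l\min\{\log(pM),\log(21epM/l)\}$ term in the exponent as stated.

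The main technical delicacy is the self-referential approximation step, which bounds $M_S$ in terms of itself plus $2\epsilon M_S$; this works cleanly only because $u$, $v$, and $v-u$ all lie in the same $l$-dimensional coordinate subspace, so the single sparse supremum $M_S$ controls every quadratic and bilinear form that appears, and the absorbing inequality closes. Once this observation is in place the remainder of the proof reduces to routine covering-number counting and a union bound, and no further structural properties of $D$ beyond symmetry enter.
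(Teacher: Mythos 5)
The paper does not actually prove this lemma; it is quoted from Basu and Michailidis (2015) and used without derivation inside the proof of Lemma~\ref{REC}. Your self-contained net argument is the standard route for this kind of sparse discretization result, and the core is sound: fixing a support $S$ of size $l$, the self-improving estimate $|v^\top Dv - u^\top Du|\leq 2\epsilon M_S$ closes precisely because $v$, $u$, and $v-u$ all live in the same $l$-dimensional coordinate slice, so the single quantity $M_S=\|D_{SS}\|_2$ controls every quadratic and bilinear term that appears. The absorbing inequality $M_S\leq(1-2\epsilon)^{-1}\max_{u\in N_S}|u^\top Du|$, the per-support volume bound $|N_S|\leq(1+2/\epsilon)^l$, and the union bound over $\binom{pM}{l}$ supports then give the conclusion up to constants.

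One step is stated incorrectly, though it does not sink the argument. You claim that when using the crude bound $\binom{pM}{l}\leq(pM)^l$, the covering factor $21^l$ is ``absorbed into the constant prefactor $c_1$.'' Taken literally this fails: $21^l$ grows with $l$ and is not a constant. What saves the claim is that $l\log(pM)$ is the smaller branch of the $\min$ exactly when $\log(pM)\leq\log(21epM/l)$, that is $l\leq 21e$; in that regime $21^l$ is bounded by the fixed (if enormous) constant $21^{21e}$ and can legitimately be absorbed. For $l>21e$ the Stirling branch is active and the $21^l$ factor is already accounted for inside $l\log(21epM/l)$. Making this case split explicit is what you need to make the absorption rigorous; with that observation added, your argument is a correct reconstruction of the cited result.
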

     
     By Lemma \ref{cone_sparse} and Lemma \ref{conv_sparse},
     \begin{equation*}
         \begin{split}
             &\sup\left\{\left|v^\top (H-\Upsilon)v\right|:v\in \mathcal{C}(J,\kappa),\|v\|_2\leq 1\right\}\\
             \leq &\sup\left\{\left|v^\top (H-\Upsilon)v\right|:v\in (\kappa+2)\text{cl}\left\{\text{conv}\{\mathcal{K}(|J|)\}\right\}\right\}\\
             \leq &3(\kappa+2)^2\sup\left\{\left|v^\top (H-\Upsilon)v\right|:v\in \mathcal{K}(2|J|)\right\}.
         \end{split}
     \end{equation*}
     For any unit vector $v\in \mathbb{R}^{pM}$, 
     \begin{equation*}
         v^\top (H-\Upsilon)v=\frac{1}{T}\sum_{t=0}^{T-1}\mathcal{X}_t^\top vv^\top \mathcal{X}_t-\text{tr}\left(vv^\top \Upsilon\right),
     \end{equation*}
     Thus $\left|v^\top (H-\Upsilon)v\right|$ can be bounded by Lemma \ref{QuaBound}. 
     \begin{equation*}
         \left\|vv^\top\right\|_{\tr}=\left\|vv^\top\right\|_2=\|v\|_2^2=1, 
     \end{equation*}
     which implies 
     \begin{equation*}
         \mathbb{P}\left(\left|v^\top (H-\Upsilon)v\right|>\eta\right)\leq c_1\exp\{-c_2T\min\{\eta,\eta^2\}\}.
     \end{equation*}
     By Lemma \ref{sparseREC}, when $|J|\log pM\leq C(\eta)T$,
     \begin{equation*}
         \sup\left\{\left|v^\top (H-\Upsilon)v\right|:v\in \mathcal{K}(2|J|)\right\}\leq \eta,
     \end{equation*}
     with probability at least $1-c_1\exp\{-c_2T\min\{\eta,\eta^2\}\}$. Let $\eta=[6(\kappa+2)^2]^{-1}\Lambda_{\min}(\Upsilon)\geq C(\kappa,\beta)$, then 
     \begin{equation*}
     \begin{split}
         &\inf\left\{v^\top Hv:v\in \mathcal{C}(J,\kappa),\|v\|_2\leq 1\right\}\\
         \geq &\Lambda_{\min}(\Upsilon)-\sup\left\{\left|v^\top (H-\Upsilon)v\right|:v\in \mathcal{C}(J,\kappa),\|v\|_2\leq 1\right\}\\
         \geq &\frac{1}{2}\Lambda_{\min}(\Upsilon)\geq C(\beta),
     \end{split}
     \end{equation*}
     with probability at least $1-c_1\exp\{-c_2T\}$, when $|J|\log pM\leq C(\kappa, \beta)T$, and $c_2$ depends on $\kappa$ and $\beta$. Here we apply Lemma \ref{CovEigenBnd} to lower bound the eigenvalues of $\Upsilon$.
\end{proof}

\section{Proof of Lemma \ref{DerivativeBnd}, \ref{SparsityBound}, \ref{InvBnd}, and \ref{trl2norm}}\label{RestProof}

\begin{proof}[Proof of Lemma \ref{DerivativeBnd}]
Recall that $f_l(z)=f_*(g_l(z))$, with $f_*(z)=\int_{-\infty}^{z-\frac{1}{2}} \phi(z) dz$, $g_1(z)=\left(\|Z+\mu\|_2-r-\varepsilon\right)/\varepsilon$, and $g_2(z)=\left(\|Z+\mu\|_2-r+2\varepsilon\right)/\varepsilon$. In order to bound the partial derivatives of composite function, we apply the following lemma which is a direct result of Proposition 1 and 2 in \cite{hardy2006combinatorics}. 
\begin{Lemma}\label{DerivativeFormula}
Suppose univariate function $f$ and $g$: $\mathbb{R}^n\rightarrow \mathbb{R}$ have derivatives and partial derivatives of orders up to $k$, then $\forall \{i_1,\dots,i_k\}\subset \{1,\dots,n\}$, 
\begin{equation*}
    \frac{\partial^k}{\partial x_{i_1}\cdots \partial x_{i_k}}f(g(x))
	=\sum_{\pi\in \Pi(k)}f^{(|\pi|)}(g(x))\prod_{B\in \pi}\frac{\partial^{|B|} g(x)}{\prod_{j\in B}\partial x_{i_j}},
\end{equation*}
where $\Pi(k)$ is the set of partitions for $\{1,\cdots,k\}$, and $B\in \pi$ is a block in $\pi$. Formally, 
	$$
	\Pi(k)=\{\{B_1,B_2,\cdots,B_n\}: B_i\cap B_j=\emptyset, \cup_i B_i=\{1,2,\cdots, k\}\}.
	$$
\end{Lemma}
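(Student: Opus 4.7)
The plan is to prove Lemma \ref{DerivativeFormula} (the multivariate Faà di Bruno formula for composing a univariate $f$ with a scalar-valued $g:\mathbb{R}^n\to\mathbb{R}$) by induction on the order $k$ of differentiation, using only the chain rule, the product rule, and a combinatorial bijection between set partitions of $\{1,\dots,k\}$ and set partitions of $\{1,\dots,k-1\}$ decorated with a placement choice for the new element $k$.

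For the base case $k=1$, the only element of $\Pi(1)$ is the single-block partition $\pi=\{\{1\}\}$, for which $|\pi|=1$ and the product on the right has the single factor $\partial g/\partial x_{i_1}$, so the identity reduces to the ordinary chain rule $\partial_{i_1} f(g(x))=f'(g(x))\,\partial_{i_1} g(x)$. For the inductive step, I would assume the identity holds at order $k-1$ for any choice of indices and differentiate both sides with respect to $x_{i_k}$. Applying the chain rule to the outer factor $f^{(|\pi'|)}(g(x))$ produces a term carrying $f^{(|\pi'|+1)}(g(x))\,\partial_{i_k}g(x)$ times the original product over blocks of $\pi'$; applying the product rule to the inner product contributes, for each block $B'\in\pi'$, a term in which $\partial^{|B'|}g/\prod_{j\in B'}\partial x_{i_j}$ is replaced by $\partial^{|B'|+1}g/(\partial x_{i_k}\prod_{j\in B'}\partial x_{i_j})$.

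The combinatorial step, which is the heart of the argument, is to identify the resulting sum with $\sum_{\pi\in\Pi(k)}f^{(|\pi|)}(g(x))\prod_{B\in\pi}\partial^{|B|}g/\prod_{j\in B}\partial x_{i_j}$. The bijection is the obvious one: every partition $\pi\in\Pi(k)$ is obtained from a unique $\pi'\in\Pi(k-1)$ by either (i) adjoining $\{k\}$ as a new singleton block (matching the chain-rule contribution, since $|\pi|=|\pi'|+1$), or (ii) enlarging a unique existing block $B'\in\pi'$ to $B'\cup\{k\}$ (matching the product-rule contribution corresponding to that block, since $|\pi|=|\pi'|$). Existence and uniqueness are immediate from where the element $k$ sits in $\pi$, and the exponents of $f$ and the multi-indices of the $g$-derivatives on the two sides match term by term under this correspondence.

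The main obstacle is not analytic but purely bookkeeping: one must verify that every partition of $\{1,\dots,k\}$ appears with coefficient exactly one after differentiation, with no over- or under-counting. A convenient safeguard is to index the induction by the richer statement that the identity holds for arbitrary (possibly repeated) indices $i_1,\dots,i_k$, so that the product-rule expansion is symbolically clean and Schwarz's theorem on mixed partials lets us identify $\partial^{|B'|+1}g/(\partial x_{i_k}\prod_{j\in B'}\partial x_{i_j})$ with $\partial^{|B|}g/\prod_{j\in B}\partial x_{i_j}$ for $B=B'\cup\{k\}$ without reordering issues. Once this accounting is in place, the induction closes and the formula is established; the resulting bound on derivatives of $f_l$ in Lemma \ref{DerivativeBnd} follows by estimating $|f_*^{(|\pi|)}|$, noting that each factor $\partial^{|B|}g_l/\prod_{j\in B}\partial x_{i_j}$ contributes at most $C_{|B|}\varepsilon^{-1}\|y\|_2^{|B|}/\|y\|_2^{|B|-1}$ after summation against $y_{i_1}\cdots y_{i_k}$, and summing the products over $\pi\in\Pi(k)$ yields the claimed $C(k)\varepsilon^{-k}\|y\|_2^k$.
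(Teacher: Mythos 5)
Your inductive proof is correct, but it takes a different route from the paper: the paper does not prove this lemma at all, instead stating that it "is a direct result of Proposition 1 and 2 in \cite{hardy2006combinatorics}" and moving on. Your argument is a self-contained derivation via the standard bijection between $\Pi(k)$ and pairs $(\pi', \text{placement of } k)$ with $\pi'\in\Pi(k-1)$: adjoining $\{k\}$ as a new singleton corresponds to the chain-rule term (bumping $|\pi'|$ to $|\pi'|+1$), and inserting $k$ into an existing block $B'$ corresponds to the product-rule term for that block (preserving $|\pi'|$). Your observation that one should carry the induction over arbitrary, possibly repeated, index tuples and invoke equality of mixed partials is the right safeguard against ordering/bookkeeping errors. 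What the citation buys is brevity; what your proof buys is a fully verifiable, elementary argument that does not rely on the reader consulting the combinatorics literature, at the cost of a paragraph of partition bookkeeping. The final sentence of your proposal drifts into Lemma \ref{DerivativeBnd}, which is outside the scope of this statement, but it is harmless and the sketch of that bound is consistent with the paper's use of the formula.
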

By Lemma \ref{DerivativeFormula}, we can write out the $k$th order partial derivatives of $f_l$:  
	\begin{align*}
	\frac{\partial^k}{\partial z_{i_1}\cdots \partial z_{i_k}}f_l(z)
	=\sum_{\pi\in \Pi(k)}f_*^{(|\pi|)}(g_l(z))\prod_{B\in \pi}\frac{\partial^{|B|} g_l(z)}{\prod_{j\in B}\partial z_{i_j}}.
	\end{align*}
	Moreover, we can also write $g_l(z)$ as a composite function $\varphi_l(\psi(z))$, with $\varphi_1(x)=\frac{\sqrt{x}-r-\varepsilon}{\varepsilon}$, $\varphi_2(x)=\frac{\sqrt{x}-r+2\varepsilon}{\varepsilon}$, and $\psi(z)=\|z+\mu\|_2^2$. Then applying Lemma \ref{DerivativeFormula} on $g_l(z)$ gives us
	\begin{equation}\label{g_l_deriv}
	    \frac{\partial^n}{\partial z_{i_1}\cdots \partial z_{i_n}}g_l(z)=\sum_{\pi\in \Pi(n)}\varphi_l^{(|\pi|)}(\psi(z))\prod_{B\in \pi}\frac{\partial^{|B|} \psi(z)}{\prod_{j\in B}\partial z_{i_j}}.
	\end{equation}
	Note that 
	\begin{equation*}
	\frac{\partial^{|B|} \psi(z)}{\prod_{j\in B}\partial z_{i_j}}=\begin{cases}
	z_{i_j}+\mu_{i_j} & \text{if } B=\{j\} \text{ for any $j$}\\
	1(i_j=i_l) & \text{if } B=\{j,l\} \text{ for any $j,l$} \\
	0 & \text{if } |B|>2,
	\end{cases}
	\end{equation*}
	which means that we only need to consider the partitions with all blocks of size $1$ or $2$, when calculating the partial derivative of $g_l(z)$ using \eqref{g_l_deriv}. Also note that we need partitions for blocks within an original partition $\pi$, we define the following partition set $\mathcal{C}(\pi)$ for any partition $\pi=\{B_1,\dots,B_n\}$ of size $n$:
	\begin{equation*}
	    \mathcal{C}(\pi)=\left\{\cup_{i=1}^n \tilde{\pi}_i:\tilde{\pi}_i\in \Pi(B_i) s.t. \forall C\in \tilde{\pi}_i, |C|\leq 2\right\}.
	\end{equation*}
	This set $\mathcal{C}(\pi)$ include the unions of partitions for each block $B_i$ within $\pi$, and each block within the partition of $B_i$ has size bounded by $2$. Let $S(\tilde{\pi})=\{i:\{i\}\in \tilde{\pi}\}$, and $P(\tilde{\pi})=\{\{i,j\}:\{i,j\}\in \tilde{\pi}\}$, then the partial derivative of $f_l(z)$ can be expanded as
	\begin{equation}\label{f_l_deriv}
	    \frac{\partial^k}{\partial z_{i_1}\cdots \partial z_{i_k}}f_l(z)=\sum_{\substack{\pi\in \Pi(n)\\\tilde{\pi}\in \mathcal{C}(\pi)}}f_*^{|\pi|)}(g_l(z))C(\pi,\tilde{\pi})\frac{\Pi_{j\in S(\tilde{\pi})}(z_{i_j}+\mu_{i_j})\Pi_{\{j,l\}\in P(\tilde{\pi})}1(i_j=i_l)}{\varepsilon^{|\pi|} \|z+\mu\|_2^{2|\tilde{\pi}|-|\pi|}},
	\end{equation}
	where we apply the fact that $\varphi_l^{(k)}(x)=\frac{C(k)}{\varepsilon x^{k-\frac{1}{2}}}$.
	For each fixed $\pi\in \Pi(k)$ and $\tilde{\pi}\in \mathcal{C}(\pi)$,
	\begin{equation*}
	\begin{split}
	    &\left|\sum_{1\leq i_1,\cdots,i_k\leq d}y_{i_1}\cdots y_{i_k}\Pi_{j\in S(\tilde{\pi})}(z_{i_j}+\mu_{i_j})\Pi_{\{j,l\}\in P(\tilde{\pi})}1(i_j=i_l)\right|\\
	 =&\left|\left(y^\top (z+\mu)\right)^{|S(\tilde{\pi})|}\|y\|_2^{2|P(\tilde{\pi})|}\right|\leq \|y\|_2^k\|z+\mu\|_2^{|S(\tilde{\pi})|},
	\end{split}
	\end{equation*}
	then combine this with \eqref{f_l_deriv}, we have
\begin{equation*}
    \begin{split}
	&\left|\sum_{1\leq i_1,\cdots,i_k\leq d} y_{i_1}\cdots y_{i_k}\frac{\partial^k}{\partial z_{i_1}\cdots \partial z_{i_k}}f_l(z)\right|\leq \sum_{\substack{\pi\in \Pi(n)\\\tilde{\pi}\in \mathcal{C}(\pi)}}\frac{f_*^{(|\pi|)}(g_l(z))C(\pi,\tilde{\pi})\|y\|_2^k}{\varepsilon^{|\pi|}\|z+\mu\|_2^{k-|\pi|}}.
    \end{split}
	\end{equation*}
	In addition, note that $f_*^{(k)}(x)=\phi^{(k-1)}(x-\frac{1}{2})=0$ when $x\leq 0$ or $x\geq 1$, and is bounded on $(0,1)$.Thus we only have to consider $\|z+\mu\|_2>r+\varepsilon$ when $l=1$ and $\|z+\mu\|_2>r-2\varepsilon$ when $l=2$. If $r>3\varepsilon$ and $l=2$, $\|z+\mu\|_2>r-2\varepsilon>\varepsilon$. Therefore, 
	\begin{equation*}
    \begin{split}
	    &\left|\sum_{1\leq i_1,\cdots,i_k\leq d} y^{(i_1)}\cdots y^{(i_k)}\frac{\partial^k}{\partial z^{(i_1)}\cdots\partial z^{(i_k)}}f_l(z)\right|\\
	\leq &\sum_{\pi\in \Pi(k)}\sum_{(S_i,P_i)_{i=1}^{|\pi|}\in \mathcal{C}(\pi)}\frac{C(|\pi|)\|y\|^k}{\varepsilon^{k}}\leq C(k)\varepsilon^{-k}\|y\|^k.
    \end{split}
	\end{equation*}
\end{proof}

\begin{proof}[Proof of Lemma \ref{SparsityBound}]
Note that
$$
w_{m}^*=\Upsilon_{D_m^c,D_m^c}^{-1}\Upsilon_{D_m^c,D_m}=-(\Upsilon^{-1})_{D_m^c,D_m}\left[(\Upsilon^{-1})_{D_m,D_m}\right]^{-1}.
$$
When $A^*$ is symmetric, $\Upsilon^{-1}=I-(A^*)^2$, thus
\begin{equation*}
    \begin{split}
        w_m^*=\left((A^*)^2\right)_{D_m^c,D_m}\left[I-\left((A^*)^2\right)_{D_m,D_m}\right]^{-1}\in \mathbb{R}^{(M-d_m)\times d_m}.
    \end{split}
\end{equation*}
It is clear that 
\begin{equation*}
    \begin{split}
        s_m=\|w_m^*\|_0\leq&d_m\left|\{i:(w_m^*)_{i\cdot}\neq 0\}\right|\leq d_m\left|\{i:[(A^*)^2]_{i,D_m}\neq 0\}\right|.
    \end{split}
\end{equation*}
Let $R_m =\left|\{i:[(A^*)^2]_{i,D_m}\neq 0\}\right|$ and $C_m=\{j: A^*_{j,D_m}\neq 0\}$, then 
$$
|C_m|\leq d_m \max_{1\leq i\leq M}\|a_i^*\|_0
$$
and
$$
R_m\subset \{i: \text{supp}(A^*_{i\cdot})\cap C_m\neq \emptyset\}.
$$
Therefore,
\begin{equation*}
    \begin{split}
        s_m\leq d_m|R_m|\leq d_m\sum_{j\in C_m}|\text{supp}(A^*_{\cdot j})|\leq d_m^2(\max_{1\leq i\leq M}\|a_i^*\|_0)^2.
    \end{split}
\end{equation*}
\end{proof}

\begin{proof}[Proof of Lemma \ref{InvBnd}]
Let $Y=(B+\Delta)^{-1}$, then immediately we have $YB-I=-Y\Delta$, which is equivalent to $Y-B^{-1}=-Y\Delta B^{-1}$. Thus the $\ell_2$ norm of $Y-B^{-1}$ can be bounded by $\|Y\|_2\|\Delta\|_2\|B^{-1}\|_2$. Moreover, note that $\|Y\|_2\leq \|Y-B^{-1}\|+\|B^{-1}\|$, we have
\begin{equation*}
    \|Y\|_2\leq \|B^{-1}\|_2+\|Y\|_2\|\Delta\|_2\|B^{-1}\|_2,
\end{equation*}
and rearranging terms gives us 
\begin{equation*}
        \|Y\|_2\leq \frac{\|B^{-1}\|_2}{1-\|B^{-1}\|_2\|\Delta\|_2}.
\end{equation*}
Therefore,
\begin{equation*}
    \|Y-B^{-1}\|_2\leq \frac{\|B^{-1}\|^2_2\|\Delta\|_2}{1-\|B^{-1}\|_2\|\Delta\|_2}.
\end{equation*}
\end{proof}

\begin{proof}[Proof of Lemma \ref{trl2norm}]
First note that for any symmetric matrix $U$, we can write it as $U=P^\top \Lambda P$, with orthogonal matrix $P$ and diagonal matrix $\Lambda$.
By the definition of trace norm, 
\begin{equation*}
    \|U\|_{\tr}=\text{tr}\left(\sqrt{U^2}\right)=\text{tr}\left(\sqrt{P^\top \Lambda^2 P}\right)=\text{tr}\left(P^\top \sqrt{\Lambda^2}P\right)=\text{tr}\left(\sqrt{\Lambda^2}\right).
\end{equation*}
If we denote the non-zero eigenvalues of $U$ as $\lambda_1,\dots,\lambda_r$, then 
\begin{equation*}
    \|U\|_{\tr}=\text{tr}\left(\sqrt{\Lambda^2}\right)\leq r\max_i|\lambda_i|\leq r\|U\|_2.
\end{equation*}

\end{proof}

\end{document}